\DeclareMathAlphabet{\mathpzc}{OT1}{pzc}{m}{en}
\newcommand{\atomic}{(L)}
\newcommand{\atomics}{(L')}
\newcommand{\dashint}{\,\ThisStyle{\ensurestackMath{%
\stackinset{c}{.2\LMpt}{c}{.5\LMpt}{\SavedStyle-}{\SavedStyle\phantom{\int}}}%
\setbox0=\hbox{$\SavedStyle\int\,$}\kern-\wd0}\int}
\DeclareMathOperator{\card}{Card}
\DeclareMathOperator{\supp}{Supp}
\DeclareMathOperator{\tr}{Tr}
\DeclareMathOperator{\Hol}{Hol}
\DeclareMathOperator{\Ima}{Im}
\DeclareMathOperator{\Rea}{Re}
\newcommand{\Supp}[1]{\supp\left( #1\right) }
\newcommand{\ee}{\mathrm{e}}
\newcommand{\vect}[1]{\mathbf{{#1}}}
\newcommand{\dd}{\mathrm{d}}
\DeclarePairedDelimiter{\abs}{\lvert}{\rvert}
\DeclarePairedDelimiter{\norm}{\lVert}{\rVert}
\let\originalleft\left
\let\originalright\right
\renewcommand{\left}{\mathopen{}\mathclose\bgroup\originalleft}
\renewcommand{\right}{\aftergroup\egroup\originalright}
\newcommand{\N}{\mathds{N}}
\newcommand{\Z}{\mathds{Z}}
\newcommand{\C}{\mathds{C}}
\newcommand{\R}{\mathds{R}}
\newcommand{\Ff}{\mathfrak{F}}
\newcommand{\Bc}{\mathcal{B}}
\newcommand{\Ec}{\mathcal{E}}
\newcommand{\Fc}{\mathcal{F}}
\newcommand{\Hc}{\mathcal{H}}
\newcommand{\Lc}{\mathcal{L}}
\newcommand{\cM}{\mathcal{M}}
\newcommand{\Nc}{\mathcal{N}}
\newcommand{\Sc}{\mathcal{S}}
\newcommand{\Tc}{\mathcal{T}}
\newcommand{\Ms}{\mathscr{M}}
\newcommand{\meg}{\leqslant}
\newcommand{\Meg}{\geqslant}
\newcommand{\eps}{\varepsilon}
\renewcommand{\phi}{\varphi}
\newcommand{\mi}{\mu}
\newcommand{\Lin}{\mathscr{L}}
\newcommand{\half}{\textstyle{\frac12}}
\newcommand{\itb}{\item[{\tiny $\bullet$}]}
\newcommand{\vb}{\vect b}
\newcommand{\vd}{\vect d}
\newcommand{\vm}{\vect m}
\newcommand{\vs}{\vect s}
\begin{document}

\numberwithin{equation}{section} 
\theoremstyle{definition}
\newtheorem{deff}{Definition}[section]

\newtheorem{oss}[deff]{Remark}

\newtheorem{ass}[deff]{Assumptions}

\newtheorem{nott}[deff]{Notation}

\theoremstyle{plain}
\newtheorem{teo}[deff]{Theorem}

\newtheorem{lem}[deff]{Lemma}

\newtheorem{prop}[deff]{Proposition}

\newtheorem{cor}[deff]{Corollary}

\title[Carleson measures on Siegel domains]{Carleson and reverse
  Carleson measures\\ on homogeneous Siegel domains} 
\author[M. Calzi, M. M. Peloso]{Mattia Calzi, 
Marco M. Peloso}

\address{Dipartimento di Matematica, Universit\`a degli Studi di
  Milano, Via C. Saldini 50, 20133 Milano, Italy}
\email{{\tt mattia.calzi@unimi.it}}
\email{{\tt marco.peloso@unimi.it}}

\keywords{Carleson measures, Sampling measures, Reverse Carleson measures, Bergman Spaces, Hardy spaces.}
\thanks{{\em Math Subject Classification 2020:} Primary: 32A36; Secondary: 32A10, 32M10. }
\thanks{Both authors are members of the
Gruppo Nazionale per l'Analisi Matematica, la Probabilit\`a e le
loro 
Applicazioni (GNAMPA) of the Istituto Nazionale di Alta Matematica
(INdAM) and are  partially supported by the 2020
GNAMPA grant {\em Fractional Laplacians and subLaplacians on Lie groups and trees}.}

\begin{abstract}
In this paper we study Carleson and reverse Carleson measures on
holomorphic function spaces on a homogeneous Siegel domain of Type
II.  We prove several necessary conditions and sufficient conditions
in order for a measure $\mi$ to be Carleson and reverse Carleson on
mixed-normed weighted Bergman spaces.  
\end{abstract}
\maketitle

\section{Introduction}

Carleson measures were first introduced by L.\
Carleson~\cite{Carleson1,Carleson2}  in order to study the corona
problem in the classical Hardy spaces on the unit disc. The study of
these measures have flourished since then, and has been generalized to
several different settings.
Form a general perspective, they can be effectively defined as follows.
Given a locally compact space $\Tc$   and a (quasi-)Banach space
$X$ of (say) continuous functions on $\Tc$, a positive Radon measure
$\mi$ on $\Tc$ is said to be a $p$-Carleson measure for $X$ ($p\in
]0,\infty[$) if there is a constant $C>0$ such that   
\[
\norm{f}_{L^p(\mi)}\meg C \norm{f}_X
\]
for every $f\in X$. Switching the roles of $X$ and $L^p(\mi)$ in the
above inequality, one then gets the definition of a reverse
$p$-Carleson measure for $X$. This kind of measures was essentially
introduced in a series of papers by D.\ H.\
Luecking~\cite{Luecking,Luecking3,Luecking4,Luecking2} for weighted
Bergman spaces on the unit disc or more general domains, and has been
later generalized to different settings by several authors
(cf.~\cite{FricainHR} and the references therein). 

In this paper we study Carleson and reverse Carleson measures on
weighted Bergman spaces on homogeneous Siegel domain, which we are now
about to introduce. 

Let $E$  be a complex hilbertian space of dimension $n$, $F$ a real
hilbertian space of dimension $m>0$ and $\Omega$ a  convex cone
in $F$ not containing any straight lines.\footnote{We shall generally
denote by $\langle\,\cdot\,,\,\cdot\,\rangle$ bilinear pairings and
real scalar products, and by $\langle\,\cdot\,\vert\,\cdot\,\rangle$
sesquilinear pairings and complex scalar products, without
specifying the involved spaces.} 
We also assume that $\Omega$ is (affine-)homogeneous, that is the group $G(\Omega)$ of linear transformation of $F$ that preserve $\Omega$ acts transitively
on $\Omega$. 

Let $\Phi\colon E\times E\to F_\C $   be a non-degenerate hermitian
mapping such that $\Phi(\zeta)\coloneqq \Phi(\zeta,\zeta)\in
\overline\Omega$ for all $\zeta\in E$.  Define the  Siegel domain of
type II associated with the cone $\Omega$ and the mapping $\Phi$ as  
\[
D \coloneqq \Set{ (\zeta, z) \in E \times F_\C \colon
\rho(\zeta,z)\coloneqq \Ima z -  \Phi(\zeta) \in \Omega } .
\]
In the case $n=0$, i.e. $E=\Set{0}$, $D$ is said to be of Type I, and
is
often called a {\em  tubular domain} over the cone $\Omega$.  
The domain $D$ is said to be homogeneous if for every
$(\zeta,z),(\zeta',z')\in D$ there is a biholomorphism $\phi$ of $D$
such that $\phi(\zeta,z)=(\zeta',z')$. It turns out that one may then
assume $\phi$ to be affine, cf., e.g.,~\cite[Theorem 2.3]{Xu}. More
precisely, $D$ is homogeneous if and only if for every $h,h'\in
\Omega$ there are $t\in GL(F)$ and $g\in GL(E)$ such that $th=h'$ and
such that $t\Phi=\Phi(g\times g)$ (so that $g\times t$ preserves
$D$), cf., e.g.,~\cite[Propositions 2.1 and 2.2]{Murakami}. In
particular, if $D$ is homogeneous, then $\Omega$ is homogeneous.  

The domain $D$ is said to be symmetric if it is homogeneous and admits
an involutive biholomorphism with an isolated fixed point. If $D$ is
symmetric, then $\Omega$ is symmetric, that is, homogeneous and
self-dual. Conversely, if $\Omega$ is symmetric \emph{and $D$ is a
tubular domain}, then  $D$ is symmetric (cf.,
e.g.,~\cite[Theorem]{Satake} for more details on various
characterizations of symmetric Siegel domains).

The \v{S}ilov boundary  of $D$ is the set 
\[
bD \coloneqq \Set{ (\zeta, z) \in E \times F_\C \colon\rho(\zeta,z) =0 } ,
\]
and it can be identified with $E\times F$ via the mapping  $ (\zeta,
x+i\Phi(\zeta,\zeta))\mapsto(\zeta,x)$. In addition, $bD$ admits a
step-2 nilpotent Lie group 
structure,  whose product can be described as follows: 
\[
(\zeta,x)(\zeta',x')=(\zeta+\zeta',x+x'+2\Ima
\Phi(\zeta,\zeta')),
\]
for $(\zeta,x),(\zeta',x')\in E\times F$, 
see e.g.~\cite[Section 1.1]{CalziPeloso}. We denote by
$\Nc$ the set $E\times F$ endowed with this group structure. 

Observe that, by definition, $\rho$ maps $ D$ into $\Omega$, and that
the fibres of $\rho$, namely the sets $b D+(0, i h)$ for $h\in
\Omega$, give rise to a foliation of $D$. Given a function $f$
defined on $D$, we shall often denote by $f_h$ the restriction of $f$
to $b D+(0,i h)$, interpreted as a function on $\Nc$ for the sake of
convenience, so that 
\[
f_h (\zeta,x)=f(\zeta,x+i\Phi(\zeta)+i h)
\]
for every $h\in \Omega$ and for every $(\zeta,x)\in \Nc$. Observe
that, identifying  $b D+(0,i h)$ with $\Nc$ as above for every $h\in
\Omega$, we get a left action of $b D$ on $D$ by affine
biholomorphisms.

For $p,q\in ]0,\infty]$ and $\vs\in\R^r$, the
weighted Bergman spaces are defined as\footnote{The definitions of the
rank $r$ of $\Omega$,  of the `generalized power functions'
$\Delta_\Omega^{\vect s}$ and of $\vd\in\R^r$   are deferred to Section~\ref{sec:2}.} 
\begin{equation}\label{eq:1}
A^{p,q}_\vs (D)\coloneqq \Set{  f\in\Hol(D)\colon\int_\Omega \Big(
\int_\Nc\abs{f_h(\zeta,x)}^p \,  \dd (\zeta,x) \Big)^{q/p}
\Delta^{q \vs}_\Omega (h)  \, \dd \nu_\Omega(h)<\infty\,  } 
\end{equation}
(modification if $\max(p,q)=\infty$), where $\dd (\zeta,x)$ denotes a
Haar measure on $\Nc$ and $\nu_\Omega$ denotes a positive
$G(\Omega)$-invariant measure   on $\Omega$, both  fixed and  unique
up to a 
multiplicative constant. We shall often simply write $A^{p,q}_\vs$ instead of $A^{p,q}_\vs (D)$. 

We remark that the spaces $A^{p,p}_\vs$ are the weighted Bergman spaces,  
the unweighted case corresponding to the value $\vs=-\vd/p$, while the
spaces $A^{p,\infty}_{\vect 0}$ are the classical Hardy spaces.  

\medskip

We denote by  $\cM_+(D)$ the space of positive Radon measures.

\begin{deff}\label{def:dom-set}
Given  a Banach space $X$ of functions on $D$ and $p\in ]0,\infty[$, a
measure $\mi\in\cM_+(D)$  is said to be $p$-Carleson for $X$ if there
exists a constant $C>0$ such that  
\[
\norm{f}_{L^p(\mi)}\le C \norm{f}_X
\]
for every $f\in X$.
If, in addition, the canonical mapping $X\to L^p(\mi)$ is compact,
then $\mi$ is said to be a  compact or   vanishing $p$-Carleson for $X$. 

A measure $\mu\in\cM_+(D)$ is said to be a reverse $p$-Carleson
measure for $X$ if there exists $C>0$ such that  
\[
\norm{f}_X\le C \norm{f}_{L^p(\mi)}.
\]
for every $f\in X$.

Finally, a measure $\mu\in\cM_+(D)$ is said to be a sampling, or dominant, measure for $X$ 
if it is both Carleson and reverse Carleson for $X$, in other words,
if $X$ embeds as a closed subspace of $L^p(\mi)$.
\end{deff}

Notice that, even though the definition of a $p$-Carleson and of a
reverse $p$-Carleson measure is quite standard, the definition of a
vanishing $p$-Carleson measure is slightly more
problematic. Originally, Carleson measures were characterized
requiring a suitable function  $F$ to be bounded, and vanishing
Carleson measures were characterized requiring $F$ to vanish `at
infinity'. As noted in~\cite{Power}, this corresponds to the
compactness of the canonical mapping $X\to L^p(\mi)$. In the
literature also appear apparently different, but still equivalent,
definitions of a vanishing Carleson measure
(cf.,e.g.,~\cite{HuLvZhu}).

In this paper we consider the  problem of obtaining necessary and
sufficients conditions for the measure $\mu$ to be a $p$-Carleson,  a vanishing
$p$-Carleson, or a reverse $p$-Carleson measure for the spaces $A^{p,q}_\vs$.
\medskip

There  exists a vast literature on Carleson measures on holomorphic
function  spaces, in one and several variables, and it is impossible
to give a proper and complete account of it.  Hence, we limit
ourselves to the papers most relevant for this work, and extend our
apologies to all other
authors. In~\cite{Carleson1,Carleson2,Duren,Luecking8}, Carleson
measures for the Hardy or Hardy--Sobolev spaces on the unit disc are
studied. In in~\cite{Stegenga,Wu} Carleson measures for the Dirichlet
and weighted Direchlet spaces still on the unit disc are
characterized. In higher dimensions, we mention~\cite{CimaWogen},
which deals with Hardy spaces in the unit ball,~\cite{Luecking7},
which deals with Bergman spaces on general
domains,~\cite{ArcozziRochbergSawyer3,ArcozziRochbergSawyer1,ArcozziRochbergSawyer5,ArcozziRochbergSawyer2,ArcozziRochbergSawyer4},
which deal with holomorphic Sobolev--Besov spaces on the unit
ball,~\cite{AbateSaracco,HuLvZhu,Abate,AbateRaissy,AbateMongodiRaissy},
which deals with Bergman spaces on strongly 
pseudoconvex domains,
and~\cite{NanaSehba,BekolleSehbaTchoundja,BekolleSehba}, which deal
with various function spaces  on Siegel domains.

Carleson measures still draw a lot of attention, in connection with
many problems in analysis such 
as operator theory, interpolation, and boundary behavior problems,
just to mention a few.  The results in this paper are applied in the
study of Toeplitz and Ces\`aro operators on weighted Bergman  spaces on
Siegel domains of Type II, see~\cite{T&C}.

On the other hand, reverse Carleson measures appeared at a later time,
and much less is known about them.  They have been studied by D.\ H.\
Luecking in the case of Bergman spaces on the unit disc
\cite{Luecking,Luecking2,Luecking3,Luecking4,Luecking5}. See also the recent
survey~\cite{FricainHR} and references therein.

Among reverse Carleson measures, those which are also Carleson
measures form a particularly tractable subclass, for which better
results are generally available. Such measures are often called
\emph{sampling measures}, since they can be considered as
generalizations of sampling sequences.  
A complete characterization of sampling measures is available on the
unit disc (cf.~\cite{Luecking5}), but not on more general
domains. Nonetheless, the techniques developed
in~\cite{Luecking,Luecking3,Luecking4} to characterize dominant (or
sampling) sets can be effectively extended to general weighted Bergman
spaces on homogeneous Siegel domains, as already noted
in~\cite{Luecking4}. A subset $G$ of $D$ is said to be \emph{dominant} for
$A^{p,p}_\vs$ if the measure
$\chi_G(\Delta_\Omega^{p\vs+\vd}\circ \rho)\cdot \Hc^{2n+2m}$. As it turns out, this notion is
independent of both $\vs$ and $p$.

In this paper we consider weighted Bergman spaces on a homogeneous
Siegel domain $D$ of Type II. 
We  provide necessary and sufficient conditions   for a measure
$\mi\in\cM_+(D)$ to be a $p$-Carleson measure for $A^{p_1,q_1}_\vs(D)$ 
when $p_1,q_1\le p$  (Theorems~\ref{main:1} and~\ref{teo:4}), as well
as in the general case,  under suitable additional assumptions
(Theorems~\ref{main:2} and~\ref{teo:4}).  
We also provide a sufficient condition for a measure to be a  reverse
Carleson measure (Theorem~\ref{prop:29}), as well as necessary
conditions and sufficient conditions for a measure $\mi\in\cM_+(D)$ to
be a sampling measure for $A^{p,p}_\vs(D)$
(Theorems~\ref{prop:31},~\ref{prop:30}, and~\ref{prop:27}).   
Finally, we remark that analysis of the theory of holomorphic
function spaces, in particular on homogeneous Siegel domains of Type
II, is a quite active area of research, see e.g. \cite{ACMPS,AMPS,MPS} and
references therein for some recent results.

The paper is organized as follows. 
In Section~\ref{sec:C+} we review our main results in the particular
case $D=\C_+$ for the ease of the reader.   
In Section~\ref{sec:2} we give the
definition of homogeneous Siegel domains of Type  II, while in
Section~\ref{sec:fnct-sp} we give the definitions of several function
spaces on $D$ and recall some of their most relevant properties which
are involved in our analysis.    
In Section~\ref{sec:Carleson}, we prove necessary conditions and sufficient
conditions for a measure $\mi\in\cM_+(D)$ to be a Carleson measure for
the weighted Bergman space $A^{p,q}_\vs$.  Section~\ref{sec:5} is
devoted to an extension of a result by Hardy and Littlewood about
embedding the Hardy space into a weighted Bergman space. Finally, in
Section~\ref{sec:6}, we prove our results concerning reverse Carleson and
sampling measures.

\section{Carleson and Reverse Carleson Measures on $\C_+$}\label{sec:C+}

In this brief section we present our main results for weighted Bergman spaces on $\C_+$, where they become particularly simple.

In this case, for $p,q\in ]0,\infty]$ and $s\in\R$,
\[
A^{p,q}_s=\Set{f\in \Hol(\C_+)\colon  \int_0^\infty y^{qs} \left(\int_\R \abs{f(x+iy)}^p\,\dd y\right)^{q/p}\,\frac{\dd y}{y}<\infty}
\] 
(modification when $\max(p,q)=\infty$). Then, $A^{p,q}_s=0$ if $s<0$ or $s=0$ and $q<\infty$. In addition, $A^{p,\infty}_0$ is the Hardy space $H^p$.

\begin{teo}
Take $p_1,q_1,p\in ]0,\infty]$ with $p<\infty$, and $s>0$. Take a compact neighbourhood $K$ of $i$ in $\C_+$, $\mi\in \cM_+(D)$, and define
\[
M_K(\mi)\colon \C_+ \ni z\mapsto \mi(\Rea z+(\Ima z)K)\in \R_+. 
\]
Then, the following conditions are equivalent:
\begin{enumerate}
\item[(1)] $A^{p_1,q_1}_s$ embeds continuously into $L^p(\mi)$;

\item[(2)] setting $p^*\coloneqq \max(1,p_1/p)'$, $q^*\coloneqq \max(1,q_1/p)'$, and $s^*\coloneqq -\max(1,p/p_1)-p s$,
\[
\int_0^\infty y^{q^*s^*} \left(\int_\R \abs{M_K(\mi)(x+iy)}^{p^*}\,\dd y\right)^{q^*/p^*}\,\frac{\dd y}{y}<\infty
\]
(modification when $\max(p^*,q^*)=\infty$).
\end{enumerate}
\end{teo}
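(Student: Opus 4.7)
The plan is to deduce this characterization as the specialisation to $\C_+$ of the general Carleson-measure theorems (Theorems~\ref{main:1},~\ref{main:2} and~\ref{teo:4}) that are proved later in the paper. Here $\C_+$ is the tube over $\Omega=(0,\infty)$, so $E=\{0\}$, $F=\R$, the rank equals~$1$, the generalised power function is $\Delta^s_\Omega(y)=y^s$, the invariant measure on $\Omega$ is $\dd y/y$, and one may take $\vd=1$. Under this dictionary, definition~\eqref{eq:1} of $A^{p,q}_\vs(D)$ collapses to the formula displayed at the beginning of this section, and the \v{S}ilov boundary $b\C_+$ is simply $\R$ with its additive group structure.

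For the implication $(2)\Rightarrow(1)$, I would invoke the sufficient-condition parts of Theorems~\ref{main:1} and~\ref{main:2}, in the regimes $p_1,q_1\meg p$ and $p_1,q_1>p$ respectively, with the mixed cases handled by Theorem~\ref{teo:4}. These results assert, roughly, that $\mi$ is a $p$-Carleson measure for $A^{p_1,q_1}_\vs$ as soon as the averaged density $M_K(\mi)$ lies in a suitable weighted mixed-norm space, where the exponents and the weight are determined by a duality argument against $A^{p_1/p,q_1/p}_{\vect 0}$ (or its dual, whenever $p_1/p$ or $q_1/p$ exceeds~$1$). In rank one this mixed norm reduces verbatim to the integral displayed in~(2), and the convex-duality exponents that emerge are exactly $p^\ast=\max(1,p_1/p)'$ and $q^\ast=\max(1,q_1/p)'$, with the convention that $1'=\infty$ (covering the cases $p_1\meg p$ and $q_1\meg p$).

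For the implication $(1)\Rightarrow(2)$ I would appeal to the necessary-condition parts of the same theorems, which are established by testing the embedding $A^{p_1,q_1}_s\hookrightarrow L^p(\mi)$ against normalised reproducing kernels, or equivalently against holomorphic functions of power type $z\mapsto (z-i\bar w)^{-\alpha}$ for a suitable $\alpha>0$. The pointwise size of such a test function on the Carleson tent $\Rea w+(\Ima w)K$ is controlled by a constant multiple of $(\Ima w)^{-\alpha}$, uniformly in $K$, and substituting this bound into the embedding inequality converts it into a mixed-norm estimate on $M_K(\mi)$. Tracking the scaling exponents, together with the $A^{p_1,q_1}_s$-norm of the test function, produces precisely the weight $y^{s^\ast}$ with $s^\ast=-\max(1,p/p_1)-ps$ and the outer structure $L^{p^\ast,q^\ast}(\dd x;\dd y/y)$ appearing in~(2).

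The main obstacle will be the bookkeeping needed to check that the abstract conditions appearing in Theorems~\ref{main:1},~\ref{main:2} and~\ref{teo:4}, phrased in terms of the generalised power functions $\Delta_\Omega^\vs$ and $G(\Omega)$-invariant mixed-norm spaces on $\Omega$, really do degenerate in this rank-one tubular case to the single integrability condition~(2). Once $\vd$ is identified with $1$ and $\vs$ with the scalar $s$, the formulas for $p^\ast$, $q^\ast$ and $s^\ast$ fall out automatically; the only place where some care is required is the dichotomy $p_1/p\meg 1$ versus $p_1/p>1$ (and similarly for~$q_1/p$), which is precisely what the $\max(1,\cdot)$ in the statement is designed to encode.
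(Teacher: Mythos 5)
Your overall strategy---specialising the general Carleson theorems to the rank-one tube $\C_+$, where $\vect m=\vect m'=\vect 0$, $\vb=0$, $\vd=-1$, so that $L^{p^*,q^*}_{s^*}(D)$ becomes exactly the integral in (2)---is the right one, and your identification of $p^*$, $q^*$, $s^*$ is correct. However, there is a genuine gap in your argument for $(1)\Rightarrow(2)$. Testing the embedding against individual normalised kernels $z\mapsto(z-\bar w)^{-\alpha}$ only yields the pointwise bound $M_K(\mi)(w)\lesssim(\Ima w)^{p(s+1/p_1)}$, i.e.\ the condition $M_R(\mi)\in L^{\infty,\infty}_{p[(\vb+\vd)/p_1-s]}(D)$ of Proposition~\ref{prop:26}. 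When $p_1>p$ or $q_1>p$ (so that $p^*<\infty$ or $q^*<\infty$) this is strictly weaker than the integrability condition (2): a measure with $M_K(\mi)$ exactly comparable to $y^{p(s+1/p_1)}$ satisfies the pointwise bound but makes the integral in (2) diverge. The necessity of the mixed-norm condition is obtained in Theorem~\ref{main:2} by testing against \emph{random} linear combinations of kernels built on a lattice, via Khintchine's inequality and the duality between $\ell^{p_1/p,q_1/p}_0$ and $\ell^{p^*,q^*}$; this in turn requires the atomic-decomposition property $\atomic^{p_1,q_1}_{s,s',0}$, which on $\C_+$ holds for every $s>0$ by Propositions~\ref{prop:25} and~\ref{prop:23bis}(4) (their hypotheses degenerate to $s>0$ precisely because $\vect m=\vect m'=\vect 0$). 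You never verify this hypothesis, and it is exactly what the paper cites alongside Theorem~\ref{main:2}.

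A second problem is your plan to route the ``mixed'' regimes (say $p_1\meg p<q_1$) through Theorem~\ref{teo:4}: the conclusion of that theorem is again the $L^{\infty,\infty}$ condition, not the genuinely mixed condition $L^{p^*,q^*}_{s^*}$ demanded by (2), and it carries the extra embedding hypothesis (iv), which by Lemma~\ref{lem:2} already forces $p_1\meg p$ and is not available in general. No case division is needed: once property $\atomic$ is checked, Theorem~\ref{main:2} covers all $p_1,q_1\in\,]0,\infty]$ at once (when $p_1,q_1\meg p$ one has $p^*=q^*=\infty$ and its condition (3) reduces to that of Theorem~\ref{main:1}). Finally, you should also justify replacing Bergman balls by the Euclidean tents $\Rea z+(\Ima z)K$: on $\C_+$ one has $B(z,R)=\Rea z+(\Ima z)B(i,R)$ by affine invariance, so any compact neighbourhood $K$ of $i$ is sandwiched between two such balls, and Lemma~\ref{lem:1} shows the resulting membership condition is independent of this choice.
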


(Cf.~Theorem~\ref{main:2} and Propositions~\ref{prop:23bis} and~\ref{prop:25} below.)

Even though Carleson measures for the Hardy spaces
$A^{p,\infty}_0(\C_+)$ have been characterized in the literature, it
is known that the problem of determining the Carleson measures for
Hardy spaces on higher rank irreducible Siegel domains  is highly non-trivial.

Concerning sampling measures, Theorem~\ref{prop:27} becomes:

\begin{teo}
Take $p,s>0$, and a $p$-Carleson measure $\mi$ for $A^{p,p}_s$. Assume that the support of every vague cluster point of every sequence of measures of the form $ y^{p s+1}\, (x+y\,\cdot)_*(\mi) $  ($x\in \R, y>0$) is a set of uniqueness for $A^{q,q}_{(p/q)s}$ for some (fixed) $q\in ]0,p[$.

Then, $\mi$ is a $p$-sampling measure for $A^{p,p}_s$.
\end{teo}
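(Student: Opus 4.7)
The plan is to argue by contradiction. Since $\mi$ is $p$-Carleson by hypothesis, one needs only establish the reverse inequality $\norm{f}_{A^{p,p}_s}\meg C\norm{f}_{L^p(\mi)}$ for $f\in A^{p,p}_s$. If this fails, take $(f_n)\subset A^{p,p}_s$ with $\norm{f_n}_{A^{p,p}_s}=1$ and $\norm{f_n}_{L^p(\mi)}\to 0$.

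The central ingredient is the transitive affine action $(x,y)\cdot z=x+yz$ of $\R\times\,]0,\infty[$ on $\C_+$. A direct change of variable shows that $f\mapsto f_{x,y}$, where $f_{x,y}(z)\coloneqq y^{s+1/p}f(x+yz)$, is an isometry of $A^{p,p}_s$ and that
\[
\int_{\C_+}\abs{f_{x,y}}^p\,\dd\mi=\int_{\C_+}\abs{f}^p\,\dd\mi^{x,y},\qquad \mi^{x,y}\coloneqq y^{ps+1}(x+y\,\cdot)_*\mi,
\]
which is precisely the family of rescaled measures featured in the hypothesis. By a concentration argument in the spirit of Luecking (using the pointwise sub-mean-value bound $\abs{f(z)}\meg C z_I^{-s-1/p}\norm{f}_{A^{p,p}_s}$ together with an affine-invariant covering of $\C_+$), choose $(x_n,y_n)$ so that $g_n\coloneqq (f_n)_{x_n,y_n}$ has $\norm{g_n}_{A^{p,p}_s}=1$, keeps a uniform positive lower bound on its local $L^p$-mass over a fixed neighbourhood of $i$, and still satisfies $\int\abs{g_n}^p\,\dd\mi_n=\int\abs{f_n}^p\,\dd\mi\to 0$, with $\mi_n\coloneqq \mi^{x_n,y_n}$.

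By the pointwise estimate, $(g_n)$ is locally uniformly bounded, so Montel's theorem extracts a subsequence $g_n\to g$ locally uniformly, with $g\in\Hol(\C_+)$ and $g\not\equiv 0$ by the concentration lower bound. A further extraction gives $\mi_n\to\nu$ vaguely for some positive Radon measure $\nu$. Combining locally uniform convergence with vague convergence, for every compact $K\subset\C_+$,
\[
\int_K\abs{g}^p\,\dd\nu\meg\liminf_n\int_K\abs{g_n}^p\,\dd\mi_n\meg\liminf_n\int_{\C_+}\abs{g_n}^p\,\dd\mi_n=0,
\]
so $g$ vanishes on $\supp\nu$. Showing that $g\in A^{q,q}_{(p/q)s}$, as required to invoke the uniqueness hypothesis, is the delicate point, since no continuous embedding $A^{p,p}_s\hookrightarrow A^{q,q}_{(p/q)s}$ is available on $\C_+$; this step uses the concentration together with the $p$-Carleson property of $\mi$ to transfer the control between these two scales of Bergman spaces. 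Once $g\in A^{q,q}_{(p/q)s}$ is in hand, the hypothesis that $\supp\nu$ is a set of uniqueness for $A^{q,q}_{(p/q)s}$ forces $g\equiv 0$, contradicting $g\not\equiv 0$. The main obstacle is the combined concentration-and-regularity step: ensuring simultaneously that the normalized limit $g$ is nontrivial and that it lies in the space $A^{q,q}_{(p/q)s}$ in which the uniqueness test is formulated---this is also why the parameter $q\in\,]0,p[$ appears in the hypothesis, calibrating the uniqueness class to the regularity one can extract in the limit.
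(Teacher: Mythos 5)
Your outline has two genuine gaps, and the second one is precisely the point where the paper's proof takes a structurally different (and necessary) route. First, the concentration step is not justified: given $\norm{f_n}_{A^{p,p}_s}=1$ and $\norm{f_n}_{L^p(\mi)}\to 0$, there need not exist affine renormalizations $(x_n,y_n)$ for which $g_n=(f_n)_{x_n,y_n}$ retains a uniform lower bound on its local mass near $i$; if $f_n$ spreads its unit mass over $n$ widely separated regions, every translate--dilate of $f_n$ converges locally uniformly to $0$, so the limit $g$ you extract by Montel may well vanish identically and no contradiction is reached. Second, and more fundamentally, the membership $g\in A^{q,q}_{(p/q)s}$ of the limit is exactly what cannot be obtained this way: a locally uniform limit of a bounded sequence in $A^{p,p}_s$ is only known to lie in the Bloch-type space $A^{\infty,\infty}_{s-1/p}$, and there is no mechanism by which the Carleson property of $\mi$ upgrades this to the \emph{smaller-exponent} space $A^{q,q}_{(p/q)s}$. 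Your remark that this is ``the delicate point'' is accurate, but no argument is supplied, and the difficulty is not merely technical.

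The paper resolves both issues by never running the compactness argument on normalized global elements of $A^{p,p}_s$. Instead it localizes first: for each $(\zeta,z)$ it considers the products $fB^{\vect{s'}}_{(\zeta,z)}$, which automatically belong to $A^{q,q}_{(p/q)s}$ because the kernel factor supplies the extra decay --- this is why the uniqueness hypothesis is calibrated to $A^{q,q}_{(p/q)s}$ and why $q<p$ appears. The vague-compactness/uniqueness argument (Lemma~\ref{lem:14}) is applied to these localized functions, and only at points where $\abs{f(\zeta,z)}$ is large relative to $\norm{fB^{\vect{s'}}_{(\zeta,z)}}_{A^{q,q}_{(p/q)s}}$; the complementary ``bad set'' is handled by a Schur-test estimate (Lemma~\ref{lem:11}), not by concentration. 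Finally the pointwise bounds are reassembled into $\norm{f}_{A^{p,p}_s}\meg C\norm{f}_{L^p(\mi)}$ via a second Schur-type bound (Lemma~\ref{lem:15}) on $L^{p/q}(\mi)$ with $p/q>1$, which is where the Carleson hypothesis on $\mi$ enters. I would encourage you to restructure your argument around localized test functions of the form $fB^{\vect{s'}}_{(\zeta,z)}$ rather than renormalized copies of $f$ itself.
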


Observe that the above sufficient condition is likely to be necessary
as well, \emph{in this case}, since the corresponding assertion on the
unit disc is true (cf.~\cite{Luecking5}).  

Nonetheless, since the above sufficient condition need not be easily managed, other sufficient conditions can be provided.

\begin{prop}\label{prop:20}
Take $p,s,\eps,C'>0$, and a compact neighbourhood $K$ of $i$ in
$\C_+$. Then, there is a compact neighbourhood $K'$ of $i$ in $\C_+$
such that, for every $\mi\in \cM_+(\C_+)$ such that 
\[
N(\mi)\coloneqq \sup_{x\in \R,y>0} y^{-1-p s}\mi(x+ y K')<\infty
\]
and such that
\[
\Hc^2\left(\Set{z\in x+y K\colon \mi(\Rea z+ (\Ima z) K')\Meg \eps (\Ima z)^{1+p s} N(\mi)  }\right)\Meg C' y^2
\]
for every $x\in \R$ and for every $y>0$, is a $p$-sampling measure for $A^{p,p}_s$.
\end{prop}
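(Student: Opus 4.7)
I plan to verify the hypotheses of Theorem~\ref{prop:27}, so that $\mu$ is automatically a $p$-sampling measure for $A^{p,p}_s(\C_+)$. This requires two things: $\mu$ is $p$-Carleson, and for every vague cluster point $\nu$ of the rescaled measures $\nu_{x,y}:=y^{ps+1}(x+y\,\cdot)_*\mu$, the support $\supp\nu$ is a set of uniqueness for some $A^{q,q}_{(p/q)s}$ with $q\in (0,p)$.

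The Carleson part is immediate from condition~(1): it says $\sup_{x,y}y^{-1-ps}\mu(x+yK')<\infty$, which by the Carleson characterization specialized to $p_1=q_1=p$ (so $p^*=q^*=\infty$ and $s^*=-1-ps$, cf.\ Theorem~\ref{main:2}) is equivalent to $\mu$ being $p$-Carleson for $A^{p,p}_s$.

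For the uniqueness-set part, the key observation is that condition~(2) is preserved under the rescaling $\mu\mapsto\nu_{x,y}$ and then passes to vague cluster points. Applying (2) to $\mu$ at the shifted base point $(x+ya,\,yb)$ and changing variables via $z\mapsto(z-x)/y$ gives, for every $(a,b)\in\R\times\R_+$ and every $(x,y)$,
\[
\Hc^2\bigl(\{z\in a+bK\colon\nu_{x,y}(\Rea z+(\Ima z)K')\geq\eps(\Ima z)^{1+ps}N(\mu)\}\bigr)\geq C'b^2.
\]
For a vague cluster point $\nu=\lim_k\nu_{x_{n_k},y_{n_k}}$, the corresponding sets $F_{n_k}\subseteq a+bK$ have $\Hc^2$-measure $\geq C'b^2$; since they sit in the common compact set $a+bK$, reverse Fatou yields $\Hc^2(\limsup_k F_{n_k})\geq C'b^2$. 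For any $z$ in this limsup, upper semicontinuity of $\mu'\mapsto\mu'(C)$ ($C$ compact) under vague convergence transfers the lower bound on $\nu_{n_k}(\Rea z+(\Ima z)K')$ to $\nu$. Thus the set $G_\nu:=\{z\colon\nu(\Rea z+(\Ima z)K')\geq\eps(\Ima z)^{1+ps}N(\mu)\}$ has relative $\Hc^2$-density bounded below in every Carleson box $a+bK$, which is exactly the dominant-set hypothesis of the Luecking-style characterization developed in Section~\ref{sec:6}.

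The choice of $K'$ is now made sufficiently small (in terms of $K,\eps,C'$) so that this dominance of $G_\nu$ forces $\supp\nu$ itself to be a uniqueness set for some $A^{q,q}_{(p/q)s}$: since $G_\nu$ is contained in a $K'$-thickening of $\supp\nu$, its density provides enough points of $\supp\nu$ in every region of $\C_+$ (up to a fixed hyperbolic scale) that any $A^{q,q}_{(p/q)s}$-function vanishing on $\supp\nu$ must vanish on a dominant set, and hence identically. Applying Theorem~\ref{prop:27} then yields the sampling conclusion. The main obstacle is this last step: translating the dominance of the slightly fattened set $G_\nu$ into a uniqueness property for $\supp\nu$ itself, which is precisely the quantitative role played by the freedom to choose $K'$ small compared to $K$.
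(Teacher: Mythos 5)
Your reduction to Theorem~\ref{prop:27} contains a genuine gap at the decisive step, and the justification you offer for it is not correct as stated. The preliminary parts are fine: the Carleson embedding does follow from $N(\mi)<\infty$ (Lemma~\ref{lem:1} plus Theorem~\ref{main:1}), and your transfer of condition (2) to the rescaled measures and then to vague cluster points via reverse Fatou and the inequality $\nu(C)\Meg\limsup_k\nu_{n_k}(C)$ for compact $C$ is sound. The problem is the claim that the resulting dominance of $G_\nu=\Set{z\colon \nu(\Rea z+(\Ima z)K')\Meg \eps(\Ima z)^{1+ps}N(\mi)}$ forces $\supp\nu$ to be a set of uniqueness because ``any function vanishing on $\supp\nu$ must vanish on a dominant set.'' The hypotheses of Proposition~\ref{prop:20} are satisfied by purely atomic measures supported on a separated net (take $\mi=\sum_j (\Ima z_j)^{1+ps}\delta_{z_j}$ with $\Set{z_j}$ meeting every box $x+yK'$), so $\supp\nu$ may well be a discrete, $\Hc^2$-null set; a holomorphic function vanishing there does not vanish on any set of positive measure, and the conclusion ``hence identically zero'' does not follow. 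What you actually know is only that $f$ has \emph{one zero} in each box $\Rea z+(\Ima z)K'$ for $z$ in a dominant set, and turning that into $f\equiv 0$ requires a quantitative oscillation estimate (if $f(w)=0$ with $w\in \Rea z+(\Ima z)K'$, then $\abs{f(z)}=\abs{f(z)-f(w)}\lesssim \operatorname{diam}(K')\cdot(\dashint \abs{f}^p\,\dd\nu_D)^{1/p}$ over a slightly larger box, which is then absorbed using the dominance of $G_\nu$ and the smallness of $K'$). You flag this as ``the main obstacle,'' but it is precisely the content of the proof, not a detail; on $\C_+$ one could alternatively invoke a Seip-type density theorem for zero sets, but no such result is available in the paper (nor on general Siegel domains).

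The paper does not go through Theorem~\ref{prop:27} at all. Proposition~\ref{prop:20} is the specialization to $\C_+$ of Theorem~\ref{prop:30} (whose hypotheses --- $N(\mi)=\norm{M_R(\mi)}_{L^{\infty,\infty}_{\vb+\vd-p\vs}}<\infty$ and $\nu_D(G_\mi\cap B((\zeta,z),R_0))\Meg C'$ --- are verbatim those of Proposition~\ref{prop:20}, with $x+yK'$ playing the role of $B(\,\cdot\,,R)$ and $R\meg R_1$ corresponding to $K'$ small), combined with Proposition~\ref{prop:23} for the forward inequality. Theorem~\ref{prop:30} is proved directly from the oscillation estimate of Proposition~\ref{prop:28} and the dominant-set characterization of Theorem~\ref{prop:31}, with no vague limits and no uniqueness sets. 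Completing your route would in effect require re-deriving Proposition~\ref{prop:28} inside the uniqueness-set verification, at which point the detour through Theorem~\ref{prop:27} buys nothing; I recommend arguing directly via Theorem~\ref{prop:30}.
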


The above result is a consequence of a characterization of dominant
subsets of $\C_+$ (cf.~Theorem~\ref{prop:31} below) which is very
close to what one obtains applying Proposition~\ref{prop:20} to the
measure $\chi_G (\Ima\,\cdot\,)^{p s-1}\cdot \Hc^2$ and to some
$\eps\in ]0,1[$, where $G$ is a measurable subset of $\C_+$. 
Here and in what follows, we denote by $\Hc^m$ the $m$-dimensional Hausdorff
measure.     

\begin{prop}\label{prop:21}
A subset $G$ of $\C_+$ is dominant if and only if there are a constant $C>0$ and a compact neighbourhood $K$ of $i$ such that
\[
\Hc^2(G\cap (x+y K))\Meg C y^2
\]
for every $x\in \R$ and for every $y>0$.
\end{prop}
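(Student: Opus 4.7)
The plan is to argue both directions separately, using explicit test-function computations for the necessity and a Luecking-type reverse-H\"older inequality for the sufficiency; this result is the case $D = \C_+$ of Theorem~\ref{prop:31} mentioned in the text.

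For the necessity direction, I would test the dominance inequality on the family
\[
f_w(z) := (z - \overline{w})^{-\beta}, \qquad w = x + iy \in \C_+,
\]
for $\beta > 0$ chosen so that $p\beta - ps - 1 > 0$, ensuring $f_w \in A^{p,p}_s$ for some (hence every) choice of $p, s > 0$. The affine invariance of $A^{p,p}_s$ under $z \mapsto x + yz$ gives $\norm{f_w}_{A^{p,p}_s}^p = c_0\, y^{ps + 1 - p\beta}$ with $c_0 = \norm{f_i}_{A^{p,p}_s}^p$. By choosing a sufficiently large compact neighbourhood $K$ of $i$ in $\C_+$, the same rescaling argument together with the integrability of $\abs{f_i(z)}^p (\Ima z)^{ps-1}$ at infinity ensures
\[
\int_{\C_+ \setminus (x + yK)} \abs{f_w(z)}^p (\Ima z)^{ps-1}\, dz \leq \tfrac{1}{2C}\norm{f_w}_{A^{p,p}_s}^p,
\]
where $C$ is the dominance constant. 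Combined with the reverse Carleson inequality, this yields
\[
\int_{G \cap (x + yK)} \abs{f_w(z)}^p (\Ima z)^{ps-1}\, dz \geq \tfrac{1}{2}\norm{f_w}_{A^{p,p}_s}^p \gtrsim y^{ps + 1 - p\beta}.
\]
Since $\abs{f_w(z)}^p (\Ima z)^{ps-1} \lesssim y^{ps - 1 - p\beta}$ uniformly on $x + yK$, this forces $\Hc^2(G \cap (x + yK)) \gtrsim y^2$, yielding the stated geometric density.

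For the sufficiency direction, I would establish the reverse Carleson inequality
\[
\norm{f}_{A^{p,p}_s}^p \leq C' \int_G \abs{f(z)}^p (\Ima z)^{ps - 1}\, dz
\]
for every $f \in A^{p,p}_s$. The key ingredient is a Luecking-type reverse-H\"older inequality: for each compact neighbourhood $K$ of $i$ and each $\delta > 0$, there exists $c > 0$ such that for every box $B = x + yK$, every measurable $E \subseteq B$ with $\Hc^2(E) \geq \delta\, y^2$, and every $f$ holomorphic on a fixed dilate of $B$, one has $\int_B \abs{f(z)}^p\, dxdy \leq c \int_E \abs{f(z)}^p\, dxdy$. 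Applying this with $E = G \cap B$ on each box in a Whitney-type cover of $\C_+$ with bounded overlap, and exploiting that the weight $(\Ima z)^{ps-1}$ is essentially constant on each box, summation of the local estimates yields the required global bound.

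The main obstacle is the Luecking-type reverse-H\"older estimate in the sufficiency direction; once that is in hand, the test-function computation for necessity and the Whitney covering argument for sufficiency are routine.
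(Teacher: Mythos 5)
Your necessity argument is essentially the paper's: testing the reverse Carleson inequality on the kernels $f_w(z)=(z-\overline w)^{-\beta}$ is exactly what Proposition~\ref{prop:32} does with $B^{\vect{s'}}_{(\zeta,z)}$ (your handling of the tail, via the integrability of $\abs{f_i}^p(\Ima z)^{ps-1}$ and scaling, is a harmless variant of the paper's use of the forward Carleson bound on $D\setminus B((\zeta,z),R')$), and that part is correct.

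The sufficiency direction, however, contains a genuine gap: the ``Luecking-type reverse-H\"older inequality'' you invoke is false as stated. You claim that for fixed $K$, $\delta$, $p$ there is $c>0$ such that $\int_B\abs{f}^p\meg c\int_E\abs{f}^p$ for \emph{every} measurable $E\subseteq B$ with $\Hc^2(E)\Meg\delta\,\Hc^2(B)$ and \emph{every} $f$ holomorphic on a fixed dilate of $B$. Take $B$ to be the unit disc, $E=\Set{\abs{z}<1/2}$ (so $\Hc^2(E)=\tfrac14\Hc^2(B)$), and $f_j(z)=z^j$, which is entire; then $\int_B\abs{f_j}^p\,/\int_E\abs{f_j}^p=2^{jp+2}\to\infty$, so no uniform constant exists. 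Holomorphy on a dilate gives a sub-mean-value \emph{upper} bound for $\sup_B\abs{f}$, but no lower bound on subsets: the mass of $f$ can concentrate near $\partial B$ and miss $E$ entirely. This is precisely the difficulty Luecking's argument is designed to circumvent, and it is how the paper proceeds in the proof of Theorem~\ref{prop:31}: one first shows (Lemma~\ref{lem:12}, a \emph{global} Fubini argument, not a ball-by-ball estimate) that the set $A_{f,\eps}$ of points where $\abs{f(\zeta,z)}$ is much smaller than its local $L^p$ mean carries an arbitrarily small fraction of $\norm{f}^p_{A^{p,p}_\vs}$; then, for centres \emph{outside} $A_{f,\eps}$, a normal-families compactness argument (Lemma~\ref{lem:13}) shows that $\abs{f}>\lambda\abs{f(\zeta,z)}$ on all but a $\delta$-fraction of $B((\zeta,z),R)$, hence on a definite portion of $G\cap B((\zeta,z),R)$, which yields $\int_{G\cap B((\zeta,z),R)}\abs{f}^p\,\dd\nu_D\gtrsim\abs{f(\zeta,z)}^p$; a final Fubini step sums these local bounds. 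The functions $z^j$ above are exactly those for which the centre lies in $A_{f,\eps}$, which is why the preliminary removal of that set is indispensable and why a purely local covering argument cannot work.
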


Finally, the following result provides a sufficient condition for a
(not necessarily Carleson) measure to be reverse Carleson  (see
Theorem~\ref{prop:29}). 

\begin{teo}  
Take $p_1,q_1 ,p\in ]0,\infty]$ with $p<\infty$, and $s>0$ or $s\Meg0$ if $q_1=\infty$. Define $p^*\coloneqq
p_1/(p-p_1)_+$ and $ q^*\coloneqq q_1/(p-q_1)_+$. 

Then, there are $\delta>0$ and a constant $C>0$ such that every  $\mi\in\cM_+(D)$ such that
\[
\norm*{  \bigg( \frac{2^{k p[s+1/p_1]}}{\mi([\delta 2^k j,\delta 2^k (j+1) ]+i [\delta 2^k,\delta2^{k+1}]  )} \bigg)_{j,k}
}_{\ell^{p^*, q^*}(\Z,\Z)}  <\infty,
\]
is  a $p$-reverse Carleson for $A^{p_1,q_1}_s$.    
\end{teo}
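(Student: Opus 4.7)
The strategy is to discretize the $A^{p_1,q_1}_s$-norm over the dyadic boxes $Q_{j,k}:=[\delta 2^k j,\delta 2^k(j+1)]+i[\delta 2^k,\delta 2^{k+1}]$ and apply Hölder's inequality twice, once in $j$ and once in $k$, with exponents matching $p^*$ and $q^*$. Let $Q_{j,k}^*$ denote a slight enlargement of $Q_{j,k}$ staying in $\C_+$, let $S_k^*$ denote a slight enlargement of the dyadic strip $\{\delta 2^k<\Ima z<\delta 2^{k+1}\}$, and fix $\delta>0$ small, to be chosen later. Using subharmonicity of $|f|^{p_1}$ and the fact that $y^{q_1 s}$ is essentially constant on each dyadic strip, I would first obtain the discretization
\[
\|f\|_{A^{p_1,q_1}_s}^{q_1} \leq C \sum_k (\delta 2^k)^{q_1(s+1/p_1)} \bigg(\sum_j M_{j,k}^{p_1}\bigg)^{q_1/p_1},
\]
together with the crucial local reverse estimate
\[
M_{j,k}^p\,\mu(Q_{j,k}) \leq C\int_{Q_{j,k}^*}|f|^p\,d\mu,
\]
for suitable local quantities $M_{j,k}=M_{j,k}(f)$ to be discussed below.

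Next, factor $M_{j,k}^{p_1}=(M_{j,k}^p\mu(Q_{j,k}))^{p_1/p}\mu(Q_{j,k})^{-p_1/p}$ and apply Hölder in $j$ with dual exponents $p/p_1$ and $p/(p-p_1)$ (when $p>p_1$; use $\ell^\infty$ otherwise). Together with the local estimate and bounded overlap of the $Q_{j,k}^*$, this yields
\[
\sum_j M_{j,k}^{p_1} \leq C \bigg(\int_{S_k^*}|f|^p\,d\mu\bigg)^{p_1/p}\bigg(\sum_j\mu(Q_{j,k})^{-p^*}\bigg)^{(p-p_1)/p}.
\]
Raising this to the power $q_1/p_1$, multiplying by $(\delta 2^k)^{q_1(s+1/p_1)}$, and using the algebraic identity $p^*(p-p_1)=p_1$, the weight $(\delta 2^k)^{q_1(s+1/p_1)}=((\delta 2^k)^{p(s+1/p_1)})^{q_1/p}$ can be absorbed so that the second factor becomes $\big(\sum_j b_{j,k}^{p^*}\big)^{q_1/(pp^*)}$, with $b_{j,k}=(\delta 2^k)^{p(s+1/p_1)}/\mu(Q_{j,k})$.

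Summing over $k$ and applying Hölder once more in $k$ with dual exponents $p/q_1$ and $p/(p-q_1)$ (or $\ell^\infty$ if $p\leq q_1$), and using $q^*(p-q_1)=q_1$, produces
\[
\|f\|_{A^{p_1,q_1}_s}^{q_1} \leq C \|f\|_{L^p(\mu)}^{q_1}\,\|b\|_{\ell^{p^*,q^*}}^{q_1/p},
\]
which is the desired reverse Carleson inequality with constant $C\|b\|_{\ell^{p^*,q^*}}^{1/p}$, finite by hypothesis. The cases $p_1=\infty$ and $q_1=\infty$ correspond to $p^*=\infty$ or $q^*=\infty$ respectively and are handled by replacing the Hölder steps with $\ell^\infty$--$\ell^1$ inequalities.

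The main obstacle is the local estimate $M_{j,k}^p\mu(Q_{j,k})\leq C\int_{Q_{j,k}^*}|f|^p\,d\mu$: the quantities $M_{j,k}$ must simultaneously dominate pointwise values of $|f|$ on $Q_{j,k}$ (for the Bergman-norm discretization to be valid, via subharmonicity) and be dominated by the $\mu$-average of $|f|^p$ on $Q_{j,k}^*$. The naive choice $M_{j,k}=\sup_{Q_{j,k}}|f|$ fails the second requirement (e.g.\ if $\mu$ concentrates near a zero of $f$), whereas a pure $\mu$-average fails the first. A balanced choice, such as the $\mu$-intermediate-value point $z_{j,k}^\mu\in Q_{j,k}$ with $|f(z_{j,k}^\mu)|^p=\mu(Q_{j,k})^{-1}\int_{Q_{j,k}}|f|^p\,d\mu$, combined with a careful subharmonicity argument and a sufficiently small choice of $\delta$, is needed to reconcile the two sides. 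The boundary case $q_1=\infty$, $s=0$ (the Hardy space) would additionally use tangential limits on the \v{S}ilov boundary $bD$ and the $\Nc$-group structure recalled in Section~\ref{sec:2}.
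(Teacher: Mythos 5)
Your overall architecture coincides with the paper's: discretize the $A^{p_1,q_1}_s$-norm over a lattice of boxes, prove a local reverse estimate on each box, and combine with the hypothesis by H\"older in $j$ and then in $k$. Your H\"older bookkeeping is correct (the identities $p^*(p-p_1)=p_1$ and $q^*(p-q_1)=q_1$ do make the weight $2^{kp[s+1/p_1]}$ come out right, and the paper performs the same computation in a single mixed-norm H\"older step). The gap is exactly where you flag it, and your proposed resolution does not close it. The clean choice is $M_{j,k}=\min_{\overline{Q^*_{j,k}}}|f|$: then $M_{j,k}^p\,\mi(Q_{j,k})\le\int_{Q_{j,k}}|f|^p\,\dd\mi$ is immediate (no intermediate-value point, no overlap argument), and \emph{all} of the difficulty is concentrated in the lower discretization
\[
\norm{f}_{A^{p_1,q_1}_s}\meg C\,\delta^{1/p_1+1/q_1}\,\Big\|\big((\delta 2^k)^{s-1/p_1}\min_{\overline{Q^*_{j,k}}}|f|\big)_{j,k}\Big\|_{\ell^{p_1,q_1}},
\]
which is~\cite[Theorem 3.23]{CalziPeloso} and is what the paper invokes. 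Your intermediate-value point $z^\mi_{j,k}$ only satisfies $|f(z^\mi_{j,k})|\Meg\min_{Q_{j,k}}|f|$, so proving the first requirement for it is no easier than proving it for the minimum; you have merely renamed the problem.

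Moreover, "subharmonicity of $|f|^{p_1}$" cannot prove the minimum (or intermediate-value) version: subharmonicity bounds point values \emph{above} by averages and yields the discretization with suprema, not with minima ($f$ may vanish inside a box while contributing substantially to the norm). The actual mechanism is an oscillation estimate of the form $\sup_{Q^*_{j,k}}|f|-\min_{Q^*_{j,k}}|f|\meg C\delta\,(\delta2^k)^{1/p_1-s}\norm{f}_{A^{\infty,\infty}_{s-1/p_1}}$, followed by absorption of the error term for $\delta$ small. This is why the a priori hypothesis $f\in A^{\infty,\infty}_{\vs-(\vb+\vd)/p_1}(D)$ appears explicitly in the statement of Theorem~\ref{prop:29} and why the remark after it stresses that this hypothesis can be relaxed but not eliminated (cf.~\cite{Luecking}); your proposal makes no mention of this a priori class, and without it the asserted discretization is false as stated for arbitrary $f\in\Hol(D)$. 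Finally, the special treatment you suggest for the Hardy case $q_1=\infty$, $s=0$ via boundary values is unnecessary: the lattice discretization of~\cite[Theorem 3.23]{CalziPeloso} already covers $q_1=\infty$, $\vs\in\R_+^r$ uniformly.
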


\section{Homogeneous Siegel domains of Type  II}
\label{sec:2}

For the general theory of homogeneous Siegel domains of Type II and
further details we refer the reader  to~\cite{Vinberg,Gindikin,CalziPeloso}.  
Throughout the paper we adopt the notation of~\cite{CalziPeloso}.

\subsection{The structure of homogeneous cones and a description of $D$}

\begin{deff}\label{def:A}
Take $r\in\N$ and let $A=\bigoplus_{j,k1}^r A_{j,k}$ be a graded algebra
endowed with a linear involution~$^*$ such that the following hold:  
\begin{itemize}
\itb $A_{j,k}A_{k',\ell}\subseteq A_{j,\ell}$ if $k=k'$ and
$A_{j,k}A_{k',\ell}=\Set{0}$ if $k\neq k'$;
\itb $A_0=\Set{0}$ and $\dim A_{j,j}=1$ for every $j=1,\dots,r$;
\itb $A_0=\Set{0}$ and $\dim A_{j,j}=1$ for every $j=1,\dots,r$;
\itb for every $j=1,\dots, r$ there exists $e_j\in A_{j,j}$ such
that the left multiplication by $e_j$ is the identity on
$A_{j,k}$ and the  right multiplication by $e_j$ is the
identity on $A_{k,j}$, for every $k=1,\dots, r$; 
\itb if we define $\tr a=\sum_{j=1}^r \langle e_j', a\rangle$ for
every $a\in A$, where $e'_j$ is the unique graded linear functional on
$A$ which takes the value $1$ at $e_j$ ($j=1,\dots,r$), then the
mapping $(a,b)\mapsto \tr(a b)$ is symmetric; 
\itb for every $a,b,c\in A$, $\tr(a(bc))=\tr((ab)c)$;
\itb  $A^*_{j,k}=A_{k,j}$ for every $j,k=1,\dots,r$ and $(ab)^*=b^* a^*$ for every $a,b\in A$;
\itb the symmetric bilinear mapping $(a,b)\mapsto \tr(a^*b)$ is positive and non-degenerate;
\itb setting $T\coloneqq\bigoplus_{j\meg k} A_{j,k}$, one has $t(u w)=(t u)w$ and $t( u u^*)=(t u) u^*$ for every $t,u,w\in T$.
\end{itemize}
Then, we say that $A$ is a $T$-algebra. 
\end{deff}

\begin{deff}
Given a $T$-algebra $A$, define 
\begin{equation}\label{def:T,H,C(A)}
\begin{aligned}
T_+&\coloneqq \Set{a\in T\colon \langle e_j',a\rangle>0 \quad \forall j=1,\dots, r }, &
H(A) &\coloneqq\Set{a\in A\colon a=a^*},\\
C(A)&\coloneqq \Set{t t^*\colon t\in T_+}, &
C'(A)&\coloneqq \Set{t^* t\colon t\in T_+}.
\end{aligned}
\end{equation}
The cones $C(A)$ and $C'(A)$ are said to have rank $r$.\index{Rank!of a homogeneous cone}
\end{deff}

The following theorem was proved in~\cite{Vinberg}.

\begin{teo}\label{teo:Vinberg}
Let $A$ be a $T$-algebra. Then, the following hold: 
\begin{itemize}
\item[(i)] $T_+$, endowed with the product induced by $A$, is a Lie group;

\item[(ii)] $C(A)$ and $C'(A)$ are   homogeneous   cones, and are dual to
one another with respect to the scalar product on  $H(A)$ ;\footnote{  In other words, $C(A)=\Set{x\in H(A)\colon \forall \lambda\in \overline{C'(A)}\setminus \Set{0}\quad \tr(\lambda x) >0 }$.}
\item[(iii)] $(t x) t^*= t(x t^*)$ and $(t^* x) t = t^* (x t)$ for every $t\in T_+$ and for every $x\in   H(A)$ ;
\item[(iv)] the mappings 
\[
(t,x)\mapsto t x t^* \qquad \text{and}\qquad (t,x)\mapsto t^* x t
\]
are simply transitive left and right actions of $T_+$ on $C(A)$ and
$C'(A)$, respectively, which are dual to one another with respect to
the scalar product on   $H(A)$ . 
\end{itemize}

In addition, if $\Omega$ is a homogeneous cone in a finite-dimensional  vector space  $F$ over $\R$, then, there exist a $T$-algebra $A'$  and an isomorphism $\Psi\colon H(A')\to F'$ such that $\Psi(C(A'))=\Omega$.
\end{teo}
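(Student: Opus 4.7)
The plan naturally splits into the four direct assertions (i)--(iv), each of which unpacks the $T$-algebra axioms, and the converse statement, which is of a different and much deeper nature.

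For (i), the subspace $T=\bigoplus_{j\meg k}A_{j,k}$ is closed under multiplication by the grading conditions and associative by the axiom $t(uw)=(tu)w$. The set $T_+$ is the open subset of $T$ where the diagonal coordinates $\langle e_j',\cdot\rangle$ are positive; it is a subsemigroup because on each one-dimensional $A_{j,j}$ the product reduces to scalar multiplication. Each $t\in T_+$ admits a two-sided inverse in $T_+$ obtained by solving $tu=e\coloneqq\sum_j e_j$ recursively along the block-triangular structure of the grading, the diagonal positivity of $t$ propagating to $u$. Hence $T_+$ is a Lie group.

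Part (iii) is a direct computation from the partial associativity axioms $t(uw)=(tu)w$ and $t(uu^*)=(tu)u^*$ for $t,u,w\in T$, combined with $(ab)^*=b^*a^*$. I would first verify $(tx)t^*=t(xt^*)$ on the spanning elements $x=e_j$ (where right multiplication by $e_j$ is the identity on $A_{k,j}$) and $x=uu^*$ with $u\in T$, and then extend by linearity; the second identity follows by applying the involution.

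For (iv), consider $\pi\colon T_+\to H(A)$, $\pi(t)\coloneqq tt^*$; by definition $\pi(T_+)=C(A)$. Using (iii), the equation $txt^*=yy^*$ with $x=uu^*$ reduces to $(tu)(tu)^*=yy^*$, so simple transitivity of the action $(t,x)\mapsto txt^*$ on $C(A)$ reduces to injectivity of $\pi$. To establish injectivity, suppose $\pi(u)=e$ with $u\in T_+$: a descending induction on $j=r,\dots,1$, exploiting the block-triangular grading, the one-dimensionality of $A_{j,j}$, and the positivity of $(a,b)\mapsto\tr(a^*b)$, forces $u_{jk}=0$ for $k>j$ and $u_{jj}=e_j$, so $u=e$. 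The right action on $C'(A)$ is treated symmetrically. With (iv) in hand, assertion (ii) follows: $C(A)$ is open and homogeneous by simple transitivity, and duality with $C'(A)$ under the scalar product on $H(A)$ is deduced from the identity $\tr((txt^*)y)=\tr(x(t^*yt))$ — a consequence of trace associativity and symmetry — together with a positivity check at the base point.

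The converse statement is the main obstacle and requires an entirely different, Lie-theoretic approach. Given a homogeneous cone $\Omega\subseteq F$, one chooses a base point $e\in\Omega$ and selects a maximal split solvable (``triangular'') subgroup of $G(\Omega)$ acting simply transitively on $\Omega$. The grading $\bigoplus_{j\meg k}A_{j,k}$ then arises from the root decomposition of its Lie algebra with respect to a Cartan subalgebra, and the multiplication, involution, and one-dimensionality of the $A_{j,j}$ come from the infinitesimal action together with a canonical scalar product on $F$. The verification that this construction yields a genuine $T$-algebra $A'$ with $\Psi(C(A'))=\Omega$ for a suitable isomorphism $\Psi$ is where the real technical work lies.
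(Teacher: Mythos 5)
The paper does not prove this statement at all: it is stated as background and attributed, in the sentence immediately preceding it, to Vinberg's foundational paper (\cite{Vinberg}), so there is no in-paper argument to compare yours against. Judged on its own terms, your outline follows the classical route, but it has concrete gaps. The most serious one in the direct part is the repeated use of associativity identities that are \emph{not} among the $T$-algebra axioms. The algebra $A$ is non-associative; the axioms only grant $t(uw)=(tu)w$ and $t(uu^*)=(tu)u^*$ for $t,u,w\in T$. Your reduction $t(uu^*)t^*=((tu)u^*)t^*=(tu)(u^*t^*)=(tu)(tu)^*$ needs the middle equality, which is an associativity statement involving two factors from $T^*$ and is not an axiom; likewise the multiplicativity $(st)x(st)^*=s(txt^*)s^*$ needed to reduce injectivity of the orbit map to injectivity over the base point. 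Deriving these identities from the listed axioms is precisely the content of several lemmas in Vinberg's paper and cannot be waved through. Two further gaps: the claim that $H(A)$ is spanned by the $e_j$ and elements $uu^*$ with $u\in T$ is asserted without justification (polarization of $t(uu^*)=(tu)u^*$ is the standard fix, but you must say why $\{uw^*+wu^*\colon u,w\in T\}$ spans $H(A)$); and for the duality in (ii), your ``positivity check at the base point'' only yields the inclusion $C(A)\subseteq C'(A)^\vee$ --- the reverse inclusion, i.e.\ that the dual cone is no larger, is the nontrivial half and requires a separate argument.

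For the converse (every homogeneous cone arises from a $T$-algebra), you correctly identify the strategy --- pass to a maximal triangular subgroup of $G(\Omega)$ acting simply transitively, read off the grading from the root decomposition, and reconstruct the product and involution from the infinitesimal action --- but you explicitly defer ``the real technical work,'' which is the bulk of Vinberg's paper (existence of such a subgroup, one-dimensionality of the root spaces $A_{j,j}$, verification of all nine axioms, and the identification $\Psi(C(A'))=\Omega$). As a proof proposal this part is a description of a programme rather than an argument. Since the paper itself simply cites \cite{Vinberg}, the appropriate resolution here is either to do the same or to carry out these derivations in full; as written, the proposal does not constitute a proof.
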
 

By the previous results, given a homogeneous cone $\Omega$, we may select a  subgroup $T_+$ of $G(\Omega)$ which acts simply transitively on $\Omega$ and,  by transposition, on the dual con $\Omega'$.  
In order to avoid some notational inconveniences, we shall write $t\cdot h$
instead of $t(h)$ or $t h$, and   $\lambda\cdot t$ instead of $\lambda
\circ t$ or $\lambda t$, for every $t\in T_+$, for every $h\in
\Omega$, and for every $\lambda\in \Omega'$. 
Then, the following hold (cf., e.g.,~\cite[Lemma~2.9]{CalziPeloso}):
\begin{itemize}
\itb for every $t\in T_+$ there exists $g\in GL(E)$ such that
$t\cdot\Phi = \Phi (g\times g)$;\footnote{This follows from the
requirement that $D$ be homogeneous (for a suitable choice of
$T_+$).} 
\itb for every $\vs\in\C^r$ the mapping $\Delta^\vs\colon T_+\to \C^*$ defined  by
$\Delta^\vs (t) \coloneqq\prod_{j=1}^r \langle e_j', t\rangle^{2 s_j}$  is a group
homomorphism;
\itb there is $\vb\in \R^r$ such that
$\Delta^{-\vb}(t)=\det_{\R}(g)=\abs{\det_\C(g)}^2$ for every $t\in T_+$ and for every $g\in GL(E)$ such that $t\cdot \Phi=\Phi(g\times g)$.
\end{itemize}
We recall that $D$ is a tube domain, that is, $E=\Set{0}$, if and only
if $\vb=\vect 0$, cf.~\cite[Remark 2.13]{CalziPeloso}.  

\subsection{The generalized  power and Gamma functions}
Using the characters $\Delta^\vs$ we define the {\em
generalized power functions} $\Delta_\Omega^\vs$ and
$\Delta_{\Omega'}^\vs$ on $\Omega$ and $\Omega'$,  respectively.

\begin{deff}
Fix base points $e_\Omega \in \Omega$ and  $e_{\Omega'} \in \Omega'$, 
and  define the {\em  generalized power functions} $\Delta_\Omega^\vs$
and $\Delta_{\Omega'}^\vs$, for $\vs\in\C^r$, by 
\[
\Delta_\Omega^\vs (h) = \Delta^\vs(t) \qquad \text{and} \qquad \Delta_\Omega^\vs (\lambda) = \Delta^\vs(t')
\]
where $t,t'$ are the \emph{unique} elements of $T_+$ such that
$h=t\cdot e_\Omega$ and $\lambda=e_\Omega\cdot t'$, respectively.  
We also set $m_{j,k}\coloneqq\dim A_{j,k}$ and define  
\begin{equation}\label{def:m,m'}
\vm\coloneqq\Big(\sum_{k>j} m_{j,k}\Big)_{j=1,\dots,r},\qquad \vm' \coloneqq\Big(\sum_{k<j} m_{j,k}\Big)_{j=1,\dots,r} .
\end{equation}
The generalized Gamma functions on $\Omega$ and $\Omega'$ are defined for
$\Rea\vs\in \frac{1}{2}{\vect{m}}+(\R_+^*)^r$ as
\[
\Gamma_\Omega(\vs)  \coloneqq\int_{\Omega} \ee^{-\langle
e_{\Omega'},h\rangle}\Delta^{\vs}_\Omega(h)\,\dd \nu_\Omega(h)=
c  \prod_{j=1}^r \Gamma\left( s_j-\frac{m_j}{2} \right) 
\]
and, for $\Rea\vs\in \frac{1}{2}\vect{m}'+(\R_+^*)^r$,
\[
\Gamma_{\Omega'}(\vs)\coloneqq \int_{\Omega'} \ee^{-\langle \lambda,e_{\Omega}\rangle}\Delta^{\vs}_{\Omega'}(\lambda)\,\dd\nu_{\Omega'}(\lambda) = c  \prod_{j=1}^r \Gamma\left( s_j-\frac{m'_j}{2} \right),
\]
where $c>0$ is a suitable constant.

Furthermore, we set $  \vd\coloneqq -\left(\vect{1}_r+\half
\vect{m}+\half \vect{m}'\right)$  and define  
\begin{equation}\label{def:d}
\nu_\Omega\coloneqq  \Delta_\Omega^{\vect{d}}\cdot \Hc^m \qquad\text{and} \qquad \nu_{\Omega'} \coloneqq \Delta^{\vect{d}}_{\Omega'}\cdot \Hc^m ,
\end{equation}
where $\Hc^m$ denotes the $m$-dimensional Hausdorff measure on
$\Omega$ and $\Omega'$, respectively.  
\end{deff}

We point out  that  $\nu_\Omega$ and $\nu_{\Omega'}$ are two
$G(\Omega)$-invariant measures on $\Omega$ and $\Omega'$,
respectively:   cf.~\cite[Lemma~2.18]{CalziPeloso} for
$T_+$-invariance, and argue as in the proof of~\cite[Proposition
I.3.1]{FarautKoranyi} for $G(\Omega)$-invariance.

\begin{deff}
We denote by $(I^{\vect s}_\Omega)_{\vs\in \C^r}$ the unique holomorphic family of tempered distributions on $F$ such that $I^{\vect s}_\Omega= \frac{1}{\Gamma_\Omega(\vs)}\Delta^{\vect s}_\Omega \cdot \nu_\Omega$ for $\Rea\vs\in \frac 1 2 \vect m+(\R_+^*)^r$ (cf.~\cite[Lemma 2.26, Definition 2.27, and Proposition 2.28]{CalziPeloso}). We call `Riemann--Liouville operators' the operators of convolution by the $I^{\vs}_\Omega$.
\end{deff}

\subsection{Lattices on $D$}

We shall make extensive use of the notion of  a  lattice on the domain
$D$.
We first introduce some Riemannian metrics on $\Omega, \Omega'$, and $D$.

\begin{deff}
We endow $D$ with the Bergman metric, that is, with the (complete K\"ahler) metric $\vect k$ defined by
\[
\vect k_{(\zeta,z)} v w\coloneqq \partial_v \overline{\partial_w} \log (\Delta^{\vect b+2 \vect d}\circ \rho)(\zeta,z)
\]
for every $(\zeta,z)\in D$ and for every $v,w\in E\times F_\C$ (cf.~\cite[\S 2.5]{CalziPeloso}). We denote by $B((\zeta,z),R)$ the corresponding open ball of centre $(\zeta,z)$ and radius $R$. 
We denote by $\nu_D$ the corresponding invariant measure $(\Delta^{\vect b+2\vect d}\circ \rho) \cdot \Hc^{2 n+2m}$ on $D$ (cf., e.g.,~\cite[Proposition 2.44]{CalziPeloso}).

We endow $\Omega$ with the quotient metric induced by the submersion $\rho\colon D\to \Omega$, and $\Omega'$ with the Riemannian metric induced by the correspondence $\Omega\ni t\cdot e_\Omega \mapsto e_{\Omega'}\cdot t\in \Omega'$ (cf.~\cite[Definition 2.45 and Lemma 2.46]{CalziPeloso}). We denote by $B_\Omega(h,R)$ and $B_{\Omega'}(\lambda,R)$ the corresponding open balls of centre $h$ and $\lambda$, respectively, and radius $R$. 
\end{deff}

\begin{deff}
Given $\delta>0$ and   $R>1$,
we say that a family $(h_{k})_{k\in K}$ of elements of $\Omega$  is a
$(\delta,R)$-lattice if the following hold:
\begin{itemize}
\itb the balls $B_\Omega(h_k,\delta)$ are pairwise disjoint;
\itb the balls  $B_\Omega(h_k,R\delta)$ cover $D$.
\end{itemize}
We define $(\delta,R)$-lattices on $\Omega'$ in an analogous fashion.  

Furthermore, we say that a family
$(\zeta_{j,k},z_{j,k})_{j\in J, k\in  K}$ of elements of $D$ is a $(\delta,R)$-lattice if the following
hold: 
\begin{itemize}
\itb  there is a $(\delta,R)$-lattice $(h_k)_{k\in K}$   on $\Omega$ such that
$h_k \coloneqq \rho(\zeta_{j,k},z_{j,k})$  for every  $j\in J$ and every $k\in K$; 
\itb the balls $B((\zeta_{j,k},z_{j,k}),\delta)$ are pairwise disjoint;
\itb the balls  $B((\zeta_{j,k},z_{j,k}),R\delta)$ cover $D$.
\end{itemize}
\end{deff}

The following  result  is~\cite[Lemma~2.55]{CalziPeloso}, and  guarantees the
existence of lattices as above on $D$.
\begin{lem}\label{lem:32}
Take $\delta>0$. Then, there is a $(\delta,4)$-lattice on $D$. 
\end{lem}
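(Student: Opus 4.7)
My plan is to follow the classical construction of a maximal separated net, carried out in two stages so as to respect the fibration $\rho\colon D\to\Omega$. I would first produce a $(\delta,2)$-lattice $(h_k)_{k\in K}$ on $\Omega$, then, for each $k\in K$, a $(\delta,2)$-lattice $((\zeta_{j,k},z_{j,k}))_{j\in J_k}$ on the fibre $F_k\coloneqq\rho^{-1}(h_k)$ with respect to the restriction of the Bergman distance $d_D$, and finally assemble the two pieces into a single family on $D$.

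Both intermediate lattices come from the same Zorn's-lemma scheme: choose a maximal family of points pairwise at the relevant distance at least $2\delta$, and observe that maximality forces every ambient point to lie strictly within $2\delta$ of some lattice point. For the combined family on $D$, disjointness of the open $d_D$-balls of radius $\delta$ is clear within a single fibre, and for $k\neq k'$ follows because $\rho$ is $1$-Lipschitz, so a common point would project into the empty intersection $B_\Omega(h_k,\delta)\cap B_\Omega(h_{k'},\delta)$. For the covering by $d_D$-balls of radius $4\delta$: given $(\zeta,z)\in D$ with $h\coloneqq\rho(\zeta,z)$, I would pick $k$ with $d_\Omega(h,h_k)<2\delta$, produce an intermediate point $(\zeta'',z'')\in F_k$ with $d_D((\zeta,z),(\zeta'',z''))<2\delta$, and then apply the fibre-lattice property and the triangle inequality to obtain some $(\zeta_{j,k},z_{j,k})$ within $d_D$-distance $4\delta$ of $(\zeta,z)$.

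The step I expect to be the main obstacle is the existence of this intermediate point $(\zeta'',z'')$, i.e.\ the identity
\[
d_D((\zeta,z),F_k)=d_\Omega(\rho(\zeta,z),h_k)
\]
together with the fact that the infimum is attained. The inequality $\geq$ is again the $1$-Lipschitz property of $\rho$; the reverse inequality uses that $\rho$ is a Riemannian submersion, so a minimising geodesic in $\Omega$ from $\rho(\zeta,z)$ to $h_k$ lifts horizontally to a curve in $D$ of the same length, issuing from $(\zeta,z)$ and landing in $F_k$. Existence of such a minimising geodesic follows from completeness of $\Omega$ as a homogeneous Riemannian manifold, and attainment of the infimum from completeness of $(D,d_D)$ (also homogeneous) together with closedness of $F_k$, via Hopf--Rinow. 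Once this identity is in hand, the remainder of the proof is routine triangle-inequality bookkeeping combined with the two maximal-family constructions.
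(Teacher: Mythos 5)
The paper offers no proof of this lemma at all: it simply cites \cite[Lemma~2.55]{CalziPeloso}, so there is no internal argument to compare yours against. Your two-stage construction --- a maximal $2\delta$-separated net on $\Omega$, then one on each fibre $\rho^{-1}(h_k)$, glued by means of the Riemannian-submersion identity $d_D\big((\zeta,z),\rho^{-1}(h_k)\big)=d_\Omega\big(\rho(\zeta,z),h_k\big)$ (where only the inequality $<2\delta$, not attainment of the infimum, is actually needed) --- is the standard argument and is correct; note also that a $(\delta,2)$-lattice on $\Omega$ is in particular the $(\delta,4)$-lattice required by the definition. The one cosmetic point worth fixing is that your fibre index sets $J_k$ depend on $k$, whereas the definition of a lattice on $D$ asks for a single product index set $J\times K$: since each fibre is an unbounded homogeneous metric space with compact closed balls (closed balls in $D$ being compact), every $J_k$ is countably infinite, so all of them may be identified with $\N$ and the product indexing is recovered.
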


\subsection{Fourier transform on $\Nc$}

We now recall the definition and a few properties of the Fourier
transform on $\Nc$, cf.~\cite[Section 1.2]{CalziPeloso} for  further  details.

Define $W\coloneqq\Set{\lambda\in F'\colon \exists \zeta\neq 0 \ \text{such
that}\ 
\langle \lambda, \Ima \Phi(\zeta, \,\cdot\,)\rangle=0 }$.
Observe that, for every $\lambda\in F'\setminus W$, the quotient
$\Nc/\ker \lambda$ is a $(2 n+1)$-dimensional Heisenberg group, so
that the Stone--von Neumann theorem (cf., e.g.,~\cite[Theorem
1.50]{Folland}) implies that there is up to unitary equivalence)  a unique  irreducible unitary
representation $\pi_\lambda$ of $\Nc$ in some hilbertian space
$H_\lambda$ such that $\pi_\lambda(0,i
x)=\ee^{-i\langle \lambda,x\rangle}$ for every $x\in F$. Using the
Plancherel formula on the quotients $\Nc/\ker \lambda$ and integrating
in $\lambda$, one may then find the Plancherel formula for $\Nc$
(cf.,e.g.,~\cite[Section 2]{AstengoCowlingDiBlasioSundari}). Namely,
there is a constant $c>0$ such that 
\[
\norm{f}^2_{L^2(\Nc)}=c \int_{F'\setminus W} \norm{\pi_\lambda(f)}_{\Lin^2(H_\lambda)}^2 \Delta_{\Omega'}^{-\vect b}(\lambda)\,\dd \lambda
\] 
for every $f\in L^1(\Nc)\cap L^2(\Nc)$ (cf.~\cite[Corollary 1.17 and Proposition 2.30]{CalziPeloso}), where $\Lin^2(H_\lambda)$ denotes the space of Hilbert--Schmidt endomorphisms of $H_\lambda$. 

Notice, though, that $\pi_\lambda(f_h)=0$ for almost every $\lambda\in F'\setminus (W\cup \Omega')$, for every $h\in \Omega$, and for every $f$ in the space $ A^{p,q}_{\vs}$ to be defined below, $p\in ]0,2]$ (cf.~\cite[Corollares 1.37 and 3.3, and Proposition 3.2]{CalziPeloso}). For this reason, we shall only describe $\pi_\lambda$ for $\lambda\in \Omega'$ (`Bargmann representation'). We define $H_\lambda\coloneqq \Hol(E)\cap L^2(\nu_\lambda)$, where $\nu_\lambda= \ee^{-2\langle\lambda, \Phi(\,\cdot\,)\rangle}\cdot \Hc^{2 n}$, and  
\begin{equation}\label{eq:pi-lambda}
\pi_\lambda(\zeta,x)  \psi  (\omega)
\coloneqq \ee^{\langle \lambda_\C, -i x +2 \Phi(\omega, \zeta)-\Phi(\zeta)\rangle}  \psi (\omega- \zeta),
\end{equation} 
for every $\psi \in H_\lambda$, for every $\omega\in E$, and for every $(\zeta,x)\in \Nc$.
Denote by $P_{\lambda,0}$ the self-adjoint projector of $H_\lambda$
onto the space of constant functions, for every $\lambda\in \Omega'$,
and define $P_{\lambda,0}=0$ for $\lambda\in F'\setminus (W\cup
\Omega')$. The introduction of $P_{\lambda,0}$ is justified by the
observation that $\pi_\lambda(f_h)=\pi_\lambda(f_h)P_{\lambda,0}$ for
almost every $\lambda\in F'\setminus W$, for every $h\in \Omega$, and
for every $f$ in the space $ A^{p,q}_{\vs}$ to be defined below, $p\in
]0,2]$ (cf.~\cite[Corollares 1.37 and 3.3, and Proposition
3.2]{CalziPeloso}).

\section{Function spaces}\label{sec:fnct-sp}
In this section we introduce the functions spaces on $D$ and $\Nc$ that we  shall consider  in this paper.  We refer
the reader to~\cite{CalziPeloso} for  further  details and references.

\begin{deff}\label{def:Lpqs}
Take $\vs\in \R^r$ and $p,q\in ]0,\infty]$. 
We define 
\[
L^{p,q}_\vs(D)\coloneqq\Set{ f\colon D\to \C\colon  f \text{ is measurable}, \int_\Omega  \Big(    \Delta_\Omega^{\vs}(h) \norm{f_h}_{L^p(\Nc)}  \Big) ^q\,\dd  \nu_\Omega(h)<\infty  }  
\]
(modification when $q=\infty$), and  $L^{p,q}_{\vs,0}(D)$ as the
closure of $C_c( D)$ in $L^{p,q}_{\vs}(D)$.   Constistently with~\eqref{eq:1}, we define
\[
A^{p,q}_\vs(D)= L^{p,q}_\vs(D)\cap \Hol(D), \qquad\text{and}\qquad
A^{p,q}_{\vs,0}(D)
= L^{p,q}_{\vs,0}(D) \cap \Hol(D).
\]
\end{deff}
Notice that $L^{p,q}_{\vs,0}(D)=L^{p,q}_{\vs}(D)$  and $A^{p,q}_{\vs,0}(D)=A^{p,q}_{\vs}(D)$
if (and only
if) $p,q<\infty$.

It is possible to characterize the values of $p,q, \vs$ for which
$A^{p,q}_{\vs}(D)$ is non-trivial. The following is~\cite[Proposition~3.5]{CalziPeloso}.
\begin{prop}
Take $\vs\in \R^r$ and $p,q\in ]0,\infty]$. Then,   $A^{p,q}_{\vs,0}(D)\neq \Set{0}$ (resp.\   $A^{p,q}_{\vs}(D)\neq \Set{0}$) 
if and only if  $\vs\in \frac{1}{2 q}\vect m+(\R_+^*)^r$ (resp.  $\vs\in \R_+^r$ if $q=\infty$). 
\end{prop}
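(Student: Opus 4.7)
My plan is to prove the necessity and sufficiency separately for $A^{p,q}_{\vs,0}(D)$ and $A^{p,q}_{\vs}(D)$, exploiting the Bargmann representation~\eqref{eq:pi-lambda} and the Plancherel formula recalled in Section~\ref{sec:2}, combined with pointwise subharmonic estimates. The unifying fact is the identity $\pi_\lambda(f_h)=\ee^{-\langle\lambda,h\rangle}\pi_\lambda(f_0)$ (for $\lambda\in \Omega'$) arising from the Cauchy--Riemann equations in the variable $h$, which encodes the holomorphicity of $f$ on the foliation $\rho^{-1}(h)$.

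For necessity, I would first treat the model case $p=q=2$. Using~\cite[Proposition 3.2]{CalziPeloso} and the Plancherel formula recalled in Section~\ref{sec:2}, together with the covariance identity above, Fubini yields
\[
\norm{f}^2_{A^{2,2}_\vs}=c\int_{\Omega'} \norm{\pi_\lambda(f_0)P_{\lambda,0}}_{\Lin^2(H_\lambda)}^2\, \Delta_{\Omega'}^{-\vb}(\lambda) \Big(\int_\Omega \Delta_\Omega^{2\vs}(h)\,\ee^{-2\langle\lambda,h\rangle}\,\dd \nu_\Omega(h)\Big)\,\dd \lambda.
\]
The inner integral is computed, by the $T_+$-invariance of $\nu_\Omega$ and the definition of $\Gamma_\Omega$, to equal $\Gamma_\Omega(2\vs)\,\Delta_{\Omega'}^{-2\vs}(2\lambda)$. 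Hence the existence of a non-zero $f$ forces $\Gamma_\Omega(2\vs)<\infty$, i.e.\ $\vs\in \tfrac14\vm+(\R_+^*)^r=\tfrac{1}{2\cdot 2}\vm+(\R_+^*)^r$. The general $(p,q)$ case follows by the same $\Gamma_\Omega$-bookkeeping applied to $\norm{f_h}_{L^p(\Nc)}^q$: one first bounds $\abs{f(\zeta,z)}^p$ pointwise by $C_\delta\int_{B((\zeta,z),\delta)}\abs{f}^p\,\dd \nu_D$ via the mean value inequality and the $T_+$-invariance of $\nu_D$ and of the Bergman metric, then integrates against $\Delta_\Omega^{q\vs}\,\dd \nu_\Omega$ and invokes the convergence criterion $q\vs\in \tfrac12\vm+(\R_+^*)^r$ for $\Gamma_\Omega(q\vs)$.

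For sufficiency, I would construct an explicit non-zero element by inverse Fourier/Bargmann synthesis. Choose a non-zero $\phi\in C_c^\infty(\Omega')$ and a vector $\vect{t}\in\R^r$ to be tuned, and set
\[
f(\zeta,z)\coloneqq \int_{\Omega'}\phi(\lambda)\,\Delta_{\Omega'}^{\vect{t}}(\lambda)\,\ee^{i\langle\lambda_\C,z\rangle-\langle\lambda,\Phi(\zeta)\rangle}\,\dd \nu_{\Omega'}(\lambda).
\]
Since $\phi$ is compactly supported in $\Omega'$, the integrand is absolutely integrable and holomorphic in $(\zeta,z)\in D$, so $f\in \Hol(D)$; Plancherel (for $p=2$) together with a complex interpolation/duality argument (for general $p$) shows $\norm{f_h}_{L^p(\Nc)}\asymp \Delta_\Omega^{-\vect{t}'}(h)$ for an explicit $\vect{t}'=\vect{t}'(\vect{t},\vb,\vm,\vm',p)$. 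Substituting into the definition of $\norm{f}_{A^{p,q}_\vs}$ reduces the problem to checking convergence of an integral of the form $\int_\Omega \Delta_\Omega^{q(\vs-\vect{t}')}\,\dd \nu_\Omega$, which, by the $\Gamma_\Omega$ criterion, holds precisely when the effective exponent lies in $\tfrac12\vm+(\R_+^*)^r$; letting $\vect{t}$ vary sweeps out the whole range $\vs\in \tfrac{1}{2q}\vm+(\R_+^*)^r$. Because $\phi\in C_c^\infty$ forces appropriate decay of $f$ in $h$, one also has $f\in A^{p,q}_{\vs,0}$. For the remaining case $A^{p,\infty}_\vs$ with $\vs\in \R_+^r\setminus (\R_+^*)^r$, I would instead exhibit a Hardy-type element (e.g.\ a Cauchy--Szeg\H o integral of a non-zero $L^p(\Nc)$ boundary datum), which lies in $A^{p,\infty}_\vs$ but not in the $C_c$-closure, thereby accounting for the discrepancy between the two spaces exactly at $q=\infty$.

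The main obstacle I foresee is the computation of $\norm{f_h}_{L^p(\Nc)}$ for general $p\neq 2$, where Plancherel is unavailable. I would circumvent this either by interpolating in the parameter $\vect{t}$ between the endpoints $p=2$ and $p=\infty$, or by exploiting the factorization of the integrand as an exponential envelope in $\zeta$ (providing Gaussian-type decay) times a partial Fourier transform in $x$, whose $L^p$ behaviour is tractable. A secondary bookkeeping issue is verifying that the map $\vect{t}\mapsto \vect{t}'$ is surjective (modulo the appropriate shifts by $\vb$, $\vd$, $\vm$, $\vm'$), so as to cover the entire admissible region of $\vs$; this is guaranteed by the $r$-dimensional freedom inherent in $T_+$ acting simply transitively on $\Omega$.
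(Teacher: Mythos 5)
First, a remark on the comparison itself: the paper does not prove this proposition --- it is quoted verbatim from \cite[Proposition 3.5]{CalziPeloso} --- so your proposal can only be measured against the standard strategy of that reference. Your overall architecture (Paley--Wiener/Plancherel for necessity, an explicit Fourier--Laplace test function for sufficiency) is indeed the natural one, but as written it has two genuine gaps.

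The first is in the sufficiency construction. The function
$f(\zeta,z)=\int_{\Omega'}\phi(\lambda)\,\Delta_{\Omega'}^{\vect t}(\lambda)\,\ee^{i\langle\lambda_\C,z\rangle-\langle\lambda,\Phi(\zeta)\rangle}\,\dd\nu_{\Omega'}(\lambda)$
is \emph{not} holomorphic on $D$ when $n>0$: the map $\zeta\mapsto\Phi(\zeta)=\Phi(\zeta,\zeta)$ is the diagonal of a sesquilinear form, hence real-analytic but not holomorphic, so the factor $\ee^{-\langle\lambda,\Phi(\zeta)\rangle}$ destroys holomorphy. It is also superfluous: since $\abs{\ee^{i\langle\lambda_\C,z\rangle}}=\ee^{-\langle\lambda,\Phi(\zeta)\rangle}\ee^{-\langle\lambda,\rho(\zeta,z)\rangle}$, the Gaussian decay in $\zeta$ and the exponential decay in $h=\rho(\zeta,z)$ are already built into $\ee^{i\langle\lambda_\C,z\rangle}$. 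After deleting that factor the construction works, but then $\norm{f_h}_{L^p(\Nc)}\meg C\,\ee^{-c\langle\lambda_0,h\rangle}$ for some $\lambda_0\in\Omega'$ (with a lower bound of the same kind on $\Omega\cap(h_0-\Omega)$), \emph{not} the two-sided polynomial estimate $\asymp\Delta_\Omega^{-\vect t'}(h)$ you assert; consequently the interpolation/duality machinery you invoke to compute $\norm{f_h}_{L^p}$, and the "sweeping in $\vect t$", are unnecessary: a single such $f$ lies in $A^{p,q}_{\vs,0}$ for \emph{every} $\vs\in\frac{1}{2q}\vect m+(\R_+^*)^r$, the membership reducing to the convergence of a $\Gamma_\Omega(q\vs)$-type integral. (Equivalently, one may use the kernel powers $B^{\vect{s'}}_{(\zeta',z')}$ and \cite[Proposition 2.41]{CalziPeloso}, as this paper does repeatedly elsewhere.)

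The second gap is in the necessity for general $(p,q)$. The mean value inequality $\abs{f(\zeta,z)}^p\meg C_\delta\int_{B((\zeta,z),\delta)}\abs{f}^p\,\dd\nu_D$ bounds $f$ from \emph{above}, whereas necessity requires a \emph{lower} bound on $\norm{f_h}_{L^p(\Nc)}$ over a substantial subregion of $\Omega$, so "the same $\Gamma_\Omega$-bookkeeping" does not follow from what you wrote. The missing ingredient is the monotonicity $\norm{f_{h+h'}}_{L^p(\Nc)}\meg\norm{f_{h'}}_{L^p(\Nc)}$ (plurisubharmonicity of $\abs{f}^p$ along the $bD$-orbits): if $f\neq 0$, choose $h_0$ with $\norm{f_{h_0}}_{L^p(\Nc)}=c>0$; then $\norm{f_h}_{L^p(\Nc)}\Meg c$ for all $h\in\Omega\cap(h_0-\Omega)$, and finiteness of the $A^{p,q}_{\vs}$-norm forces $\int_{\Omega\cap(h_0-\Omega)}\Delta_\Omega^{q\vs}\,\dd\nu_\Omega<\infty$, which is equivalent to $q\vs\in\frac12\vect m+(\R_+^*)^r$. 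Your $p=q=2$ Plancherel computation is correct (modulo replacing $f_0$ by some interior $f_{h_0}$, since boundary values are not yet available), but it does not generalize as stated. Finally, you do not address the necessity side of the $q=\infty$ dichotomy, namely why $\vs\in\R_+^r$ is forced for $A^{p,\infty}_{\vs}\neq\Set{0}$ while the strict condition $\vs\in(\R_+^*)^r$ is forced for $A^{p,\infty}_{\vs,0}\neq\Set{0}$.
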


\begin{deff}
For $\vs\in\C^r$ we define the kernel function $B^{\vs}$ as
\[
B_{(\zeta',z')}^{\vs} (\zeta,z)\coloneqq \Delta_\Omega^{\vs}\left( \frac{z-\overline{z'}}{2i}
-\Phi(\zeta,\zeta') \right)
\]
for every $((\zeta,z),(\zeta',z'))\in (D\times \overline D)\cup (\overline D \times D)$. 
\end{deff}

\begin{oss}
It turns out that, when $\vs \in \frac14 {\vect m}+ (\R^*_+)^r$, 
the
reproducing kernel of $A^{2,2}_\vs$ is  given by 
\[
K_\vs ((\zeta,z),(\zeta',z'))
\coloneqq c\frac{\Gamma_{\Omega'}(2 \vs-\vb-\vd)}{\Gamma_\Omega(2\vs)}
B_{(\zeta',z')}^{\vd+\vb-2\vs} (\zeta,z)
\]
for a suitable constant $c>0$, cf.~\cite[Remark 3.12]{CalziPeloso}.

In addition, we can  also  describe the {\em Cauchy--Szeg\H o} kernel,
that is, the reproducing kernel of the Hardy space
$A^{2,\infty}_{\vect 0}$. Indeed, for every $(\zeta,z)\in D$ and for every
$(\zeta',x')\in \Nc$, set 
\[
S_{(\zeta,z)}(\zeta',x') \coloneqq
c' \big( B^{\vb+\vd}_{(\zeta,z)}\big)_0(\zeta',x')
\]
for a suitable constant $c'>0$, cf.~\cite[Lemma 5.1]{CalziPeloso}.  Then,  for every $f\in A^{2,\infty}_{\vect 0}$, 
\[
f(\zeta,z)=\langle f_0| S_{(\zeta,z)}\rangle
\]
for every $(\zeta,z)\in D$, where $f_0\coloneqq\lim\limits_{h\to 0} f_h$ in $L^2(\Nc)$. 
\end{oss}

For $p,q\in ]0,\infty]$, and two sets $J$ and $K$, define
\[
\ell^{p,q}(J,K)\coloneqq \Set{\lambda\in \C^{J\times K}\colon ((\lambda_{j,k})_{j\in J})_{k\in K}\in \ell^q(K;\ell^p(J))},
\]
endowed with the corresponding quasi-norm, and define
$\ell^{p,q}_0(J,K)$ as the closure of $\C^{(J\times K)}$ in
$\ell^{p,q}(J,K)$.

\begin{deff}\label{def:L-property}
We say that property $\atomic^{p,q}_{\vs,\vect{s'},0}$ (resp.\
$\atomic^{p,q}_{\vs,\vect{s'}}$) holds if for every $\delta_0>0$
there is a $(\delta,4)$-lattice $(\zeta_{j,k},z_{j,k})_{j\in J, k\in K}$, with $\delta\in ]0,\delta_0]$,  such that, defining
$h_k\coloneqq \rho(\zeta_{j,k},z_{j,k})$ for every $k\in K$ and
for some (hence every) $j\in J$, the mapping 
\[
\Psi\colon\lambda \mapsto \sum_{j,k} \lambda_{j,k}
B_{(\zeta_{j,k},z_{j,k})}^{\vect{s'}} \Delta_\Omega^{(\vb+\vect
d)/p-\vs-\vect{s'}}(h_k) 
\]
is well defined (with locally uniform convergence of the sum) and maps
$\ell^{p,q}_0(J,K) $ into $A^{p,q}_{\vs,0}(D)$ continuously
(resp.\ maps $\ell^{p,q}(J,K) $ into $A^{p,q}_{\vs}(D)$
continuously). 

If we may take $(\zeta_{j,k},z_{j,k})_{j\in J, k\in K}$, for every
$\delta_0>0$ as above, in such a way that the corresponding mapping
$\Psi$ is onto, then we say that property $\atomics^{p,q}_{\vect
s,\vect{s'},0}$ (resp.\ $\atomics^{p,q}_{\vs,\vect{s'}}$) holds.  
\end{deff}

We shall present later (cf.~Proposition~\ref{prop:25})  some sufficient conditions for property
$\atomics$ to hold. See also~\cite{CalziPeloso} for a more thorough
discussion and some necessary conditions.

In this paper we  are  also interested in a family of spaces of holomorphic
functions on $D$,  denoted by $\widetilde A^{p,q}_{\vs}(D)$,
which is  defined in connection with  the boundary values  of  the elements of 
$A^{p,q}_\vs(D)$. We begin by introducing some 
Besov-type spaces defined on the \v{S}ilov boundary  $bD$, that we
identify with $\Nc$.

We define a space of test functions on $\Nc$ by setting
\[
\Sc_\Omega(\Nc)\coloneqq\Set{ \psi  \in \Sc(\Nc)\colon \exists \varphi\in C^\infty_c(\Omega')\;\forall\lambda\in F'\setminus W\quad \pi_\lambda(\psi)= \varphi(\lambda)P_{\lambda,0} } .
\]
Then, define $\Sc_{\Omega,L}(\Nc)\coloneqq
\Sc(\Nc)*\Sc_{\Omega}(\Nc)$, endowed with the inductive limit of the
topologies induced by $\Sc(\Nc)$ on its subspaces $\Sc(\Nc)*\psi$,
$\psi\in \Sc_{\Omega}(\Nc)$.\footnote{It is not hard to see that this  definition is equivalent to~\cite[Definition 4.4]{CalziPeloso}.}
Denote by  $\Sc_{\Omega,L}'(\Nc)$ the dual of $\Sc_{\Omega,L}(\Nc)$.
See~\cite[Propositions 4.2 and 4.5, and Lemma~4.14]{CalziPeloso} for a proof of the following result. 
\begin{prop}\label{prop:4.2}
The following hold:
\begin{enumerate}
\item[\em(1)] the mapping $\Fc_\Nc\colon \varphi\mapsto [\lambda
  \mapsto \tr(\pi_\lambda(\varphi) ) ] $ induces an isomorphism of
  $\Sc_\Omega(\Nc)$ onto $C^\infty_c(\Omega')$; 

\item[\em(2)]  given two $(\delta,R)$-lattices $(\lambda_k)_{k\in K}$ and $(\lambda'_{k'})_{k'\in K'}$ on $\Omega'$, and two families $(\psi_k)_{k\in K}, (\psi'_{k'})_{k'\in K'}$ of elements of $\Sc_\Omega(\Nc)$ such that $((\Fc_\Nc \psi_k)(\,\cdot\, t_k))$ and $((\Fc_\Nc \psi'_{k'})(\,\cdot\, t'_{k'}))$ are bounded families of positive elements of $C^\infty_c(\Omega')$, where $t_k,t'_{k'}\in T_+$ are such that $\lambda_k=e_{\Omega'}\cdot t_k$ and $\lambda'_{k'}=e_{\Omega'}\cdot t'_{k'}$, and
\[
\sum_k \Fc_\Nc\psi_k, \sum_{k'} \Fc_\Nc \psi_{k'}\Meg 1
\]
on $\Omega'$,  one has  
\[
\Big\| \Delta_{\Omega'}^{\vs}(\lambda_{k'}) \big\|u*\psi'_{k'}
\big\|_{L^p(\Nc)}  \Big\|_{\ell^q(K')}
\approx\Big\| \Delta_{\Omega'}^{\vs}(\lambda_k) \big\| u*\psi_k
\big\|_{L^p(\Nc)}  \Big\|_{\ell^q(K)},
\]
for every $u\in \Sc_{\Omega,L}'(\Nc)$.  
\end{enumerate} 
\end{prop}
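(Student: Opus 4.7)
For part (1), the decisive observation is that $P_{\lambda,0}$ is by definition the orthogonal projection onto the one-dimensional subspace of constant functions in $H_\lambda=\Hol(E)\cap L^2(\nu_\lambda)$, hence it is rank one with $\tr(P_{\lambda,0})=1$. Therefore, for $\psi\in\Sc_\Omega(\Nc)$ with associated $\varphi\in C^\infty_c(\Omega')$, the identity $\pi_\lambda(\psi)=\varphi(\lambda)P_{\lambda,0}$ immediately gives $\tr(\pi_\lambda(\psi))=\varphi(\lambda)$, so $\Fc_\Nc$ is essentially the coordinate projection $\psi\leftrightarrow\varphi$ and lands in $C^\infty_c(\Omega')$. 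Injectivity follows from the Plancherel formula for $\Nc$ applied to $\pi_\lambda(\psi)=0$ for a.e.\ $\lambda$. For surjectivity, given $\varphi\in C^\infty_c(\Omega')$ I would construct $\psi$ by applying the operator-valued Plancherel inversion to the smooth, compactly supported operator field $\lambda\mapsto \varphi(\lambda)P_{\lambda,0}$ (extended by zero on $F'\setminus(W\cup\Omega')$), and then verify $\psi\in\Sc(\Nc)$ by translating polynomial multiplication and differentiation on $\Nc$ into smooth operations on this field, using the smooth dependence $\lambda\mapsto P_{\lambda,0}$ over $\Omega'$ and the compact support of $\varphi$.

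For part (2), the key structural remark is that $\Fc_\Nc$ intertwines convolution in $\Sc_\Omega(\Nc)$ with pointwise multiplication on $\Omega'$: indeed, $\pi_\lambda(\psi_1*\psi_2)=\varphi_1(\lambda)\varphi_2(\lambda)P_{\lambda,0}^2=(\varphi_1\varphi_2)(\lambda)P_{\lambda,0}$ since $P_{\lambda,0}$ is a projection, and in particular such convolutions commute. Using the $(\delta,R)$-lattice property and the controlled supports of $\Fc_\Nc\psi_k$, $\Fc_\Nc\psi'_{k'}$ around $\lambda_k,\lambda_{k'}$, the overlap set $N(k')\coloneqq\Set{k\colon \supp(\Fc_\Nc\psi_k)\cap\supp(\Fc_\Nc\psi'_{k'})\neq\emptyset}$ has uniformly bounded cardinality. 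The hypothesis $\sum_\ell \Fc_\Nc\psi_\ell\Meg 1$ on $\Omega'$ legitimizes defining
\[
\eta_{k,k'}\coloneqq \Fc_\Nc^{-1}\!\Bigl(\frac{\Fc_\Nc\psi_k\cdot\Fc_\Nc\psi'_{k'}}{\sum_\ell \Fc_\Nc\psi_\ell}\Bigr)\in\Sc_\Omega(\Nc),
\]
and by construction $\psi'_{k'}=\sum_{k\in N(k')}\eta_{k,k'}*\psi_k$.

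Applying this identity to $u\in\Sc_{\Omega,L}'(\Nc)$ and using associativity together with the commutativity of convolution in $\Sc_\Omega(\Nc)$ gives $u*\psi'_{k'}=\sum_{k\in N(k')}(u*\psi_k)*\eta_{k,k'}$. Young's inequality (or its quasi-Banach substitute when $p<1$, combined with a Bernstein-type inequality since $\eta_{k,k'}$ is band-limited) then yields
\[
\norm{u*\psi'_{k'}}_{L^p(\Nc)}\meg C\sum_{k\in N(k')}\norm{u*\psi_k}_{L^p(\Nc)}.
\]
Since $\lambda_k$ and $\lambda_{k'}$ lie in a common fixed-radius ball in the $T_+$-invariant metric on $\Omega'$ whenever $k\in N(k')$, one has $\Delta_{\Omega'}^\vs(\lambda_k)\approx\Delta_{\Omega'}^\vs(\lambda_{k'})$. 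Multiplying by $\Delta_{\Omega'}^\vs(\lambda_{k'})$, taking $\ell^q$-norms over $k'$, and using the uniform bound on $\abs{N(k')}$ together with the symmetric bound on the multiplicity of the covering $\Set{N(k')}_{k'}$ yields one direction of the equivalence; the other direction is obtained by swapping the roles of $(\psi_k)$ and $(\psi'_{k'})$.

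The main obstacle is to establish the uniform bound on $\norm{\eta_{k,k'}}_{L^1(\Nc)}$. The plan is to exploit the dilation action of $T_+$: writing $t_k,t'_{k'}\in T_+$ as in the statement, the hypothesis that $(\Fc_\Nc\psi_k)(\,\cdot\, t_k)$ and $(\Fc_\Nc\psi'_{k'})(\,\cdot\, t'_{k'})$ are bounded families in $C^\infty_c(\Omega')$ together with the $T_+$-equivariance of $\Fc_\Nc$ and of the convolution structure on $\Nc$ reduces the estimate to the uniform boundedness, in $L^1(\Nc)$, of inverse Fourier transforms of a bounded family of smooth compactly supported functions on $\Omega'$. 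The quasi-Banach range $p<1$ is an auxiliary technical point handled by the Bernstein-type inequality for band-limited functions.
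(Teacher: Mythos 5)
The paper does not actually prove this proposition: it is imported wholesale from \cite[Propositions 4.2 and 4.5, and Lemma 4.14]{CalziPeloso}, so your argument can only be compared with the strategy of that reference. In outline you follow it: part (1) rests on $\tr(P_{\lambda,0})=1$ plus operator-valued Fourier inversion, and part (2) on a bounded-overlap partition of unity on $\Omega'$ transported through $\Fc_\Nc$, with the equivalence $\Delta_{\Omega'}^{\vs}(\lambda_k)\approx\Delta_{\Omega'}^{\vs}(\lambda'_{k'})$ on overlapping supports. That is the right skeleton.

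Two points need attention. First, your reproducing identity is false as written: with $\eta_{k,k'}\coloneqq\Fc_\Nc^{-1}\bigl(\Fc_\Nc\psi_k\cdot\Fc_\Nc\psi'_{k'}/\sum_\ell\Fc_\Nc\psi_\ell\bigr)$ one computes
\[
\Fc_\Nc\Bigl(\sum_{k}\eta_{k,k'}*\psi_k\Bigr)=\Fc_\Nc\psi'_{k'}\cdot\frac{\sum_k(\Fc_\Nc\psi_k)^2}{\sum_\ell\Fc_\Nc\psi_\ell},
\]
which is not $\Fc_\Nc\psi'_{k'}$. Either normalize by $\sum_\ell(\Fc_\Nc\psi_\ell)^2$ instead (still bounded below by $1/N$, by Cauchy--Schwarz and the bounded overlap of the supports), or more simply set $\theta_{k'}\coloneqq\Fc_\Nc^{-1}\bigl(\Fc_\Nc\psi'_{k'}/\sum_\ell\Fc_\Nc\psi_\ell\bigr)\in\Sc_\Omega(\Nc)$ and use $u*\psi'_{k'}=\sum_{k\in N(k')}(u*\psi_k)*\theta_{k'}$. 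Second, for $p<1$ Young's inequality genuinely fails, and the substitute $\norm{f*\theta_{k'}}_{L^p(\Nc)}\meg C\norm{f}_{L^p(\Nc)}$ for $f$ with group Fourier transform supported near $\lambda'_{k'}$ --- with a constant uniform in $k'$, obtained by $T_+$-rescaling --- is precisely the content of \cite[Lemma 4.14]{CalziPeloso} and is the actual technical heart of part (2); naming it ``a Bernstein-type inequality'' is acceptable in a sketch, but it is the one step that cannot be waved through. The same caveat applies in part (1) to the verification that the inverse transform of the field $\lambda\mapsto\varphi(\lambda)P_{\lambda,0}$ lies in $\Sc(\Nc)$, and to the topological half of ``isomorphism'' (continuity of $\Fc_\Nc^{-1}$ between the induced topology on $\Sc_\Omega(\Nc)$ and the LF-topology on $C^\infty_c(\Omega')$), which you do not address.
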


\begin{deff}
Let $\vs\in \R^r$, $p,q\in ]0,\infty]$. Take
$(\lambda_k)_{k\in K}$ and $(\psi_k)$ as in
Proposition~\ref{prop:4.2}.
Then, we define $B^{\vs}_{p,q}(\Nc,\Omega)$ as the space of $u\in \Sc_{\Omega,L}'(\Nc)$ such that
\[
(\Delta^{\vs}_{\Omega'}(\lambda_k) (u*\psi_k))_k\in  \ell^q (K;L^p(\Nc)),
\]
endowed with the corresponding topology. We denote by  $\mathring{B}^{\vs}_{p,q}(\Nc,\Omega)$ the closure of $\Sc_{\Omega,L}(\Nc)$ in $ B^{\vs}_{p,q}(\Nc,\Omega)$, which is  the space of $u\in
\Sc_{\Omega,L}'(\Nc)$ such that  
\[
(\Delta^{\vs}_{\Omega'}(\lambda_k) (u*\psi_k))_k\in \ell^q_0(K;L^p_0(\Nc)).
\]
\end{deff}

We are now able  to define  an extension operator  from the Besov-type spaces $B^{-\vs}_{p,q}(\Nc,\Omega)$ on the \v{S}ilov
boundary $\Nc$ to $D$ to the function  spaces $\widetilde
A^{p,q}_\vs(D)$ and $\widetilde A^{p,q}_{\vs,0}(D)$ on $D$. 

\begin{deff}\label{def:tilde-spaces}
Take $p,q\in]0,\infty]$ and  $\vs\in
\frac{1}{p}(\vb+\vd)+\frac{1}{2 q'}\vect{m'}+(\R_+^*)^r$, so that, 
for every $(\zeta,z)\in D$, $S_{(\zeta,z)}\in \mathring
B^{\vs-(1/p-1)_+(\vect b+\vect d)}_{p',q'}(\Nc,\Omega)$
(cf.~\cite[Lemma 5.1]{CalziPeloso}).  
Then, define,  for every  $u\in B^{-\vs}_{p,q}(\Nc,\Omega)$ and for every $(\zeta,z)\in D$,
\[
\Ec u  (\zeta,z) \coloneqq\langle u| S_{(\zeta,z)}\rangle,
\]
so that $\Ec$ maps $B^{-\vs}_{p,q}(\Nc,\Omega)$ into $A^{\infty,\infty}_{\vs-(\vect b+\vect d)/p}(D)$ continuously (cf.~\cite[Theorem 5.2]{CalziPeloso}).
We define 
\[
\widetilde A^{p,q}_\vs(D) \coloneqq\Ec(B^{-\vs}_{p,q}(\Nc,\Omega))\qquad
\text{and} \qquad\widetilde A^{p,q}_{\vs,0}(D)\coloneqq \Ec(\mathring B^{-\vs}_{p,q}(\Nc,\Omega)),
\]
and endow both spaces with the corresponding (direct image) topology.

In addition, we define the {\em boundary value operator}
$\Bc \colon A^{2,\infty}_{\vect 0}(D)\to B^{\vect 0}_{2,2} (\Nc,\Omega) $ 
\[
\Bc\colon A^{2,\infty}_{\vect 0}(D)\ni f \mapsto \lim_{h\to 0} f_h\in L^2(\Nc)
\]
\end{deff}
We remark that,  by~\cite[Corollary 1.37]{CalziPeloso}, $\Bc$  is
well defined and continuous, and that $\Ec \Bc=I$. 

It is possible to describe the boundary values of functions in
$A^{p,q}_\vs(D)$, that is, the the limits of $f_h$, as $h\to 0,h\in \Omega$, for $f\in
A^{p,q}_\vs(D)$, as elements  of  $B^{-\vs}_{p,q}(\Nc,\Omega)$.
Conversely, under  suitable  assumptions, every element of
$B^{-\vs}_{p,q}(\Nc,\Omega)$ is the boundary value of a unique element
of $A^{p,q}_{\vs}(D)$.  Precisely, the following holds,
cf.~\cite[Theorem 5.2, Proposition 5.4 and its proof, and Corollary 5.11]{CalziPeloso}.

\begin{prop}\label{prop:23bis}
Take $p,q\in ]0,\infty]$, and $\vs\in \sup\Big( \frac{1}{2 q}\vect m,
\frac{1}{p}(\vb+\vd)+\frac{1}{2 q'}\vect{m'}\Big)+(\R_+^*)^r $. 
Then, the following hold:
\begin{enumerate}
\item[\em(1)] $(\Ec u)_h$ converges to $u$ in
$B^{-\vs}_{p,q}(\Nc,\Omega)$ (resp.\ in $\Sc'_{\Omega,L}(\Nc)$)
for everu $u\in \mathring B^{-\vs}_{p,q}(\Nc,\Omega)$ (resp.\ for
every $u\in B^{-\vs}_{p,q}(\Nc,\Omega)$);  

\item[\em(2)] the operator $\Bc$ induces a continuous linear mapping
\[
\Bc\colon A^{p,q}_{\vs}(D)\to B^{-\vs}_{p,q}(\Nc,\Omega)
\qquad \text{(resp.\ } \Bc\colon A^{p,q}_{\vs,0}(D)\to \mathring{B}^{-\vs}_{p,q}(\Nc,\Omega))
\]
such that $\Ec \Bc=I$;

\item[\em(3)] there are continuous inclusions
\[
\Ec(\Sc_{\Omega,L}(\Nc))\subseteq A^{p,q}_{\vs}(D) \subseteq
\widetilde A^{p,q}_{\vs}(D)
\qquad
(\text{resp.\ } \Ec(\Sc_{\Omega,L}(\Nc))\subseteq A^{p,q}_{\vs,0}(D) \subseteq \widetilde A^{p,q}_{\vs,0}(D))  ;
\]

\item[\em(4)] if, further,
$\vs\in\sup\Big(\frac{1}{2 q}\vect m+\big( \frac{1}{2 \min(p,p')}-\frac{1}{2q} \big)_+\vect{m'} , \frac{1}{p}(\vb+\vd)+\frac{1}{2 q'}\vect{m'}\Big) +(\R_+^*)^r,
$
then,
\[
A^{p,q}_{\vs}(D)=\widetilde A^{p,q}_{\vs}(D)  \qquad\text{and} \qquad
A^{p,q}_{\vs,0}(D)=\widetilde A^{p,q}_{\vs,0}(D).
\]
\end{enumerate}
\end{prop}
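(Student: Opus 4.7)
The plan is to assemble the four assertions out of three pieces of machinery already in place: the windowed characterization of $B^{-\vs}_{p,q}(\Nc,\Omega)$ given by Proposition~\ref{prop:4.2}, the explicit form of the Cauchy--Szegő kernel $S_{(\zeta,z)}$, and the reproducing identity for $A^{2,\infty}_{\vect 0}$. The statement is close to results proved in \cite{CalziPeloso}, so the proof essentially amounts to checking that the hypotheses on $(p,q,\vs)$ match what is needed in each step; I shall describe the underlying mechanism.

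For (1), I would fix a $(\delta,R)$-lattice $(\lambda_k)_{k\in K}$ on $\Omega'$ and windows $(\psi_k)_{k\in K}$ as in Proposition~\ref{prop:4.2} and exploit the $\Nc$-equivariance of $S$ to rewrite $(\Ec u)_h = u \ast \sigma_h$ for a suitable kernel $\sigma_h$ on $\Nc$. The assumption $\vs > \frac{1}{p}(\vb+\vd)+\frac{1}{2 q'}\vect{m'}$ places each $S_{(\zeta,z)}$ in $\mathring B^{\vs - (1/p-1)_+(\vb+\vd)}_{p',q'}(\Nc,\Omega)$, whence the operators $u\mapsto u\ast \sigma_h$ are uniformly bounded on $B^{-\vs}_{p,q}$. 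Strong convergence on the dense subspace $\Sc_{\Omega,L}(\Nc)$ follows by a direct Plancherel computation using that the Fourier data of such $u$ is compactly supported in $\Omega'$, which, by a standard density argument, upgrades to norm convergence for $u\in \mathring B^{-\vs}_{p,q}$ and to weak-$*$ convergence in $\Sc'_{\Omega,L}$ in general.

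For (2) and (3), given $f\in A^{p,q}_\vs(D)$ I would estimate the windowed pieces $\|f_h\ast \psi_k\|_{L^p(\Nc)}$ by applying the submean value property to $|f|^p$ on Bergman balls, whose projections under $\rho$ to $\Omega$ have diameter comparable to that of a fixed ball about $e_\Omega$; the resulting pointwise bound turns the $\ell^q(K)$ norm of the windowed pieces into a quantity controlled by $\|f\|_{A^{p,q}_\vs}$. A Fatou-type extraction then produces a unique weak-$*$ limit $\Bc f\in B^{-\vs}_{p,q}(\Nc,\Omega)$, and the identity $\Ec\Bc f = f$ follows first for Hardy functions from the reproducing property of $S$ and then for general $f$ by approximation. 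The first inclusion in (3) is the explicit Plancherel computation for $u\in \Sc_{\Omega,L}(\Nc)$, while the second inclusion is an immediate corollary of (2).

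For (4), the nontrivial direction is $\widetilde A^{p,q}_\vs \subseteq A^{p,q}_\vs$, which reduces to the quantitative bound $\|\Ec u\|_{A^{p,q}_\vs}\lesssim \|u\|_{B^{-\vs}_{p,q}}$ for every $u\in B^{-\vs}_{p,q}(\Nc,\Omega)$. I expect this to be the main obstacle: after discretizing $u$ via the lattice $(\lambda_k)$ and the windows $\psi_k$, one is reduced to a Schur-type inequality between mixed-norm sequence spaces indexed by lattices on $\Omega$ and $\Omega'$, with kernel built from the weighted slice $L^p$-norms of translates of $S$. The additional term $\big(\frac{1}{2\min(p,p')}-\frac{1}{2q}\big)_+\vect{m'}$ in the strengthened lower bound on $\vs$ is exactly the margin that renders the associated Schur kernel integrable, and is where the technical heart of the argument lies.
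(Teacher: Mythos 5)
The paper itself does not prove this proposition: it is imported from the companion work, the ``proof'' being the citation to [CalziPeloso, Theorem 5.2, Proposition 5.4 and its proof, and Corollary 5.11]. So there is no in-paper argument to match; the only fair comparison is with the cited results, and on that score your outline tracks how they are actually established — slice operators $u\mapsto (\Ec u)_h=u*\sigma_h$ with spectral multiplier $\ee^{-\langle\,\cdot\,,h\rangle}$ for (1), sub-mean-value estimates on Bergman balls (whose $\rho$-projections have uniformly bounded $d_\Omega$-diameter) to control $\norm{f_h*\psi_k}_{L^p(\Nc)}$ for (2)--(3), and a discretized Schur-type estimate for the reverse inclusion in (4), where the extra term $\big(\tfrac{1}{2\min(p,p')}-\tfrac{1}{2q}\big)_+\vect{m'}$ is indeed the summability margin.

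As a proof, however, what you have written is a programme: each step names the correct mechanism and then defers the decisive estimate. Three deferrals are substantive rather than routine. First, in (1), the uniform boundedness of $u\mapsto u*\sigma_h$ on $B^{-\vs}_{p,q}(\Nc,\Omega)$ in the range $\min(p,q)<1$ is not a soft consequence of $S_{(\zeta,z)}$ lying in the dual-type Besov space (that pairing argument only gives $A^{\infty,\infty}$-bounds); one needs Plancherel--P\'olya-type inequalities for the band-limited pieces $u*\psi_k*\sigma_h$. Second, in (2), boundedness of $(f_h)_{h}$ in $B^{-\vs}_{p,q}$ plus weak-* compactness does not by itself yield a \emph{unique} limit $\Bc f$, and the identity $\Ec\Bc=I$ cannot be obtained ``by approximation from Hardy functions'': for general $(p,q,\vs)$ in the stated range, $A^{p,q}_{\vs}(D)$ is not contained in $A^{2,\infty}_{\vect 0}(D)$, and density of nice functions fails in the non-``$0$'' spaces when $\max(p,q)=\infty$. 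The cited proof instead exploits the semigroup identity $f_{h+h'}=f_{h'}*\sigma_{h}$ for fixed $h'\in\Omega$ and then lets $h'\to0$. Third, in (4) the entire content is the Schur estimate you only gesture at. None of these steps is wrongly conceived, but none is carried out, so the attempt should be regarded as a correct road map rather than a proof.
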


We now present some sufficient conditions for property $\atomics$. See~\cite[Corollary 5.14]{CalziPeloso} for a proof of the following result.

\begin{prop}\label{prop:25}
Take $p,q\in ]0,\infty]$, $\vs\in \sup\Big( \frac{1}{2 q}\vect m,
\frac{1}{p}(\vb+\vd)+\frac{1}{2 q'}\vect{m'}\Big)+(\R_+^*)^r $, and
$\vs'\in \frac{1}{\min(1,p)}(\vect b+\vect d)-\frac{1}{2 q}\vect{m'} -
\left(\frac{1}{2 \min(1,p)}-\frac{1}{2q}  \right)_+\vect m -
\vs-(\R_+^*)^r$. If $A^{p,q}_{\vect s,0}(D)=\widetilde A^{p,q}_{\vs,0}(D)$
(resp.\ $A^{p,q}_{\vect s}(D)=\widetilde A^{p,q}_{\vs}(D)$), then property
$\atomics^{p,q}_{\vs, \vs',0}$ (resp.\ $\atomics^{p,q}_{\vs,\vs'}$)
holds. 
\end{prop}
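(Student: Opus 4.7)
The strategy is to reduce the atomic decomposition on $D$ to an atomic decomposition of the boundary Besov space $B^{-\vs}_{p,q}(\Nc,\Omega)$, using the isomorphism $\Ec\colon B^{-\vs}_{p,q}(\Nc,\Omega)\to\widetilde A^{p,q}_\vs(D)$ (with inverse $\Bc$) from Proposition~\ref{prop:23bis}. Under the standing hypothesis $A^{p,q}_\vs(D)=\widetilde A^{p,q}_\vs(D)$, this isomorphism identifies $A^{p,q}_\vs(D)$ with $B^{-\vs}_{p,q}(\Nc,\Omega)$, so that to decompose a given $f\in A^{p,q}_\vs(D)$ in the form $f=\Psi(\lambda)$ it suffices to decompose $u\coloneqq\Bc f$ as a sum of Besov atoms whose $\Ec$-extensions are exactly the Bergman atoms $B^{\vs'}_{(\zeta_{j,k},z_{j,k})}\Delta_\Omega^{(\vb+\vd)/p-\vs-\vs'}(h_k)$ appearing in the definition of $\Psi$.

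Concretely, I would fix a $(\delta,4)$-lattice $(\zeta_{j,k},z_{j,k})_{j\in J, k\in K}$ on $D$ (Lemma~\ref{lem:32}), with $h_k=\rho(\zeta_{j,k},z_{j,k})$, and choose atoms $\psi_k\in\Sc_\Omega(\Nc)$ adapted to the corresponding lattice $(\lambda_k)$ on $\Omega'$ via Proposition~\ref{prop:4.2}(2), yielding a Calder\'on-type reproducing formula $u=\sum_k u*\psi_k$ with norm control in $B^{-\vs}_{p,q}(\Nc,\Omega)$. Since each $u*\psi_k$ is spectrally localized near $\lambda_k$, a Nyquist-type sampling argument on $\Nc$ (exploiting that the points $(\zeta_{j,k},x_{j,k})$ form a uniformly discrete sampling set adapted to the scale $h_k$, and that the quotient $\Nc/\ker\lambda_k$ is a Heisenberg group on which band-limited functions admit a sampling theorem) produces a discrete representation
\[
u*\psi_k=\sum_{j\in J}\mu_{j,k}\,\tau_{(\zeta_{j,k},x_{j,k})}\widetilde\psi_k,
\]
with the norm equivalence of Proposition~\ref{prop:4.2}(2) giving $\bigl\|\bigl(\Delta^{-\vs}_{\Omega'}(\lambda_k)\|(\mu_{j,k})_j\|_{\ell^p(J)}\bigr)_k\bigr\|_{\ell^q(K)}\lesssim\|u\|_{B^{-\vs}_{p,q}}$. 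Applying $\Ec$ termwise and using the covariance of $\Ec$ under left translations by $\Nc$ reduces the question to computing $\Ec\widetilde\psi_k$ at a single base point per scale $k$.

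The computation of $\Ec\widetilde\psi_k$ proceeds from $\Ec v(\zeta,z)=\langle v\mid S_{(\zeta,z)}\rangle$ with $S_{(\zeta,z)}=c'(B^{\vb+\vd}_{(\zeta,z)})_0$: since $\widetilde\psi_k$ is obtained by rescaling a fixed $\Sc_\Omega$-atom by $t_k$ (acting on $\Omega'$), the pairing unfolds into a generalized power function of $h_k$ times $B^{\vs'}_{(\zeta_{j,k},z_{j,k})}(\zeta,z)$, by the homogeneity of the generalized power functions and the identity $\langle e_{\Omega'}\cdot t_k,\,\cdot\,\rangle=\langle e_{\Omega'},t_k\cdot\,\cdot\,\rangle$. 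Matching the exponent of the resulting $\Delta_\Omega$-factor to the prescribed weight $\Delta_\Omega^{(\vb+\vd)/p-\vs-\vs'}(h_k)$ and renormalizing the $\mu_{j,k}$ accordingly yields coefficients $(\lambda_{j,k})\in\ell^{p,q}(J,K)$ such that $\Psi(\lambda)=f$ with $\|\lambda\|_{\ell^{p,q}}\lesssim\|f\|_{A^{p,q}_\vs}$; the $0$-case is treated identically, replacing $\ell^{p,q}$ and $B$ by $\ell^{p,q}_0$ and $\mathring B$ throughout.

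The main obstacle is the exponent bookkeeping in the last step: one must verify that the range condition imposed on $\vs'$ in the statement is exactly what makes (i) the Bergman atoms $B^{\vs'}_{(\zeta_{j,k},z_{j,k})}\Delta_\Omega^{(\vb+\vd)/p-\vs-\vs'}(h_k)$ admissible summands in $A^{p,q}_\vs(D)$ with the correct $\ell^{p,q}\to A^{p,q}_\vs$ estimate (this delivers the continuity half, i.e.\ property $\atomic^{p,q}_{\vs,\vs'}$), and (ii) the discrete sum just constructed converges to $u$ in the Besov topology, so that $\Ec$ applied termwise yields $f$ and hence surjectivity of $\Psi$ (upgrading $\atomic$ to $\atomics$). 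The conditions $\vs'\in\frac{1}{\min(1,p)}(\vb+\vd)-\frac{1}{2q}\vm'-(\frac{1}{2\min(1,p)}-\frac{1}{2q})_+\vm-\vs-(\R_+^*)^r$ encode precisely the summability of the pointwise Bergman-kernel estimates for $B^{\vs'}$ in the relevant quasi-Banach range, while the hypothesis on $\vs$ ensures the validity of the Besov reproducing formula used in Step 2; the equality $A^{p,q}_\vs(D)=\widetilde A^{p,q}_\vs(D)$ is what allows the decomposition of $u=\Bc f$ to be lifted back to a decomposition of $f$ itself.
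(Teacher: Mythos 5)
The first thing to note is that the paper does not actually prove this proposition: it is quoted from~\cite[Corollary 5.14]{CalziPeloso}, so there is no in-paper argument to compare yours against. Judged on its own terms, your reduction to the boundary via $\Ec$ and $\Bc$, your reading of the hypothesis $A^{p,q}_{\vs,0}(D)=\widetilde A^{p,q}_{\vs,0}(D)$ (it is what lets the reproducing identity $\Ec\Bc=I$ act on all of $A^{p,q}_{\vs,0}(D)$), and your remarks on the continuity half (property $\atomic^{p,q}_{\vs,\vs'}$, which is indeed a matter of kernel estimates and exponent bookkeeping governed by the range condition on $\vs'$) are all sound.

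The gap is the ``Nyquist-type sampling argument'', which is where the entire content of the surjectivity claim is hidden. You assert an exact discrete representation $u*\psi_k=\sum_j\mu_{j,k}\,\tau_{(\zeta_{j,k},x_{j,k})}\widetilde\psi_k$ in which the reconstruction atoms $\widetilde\psi_k$ can simultaneously be prescribed so that $\Ec\widetilde\psi_k$ equals, up to the normalizing power $\Delta_\Omega^{(\vb+\vd)/p-\vs-\vs'}(h_k)$, the Bergman kernel $B^{\vs'}_{(\zeta_{j,k},z_{j,k})}$. This does not follow from band-limitedness, for two reasons. First, the boundary values of $B^{\vs'}_{(\zeta_{j,k},z_{j,k})}$ are not band-limited to a dyadic shell around $\lambda_k$: by the Laplace-transform representation of $\Delta_\Omega^{\vs'}$, their $\pi_\lambda$-spectrum fills the whole of $\overline{\Omega'}$, merely decaying where $\langle\lambda,h_k\rangle$ is large, so these atoms do not respect the frequency decomposition $u=\sum_k u*\psi_k$ and cannot serve as reconstruction atoms for the individual blocks. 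Second, even for genuinely band-limited functions on the Heisenberg quotients, a uniformly discrete set yields at best a frame, whose dual atoms are determined by the frame operator and cannot be prescribed in advance. The argument that actually closes this gap (and the one behind~\cite[Corollary 5.14]{CalziPeloso}) is the Coifman--Rochberg discretization scheme: write $f=\Ec\Bc f$ as an integral of $f$ against the weighted kernel $B^{\vs'}$ over $D$, replace the integral by a Riemann sum over the cells of a $(\delta,4)$-lattice, and show that the error operator has norm $<1$ on $A^{p,q}_{\vs,0}(D)$ once $\delta$ is small, so that the discretized operator is invertible and $\Psi$ is onto. Your sampling step would have to be replaced by such an estimate (or, in the Banach range only, by a duality argument against the sampling operator of~\cite[Theorem 3.23]{CalziPeloso}).
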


\medskip

We conclude this section with a  technical lemma that will be very useful later on. 
\begin{lem}\label{lem:5}
Take $\vs\in \R^r$, $p,q\in ]0,\infty]$, $t\in T_+$, and $g\in
GL(E)$ such that $t\cdot \Phi=\Phi\circ (g\times g)$. Then, 
\[
\norm{f\circ (g \times t)}_{L^{p,q}_{\vs}(D)}=\Delta_\Omega^{(\vb+\vd)/p-\vs}(t)\norm{f}_{L^{p,q}_{\vs}(D)}
\]
for every $\nu_D$-measurable function $f$ on $D$.
\end{lem}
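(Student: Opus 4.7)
The plan is a direct computation via two successive changes of variables (one on the fiber $\Nc$, one on the base $\Omega$), after checking that $g\times t$ preserves $D$ and recording the Jacobians dictated by $\vb$ and $\vd$.

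First I would verify that $g\times t$ (with $t$ extended $\C$-linearly to $F_\C$) is an affine automorphism of $D$: if $(\zeta,z)\in D$ then
\[
\Ima(t z)-\Phi(g\zeta)=t\cdot\Ima z-(t\cdot\Phi)(\zeta,\zeta)=t\cdot\rho(\zeta,z)\in\Omega,
\]
since $t\in T_+\subset G(\Omega)$. Setting $F\coloneqq f\circ(g\times t)$ and unwinding the definition of the slice, one obtains the key identity
\[
F_h(\zeta,x)=f_{t\cdot h}(g\zeta,t x)\qquad\text{for every }h\in\Omega,\ (\zeta,x)\in\Nc,
\]
again using $t\cdot\Phi=\Phi(g\times g)$.

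Next I would pin down the two Jacobians I need. From the relation $\Delta^{-\vb}(t)=\lvert\det_\C g\rvert^2=\lvert\det_\R g\rvert$ (recalled in Section~\ref{sec:2}), the change of variable $\zeta\mapsto g\zeta$ on $E$ has Jacobian $\Delta^{-\vb}(t)$. For the Jacobian of $x\mapsto tx$ on $F$ I would exploit the $G(\Omega)$-invariance of $\nu_\Omega=\Delta_\Omega^\vd\cdot\Hc^m$: the identity
\[
\int_\Omega\phi(t\cdot h)\,\Delta_\Omega^\vd(h)\,\dd h=\int_\Omega\phi(h)\,\Delta_\Omega^\vd(h)\,\dd h
\]
together with $\Delta_\Omega^\vd(t^{-1}\cdot h)=\Delta^{-\vd}(t)\Delta_\Omega^\vd(h)$ forces $\lvert\det_\R t\rvert=\Delta^{-\vd}(t)$. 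Putting these together, the change of variable $(\zeta,x)\mapsto(g\zeta,t x)$ on $\Nc$ carries Haar measure to $\Delta^{-\vb-\vd}(t)\,\dd(\zeta,x)$, hence
\[
\norm{F_h}_{L^p(\Nc)}=\Delta^{(\vb+\vd)/p}(t)\,\norm{f_{t\cdot h}}_{L^p(\Nc)}
\]
(with the usual modification when $p=\infty$, where no Jacobian appears and the supremum is translation-invariant).

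Finally I would plug this into Definition~\ref{def:Lpqs}: using $\Delta_\Omega^\vs(t^{-1}\cdot h)=\Delta^{-\vs}(t)\Delta_\Omega^\vs(h)$ and the $T_+$-invariance of $\nu_\Omega$, the substitution $h'=t\cdot h$ in the outer integral yields
\[
\norm{F}_{L^{p,q}_\vs(D)}^q=\Delta^{q(\vb+\vd)/p}(t)\int_\Omega\Delta_\Omega^{q\vs}(t^{-1}\cdot h')\norm{f_{h'}}_{L^p(\Nc)}^q\dd\nu_\Omega(h')=\Delta^{q[(\vb+\vd)/p-\vs]}(t)\,\norm{f}_{L^{p,q}_\vs(D)}^q,
\]
which is the claim (the case $q=\infty$ follows verbatim after replacing the integral by an essential supremum). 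The only subtlety is the bookkeeping of Jacobians and of the exponents $\vb,\vd$; apart from that, everything is a change of variables, and the two endpoint cases $p=\infty$ or $q=\infty$ need only the obvious notational adjustment.
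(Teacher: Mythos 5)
Your proof is correct and follows essentially the same route as the paper: the slice identity $(f\circ(g\times t))_h(\zeta,x)=f_{t\cdot h}(g\zeta,t\cdot x)$, the factor $\Delta^{(\vb+\vd)/p}(t)$ from the change of variables on $\Nc$, and then the substitution $h'=t\cdot h$ in the outer integral using the invariance of $\nu_\Omega$. The only difference is that the paper cites external lemmas for the Jacobian identity, whereas you derive $\lvert\det_\R g\rvert=\Delta^{-\vb}(t)$ and $\lvert\det_\R t\rvert=\Delta^{-\vd}(t)$ directly from the facts recorded in Section~\ref{sec:2}, which is a perfectly sound (and self-contained) way to justify the same step.
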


\begin{proof}
Observe that 
\[
(f\circ (g\times t))_h(\zeta,x)= f_{t\cdot h}(g\zeta,t\cdot x)
\]
for every $(\zeta,x)\in D$ and for every $h\in\Omega$,  so that
\[
\norm{(f\circ (g\times t))_h}_{L^p(\Nc)}= \Delta^{(\vb+\vd)/p}(t) \norm{f_{t\cdot h}}_{L^p(\Nc)}
\]
for every $h\in \Omega$, thanks to~\cite[Lemmas 2.9 and 2.18]{CalziPeloso}.
Then,
\[
\norm{f\circ (g\times t)}_{L^{p,q}_{\vs}(D)}=  \Delta^{(\vb+\vd)/p-\vs}(t)\norm{f}_{L^{p,q}_{\vs}(D)},
\]
whence the result.
\end{proof}

\section{Carleson Measures}\label{sec:Carleson}

In this section we prove our main results concerning Carleson measures
for $A^{p,q}_\vs$.
For every  $\mi\in\cM_+(D)$  and $R>0$, we set
\begin{equation}\label{eq:MR-mi}
M_R(\mi)\colon D\ni (\zeta,z)\mapsto \mi(B((\zeta,z),R))\in [0,\infty[.
\end{equation}

\begin{lem}\label{lem:1}
Take $\vs\in \R^r$, $p,q\in ]0,\infty]$, and $\mi\in\cM_+(D)$. Then, the following conditions are equivalent:
\begin{enumerate}
\item[{\em(1)}]  there is $R>0$ such that $M_R(\mi)\in L^{p,q}_{\vs}(D)$;

\item[{\em(2)}]  $M_R(\mi)\in L^{p,q}_{\vs}(D)$ for every $R>0$;

\item[{\em(3)}] there is a $(\delta,R)$-lattice $(\zeta_{j,k},z_{j,k})_{j\in J,k\in K}$ on $D$, with $\delta>0$ and $R>1$, such that 
\[
\Big(\Delta_\Omega^{\vs-(\vb+\vect
d)/p}(\rho(\zeta_{j,k},z_{j,k}))M_{R\delta}(\mi)(\zeta_{j,k},z_{j,k})
\Big)\in\ell^{p,q}(J,K);
\]

\item[{\em(4)}] for every $(\delta,R)$-lattice $(\zeta_{j,k},z_{j,k})_{j\in J,k\in K}$ on $D$, with $\delta>0$ and $R>1$, 
\[
\big(\Delta_\Omega^{\vs-(\vb+\vd)/p}(\rho(\zeta_{j,k},z_{j,k}))M_{R\delta}(\mi)(\zeta_{j,k},z_{j,k}) \big)\in\ell^{p,q}(J,K).
\]
\end{enumerate}
The same holds if one replaces $L^{p,q}_{\vs}(D)$ and
$\ell^{p,q}(J,K)$ with $L^{p,q}_{\vs,0}(D)$ and
$\ell^{p,q}_0(J,K)$, respectively. 
\end{lem}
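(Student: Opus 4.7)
The goal is to show that all four conditions reduce to a common discrete mixed-norm quantity attached to any $(\delta,R)$-lattice. Two structural features of the Bergman geometry of $D$ will drive the proof: (a) for any $R,r>0$, the trivial estimate $M_R(\mi)(\zeta,z)\leq M_{R+r}(\mi)(\zeta',z')$ holds whenever $(\zeta',z')\in B((\zeta,z),r)$, while a bounded-doubling covering argument yields the reverse estimate $M_{R'}(\mi)(\zeta,z)\leq C\sum_{i=1}^N M_R(\mi)(\zeta_i,z_i)$ with $N$ uniformly bounded and all centres in $B((\zeta,z),R+R')$; and (b) both $\Delta_\Omega^{\vs}\circ \rho$ and the density of $\nu_D$ vary by a bounded factor on every Bergman ball of bounded radius. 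Combined with the fibre decomposition
\[
\int_D f\,\dd\nu_D=\int_\Omega \Delta_\Omega^{\vb+\vd}(h)\bigg(\int_\Nc f_h(\zeta,x)\,\dd(\zeta,x)\bigg)\dd\nu_\Omega(h)
\]
of the invariant measure along $\rho$, these facts will allow one to pass between the mixed-norm space $L^{p,q}_\vs(D)$ and the mixed sequence space $\ell^{p,q}(J,K)$.

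The equivalence $(1)\Leftrightarrow(2)$ is then routine: one direction uses pointwise monotonicity of $M_R$ in $R$, and the other uses the covering estimate from (a). The slow variation (b) of $\Delta_\Omega^{\vs}\circ \rho$ on the enlarged ball $B((\zeta,z),R+R')$ allows the discrete (quasi-)triangle inequality on the $N$ summands to pass through the $L^{p,q}_\vs$ norm, with the quasi-norm constant (when $\min(p,q)<1$) absorbed into the uniformly bounded $N$. The implication $(4)\Rightarrow(3)$ is obvious, and $(3)\Rightarrow(2)$ will come along with the main step below via the same lattice argument.

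The heart of the proof is $(2)\Leftrightarrow(4)$, which one would carry out on any fixed $(\delta,R)$-lattice $(\zeta_{j,k},z_{j,k})_{j\in J,k\in K}$ with $h_k=\rho(\zeta_{j,k},z_{j,k})$. For the ``$\lesssim$'' bound, the pairwise disjointness of the Bergman balls $B((\zeta_{j,k},z_{j,k}),\delta)$, combined with the pointwise estimate $M_{R\delta}(\mi)(\zeta_{j,k},z_{j,k})\leq M_{(R+1)\delta}(\mi)(\zeta,z)$ on those balls and (b), bounds the discrete quantity by a continuous $L^{p,q}_\vs$ norm of $M_{(R+1)\delta}(\mi)$. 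The decisive computation is that, when one integrates leaf by leaf, the Haar measure on $\Nc$ of the $j$-slice $B((\zeta_{j,k},z_{j,k}),\delta)\cap(bD+(0,ih_k))$ depends on $h_k$ through an exact power of $\Delta_\Omega$, and this factor combines with the $\nu_D/\nu_\Omega$ Jacobian $\Delta_\Omega^{\vb+\vd}(h_k)$ precisely into $\Delta_\Omega^{\vs-(\vb+\vd)/p}(h_k)$; this is where Lemma~\ref{lem:5} effectively intervenes, via the $T_+$-equivariance of all the objects in question. The ``$\gtrsim$'' direction uses that $(B((\zeta_{j,k},z_{j,k}),R\delta))_{j,k}$ covers $D$, together with $M_{R\delta}(\mi)(\zeta,z)\leq M_{2R\delta}(\mi)(\zeta_{j,k},z_{j,k})$ and (b).

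The $0$-variants follow from the same estimates, because all the comparisons between $M_R(\mi)$ and the lattice sequence are \emph{local} in $(j,k)$: truncating $M_R(\mi)$ to a compact set of $D$ corresponds, up to a bounded perturbation, to truncating the lattice sequence to a finite set of indices, and vice versa, so approximation by $C_c(D)$ on the continuous side matches approximation by finitely supported sequences on the discrete side. The main obstacle, and the step requiring the most care, is keeping the $j$- and $k$-summations separated throughout: one must perform the $L^p(\Nc)\to\ell^p(J)$ conversion uniformly in the leaf index $k$ before taking the $\ell^q$-norm in $k$, and one must check that the $h_k$-dependent slice volume combines with the other $h_k$-powers \emph{exactly} into the exponent $(\vb+\vd)/p$ appearing in the weight, so as to produce the precise weight $\Delta_\Omega^{\vs-(\vb+\vd)/p}(h_k)$ stated in (3)--(4).
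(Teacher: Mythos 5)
Your plan is correct and follows essentially the same route as the paper's proof: radius changes via a finite covering of a larger Bergman ball by smaller ones combined with the $T_+$-equivariance of Lemma~\ref{lem:5}; the continuous-to-discrete passage via disjointness of the lattice balls and the $\Delta_\Omega^{-(\vb+\vd)/p}(h_k)$ scaling of the slice norms $\norm{(\chi_{B_{j,k}})_h}_{L^p(\Nc)}$; the reverse via the covering property of the lattice and the bounded overlap of the balls $B_\Omega(h_k,R\delta)$; and the $0$-variants by the quantitative, local nature of all estimates. The only point to watch is that the exponent $(\vb+\vd)/p$ arises from the scaling of the leaf norms under the affine automorphisms rather than from a $\nu_D$-Jacobian (since $L^{p,q}_\vs(D)$ is defined directly through $\nu_\Omega$ and $\norm{f_h}_{L^p(\Nc)}$), but that is exactly what your appeal to Lemma~\ref{lem:5} delivers.
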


This extends~\cite[Lemmas 2.9 and 2.12]{NanaSehba}, where the case in
which $p=q\in [1,\infty]$, $D$ is an irreducible symmetric tube
domain, and $\vect{s}\in \R\vect 1_r$, is considered.\footnote{Notice,
though, that in~\cite[Lemmas 2.9 and 2.12]{NanaSehba} the use of the
`pure-norm' spaces $L^p$ and $\ell^p$ allows for the use of more
general lattices. } 

\begin{proof}
We shall only prove the first assertion. The second assertion is clear
if $\mi$ has compact support, and then follows by approximation, since
the equivalence of conditions (1)--(4) is quantitative by the closed
graph theorem, or by the proof below. 

(1) $\implies$ (2). Take $R'>0$. Observe that, since $\overline B((0,i
e_\Omega),R')$ is compact by~\cite[Proposition 2.44]{CalziPeloso},
there is a finite family $(\zeta_j,z_j)_{j\in J}$ of elements of $D$
such that $B((0,i e_\Omega),R')\subseteq \bigcup_{j\in
J}B((\zeta_j,z_j),R)$.  
For every $j\in J$, choose $t_j\in T_+$ and $g_j\in GL(E)$ in such a
way that $t_j\cdot \Phi=\Phi\circ (g_j\times g_j)$ and $t_j\cdot
e_\Omega= \rho(\zeta_j,z_j)$. If we define  
\[
\phi_j\colon (\zeta,z)\mapsto (\zeta_j,\Rea z_j+i\Phi(\zeta_j))\cdot (g_j \zeta,t_j\cdot z),
\]
then $\phi_j$ is an affine automorphism of $D$ and
$B((\zeta,z),R')\subseteq \bigcup_{j\in J} B(\phi_j(\zeta,z),R)$ for
every $(\zeta,z)\in D$. Therefore, in order to prove that
$M_{R'}(\mi)\in L^{p,q}_{\vs}(D)$, it will suffice to prove that
$M_R(\mi)\circ \phi_j\in L^{p,q}_{\vs}(D)$ for every $j\in J$.
Since this follows easily from Lemma~\ref{lem:5}, this proves (2). 

(2) $\implies$ (4). Let $(\zeta_{j,k},z_{j,k})_{j\in J,k\in K}$ be a
$(\delta,R)$-lattice on $D$ for some $\delta>0$ and $R>1$. Observe
that 
\[
\sum_{j,k} M_{R\delta}(\zeta_{j,k},z_{j,k}) \chi_{B((\zeta_{j,k},z_{j,k}),\delta)}\meg M_{(R+1)\delta}(\mi)(\zeta_{j,k},z_{j,k})
\]
on $D$, so that 
\[
\sum_{j,k}  M_{R\delta}(\zeta_{j,k},z_{j,k})\chi_{B((\zeta_{j,k},z_{j,k}),\delta)}\in L^{p,q}_{\vs}(D).
\]
Now, observe that there is a constant $C_1>0$ such that
\[
\norm{(\chi_{B((0,i e_\Omega),\delta)})_h}_{L^p(\Nc)}\Meg C_1
\]
for every $h\in B_\Omega(e_\Omega,\delta/2)$,\footnote{Observe that,
since $B((0,i e_\Omega),\delta)$ is an open set, the mapping
$h\mapsto \norm{(\chi_{B((0,i e_\Omega),\delta)})_h}_{L^p(\Nc)}$ is
lower semi-continuous, and that it vanishes nowhere on the compact
set $\overline B_\Omega(e_\Omega,\delta/2)$, thanks to~\cite[Lemma
2.46]{CalziPeloso}.} so that, by homogeneity, 
\[
\norm{(\chi_{B((\zeta_{j,k},z_{j,k}),\delta)})_h}_{L^p(\Nc)}\Meg C_1\Delta_\Omega^{-(\vb+\vd)/p}(h_k)
\]
for every $h\in B_\Omega(h_k,\delta/2)$ and for every $(j,k)\in J\times K$, where $h_k\coloneqq\rho(\zeta_{j,k},z_{j,k})$ for every $(j,k)\in J\times K$.
It then follows that
\[
\begin{split}
&\norm*{\sum_{j,k} \left( \chi_{B((\zeta_{j,k},z_{j,k}),\delta)}\right)_h M_{R\delta}(\zeta_{j,k},z_{j,k})}_{L^p(\Nc)}\\
&\qquad\Meg C_1 \norm*{ \big(\Delta_\Omega^{-(\vb+\vd)/p}(h_k)M_{R\delta}(\zeta_{j,k},z_{j,k}) \chi_{B_\Omega(h_k,\delta/2)}(h)\big)_{j,k}}_{\ell^p(J\times K)}
\end{split}
\]
for every $h\in\Omega$, whence
\[
\left(\Delta_\Omega^{\vs-(\vb+\vd)/p}(h_k)M_{R\delta}(\zeta_{j,k},z_{j,k})\right)\in \ell^{p,q}(J,K)
\]
thanks to~\cite[Corollary 2.49]{CalziPeloso}.

(4) $\implies$ (3). Obvious by~\cite[Lemma 2.55]{CalziPeloso}.

(3) $\implies$ (1). Take a $(\delta,R)$-lattice
$(\zeta_{j,k},z_{j,k})_{j\in J, k\in K}$ as in the statement, define
$h_k\coloneqq \rho(\zeta_{j,k},z_{j,k})$ for every $(j,k)\in J\times
K$, and let us prove that 
\[
\left(\Delta_\Omega^{\vs-(\vb+\vd)/p}(h_k)M_{R' \delta}(\zeta_{j,k},z_{j,k}) \right)\in\ell^{p,q}(J,K)
\] 
for every $R'\Meg R$. Indeed, for every $(j,k)\in J\times K$, define
$V_{j,k}$ as the set of $(j',k')\in J\times K$ such that
$d((\zeta_{j,k},z_{j,k}),(\zeta_{j',k'},z_{j',k'}))<(R+R')\delta $,
and observe that $N\coloneqq \sup\limits_{j,k} \card(V_{j,k})$ is finite
by~\cite[Proposition 2.56]{CalziPeloso}. In addition,
by~\cite[Corollary 2.49]{CalziPeloso} we may find a constant $C_2>0$
such that 
\[
\frac{1}{C_2}\Delta_\Omega^{\vs-(\vb+\vd)/p}(h_{k'}) \meg
\Delta_\Omega^{\vs-(\vb+\vd)/p}(h_k)\meg
C_2\Delta_\Omega^{\vs-(\vb+\vd)/p}(h_{k'}) 
\] 
for every $(j,k)\in J\times K$ and for every $(j',k')\in V_{j,k}$. Then,
\[
\Delta_\Omega^{\vs-(\vb+\vd)/p}(h_k)M_{R'\delta}(\zeta_{j,k},z_{j,k})\meg
C_2 \sum_{(j',k')\in V_{j,k}}
\Delta_\Omega^{\vs-(\vb+\vd)/p}(h_{k'})M_{R\delta}(\mi)(\zeta_{j',k'},z_{j',k'}) 
\]
for every $(j,k)\in J\times K$. Define $V'_k$ as the set of $k'\in K$
such that $d(h_k,h_{k'})\meg (R+R')\delta$, so that $k'\in V'_k$ for
every $(j',k')\in V_{j,k}$ and for every $j\in J$ by~\cite[Lemma
2.46]{CalziPeloso}. In addition, $N'\coloneqq  \sup\limits_{k} \card(V'_{k})$
is finite by~\cite[Proposition 2.56]{CalziPeloso}. 
Therefore,
\[
\begin{split}
&\norm*{\big(\Delta_\Omega^{\vs-(\vb+\vd)/p}(h_k)M_{R'\delta}(\zeta_{j,k},z_{j,k})\big)_j}_{\ell^p(J)}\\
&\qquad\qquad\qquad\qquad
\meg C_2 N^{\max(1,1/p)} \norm*{\big(\Delta_\Omega^{\vs-(\vb+\vd)/p}(h_{k'})M_{R\delta}(\zeta_{j',k'},z_{j',k'})\big)_{j',k'}}_{\ell^p(J\times V'_k)}
\end{split}
\]
for every $k\in K$, so that
\[
\begin{split}
&\norm*{\left( \Delta_\Omega^{\vs-(\vb+\vd)/p}(h_k)M_{R'\delta}(\zeta_{j,k},z_{j,k})\right) _{j,k}}_{\ell^{p,q}(J,K)}\\
&\qquad\qquad\qquad\meg C_2 N^{\max(1,1/p)} N'^{\max(1/p,1/q)}
\norm*{\left( \Delta_\Omega^{\vs-(\vb+\vect
d)/p}(h_{k'})M_{R\delta}(\mi)(\zeta_{j',k'},z_{j',k'})\right)
_{j',k'}}_{\ell^{p,q}(J,K)}. 
\end{split}
\]
whence our claim.

Now, let $(B_{j,k})_{j,k}$ be a Borel partition of $D$ such that $B_{j,k}\subseteq B((\zeta_{j,k},z_{j,k}),R\delta)$ for every $(j,k)\in J\times K$, and observe that
\[
M_\delta(\mi)\meg \sum_{j,k} \chi_{B_{j,k}} M_{(R+1)\delta}(\zeta_{j,k},z_{j,k})
\] 
on $D$. In addition, arguing as in the proof of~\cite[Theorem
3.23]{CalziPeloso}, we see that there is a constant $C_3>0$ such that 
\[
\norm{(\chi_{B((\zeta,z),R\delta)})_h}_{L^p(\Nc)}\meg C_3
\Delta^{-(\vb+\vd)/p}_\Omega(\rho(\zeta,z))
\chi_{B_\Omega(\rho(\zeta,z),R\delta)}(h) 
\]
for every $(\zeta,z)\in D$ and for every $h\in \Omega$.
Hence,
\[
\norm*{M_\delta(\mi)_h}_{L^p(\Nc)}\meg C_3\norm*{\big(
\chi_{B(h_k,R\delta)}(h) \Delta^{-(\vb+\vect
d)/p}_\Omega(h_k)M_{(R+1)\delta}(\zeta_{j,k},z_{j,k})
\big)_{j,k}}_{\ell^p(J\times K)}. 
\]
Now, by~\cite[Corollary 2.49 and Proposition 2.56]{CalziPeloso} there
are two constants $C_4>0$ and $N''\in \N$ such that 
\[
\frac{1}{C_4}\Delta^{\vs}_\Omega(h)\meg  \Delta^{\vs}_\Omega(h_k)\meg C_4 \Delta^{\vs}_\Omega(h)
\]
for every $h\in B_\Omega(h_k, R\delta)$ and for every $k\in K$, and
such that $\sum_k \chi_{B_\Omega(h_k,R\delta)}\meg N''
\chi_\Omega$. Then, 
\[
\norm*{M_\delta(\mi)}_{L^{p,q}_{\vs}(D)}\meg C_5
\norm*{\big(\Delta^{\vs-(\vb+\vd)/p}_\Omega(h_k)
M_{(R+1)\delta}(\zeta_{j,k},z_{j,k}) \big)_{j,k}
}_{\ell^{p,q}(J,K)}, 
\]
where $C_5\coloneqq C_3 C_4
\nu_\Omega(B_\Omega(e_\Omega,R\delta))^{1/q}
N''^{\max(1/p,1/q)}$. Thus,  (1) follows.  
\end{proof}

The next proposition provides a first sufficient condition for a
measure $\mi\in\cM_+(D)$ to  be a $p$-Carleson measure or a  compact
$p$-Carleson measure. 
It extends~\cite[Proposition 3.5 and Theorem
3.8 (i)]{NanaSehba}, where the case in which $p_1=q_1,p\in
[1,\infty[$, $D$ is an irreducible symmetric tube domain, and
$\vect{s}\in \R\vect 1_r$, is considered. 

We shall often use  the following notation. Given $p_1,q_1,p\in ]0,\infty]$, with $p<\infty$ and $\vs\in
\R^r$, we set\footnote{Recall that $t'\coloneqq\max(1,t)'$ for every $t\in ]0,\infty]$.}
\begin{equation}\label{p*q*s'}
p^*\coloneqq (p_1/p)',\quad
q^*\coloneqq(q_1/p)',\quad \text{and}\quad
\vs^*\coloneqq \max(1,p/p_1)(\vb+\vd)-p\vs.
\end{equation}

\begin{prop}\label{prop:23}
Take $p_1,q_1,p\in ]0,\infty]$, with $p<\infty$, and take $\vs\in
\R^r$ such that $\vs\in \frac{1}{2 q_1}\vect m+(\R_+^*)^r$ or
$\vs\in \R_+^r$ if $q_1=\infty$. Let $p^*,q^*$, and $\vs^*$ be as in~\eqref{p*q*s'}. 
Let $\mi\in\cM_+(D)$, and take $R>0$. 
Assume that 
\[
M_R(\mi)\in L^{p^*,q^*}_{\vs^*}(D).
\]
Then, $A^{p_1,q_1}_{\vs}(D)$  embeds continuously into $L^{p}(\mi)$.

If, in addition, 
\[
M_R(\mi)\in L^{p^*, q^*}_{\vs^*,0}(D),
\]
then $A^{p_1,q_1}_{\vs}(D)$  embeds compactly into $L^{p}(\mi)$.
\end{prop}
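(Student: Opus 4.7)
The plan is to combine three ingredients: a lattice decomposition of $D$ (Lemma~\ref{lem:32}), the plurisubharmonic submean inequality for $|f|^{p_1}$ on Bergman balls, and H\"older's inequality in the mixed sequence space $\ell^{p^*,q^*}(J,K)$, with the discrete characterization of $L^{p^*,q^*}_{\vs^*}(D)$ supplied by Lemma~\ref{lem:1}. First I would fix a $(\delta,4)$-lattice $(\zeta_{j,k},z_{j,k})_{j\in J,k\in K}$ on $D$, with $\delta$ small enough to freeze weights later, set $h_k\coloneqq \rho(\zeta_{j,k},z_{j,k})$, and choose a Borel partition $(B_{j,k})$ of $D$ with $B_{j,k}\subseteq B((\zeta_{j,k},z_{j,k}),4\delta)$, so that
$$
\int_D|f|^p\,d\mi\meg \sum_{j,k}M_{4\delta}(\mi)(\zeta_{j,k},z_{j,k})\sup_{B((\zeta_{j,k},z_{j,k}),4\delta)}|f|^p.
$$

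Next I would derive the key pointwise estimate. The submean inequality for $|f|^{p_1}$ on slightly enlarged Bergman balls, combined with the factorization $\nu_D=(\Delta_\Omega^{\vb+2\vd}\circ\rho)\cdot\Hc^{2n+2m}$, the comparability of $\Delta_\Omega$ on those balls (\cite[Corollary 2.49]{CalziPeloso}), and the quasi-product structure of Bergman balls along $\rho$, yields
$$
\sup_{B((\zeta_{j,k},z_{j,k}),4\delta)}|f|^p\meg C\,\Delta_\Omega^{(\vb+\vd)/p^*-\vs^*}(h_k)\biggl(\int_{B_\Omega(h_k,C'\delta)}\bigl(\Delta_\Omega^{\vs}(h)\norm{f_h\chi_{U_{j,k}}}_{L^{p_1}(\Nc)}\bigr)^{p_1}\,\frac{d\nu_\Omega(h)}{\nu_\Omega(B_\Omega(h_k,C'\delta))}\biggr)^{p/p_1},
$$
where $U_{j,k}$ is a bounded neighbourhood of $\zeta_{j,k}$ in the $\Nc$ direction. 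The arithmetic identity $\vs^*-(\vb+\vd)/p^*=(p/p_1)(\vb+\vd)-p\vs$, which holds both when $p_1\meg p$ and when $p_1>p$, is what ensures that the exponent produced by the submean step matches the one prescribed by Lemma~\ref{lem:1}.

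With this bound in hand, the Carleson inequality reduces to estimating a sum of products $\sum_{j,k}\alpha_{j,k}\beta_{j,k}$ with $\alpha_{j,k}=\Delta_\Omega^{\vs^*-(\vb+\vd)/p^*}(h_k)M_{4\delta}(\mi)(\zeta_{j,k},z_{j,k})$. I would apply H\"older's inequality first in the $j$-variable with exponents $p^*$ and $\max(1,p_1/p)$, then in the $k$-variable with exponents $q^*$ and $\max(1,q_1/p)$, using in the subcritical cases $p_1<p$ or $q_1<p$ the embedding $\ell^r\hookrightarrow\ell^1$ (for $r\meg 1$) to absorb the relevant power. The first factor is controlled by $\norm{M_{R\delta}(\mi)}_{L^{p^*,q^*}_{\vs^*}(D)}$ via Lemma~\ref{lem:1}; the second factor, after using bounded overlap of the enlarged lattice balls in $\Omega$ and in $\Nc$ together with Jensen's inequality, is bounded by $\norm{f}_{A^{p_1,q_1}_\vs(D)}^p$, yielding the continuous embedding.

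For the compactness statement I would exploit that $L^{p^*,q^*}_{\vs^*,0}(D)$ is the closure of $C_c(D)$, so $M_R(\mi)$ is approximable in that norm by compactly supported functions. Applied to an exhaustion of $D$ by compacta $K_n$, the Carleson estimate just obtained shows that the operator norm of $A^{p_1,q_1}_{\vs}(D)\hookrightarrow L^p(\mi|_{D\setminus K_n})$ tends to zero. Compactness of each $A^{p_1,q_1}_\vs(D)\hookrightarrow L^p(\mi|_{K_n})$ follows from Montel's theorem via the standard pointwise bound $|f(\zeta,z)|\meg C_{K_n}\norm{f}_{A^{p_1,q_1}_\vs}$ (coming from the reproducing kernel, or equivalently from the continuous inclusion into $A^{\infty,\infty}_{\vs-(\vb+\vd)/p_1}$) and a diagonal extraction, proving compactness of the embedding. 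The principal obstacle throughout is the H\"older bookkeeping: the definitions of $p^*,q^*,\vs^*$ bifurcate according to the sign of $p_1-p$ and $q_1-p$, and one must verify that in each of the four cases the weight produced by the submean step matches $\vs^*-(\vb+\vd)/p^*$ and that the H\"older/$\ell^r\hookrightarrow\ell^1$ alternative is applied correctly.
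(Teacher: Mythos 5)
Your proposal is correct and follows essentially the same route as the paper: a lattice decomposition, pointwise control of $\sup_B\abs{f}$ by the $A^{p_1,q_1}_{\vs}$ norm (the paper simply cites the sampling estimate of \cite[Theorem 3.23]{CalziPeloso} for the map $S_+f$ rather than re-deriving the submean inequality), H\"older/$\ell^r\hookrightarrow\ell^1$ in $\ell^{p^*,q^*}$ duality, and Lemma~\ref{lem:1}; for compactness the paper reduces to compactly supported $\mi$ and uses that $\Hol(D)$ is Fr\'echet--Montel, which is equivalent to your tail-norm argument. Note only that the prefactor in your displayed submean estimate should read $\Delta_\Omega^{\vs^*-(\vb+\vd)/p^*}(h_k)$ rather than its reciprocal, consistent with the arithmetic identity and the definition of $\alpha_{j,k}$ you state afterwards.
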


\begin{proof}
Let $(\zeta_{j,k},z_{j,k})_{j\in J,k\in K}$ be an $(R/4,4)$-lattice on
$D$ (cf.~\cite[Lemma 2.55]{CalziPeloso}), and define
$S_+\colon \Hol(D)\to \C^{J\times K}$ so that 
\[
(S_+ f)_{j,k}\coloneqq \Delta^{\vs-(\vb+\vect
d)/p_1}_\Omega(h_k) \max_{\overline B((\zeta_{j,k},z_{j,k}), R)}
\abs{f} 
\]
for every $(j,k)\in J\times K$, where $h_k\coloneqq
\rho(\zeta_{j,k},z_{j,k})$, so that there is a constant $C_1>0$ such
that 
\[
\frac{1}{C_1} \norm{f}_{A^{p_1,q_1}_{\vs}(D)}\meg \norm{S_+
f}_{\ell^{p_1,q_1}(J,K)}\meg C_1\norm{f}_{A^{p_1,q_1}_{\vs}(D)} 
\] 
for every $f\in \Hol(D)$, thanks to~\cite[Theorem 3.23]{CalziPeloso}.
In addition, choose a Borel partition $(B_{j,k})$ of $D$ such that
$B_{j,k}\subseteq B((\zeta_{j,k},z_{j,k}), R)$ for every $(j,k)\in
J\times K$. Then, for every $f\in A^{p_1,q_1}_{\vs}(D)$, 
\[
\begin{split}
\norm{f}_{L^{p}(\mi)}&\meg \norm*{\sum_{(j,k)\in J\times K} (S_+
f)_{j,k}  \Delta^{(\vb+\vd)/p_1-\vect
s}_\Omega(h_k)\chi_{B_{j,k}}}_{L^{p}(\mi)}\\ 
&\meg \norm*{\big((S_+ f)_{j,k}  \Delta^{(\vb+\vd)/p_1-\vect
s}_\Omega(h_k)
M_R(\mi)(\zeta_{j,k},z_{j,k})^{1/p}\big)_{j,k}}_{\ell^{p}(J\times
K)}\\ 
&\meg C_2\norm*{S_+ f}_{\ell^{p_1,q_1}(J,K)}\\ 
&\meg C_1 C_2 \norm{f}_{A^{p_1,q_1}_{\vs}(D)}
\end{split}
\]
where $C_2\coloneqq \norm*{\Delta^{p[(\vb+\vd)/p_1-\vs]}_\Omega(h_k) M_R(\mi)(\zeta_{j,k},z_{j,k}) }_{\ell^{p^*,q^*}(J,K)}^{1/p}$. 
Then, the first assertion follows from Lemma~\ref{lem:1}.

In order to prove the second assertion, it will suffice to show that,
if $\mi$ has compact support, then the canonical mapping
$A^{p_1,q_1}_{\vs}(D)\to L^{p}(\mi)$ is compact. Since
$A^{p_1,q_1}_{\vs}(D)$ embeds continuously, hence compactly, into
the Fr\'echet--Montel space $\Hol(D)$,  which in turn embeds
continuously into $L^p(\mi)$, the assertion follows easily. 
\end{proof}

\begin{prop}\label{prop:26}
Take $p_1,q_1,p\in ]0,\infty]$, with $p<\infty$, and take $\vs\in \R^r$ such that $\vs\in \frac{1}{2 q_1}\vect m+(\R_+^*)^r$ (resp.\ $\vs\in \R_+^r$ if $q_1=\infty$). 
Let $\mi\in\cM_+(D)$ and  $R>0$. Assume that $A^{p_1,q_1}_{\vs,0}(D)$ (resp.\ $A^{p_1,q_1}_{\vs}(D)$) embeds continuously into $L^{p}(\mi)$.
Then,
\[
M_R(\mi)\in L^{\infty,\infty}_{p(\vb+\vd)/p_1-p\vs}(D).
\]
If, in addition, $A^{p_1,q_1}_{\vs,0}(D)$ (resp.\ $A^{p_1,q_1}_{\vs}(D)$) embeds compactly into $L^{p}(\mi)$ and $\vect{s_1}\in (\R_+^*)^r$ if $p_1=q_1=\infty$, then
\[
M_R(\mi)\in L^{\infty,\infty}_{p(\vb+\vd)/p_1-p\vs,0}(D).
\]
\end{prop}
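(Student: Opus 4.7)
The natural approach is to test the Carleson inequality against a homogeneous family of functions of uniformly bounded norm. Concretely, for every $(\zeta_0,z_0)\in D$ I shall produce $f_{(\zeta_0,z_0)}\in A^{p_1,q_1}_{\vs,0}(D)$ (respectively $A^{p_1,q_1}_{\vs}(D)$) satisfying
\[
\big\|f_{(\zeta_0,z_0)}\big\|_{A^{p_1,q_1}_{\vs}}\meg C_1,\qquad \big|f_{(\zeta_0,z_0)}\big|\Meg c_1\,\Delta_\Omega^{(\vb+\vd)/p_1-\vs}(\rho(\zeta_0,z_0))\quad\text{on }B((\zeta_0,z_0),R),
\]
with constants $C_1,c_1>0$ independent of $(\zeta_0,z_0)$. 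Substituting such an $f_{(\zeta_0,z_0)}$ into the hypothesized Carleson inequality $\|f\|_{L^p(\mi)}\meg C\|f\|_{A^{p_1,q_1}_{\vs}}$ and bounding $\|f_{(\zeta_0,z_0)}\|_{L^p(\mi)}^p$ from below by integrating only over $B((\zeta_0,z_0),R)$ yields at once
\[
c_1^p\,\Delta_\Omega^{p(\vb+\vd)/p_1-p\vs}(\rho(\zeta_0,z_0))\,M_R(\mi)(\zeta_0,z_0)\meg C^p C_1^p
\]
uniformly in $(\zeta_0,z_0)\in D$, which is exactly the required membership $M_R(\mi)\in L^{\infty,\infty}_{p(\vb+\vd)/p_1-p\vs}(D)$.

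The family $\{f_{(\zeta_0,z_0)}\}$ is transported from a single reference function $f_0$ centered at $(0,ie_\Omega)$ via the homogeneity of $D$. Given $(\zeta_0,z_0)$, choose $t_0\in T_+$ and $g_0\in GL(E)$ with $t_0\cdot e_\Omega=\rho(\zeta_0,z_0)$ and $t_0\cdot\Phi=\Phi\circ(g_0\times g_0)$, and let $\phi_0$ be the affine biholomorphism of $D$ obtained by composing $(\zeta,z)\mapsto(g_0\zeta,t_0\cdot z)$ with the left $bD$-translation by $(\zeta_0,\Rea z_0)$; a short computation gives $\phi_0(0,ie_\Omega)=(\zeta_0,z_0)$, and $\phi_0$, being an automorphism of $D$, is a Bergman isometry, so it maps $B((0,ie_\Omega),R)$ bijectively onto $B((\zeta_0,z_0),R)$. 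Set
\[
f_{(\zeta_0,z_0)}\coloneqq\Delta_\Omega^{(\vb+\vd)/p_1-\vs}(\rho(\zeta_0,z_0))\cdot\bigl(f_0\circ\phi_0^{-1}\bigr).
\]
Lemma~\ref{lem:5} applied to the $(g_0\times t_0)$-factor, together with the invariance of $\|\cdot\|_{L^{p_1,q_1}_{\vs}}$ under left $bD$-translations (immediate from the $bD$-invariance of $\rho$ and of the Haar measure on $\Nc$), yields $\|f_{(\zeta_0,z_0)}\|_{A^{p_1,q_1}_{\vs}}=\|f_0\|_{A^{p_1,q_1}_{\vs}}$; the pointwise lower bound on $|f_{(\zeta_0,z_0)}|$ then follows from the corresponding lower bound on $|f_0|$ over $\overline{B((0,ie_\Omega),R)}$. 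For $f_0$ one may take the Bergman-type kernel $B^{\vs'}_{(0,ie_\Omega)}$ with $\vs'$ chosen so that this kernel belongs to $A^{p_1,q_1}_{\vs,0}(D)$: such a $\vs'$ exists in the parameter range of the statement by a standard integrability computation (relying on the non-triviality hypothesis on $\vs$), and since $\Delta_\Omega^{\vs'}$ does not vanish on $\Omega+iF$, $|f_0|$ is continuous and positive on $D$, hence bounded below by some $c_1>0$ on the compact set $\overline{B((0,ie_\Omega),R)}$.

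For the compact/vanishing case I argue by contradiction. If $M_R(\mi)\notin L^{\infty,\infty}_{p(\vb+\vd)/p_1-p\vs,0}(D)$ there exist $\eps>0$ and a sequence $(\zeta_n,z_n)\in D$ escaping every compact subset of $D$ with $\Delta_\Omega^{p(\vb+\vd)/p_1-p\vs}(\rho(\zeta_n,z_n))M_R(\mi)(\zeta_n,z_n)\Meg\eps$. The corresponding $f_n\coloneqq f_{(\zeta_n,z_n)}$ are bounded in $A^{p_1,q_1}_{\vs,0}$ but have $\|f_n\|_{L^p(\mi)}$ bounded below by a positive constant. Compactness of $A^{p_1,q_1}_{\vs,0}\hookrightarrow L^p(\mi)$ extracts a subsequence converging in $L^p(\mi)$, while the standard pointwise bound $|f_n(\zeta,z)|\meg C\|f_0\|\,\Delta_\Omega^{(\vb+\vd)/p_1-\vs}(\rho(\zeta,z))$, combined with the fact that $f_n$ is concentrated near the escaping centers $(\zeta_n,z_n)$, forces $f_n\to 0$ locally uniformly on $D$ via Montel's theorem; the $L^p(\mi)$-limit must therefore vanish, a contradiction. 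The extra hypothesis $\vs\in(\R_+^*)^r$ when $p_1=q_1=\infty$ is precisely what ensures that $A^{\infty,\infty}_{\vs,0}(D)$ is large enough to house an admissible $f_0$. The chief technical difficulty is thus the verification that $B^{\vs'}_{(0,ie_\Omega)}\in A^{p_1,q_1}_{\vs,0}(D)$ for a suitable $\vs'$ throughout the full parameter range of the statement, and, for the vanishing part, the justification of the locally uniform decay $f_n\to 0$ when the centers $(\zeta_n,z_n)$ tend to $\partial D$.
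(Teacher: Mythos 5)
Your proposal is correct and follows essentially the same route as the paper: both test the embedding against the normalized Bergman-type kernels $B^{\vect{s'}}_{(\zeta_0,z_0)}\Delta_\Omega^{(\vb+\vd)/p_1-\vs-\vect{s'}}(\rho(\zeta_0,z_0))$ (your transported family $f_{(\zeta_0,z_0)}$ is exactly this, with the norm identity and the lower bound on $B((\zeta_0,z_0),R)$ obtained via Lemma~\ref{lem:5} and homogeneity rather than by citing the norm formula and kernel-comparability results of~\cite{CalziPeloso} directly), and both handle the vanishing case by combining the locally uniform decay of these normalized kernels as the centre escapes to infinity with the compactness of the embedding.
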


\begin{proof}
Take $\vect{s'}\in \R^r$ such that 
\[
\vect{s'}\in \frac{1}{p_1} (\vb+\vd)- \frac{1}{2 p_1}\vect{m'}-(\R_+^*)^r \qquad \text{and} \qquad
\vs+\vect{s'}\in \frac{1}{p_1} (\vb+\vd)- \frac{1}{2 q_1}\vect{m'}-(\R_+^*)^r.
\]
Then, $B^{\vect{s'}}_{(\zeta',z')}\in A^{p_1,q_1}_{\vs,0}(D)$ (resp.\ $B^{\vect{s'}}_{(\zeta',z')}\in A^{p_1,q_1}_{\vs}(D)$) for every $(\zeta',z')\in D$, and
\[
\norm{B^{\vect{s'}}_{(\zeta',z')}}_{A^{p_1,q_1}_{\vs}(D)}= C_1 \Delta_\Omega^{\vs+\vs^*-(\vb+\vd)/p_1}(\rho(\zeta',z'))
\]
for a suitable constant $C_1>0$, thanks to~\cite[Proposition 2.41]{CalziPeloso}. Hence, there is a constant $C_2>0$ such that
\[
\norm{B^{\vect{s'}}_{(\zeta',z')}}_{L^{p}(\mi)}\meg C_2 \Delta_\Omega^{\vs+\vs^*-(\vb+\vd)/p_1}(\rho(\zeta',z'))
\]
for every $(\zeta',z')\in D$.
In addition, there is a constant $C_3>0$  such that
\[
\frac{1}{C_3}\abs{B^{\vect{s'}}_{(\zeta',z')}}\meg \abs{B^{\vect{s'}}_{(\zeta,z)}}\meg C_3\abs{B^{\vect{s'}}_{(\zeta',z')}}
\]
on $D$, for every $(\zeta',z')\in D$ and for every $(\zeta,z)\in B((\zeta',z'),r)$, thanks to~\cite[Theorem 2.47]{CalziPeloso}. Therefore,
\[
\begin{split}
\norm{B^{\vect{s'}}_{(\zeta',z')}}_{L^{p}(\mi)}&\Meg \frac{1}{C_3} \Delta^{\vect{s'}}_{\Omega}(\rho(\zeta',z'))\norm{ \chi_{B((\zeta',z'),R)} }_{L^{p}(\mi)}\\
&= \frac{1}{C_3} \Delta^{\vect{s'}}_{\Omega}(\rho(\zeta',z')) M_R(\mi)(\zeta',z')^{1/p}
\end{split}
\]
for every $(\zeta',z')\in D$. It then follows that
\[
M_R(\mi)(\zeta',z')\meg C_2^{p} C_3^{ p} \Delta_\Omega^{p[\vs-(\vb+\vd)/p_1]  }(\rho(\zeta',z'))
\]
for every $(\zeta',z')\in D$.

Now, assume that $A^{p_1,q_1}_{\vs,0}(D)$ (resp.\ $A^{p_1,q_1}_{\vs}(D)$) embeds compactly in $L^{p}(\mi)$, and that $\vs\in (\R_+^*)^r$ if $p_1=q_1=\infty$.
Define $b_{(\zeta,z)}\coloneqq B^{\vect{s'}}_{(\zeta,z)}\Delta_\Omega^{(\vb+\vd)/p_1-\vs-\vect{s'}}(\rho(\zeta,z))$ for every $(\zeta,z)\in D$, so that the family $(b_{(\zeta,z)})_{(\zeta,z)\in D}$ is uniformly bounded in $A^{p_1,q_1}_{\vs}(D)$. In addition, since $\vs\in \frac{\vb+\vd}{p_1}+(\R_+^*)^r$,~\cite[Proposition 2.41]{CalziPeloso} implies that $B_{(\zeta,z)}^{\vect{s'}}\in A^{\infty,\infty}_{(\vb+\vd)/p_1-\vs-\vect{s'},0}(D)$, so that
\[
\lim_{(\zeta',z')\to \infty} \abs{b_{(\zeta',z')}(\zeta,z)}=\lim_{(\zeta',z')\to \infty} \Delta_\Omega^{(\vb+\vd)/p_1-\vs-\vect{s'}}(\rho(\zeta',z')) \abs{B^{\vect{s'}}_{(\zeta,z)}(\zeta',z')}=0
\]
for every $(\zeta,z)\in D$.  Let $b$ be a cluster point of $(b_{(\zeta,z)})$ in $L^p(\mi)$ for $(\zeta,z)\to \infty$, and observe that $b=0$ since $b_{(\zeta,z)}$ converges pointwise (and also locally uniformly, thanks to~\cite[Theorem 2.47]{CalziPeloso}) to $0$ as $(\zeta,z)\to 0$. Since $A^{p_1,q_1}_{\vs}(D)$ embeds compactly into $L^{p}(\mi)$, this suffices to prove that  $b_{(\zeta,z)}$  converges to $0$ in $L^p(\mi)$ as $(\zeta,z)\to \infty$.
Therefore, the computations of the proof of the implication (1) $\implies$ (2) show that
\[
\lim_{(\zeta,z)\to \infty}\Delta^{p[(\vb+\vd)/p_1-\vs]}_\Omega(\rho(\zeta,z))  M_R(\mi)(\zeta,z)=0,
\]
whence the result.
\end{proof}

We are now ready to prove our first main result.  It provides a
necessary and sufficient condition for a measure
$\mi\in\cM_+(D)$ to be a $p$-Carleson measure for $A^{p_1,q_1}_{\vs}$.
This  result  is analogous to the classical characterization in the
case  of the unit ball, see e.g.~\cite[Theorem 2.25]{Zhu2}, and it 
extends~\cite[Theorem 3.5]{NanaSehba}, where the case in which 
$p_1=q_1,p\in [1,\infty[$, $D$ is an irreducible symmetric tube
domain, and $\vs\in \R\vect 1_r$, is considered.

\begin{teo}\label{main:1}
Take $p_1,q_1,p\in ]0,\infty[$ with $p_1,q_1\meg p$, and take
$\vs\in \R^r$ such that $\vs\in \frac{1}{2 q_1}\vect
m+(\R_+^*)^r$. Let $\mi\in\cM_+(D)$ and  $R>0$. Then, the following
conditions are equivalent: 
\begin{enumerate}
\item[{\em (1)}] $A^{p_1,q_1}_{\vs}(D)$  embeds continuously (resp.\ compactly) into $L^{p}(\mi)$;
\item[{\em (2)}]  $M_R(\mi)\in L^{\infty,\infty}_{p[(\vb+\vect
d)/p_1-\vs]}(D)$ (resp.\  $M_R(\mi)\in
L^{\infty,\infty}_{p[(\vb+\vd)/p_1-\vs],0}(D)$). 
\end{enumerate}
\end{teo}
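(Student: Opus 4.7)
The theorem is essentially a specialization of Propositions~\ref{prop:23} and~\ref{prop:26} to the range $p_1,q_1\meg p$, so the plan is to show that the two halves of the equivalence correspond exactly to these two results once one computes the auxiliary parameters $p^*,q^*,\vs^*$ in this regime.

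For the implication (2) $\implies$ (1), I would apply Proposition~\ref{prop:23}. The point is that with the standing assumption $p_1,q_1\meg p$ one has $p_1/p\meg 1$ and $q_1/p\meg 1$, so
\[
p^*=\max(1,p_1/p)'=\infty,\qquad q^*=\max(1,q_1/p)'=\infty,
\]
and
\[
\vs^*=\max(1,p/p_1)(\vb+\vd)-p\vs=\tfrac{p}{p_1}(\vb+\vd)-p\vs=p\bigl[(\vb+\vd)/p_1-\vs\bigr].
\]
Thus the hypothesis of Proposition~\ref{prop:23}, namely $M_R(\mi)\in L^{p^*,q^*}_{\vs^*}(D)$, reduces to exactly condition~(2) of the theorem, and the conclusion gives the desired continuous embedding. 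For the compact case, one replaces $L^{\infty,\infty}_{\vs^*}$ by $L^{\infty,\infty}_{\vs^*,0}$ and uses the second assertion of Proposition~\ref{prop:23}.

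For the converse (1) $\implies$ (2), the statement is a direct restatement of Proposition~\ref{prop:26}: a continuous (resp.\ compact) embedding of $A^{p_1,q_1}_{\vs}(D)$ into $L^p(\mi)$ forces $M_R(\mi)\in L^{\infty,\infty}_{p(\vb+\vd)/p_1-p\vs}(D)$ (resp.\ into the vanishing version). Here the admissibility condition $\vs\in\frac{1}{2q_1}\vect m+(\R_+^*)^r$ guarantees that the reproducing kernel test functions $B^{\vs'}_{(\zeta',z')}$ used in the proof of Proposition~\ref{prop:26} indeed live in $A^{p_1,q_1}_{\vs}(D)$; for the compact part, no additional hypothesis on $\vs$ is needed since we are assuming $p_1,q_1<\infty$, so the extra assumption $\vs\in(\R_+^*)^r$ imposed in Proposition~\ref{prop:26} for the case $p_1=q_1=\infty$ is vacuous in our regime.

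The main (in fact only) conceptual observation is the bookkeeping that $p_1,q_1\meg p$ collapses the mixed-norm condition of Proposition~\ref{prop:23} into a sup-norm condition that matches precisely the necessary condition of Proposition~\ref{prop:26}; outside this regime the two conditions differ, which is why the equivalence cannot be obtained so cleanly and additional assumptions are needed, as reflected in the companion Theorem~\ref{main:2}. There is no real obstacle in the proof itself, beyond verifying that the constants and parameter ranges are compatible; in particular, the implicit constants in Propositions~\ref{prop:23} and~\ref{prop:26} are finite precisely under the assumption $\vs\in\frac{1}{2q_1}\vect m+(\R_+^*)^r$, which is assumed here.
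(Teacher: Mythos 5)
Your proof is correct and follows exactly the route the paper takes: the paper's own proof of Theorem~\ref{main:1} consists precisely of citing Proposition~\ref{prop:23} for (2)~$\implies$~(1) and Proposition~\ref{prop:26} for (1)~$\implies$~(2). Your explicit verification that $p_1,q_1\meg p$ forces $p^*=q^*=\infty$ and $\vs^*=p[(\vb+\vd)/p_1-\vs]$, so that the sufficient and necessary conditions coincide, is the (implicit) content of the paper's two-line argument, and your remarks on the admissibility of $\vs$ and the vacuity of the extra hypothesis for $p_1=q_1=\infty$ are accurate.
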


\begin{proof}
(2) $\implies$ (1). This follows from Proposition~\ref{prop:23}.

(1) $\implies$ (2). This follows from Proposition~\ref{prop:26}.
\end{proof}

We now deal with Carleson measures {\em admiting a loss}, that is, with the case $p_1,q_1\not \meg p$.  Note that when dealing with the mixed norm case, this complementary condition is not as simple as in the pure norm one. 
We provide a characterization of $p$-Carleson measures on
$A^{p_1,q_1}_\vs$ in two situations: when $A^{p_1,q_1}_\vs$ admits a
suitable atomic decomposition (Theorem~\ref{main:2}); when
$A^{p_1,q_1}_\vs$ embeds into $A^{p,p}_{\vs+(1/p-1/p_1)(\vect b+\vect
d)}$ (Theorem~\ref{teo:4}). 

The  next theorem extends~\cite[Theorem 3.8 (ii)]{NanaSehba},   where the case in which $p_1=q_1,p\in [1,\infty[$, $D$ is an irreducible symmetric tube
domain, and $\vect{s}\in \R\vect 1_r$, is considered. 
Recall that property $\atomic^{p_1,q_1}_{\vs, \vect{s'},0}$ is
defined   in Definition~\ref{def:L-property}.

\begin{teo}\label{main:2}
Take $p_1,q_1,p\in ]0,\infty]$, with $p<\infty$, and $\vs\in \frac{1}{2 q_1}\vect m+(\R_+^*)^r$.
Assume that property $\atomic^{p_1,q_1}_{\vs,\vect{s'},0}$ holds 
for some $\vect{s'}\in \R^r$, and let $\mi\in\cM_+(D)$.  
Then, the following conditions are equivalent: 
\begin{enumerate}
\item[{\em(1)}] $A^{p_1,q_1}_{\vs}(D)$  embeds continuously  into $L^{p}(\mi)$;
\item[{\em(2)}] $A^{p_1,q_1}_{\vs,0}(D)$  embeds continuously  into $L^{p}(\mi)$;
\item[{\em(3)}] for some (or, equivalently, every) $R>0$,
$M_R(\mi)\in L^{p^*,q^*}_{\vs^*}(D)$, where
$p^*,q^*,\vs^*$ are as in~\eqref{p*q*s'}.
\end{enumerate}
If, in addition, $p< q_1$, then the following conditions are equivalent:
\begin{enumerate}
\item[{\em(1$'$)}] $A^{p_1,q_1}_{\vs}(D)$  embeds compactly  into $L^{p}(\mi)$;
\item[{\em(2$'$)}] $A^{p_1,q_1}_{\vs,0}(D)$  embeds compactly  into $L^{p}(\mi)$;
\item[{\em(3$'$)}]for some (or, equivalently, every) $R>0$,
$M_R(\mi)\in L^{p^*,q^*}_{\vs^*,0}(D)$, where $p^*,q^*,\vs^*$ are
as in~\eqref{p*q*s'}.
\end{enumerate}
\end{teo}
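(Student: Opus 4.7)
The plan is to run the cycle (3)$\Rightarrow$(1)$\Rightarrow$(2)$\Rightarrow$(3). The implication (3)$\Rightarrow$(1) is already Proposition~\ref{prop:23}, while (1)$\Rightarrow$(2) is immediate from $A^{p_1,q_1}_{\vs,0}(D)\subseteq A^{p_1,q_1}_{\vs}(D)$, and the equivalence of ``some $R$'' with ``every $R$'' inside~(3) follows from Lemma~\ref{lem:1}. All the substance therefore lies in (2)$\Rightarrow$(3), which I would attack through the atomic synthesis supplied by $\atomic^{p_1,q_1}_{\vs,\vect{s'},0}$.

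So fix a small $\delta_0>0$; property $\atomic^{p_1,q_1}_{\vs,\vect{s'},0}$ supplies a $(\delta,4)$-lattice $(\zeta_{j,k},z_{j,k})_{j\in J,k\in K}$ with $\delta\in\;]0,\delta_0]$ and a continuous map
\[
\Psi\colon \ell^{p_1,q_1}_0(J,K)\to A^{p_1,q_1}_{\vs,0}(D),\qquad \Psi\lambda=\sum_{j,k}\lambda_{j,k}\,a_{j,k},
\]
where $a_{j,k}\coloneqq B^{\vect{s'}}_{(\zeta_{j,k},z_{j,k})}\Delta_\Omega^{(\vb+\vd)/p_1-\vs-\vect{s'}}(h_k)$ and $h_k\coloneqq\rho(\zeta_{j,k},z_{j,k})$. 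Composing with the hypothesized embedding yields a bounded linear operator $\ell^{p_1,q_1}_0(J,K)\to L^p(\mi)$. I would then randomize with an i.i.d.\ Rademacher family $(\eps_{j,k})$: since $\norm{\eps\lambda}_{\ell^{p_1,q_1}}=\norm{\lambda}_{\ell^{p_1,q_1}}$, averaging in $\eps$ and invoking the Khintchine--Kahane inequality (valid for all $p\in\,]0,\infty[$ after Fubini) produces the square-function estimate
\[
\bigl\|\bigl(\textstyle\sum_{j,k}|\lambda_{j,k}\,a_{j,k}|^2\bigr)^{1/2}\bigr\|_{L^p(\mi)}\le C\norm{\lambda}_{\ell^{p_1,q_1}}.
\]
The pointwise estimates for the Bergman kernels in~\cite[Theorem~2.47]{CalziPeloso} give $|a_{j,k}|\Meg c\,\Delta_\Omega^{(\vb+\vd)/p_1-\vs}(h_k)$ on $B((\zeta_{j,k},z_{j,k}),\delta)$, so choosing a disjoint family of Borel sets $(B_{j,k})$ with $B_{j,k}\subseteq B((\zeta_{j,k},z_{j,k}),\delta)$ collapses the square function to a single term on each $B_{j,k}$, leaving
\[
\bigl\|\bigl(\lambda_{j,k}\,\Delta_\Omega^{(\vb+\vd)/p_1-\vs}(h_k)\,\mi(B_{j,k})^{1/p}\bigr)_{j,k}\bigr\|_{\ell^p(J\times K)}\le C'\norm{\lambda}_{\ell^{p_1,q_1}(J,K)}.
\]
This is exactly the boundedness of a diagonal multiplier $\ell^{p_1,q_1}\to \ell^p$, which, by the standard characterization of mixed-norm multipliers, is equivalent to the sequence $\bigl(\Delta_\Omega^{p[(\vb+\vd)/p_1-\vs]}(h_k)\,\mi(B_{j,k})\bigr)_{j,k}$ lying in $\ell^{p^*,q^*}(J,K)$. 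A short case-split ($p_1\le p$ versus $p_1>p$) using the very definition~\eqref{p*q*s'} of $\vs^*$ shows that $p[(\vb+\vd)/p_1-\vs]=\vs^*-(\vb+\vd)/p^*$; a bounded-overlap argument then promotes this sequence condition from $\mi(B_{j,k})$ to $M_{R\delta}(\mi)(\zeta_{j,k},z_{j,k})$, and Lemma~\ref{lem:1} applied with $(p,q,\vs)$ replaced by $(p^*,q^*,\vs^*)$ delivers $M_R(\mi)\in L^{p^*,q^*}_{\vs^*}(D)$.

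The compact version under $p<q_1$ follows the same scheme with $\ell^{p^*,q^*}_0$ replacing $\ell^{p^*,q^*}$: compactness of the inclusion $A^{p_1,q_1}_{\vs,0}(D)\to L^p(\mi)$ forces the diagonal multiplier to vanish at infinity in the vanishing mixed-norm sense, and the assumption $p<q_1$ is precisely what allows the vanishing multiplier characterization to translate into membership of $L^{p^*,q^*}_{\vs^*,0}(D)$ through the vanishing portion of Lemma~\ref{lem:1}. I expect the main technical obstacles to be making Khintchine--Kahane effective in the quasi-Banach regime $p<1$ (distinguishing carefully between the scalar and $L^p$-valued statements) together with the careful bookkeeping required in the bounded-overlap step that transfers estimates from $\mi(B_{j,k})$ to $M_R(\mi)$ at lattice points, especially outside the tube case $\vb=\vect 0$; identifying the correct ``vanishing at infinity'' condition in the compact case is an additional subtlety, and is why the hypothesis $p<q_1$ must enter there.
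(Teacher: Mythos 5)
Your overall strategy coincides with the paper's: compose the atomic synthesis map $\Psi$ furnished by $\atomic^{p_1,q_1}_{\vs,\vect{s'},0}$ with the embedding, randomize with Rademacher signs, apply Khintchine to extract the square--function estimate, lower-bound the kernels via~\cite[Theorem 2.47]{CalziPeloso}, and conclude by duality of $\ell^{p_1/p,q_1/p}_0$ with $\ell^{p^*,q^*}$ and Lemma~\ref{lem:1}. The compact case is also handled in the paper by reduction (via Proposition~\ref{prop:26} and Lemma~\ref{lem:9}(3), which is where $p<q_1$, i.e.\ $q^*<\infty$, enters) rather than by a direct ``vanishing multiplier'' argument, but that is a minor stylistic difference.

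There is, however, one concrete gap in your localization step. You collapse the square function on a disjoint family $(B_{j,k})$ with $B_{j,k}\subseteq B((\zeta_{j,k},z_{j,k}),\delta)$. Since the balls of radius $\delta$ in a $(\delta,4)$-lattice are \emph{pairwise disjoint}, their union is far from all of $D$, so the resulting condition only involves $\mi(B_{j,k})\meg \mi(B((\zeta_{j,k},z_{j,k}),\delta))$ and is completely blind to the restriction of $\mi$ to $D\setminus\bigcup_{j,k}B((\zeta_{j,k},z_{j,k}),\delta)$: a measure supported there satisfies your sequence condition trivially, yet $M_R(\mi)$ need not lie in $L^{p^*,q^*}_{\vs^*}(D)$. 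Consequently no bounded-overlap argument can ``promote'' $\mi(B_{j,k})$ to $M_{R\delta}(\mi)(\zeta_{j,k},z_{j,k})$ from this starting point. Two repairs are available. Either take $(B_{j,k})$ to be a Borel \emph{partition of $D$} subordinate to the covering balls $B((\zeta_{j,k},z_{j,k}),R\delta)$ (disjointness still lets you keep a single term of the square function on each piece, and now $M_{R'}(\mi)(\zeta_{j,k},z_{j,k})\meg\sum_{(j',k')\in V_{j,k}}\mi(B_{j',k'})$ over a bounded set of indices with comparable weights, so the promotion goes through as in the proof of Lemma~\ref{lem:1}); or do as the paper does and keep the overlapping characteristic functions themselves, using that at most $N$ of the balls $B((\zeta_{j,k},z_{j,k}),R)$ meet at any point to write
\[
\Big(\sum_{j,k}\chi_{B((\zeta_{j,k},z_{j,k}),R)}\,a_{j,k}\Big)^{p/2}\Meg N^{-(1-p/2)_+}\sum_{j,k}\chi_{B((\zeta_{j,k},z_{j,k}),R)}\,a_{j,k}^{p/2},
\]
which upon integration against $\mi$ produces $M_R(\mi)(\zeta_{j,k},z_{j,k})$ at the lattice points directly, with no promotion step at all. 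With either correction the rest of your argument is sound.
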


Before we pass to the proof, we need a lemma.

\begin{lem}\label{lem:9}
Take $p_1,q_1,p_2,q_2\in ]0,\infty]$, $\vs\in\R^r$, $R>0$, and
$\mi\in\cM_+(D)$. Assume that $p_1\meg p_2$ and that $q_1\meg q_2$. Then, the following hold: 
\begin{enumerate}
\item[{\em(1)}] if $M_R(\mi)\in L^{p_1,q_1}_{\vs}(D)$, then $M_R(\mi)\in L^{p_2,q_2}_{\vs+(1/p_2-1/p_1)(\vb+\vd)}(D)$;
\item[{\em(2)}] if $M_R(\mi)\in L^{p_1,q_1}_{\vs,0}(D)$, then $M_R(\mi)\in L^{p_2,q_2}_{\vs+(1/p_2-1/p_1)(\vb+\vd),0}(D)$;
\item[{\em(3)}] if $q_1<\infty$, $M_R(\mi)\in L^{p_1,q_1}_{\vs}(D)\cap
L^{\infty,\infty}_{\vs-(\vb+\vd)/p_1,0}(D)$, then $M_R(\mi)\in
L^{p_1,q_1}_{\vs,0}(D)$. 
\end{enumerate}
\end{lem}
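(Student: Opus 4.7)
The strategy is to reduce everything to a single inclusion of discrete mixed-norm sequence spaces via the lattice-based characterization of Lemma~\ref{lem:1}. I would fix once and for all any $(\delta,4)$-lattice $(\zeta_{j,k},z_{j,k})_{j\in J,k\in K}$ on $D$ (which exists by Lemma~\ref{lem:32}) and set $h_k\coloneqq \rho(\zeta_{j,k},z_{j,k})$. By Lemma~\ref{lem:1}, for any $\vect t\in\R^r$ and $p,q\in]0,\infty]$, the condition $M_R(\mi)\in L^{p,q}_{\vect t}(D)$ (respectively $L^{p,q}_{\vect t,0}(D)$) is equivalent to the membership of the sequence
\[
a^{(\vect t,p)}_{j,k}\coloneqq \Delta_\Omega^{\vect t-(\vb+\vd)/p}(h_k)\,M_{4\delta}(\mi)(\zeta_{j,k},z_{j,k})
\]
in $\ell^{p,q}(J,K)$ (respectively $\ell^{p,q}_0(J,K)$). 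The key algebraic observation is that $a^{(\vect t,p)}_{j,k}$ depends on $(\vect t,p)$ only through the combination $\vect t-(\vb+\vd)/p$, so that, adopting the convention $1/\infty=0$, the three sequences $a^{(\vs,p_1)}$, $a^{(\vs+(1/p_2-1/p_1)(\vb+\vd),\,p_2)}$, and $a^{(\vs-(\vb+\vd)/p_1,\,\infty)}$ are all equal as elements of $\C^{J\times K}$.

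Granted this identification, parts (1) and (2) reduce to the elementary discrete inclusions $\ell^{p_1,q_1}(J,K)\subseteq \ell^{p_2,q_2}(J,K)$ and $\ell^{p_1,q_1}_0(J,K)\subseteq \ell^{p_2,q_2}_0(J,K)$, valid whenever $p_1\meg p_2$ and $q_1\meg q_2$. Indeed, on counting measure one has $\norm{\,\cdot\,}_{\ell^{p_2}}\meg \norm{\,\cdot\,}_{\ell^{p_1}}$ when $p_1\meg p_2$; applying this first in the inner variable $j$ and then in the outer variable $k$ yields the first inclusion, and stability under truncation to finitely supported sequences yields the second.

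For part (3), the identification above shows that the single sequence $a_{j,k}\coloneqq a^{(\vs,p_1)}_{j,k}=a^{(\vs-(\vb+\vd)/p_1,\,\infty)}_{j,k}$ lies simultaneously in $\ell^{p_1,q_1}(J,K)$ and in $\ell^{\infty,\infty}_0(J,K)$, and we want it in $\ell^{p_1,q_1}_0(J,K)$. When $p_1<\infty$ the equality $\ell^{p_1,q_1}_0(J,K)=\ell^{p_1,q_1}(J,K)$ (which holds because $p_1,q_1<\infty$) immediately concludes. The only delicate case is $p_1=\infty$: there a standard truncation argument identifies $\ell^{\infty,\infty}_0(J,K)$ with $c_0(K;c_0(J))$ and $\ell^{\infty,q_1}_0(J,K)$ with $\ell^{q_1}(K;c_0(J))$. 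The former gives $(a_{j,k})_j\in c_0(J)$ for every $k\in K$, while the hypothesis $M_R(\mi)\in L^{\infty,q_1}_\vs(D)$ gives $\bigl(\norm{(a_{j,k})_j}_{\ell^\infty(J)}\bigr)_k\in\ell^{q_1}(K)$; combining these yields $a\in\ell^{q_1}(K;c_0(J))=\ell^{\infty,q_1}_0(J,K)$.

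The main technical obstacle is precisely this careful bookkeeping of the closure-subscript spaces in the extreme case $p_1=\infty$; once the identifications $\ell^{\infty,\infty}_0=c_0(K;c_0(J))$ and $\ell^{\infty,q_1}_0=\ell^{q_1}(K;c_0(J))$ are in place, the whole lemma collapses to the observation that a single sequence is being measured in different discrete mixed-norm spaces, with the monotonicity of $\ell^p$ norms on counting measure providing the required inclusions.
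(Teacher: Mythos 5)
Your proof is correct and is essentially the paper's own argument: the paper likewise reduces all three assertions to Lemma~\ref{lem:1} together with the elementary inclusions $\ell^{p_1,q_1}(J,K)\subseteq \ell^{p_2,q_2}(J,K)$, $\ell^{p_1,q_1}_0(J,K)\subseteq \ell^{p_2,q_2}_0(J,K)$, and $\ell^{p_1,q_1}(J,K)\cap \ell^{\infty,\infty}_0(J,K)\subseteq \ell^{p_1,q_1}_0(J,K)$ for $q_1<\infty$. You have merely made explicit the two points the paper leaves to the reader, namely that the discretized sequence depends on $(\vect t,p)$ only through $\vect t-(\vb+\vd)/p$ and the $c_0$-identifications needed when $p_1=\infty$.
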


\begin{proof}
All the assertions follow easily from Lemma~\ref{lem:1} and the
elementary inclusions $\ell^{p_1,q_1}(J,K)\subseteq \ell^{p_2,q_2}(J,K)$,
$\ell^{p_1,q_1}_0(J,K)\subseteq \ell^{p_2,q_2}_0(J,K)$, and
$\ell^{p_1,q_1}(J,K)\cap \ell^{\infty,\infty}_0(J,K)\subseteq
\ell^{p_1,q_1}_0(J,K)$ (when $q_1<\infty$) for every two sets $J$ and
$K$.  
\end{proof}

\begin{proof}[Proof of Theorem~\ref{main:2}.] 
Clearly, (1) $\implies$ (2), while  the implication (3) $\implies$  (1) follows from  Proposition~\ref{prop:23}. 
Thus, it only remains to show that 
(2) $\implies$ (3). 

By assumption, there is a $(\delta,4)$-lattice
$(\zeta_{j,k},z_{j,k})_{j\in J,k\in K}$ for some $\delta\meg R/4$ such
that the mapping 
\[
\Psi\colon \ell^{p_1,q_1}_0(J,K)\ni \lambda \mapsto \sum_{j,k}
\lambda_{j,k} B^{ \vect{s'}}_{(\zeta_{j,k},z_{j,k})} \Delta_\Omega^{(\vect
b+\vd)/p_1-\vs- \vect{s'}}(h_k)\in A^{p_1,q_1}_{\vect
s,0}(D) 
\]
is well defined and continuous, where $h_k\coloneqq
\rho(\zeta_{j,k},z_{j,k})$ for every $j\in J$ and for every $k\in K$. 

Now, take a probability space $(X,\nu)$ and a sequence
$(r_\beta)_{\beta\in \N}$ of $\nu$-measurable functions on $X$ such
that 
\[
\Big(\bigotimes_{\beta\in B} r_\beta   \Big)(\nu)=\frac{1}{2^{\card(B)}}\sum_{\eps\in \Set{-1,1}^B} \delta_\eps
\]
for every finite subset $B$ of $\N$ (cf.~\cite[C.1]{Grafakos}). By
Khintchine's inequality, there is a constant $C_1>0$ such that 
\[
\frac{1}{C_1}\Big( \sum_{\beta\in\N} \abs{a_\beta}^2
\Big)^{1/2}\meg \bigg\| \sum_{\beta\in B} a_\beta r_\beta
\bigg\|_{L^{p}(\nu)}\meg C_1 \Big( \sum_{\beta\in\N} \abs{a_\beta}^2
\Big)^{1/2} 
\]
for every $(a_\beta)\in \C^{(\N)}$ (cf.~\cite[C.2]{Grafakos}). Let
$\iota\colon J\times K\to \N$ be a bijection and define $\vect{r}\cdot
\lambda\coloneqq ( r_{\gamma(j,k)} \lambda_{j,k})_{j,k}$ for every
$\lambda\in \C^{(J\times K)}$. By the assumptions and the continuity
of $\Psi$, there is a constant $C_2>0$ such that, for every
$\lambda\in \C^{(J\times K)}$, 
\[
\norm{\Psi(\vect r\cdot \lambda)}_{L^{p}(\mi)}\meg C_2  \norm{\vect
r\cdot \lambda}_{\ell^{p_1,q_1}(J,K)}= C_2
\norm{\lambda}_{\ell^{p_1,q_1}(J,K)} 
\]
$\nu$-almost everywhere.
Therefore, by means of Tonelli's theorem we see that
\[
\norm*{\left( \sum_{j,k} \abs*{\lambda_{j,k}  B^{ \vect{s'}}_{(\zeta_{j,k},z_{j,k})}}^2    \Delta_\Omega^{2[(\vb+\vd)/p_1-\vs-\vect{s'}]}(h_k)
\right)^{1/2}}_{L^{p}(\mi)}\meg C_1 C_2\norm{\lambda}_{\ell^{p_1,q_1}(J,K)} 
\]
for every $\lambda\in \C^{(J\times K)}$. Now, observe that there is
$N\in\N$ such that $\sum_{j,k} \chi_{B((\zeta_{j,k},z_{j,k}),R)}\meg
N$ on $D$, thanks to~\cite[Proposition 2.56]{CalziPeloso}. In
addition, there is a constant $C_3>0$ such that  
\[
\frac{1}{C_3}\Delta_\Omega^{ \vect{s'}}(h_k)\meg
\abs*{B^{ \vect{s'}}_{(\zeta_{j,k},z_{j,k})}}\meg C_3
\Delta_\Omega^{ \vect{s'}}(h_k) 
\]
on $B((\zeta_{j,k},z_{j,k}),R)$ for every $(j,k)\in J\times K$, thanks
to~\cite[Theorem 2.47]{CalziPeloso}. 
Then,
\[
\begin{split}
&\norm*{\left( \sum_{j,k} \abs*{\lambda_{j,k}
B^{ \vect{s'}}_{(\zeta_{j,k},z_{j,k})}}^2
\Delta_\Omega^{2[(\vb+\vd)/p_1-\vs- \vect{s'}]}(h_k)
\right)^{1/2}}_{L^{p}(\mi)}\\ 
&\qquad\Meg \frac{1}{C_3}
\norm*{\left(
\sum_{j,k}\chi_{B((\zeta_{j,k},z_{j,k}),R)}\abs*{\lambda_{j,k}}^2 \Delta_\Omega^{2[(\vb+\vd)/p_1-\vs]}(h_k) \right)^{1/2}}_{L^{p}(\mi)}\\ 
&\qquad\Meg \frac{N^{-(1-p/2)_+}}{C_3} \left(  \sum_{j,k}   \abs*{\lambda_{j,k}}^{p} \Delta_\Omega^{p[(\vb+\vd)/p_1-\vs]}(h_k) M_R(\mi)(\zeta_{j,k},z_{j,k}) \right)^{1/p}
\end{split}
\]
for every $\lambda\in \C^{(J\times K)}$. Using the natural duality between $\ell^{p_1/p,q_1/p}_0(J,K)$ and $\ell^{p^*,q^*}(J,K)$, we then see that
\[
\big(\Delta_\Omega^{p[(\vb+\vd)/p_1-\vs]}(h_k) M_R(\mi)(\zeta_{j,k},z_{j,k})\big)\in \ell^{p^*,q^*}(J,K)  .
\]
Then,  (3)   follows from Lemma~\ref{lem:1}.

Next, we turn to the second part of the statement.
The fact that (3$'$) $\implies$ (1$'$) follows from
Proposition~\ref{prop:23},   while it is   obvious that
(1$'$) $\implies$ (2$'$).  Finally, Proposition~\ref{prop:26},
Lemma~\ref{lem:9},   and the implication (2) $\implies$ (3) prove that
(2$'$) $\implies$ (3$'$). 
\end{proof}

With the same techniques, but using~\cite[Corollary 5.14]{CalziPeloso}
instead of property $\atomic^{p_1,q_1}_{\vs,\vect{s'},0}$, one
also proves the following result.  We recall that the spaces
$\widetilde A^{p,q}_{\vs}(D)$ and $\widetilde A^{p,q}_{\vs,0}(D)$ are
the holomorphic extensions of  the analytic-type Besov spaces $B^{-\vs}_{p,q}(\Nc,\Omega)$
and $\mathring B^{-\vs}_{p,q}(\Nc,\Omega)$, respectively,   see Definition~\ref{def:tilde-spaces}.

\begin{prop}\label{prop:1}
Take $p_1,q_1,p\in ]0,\infty]$, with $p<\infty$, and take  $\vs\in
\frac{1}{p_1}(\vb+\vd)+\frac{1}{2
q_1'}\vect{m'}+(\R_+^*)^r$.   Take $\mi\in\cM_+(D)$ so   that  $\widetilde A^{p_1,q_1}_{\vs,0}(D)$  embeds continuously into $L^{p}(\mi)$, and   take $p^*,q^*,\vs^*$    as in~\eqref{p*q*s'}. Then, 
\[
M_R(\mi)\in L^{p^*,q^*}_{\vs^*}(D)
\]
for every $R>0$.
\end{prop}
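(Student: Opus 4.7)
The plan is to adapt the argument for the implication (2) $\implies$ (3) of Theorem~\ref{main:2}, which had a very clean structure: obtain an atomic decomposition of the target holomorphic function space as a continuous image of a sequence space, compose with the Carleson embedding, randomize via Rademacher functions, apply Khintchine's inequality together with Tonelli, extract a weighted-$\ell^p$ lower bound using pointwise estimates on the Bergman kernels, and conclude by sequence-space duality together with Lemma~\ref{lem:1}. The only ingredient missing from that scheme in the present setting is a continuous atomic decomposition for $\widetilde A^{p_1,q_1}_{\vs,0}(D)$, since property $\atomic^{p_1,q_1}_{\vs,\vect{s'},0}$ is only known for $A^{p_1,q_1}_{\vs,0}(D)$ under equality of the two spaces. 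This is precisely where I would invoke Corollary~5.14 of~\cite{CalziPeloso}, which, as the paper indicates, provides such a decomposition for $\widetilde A^{p_1,q_1}_{\vs,0}(D)$ directly from the hypothesis $\vs \in \frac{1}{p_1}(\vb+\vd) + \frac{1}{2 q_1'}\vect{m'} + (\R_+^*)^r$.

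Fixing $R>0$, I would choose $\vs'$ in the range specified by Corollary~5.14 of~\cite{CalziPeloso}, apply that corollary to produce a $(\delta,4)$-lattice $(\zeta_{j,k},z_{j,k})_{j\in J,k\in K}$ with $\delta \le R/4$, and obtain a continuous linear map
\[
\Psi\colon \ell^{p_1,q_1}_0(J,K)\ni \lambda \mapsto \sum_{j,k} \lambda_{j,k} B^{\vs'}_{(\zeta_{j,k},z_{j,k})}\Delta_\Omega^{(\vb+\vd)/p_1-\vs-\vs'}(h_k)\in \widetilde A^{p_1,q_1}_{\vs,0}(D),
\]
where $h_k\coloneqq \rho(\zeta_{j,k},z_{j,k})$. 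Composing with the assumed continuous embedding $\widetilde A^{p_1,q_1}_{\vs,0}(D)\hookrightarrow L^p(\mi)$ gives a continuous map $\ell^{p_1,q_1}_0(J,K)\to L^p(\mi)$. I would then introduce a sequence $(r_\beta)_{\beta\in\N}$ of Rademacher functions on a probability space $(X,\nu)$, fix a bijection $J\times K\to\N$, apply the continuity estimate $\nu$-almost everywhere to the randomized scalars $\vect r\cdot\lambda$, and use Khintchine's inequality together with Tonelli's theorem to pass to the square-function estimate
\[
\norm*{\Big(\sum_{j,k} \abs*{\lambda_{j,k} B^{\vs'}_{(\zeta_{j,k},z_{j,k})}}^2 \Delta_\Omega^{2[(\vb+\vd)/p_1-\vs-\vs']}(h_k)\Big)^{1/2}}_{L^p(\mi)}\meg C\norm{\lambda}_{\ell^{p_1,q_1}(J,K)}.
\]

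Next, the lower bound $\abs{B^{\vs'}_{(\zeta_{j,k},z_{j,k})}}\Meg C^{-1}\Delta_\Omega^{\vs'}(h_k)$ on $B((\zeta_{j,k},z_{j,k}),R)$, supplied by~\cite[Theorem 2.47]{CalziPeloso}, together with the finite overlap $\sum_{j,k}\chi_{B((\zeta_{j,k},z_{j,k}),R)}\meg N$ on $D$ from~\cite[Proposition~2.56]{CalziPeloso}, allows me to bound the left-hand side below by a constant multiple of
\[
\Big(\sum_{j,k} \abs{\lambda_{j,k}}^p \Delta_\Omega^{p[(\vb+\vd)/p_1-\vs]}(h_k) M_R(\mi)(\zeta_{j,k},z_{j,k})\Big)^{1/p},
\]
exactly as in the proof of Theorem~\ref{main:2}. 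Invoking the natural duality between $\ell^{p_1/p,q_1/p}_0(J,K)$ and $\ell^{p^*,q^*}(J,K)$ (recall $p^*=(p_1/p)'$, $q^*=(q_1/p)'$) against arbitrary $\lambda\in\C^{(J\times K)}$ then yields
\[
\big(\Delta_\Omega^{p[(\vb+\vd)/p_1-\vs]}(h_k) M_R(\mi)(\zeta_{j,k},z_{j,k})\big)_{j,k}\in \ell^{p^*,q^*}(J,K),
\]
and Lemma~\ref{lem:1} delivers $M_R(\mi)\in L^{p^*,q^*}_{\vs^*}(D)$ with $\vs^*=\max(1,p/p_1)(\vb+\vd)-p\vs$, completing the proof.

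The only genuine novelty compared with Theorem~\ref{main:2} is in the very first step, namely producing the atomic decomposition for $\widetilde A^{p_1,q_1}_{\vs,0}(D)$ rather than for $A^{p_1,q_1}_{\vs,0}(D)$; I expect this to be the main (indeed, the only) obstacle, and it is disposed of by a direct appeal to~\cite[Corollary~5.14]{CalziPeloso}. Everything downstream of that step is a verbatim transcription of the corresponding part of the proof of Theorem~\ref{main:2}, with $A^{p_1,q_1}_{\vs,0}(D)$ replaced by $\widetilde A^{p_1,q_1}_{\vs,0}(D)$.
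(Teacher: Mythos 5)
Your proposal is correct and follows exactly the route the paper intends: the paper's entire ``proof'' of Proposition~\ref{prop:1} is the remark that one argues as in the implication (2)~$\implies$~(3) of Theorem~\ref{main:2}, replacing property $\atomic^{p_1,q_1}_{\vs,\vect{s'},0}$ by the atomic decomposition of $\widetilde A^{p_1,q_1}_{\vs,0}(D)$ supplied by \cite[Corollary 5.14]{CalziPeloso}. Your write-up is a faithful (and more detailed) expansion of precisely that argument, and the downstream steps (Khintchine, Tonelli, kernel bounds, duality, Lemma~\ref{lem:1}) indeed depend only on the continuity of the composite map $\ell^{p_1,q_1}_0(J,K)\to L^p(\mi)$, so they carry over unchanged.
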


Notice, though, that an analogue of Proposition~\ref{prop:23} for the
spaces $ \widetilde A^{p_1,q_1}_{\vs,0}(D)$ is false, in general,
as the following Proposition shows.   We denote by $\Lc \nu$  the Laplace
transform of $\nu\in\cM_+(\Omega)$, that is,
\[
(\Lc \nu)(\lambda)= \int_\Omega \ee^{-\langle \lambda, h\rangle}\,\dd \nu(h)
\] 
for every $\lambda\in F'$.

\begin{prop}\label{prop:24}
Take $\vs\in \frac{\vb+\vd}{2}+\frac{1}{4}\vect{m'}+(\R_+^*)^r$   and a $bD$-invariant   $\mi\in\cM_+(D)$, and let $\nu$ be the unique Radon measure $\nu$ on
$\Omega$ such that $\int_D f\,\dd \mi= \int_\Omega \int_{\Nc}
f_h(\zeta,x)\,\dd (\zeta,x)\,\dd \nu(h)$ for every $f\in C_c(D)$. Take
$\vect{s'}\in -\frac{1}{4}\vect{m'}-\vs-(\R_+^*)^r$. Then, the
following conditions are equivalent: 
\begin{enumerate}
\item[{\em(1)}] $\widetilde A^{2,2}_{\vs}(D)$ embeds continuously (resp.\ compactly) into $L^2(\mi)$;

\item[{\em(2)}] the function $\Delta^{2\vs}_{\Omega'} \Lc\nu$ is
bounded (resp.\ is bounded and vanishes at $\infty$) on $\Omega'$; 

\item[{\em(3)}] the function
$h\mapsto\Delta_\Omega^{-\vs-\vect{s'}}(h)
\norm{\Delta^{\vect{s'}}(\,\cdot\,+h)}_{L^2(\nu)}$ is bounded
(resp.\ is bounded and vanishes at $\infty$) on $\Omega$. 
\end{enumerate}
\end{prop}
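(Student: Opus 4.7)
My plan is to reduce everything to the spectral picture on $\Omega'$ via Plancherel on $\Nc$, so that both the $\widetilde A^{2,2}_\vs$-norm and the $L^2(\mi)$-norm become weighted $L^2$-norms of the same spectral object and the embedding amounts to a pointwise multiplier condition. Using the $bD$-invariance of $\mi$ to disintegrate $\int_D f\,\dd\mi=\int_\Omega\int_\Nc f_h(\zeta,x)\,\dd(\zeta,x)\,\dd\nu(h)$, I would write
\[
\norm{f}_{L^2(\mi)}^2=\int_\Omega \norm{f_h}_{L^2(\Nc)}^2\,\dd\nu(h).
\]
The Plancherel formula on $\Nc$ recalled in Section~2.4, combined with the fact that $\pi_\lambda(f_h)=\pi_\lambda(f_h)P_{\lambda,0}$ vanishes outside $\Omega'$ for $f\in A^{2,2}_\vs$, and the Paley--Wiener-type identity $\pi_\lambda((\Ec u)_h)=\ee^{-\langle\lambda,h\rangle}\pi_\lambda(u)P_{\lambda,0}$ (implicit in the framework of \cite{CalziPeloso}), gives, after Tonelli,
\[
\norm{f}_{L^2(\mi)}^2 = c\int_{\Omega'}\norm{\pi_\lambda(u)P_{\lambda,0}}_{\Lin^2}^2(\Lc\nu)(2\lambda)\Delta_{\Omega'}^{-\vb}(\lambda)\,\dd\lambda.
\]

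Next, I would express the $\widetilde A^{2,2}_\vs$-norm spectrally: since $\widetilde A^{2,2}_\vs\cong B^{-\vs}_{2,2}(\Nc,\Omega)$ via $\Ec$, for $p=q=2$ Proposition~\ref{prop:4.2} together with the Plancherel formula on $\Nc$ yields a Littlewood--Paley identity
\[
\norm{u}_{B^{-\vs}_{2,2}}^2 \approx \int_{\Omega'}\norm{\pi_\lambda(u)P_{\lambda,0}}_{\Lin^2}^2\Delta_{\Omega'}^{-2\vs-\vb}(\lambda)\,\dd\lambda.
\]
Comparing the two weighted $L^2$ expressions, the continuous embedding $\widetilde A^{2,2}_\vs\hookrightarrow L^2(\mi)$ is equivalent to $\lambda\mapsto \Delta_{\Omega'}^{2\vs}(\lambda)(\Lc\nu)(2\lambda)$ being bounded, which, after rescaling $\lambda\mapsto\lambda/2$ and using the homogeneity of $\Delta_{\Omega'}^{2\vs}$ under the scalar action, recovers (2). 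The compactness case reduces to the multiplier vanishing at infinity, which I would justify by passing to a diagonal operator on a $(\delta,R)$-lattice of $\Omega'$.

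For the equivalence (2) $\iff$ (3), I would exploit the Bochner-type representation
\[
\Delta_\Omega^{2\vect{s'}}(h'+h)=\frac{1}{\Gamma_{\Omega'}(-2\vect{s'})}\int_{\Omega'}\ee^{-\langle\lambda,h'+h\rangle}\Delta_{\Omega'}^{-2\vect{s'}-\vd}(\lambda)\,\dd\lambda,
\]
valid since the hypothesis $\vect{s'}\in -\tfrac14\vect{m'}-\vs-(\R_+^*)^r$ places the exponent in the cone of convergence of the Gamma integral. Integrating against $\dd\nu(h')$ and applying Tonelli,
\[
\norm{\Delta_\Omega^{\vect{s'}}(\,\cdot\,+h)}_{L^2(\nu)}^2 = \frac{1}{\Gamma_{\Omega'}(-2\vect{s'})}\int_{\Omega'}\ee^{-\langle\lambda,h\rangle}\Delta_{\Omega'}^{-2\vect{s'}-\vd}(\lambda)(\Lc\nu)(\lambda)\,\dd\lambda.
\]
The implication $(2)\Rightarrow(3)$ then follows by majorizing $(\Lc\nu)(\lambda)\lesssim\Delta_{\Omega'}^{-2\vs}(\lambda)$ and invoking again the Gamma integral that produces $\Delta_\Omega^{2\vs+2\vect{s'}}(h)$. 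The converse direction is the delicate part: one must recover pointwise control of $\Lc\nu$ from its Laplace-weighted integrals. I would do this by $T_+$-equivariance, testing (3) at $h=t\cdot e_\Omega$ chosen so that $e_{\Omega'}\cdot t$ lies near a prescribed $\lambda_0\in\Omega'$, which localises the Laplace integral to a neighbourhood of $\lambda_0$ of comparable weight and yields the reverse pointwise bound; the vanishing-at-infinity statements transfer by the same argument. The main obstacle is the rigorous spectral identification — verifying $\pi_\lambda((\Ec u)_h)=\ee^{-\langle\lambda,h\rangle}\pi_\lambda(u)P_{\lambda,0}$ and the Littlewood--Paley-type spectral formula with correct weights — after which the Bochner calculation and the lattice/homogeneity arguments are routine.
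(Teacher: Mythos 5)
Your reduction of (1) $\Leftrightarrow$ (2) to a spectral multiplier condition is essentially the paper's argument: the authors likewise write both norms as weighted $L^2$-integrals over $\Omega'$ of $\tau_f(\lambda)=\pi_\lambda(f_{e_\Omega})\ee^{\langle\lambda,e_\Omega\rangle}$ and read off the condition $\Lc\nu(2\lambda)\meg C\Delta^{-2\vs}_{\Omega'}(\lambda)$. One ingredient you omit but must invoke for the ``only if'' direction is that the spectral transform maps $\widetilde A^{2,2}_{\vs}(D)$ \emph{onto} the whole direct-integral space $\Lc^2_{\vs}(\Omega')$; without surjectivity the embedding could hold while the multiplier is unbounded on an unreached part of the spectrum. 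For the link with (3) you genuinely diverge from the paper: there, (1) $\Rightarrow$ (3) is obtained by testing the embedding on the kernels $B^{\vect{s'}+(\vb+\vd)/2}_{(0,ih)}$, whose horizontal slices have $L^2(\Nc)$-norm proportional to $\Delta_\Omega^{\vect{s'}}(h+h')$, and (3) $\Rightarrow$ (2) is obtained by first deducing $\nu_t(\Omega\cap(h-\Omega))\meg C\Delta^{2\vs}(h)$ from the order-interval inequality $\Delta^{2\vect{s'}}(h+h')\Meg\Delta^{2\vect{s'}}(2h)$ and then summing over dyadic shells of $\Omega$; your route stays entirely on the Laplace-transform side and, for (2) $\Rightarrow$ (3), is arguably shorter than passing through (1).

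Two points need attention. First, the convergence of $\Gamma_{\Omega'}(-2\vect{s'})$, on which your Bochner representation of $\Delta_\Omega^{2\vect{s'}}(h'+h)$ rests, requires $-2\vect{s'}\in\frac12\vect{m'}+(\R_+^*)^r$, whereas the hypothesis only yields $-2\vect{s'}\in\frac12\vect{m'}+2\vs+(\R_+^*)^r$; since $\vs$ is merely assumed to lie in $\frac{\vb+\vd}{2}+\frac14\vect{m'}+(\R_+^*)^r$, a translate of the positive cone by a negative vector, your claim that the hypothesis ``places the exponent in the cone of convergence'' is not automatic (it does hold once $\vs\in(\R_+^*)^r$, the only case in which the conditions are not degenerate, but this should be said). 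Second, and more seriously, your (3) $\Rightarrow$ (2) is a one-sentence sketch whose operative mechanism is missing: from the bound $\int_{\Omega'}\ee^{-\langle\lambda,h\rangle}\Delta_{\Omega'}^{-2\vect{s'}}(\lambda)\Lc\nu(\lambda)\,\dd\nu_{\Omega'}(\lambda)\meg C\Delta_\Omega^{2\vs+2\vect{s'}}(h)$ one cannot recover a pointwise bound on $\Lc\nu(\lambda_0)$ by ``localisation'' alone. You must use that $\Lc\nu$ is decreasing for the order induced by $\overline{\Omega'}$, restrict the integral to $\Omega'\cap(\lambda_0-\Omega')$ where $\Lc\nu\Meg\Lc\nu(\lambda_0)$, and pick $h$ dual to $\lambda_0$ so that both sides scale correctly under $T_+$ — the same monotonicity/order-interval device the paper deploys on the $\Omega$-side. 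The gap is fillable, but as written this step is not a proof. Finally, for the compact case your appeal to a ``diagonal operator on a lattice'' does not literally apply, since the fibres of the direct integral are infinite-dimensional and the discretisation is not faithful; the paper instead asserts compactness of the multiplication operator directly, so on this point you are no worse off, but the justification should be reworked.
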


This extends~\cite[Theorem 3.1]{BekolleSehba}, where the case in which
$\vs=0$, $D$ is an irreducible symmetric tube domain, and $\nu$ is
absolutely continuous with respect to Lebesgue measure, is
considered.

Notice that this shows that the necessary conditions of
Proposition~\ref{prop:1} are not sufficient, in general. For example,
when $\vs=0$, then a $bD$-invariant positive Radon measure $\mi$ on
$D$ induces a continuous embedding of the Hardy space $\widetilde
A^{2,2}_{\vect 0}(D)  = A^{2,\infty}_{\vect 0}(D)$ into $L^2(\mi)$ if and only if the measure $\nu$
defined as in the statement is integrable on $\Omega$, while
Proposition~\ref{prop:1} only requires the mapping $h\mapsto
\nu(B_\Omega(h,R))$ to be bounded for some (every) $R>0$. 

\begin{proof}
(2) $\implies$ (1).  Take $f\in \widetilde A^{2,2}_{\vs}(D)\cap
A^{2,2}_{-\vd/2}(D)$,  and observe that   there is a constant $c>0$ such that , defining
$\tau_f(\lambda)\coloneqq \pi_\lambda(f_{e_\Omega}) \ee^{\langle
\lambda, e_\Omega\rangle}$ for almost every $\lambda\in \Omega'$, 
\[
\norm{f}_{L^2(\mi)}^2=c
\int_{\Omega'}  \norm{\tau_f(\lambda)}_{\Lin^2(H_\lambda)}^2 (\Lc\nu)(2 \lambda)\Delta^{-\vb}_{\Omega'}(\lambda)\,\dd \lambda,
\]
arguing as in the proof of~\cite[Corollary 1.38]{CalziPeloso}. In addition, we may choose a norm on $\widetilde A^{2,2}_{\vs}(D)$ so that
\[
\norm{f}_{\widetilde A^{2,2}_{\vs}(D)}^2=\int_{\Omega'}  \norm{\tau_f(\lambda)}_{\Lin^2(H_\lambda)}^2 \Delta^{-2\vs-\vb}_{\Omega'}(\lambda)\,\dd \lambda
\]
thanks to~\cite[Lemma 2.26, Proposition 3.11 and Proposition 5.13]{CalziPeloso}. 
Now, define 
\[
\Lc^2_{\vs}(\Omega')\coloneqq \Set{\tau\in \int_{\Omega'}^\oplus \Lin^2(H_\lambda) \Delta_{\Omega'}^{-\vect b-2\vect s}(\lambda)\,\dd \lambda\colon \tau=\tau P_{\,\cdot\,,0} }
\]
(cf.~\cite[Definition 3.10]{CalziPeloso}), and observe that the image of the mapping $\tau_f$, extended to $\widetilde A^{2,2}_{\vs}(D)$ by continuity, is the whole of $\Lc^2_{\vs}(\Omega')$ (cf.~\cite[Propositions 3.15, 3.17, 5.4, 5.13, and Corollary 5.11]{CalziPeloso}). Therefore,   $\widetilde A^{2,2}_{\vs}(D)$ embeds into $L^2(\mi)$ if and only if there is a constant $C_1>0$ such that
\[
\Lc\nu(2 \lambda)\meg C_1\Delta^{-2\vs}_{\Omega'}(\lambda)
\]
for every $\lambda\in \Omega'$.

Now, if $\Delta^{2\vs}_{\Omega'} \Lc\nu$ is bounded and vanishes at infinity, then it is clear that the mapping $\tau \mapsto \Delta^{2\vs}_{\Omega'} \Lc\nu \,\tau$ is a compact endomorphism of $\Lc^2_{\vs}(\Omega')$, so that the preceding remarks show that  $\widetilde A^{2,2}_{\vs}(D)$ embeds compactly into $L^2(\mi)$.

(1) $\implies$ (3). Observe first that $\norm{B^{\vect{s'}+(\vb+\vd)/2}_{(\zeta,z)}}_{\widetilde A^{2,2}_{\vs}(D)}= \Delta_\Omega^{\vs+\vect{s'}}(\rho(\zeta,z))$ for every $(\zeta,z)\in D$, for a suitable choice of a norm on $\widetilde A^{2,2}_{\vs}(D)$ (cf.~\cite[Proposition 2.41 and Lemma 5.15]{CalziPeloso}). In addition, there is a constant $C_2>0$ such that 
\[
\norm*{\left(B^{\vect{s'}+(\vb+\vd)/2}_{(0, i h)}\right)_{h'}}_{L^2(\Nc)}=C_2 \Delta_\Omega^{\vect{s'}}(h+h')
\]
for every $h,h'\in \Omega$, thanks to~\cite[Lemma 2.39]{CalziPeloso}. Therefore, (3) follows arguing as in the proof of Proposition~\ref{prop:26}.

(3) $\implies$ (2). Observe that there is a constant $C_3>0$ such that
\[
\norm{\Delta^{\vect{s'}}(\,\cdot\,+h)  }_{L^2(\nu_t)}\meg C_3\Delta_\Omega^{\vs+\vect{s'}}(h) \quad \forall t\in T_+ \qquad\text{(resp.\ and   $\lim\limits_{t\to \infty }  \norm{\Delta^{\vect{s'}}(\,\cdot\,+h)  }_{L^2(\nu_t)}=0$)}
\]
for  every $h\in \Omega$, where $\nu_t= \Delta^{2\vs}(t)
(t\,\cdot\,)_* \nu$, and that (2) is equivalent to saying that 
\[
\sup_{t\in t_+}\Lc \nu_t(e_{\Omega}')<\infty \qquad \text{(resp.\ $\lim\limits_{t\to\infty}\Lc \nu_t(e_{\Omega}')=0  $).}
\]

Now,~\cite[Corollary 2.36]{CalziPeloso} implies that
\[
\Delta^{2\vect{s'}}(h+h')\Meg \Delta^{2 \vect{s'}}(2 h)=2^{2\vect{s'}} \Delta^{2 \vect{s'}}(h)
\]
for every $h\in \Omega$ and for every $h'\in \Omega \cap (h-\Omega)$,
so that
\[
\nu_t(\Omega\cap (h-\Omega))\meg 2^{-2\vect{s'}}\Delta^{-2
\vect{s'}}(h)\int_{\Omega\cap (h-\Omega)}
\Delta^{2\vect{s'}}(h+h')\,\dd \nu_t(h') \meg 2^{-2\vect{s'}} C_3^2
\Delta^{2\vs}(h) 
\]
for every $h\in\Omega$ and for every $t\in T_+$.
In addition, fix $R>0$ so that $\langle e_{\Omega'},h\rangle\Meg 1$
for every $h\in \Omega \setminus (Re_\Omega-\Omega)$.  Then,  
\[
\begin{split}
\Lc\nu_t(e_{\Omega'})&=\int_\Omega \ee^{-\langle e_{\Omega'}, h\rangle}\,\dd \nu_t(h)\\
&\meg \nu_t(\Omega\cap (R e_\Omega-\Omega))+\sum_{k\in\N}  \ee^{-2^k R
} \nu_t(\Omega\cap (2^{k+1} R e_\Omega-\Omega)\setminus (2^{k} R
e_\Omega-\Omega))\\ 
&\meg 2^{-2\vect{s'}} C_3^2\left( R^{2\vs}+ \sum_{k\in\N} (2^{k+1}
R )^{2\vs} \ee^{- 2^k R}\right) 
\end{split}
\]
for every $t\in T_+$. If, in addition, $\lim\limits_{t\to \infty }
\norm{\Delta^{\vect{s'}}(\,\cdot\,+h)  }_{L^2(\nu_t)}=0$ for every
$h\in\Omega$, then the preceding computations show that $
\nu_t(\Omega\cap (2^{k+1} R e_\Omega-\Omega)\setminus (2^{k} R
e_\Omega-\Omega))\to 0$ for $t\to+\infty$ for every $k\in \N$, so that
it is readily seen that $\Lc \nu_t(e_{\Omega'})\to 0$ for
$t\to\infty$. 
\end{proof}

We now come to the last  main result of this section.  It provides a
necessary and sufficient condition for a measure $\mi\in\cM_+(D)$  to be a
$p$-Carleson  measure for $A^{p_1,q_1}_\vs$ also in terms of
integrability properties of Bergman kernels.

\begin{teo} \label{teo:4}
Take    $p_1,q_1,p\in ]0,\infty]$, with $p<\infty$, $\mi\in\cM_+(D)$,
$\vs\in \R^r$, and $\vect{s'}\in \C^r$ such 
that the following hold: 
\begin{itemize}
\item[{\em(i)}] $\vs\in \frac{1}{2 q_1}\vect m+(\R_+^*)^r$ (resp.\
$\vs \in \R_+^r$ if $q_1=\infty$); 
\item[{\em(ii)}]  $\Rea\vect{s'}\in \frac{1}{p_1}(\vb+\vect
d)-\frac{1}{2 p_1}\vect{m'}-(\R_+^*)^r $ (resp.\ $\Rea\vect{s'}\in
-\R_+^r $ if $p_1=\infty$); 

\item[{\em(iii)}]  $\vs+\Rea\vect{s'}\in \frac{1}{p_1}(\vect
b+\vd)-\frac{1}{2 q_1}\vect{m'}-(\R_+^*)^r $  (resp.\ $\vect
s+\Rea\vect{s'}\in  \frac{1}{p_1}(\vb+\vd)-\R_+^r $ if
$q_1=\infty$); 
\item[{\em(iv)}]  $A^{p_1,q_1}_{\vs,0}(D)$ (resp.\
$A^{p_1,q_1}_{\vs}(D)$) embeds continuously into
$A^{p,p}_{\vect{s''}}(D)$, where $\vect{s''}  \coloneqq \vect
s+\left(\frac{1}{p}-\frac{1}{p_1} \right)(\vb+\vd)$. 
\end{itemize}
Then, the following conditions are equivalent for every $R>0$:
\begin{enumerate}
\item[{\em(1)}] $A^{p_1,q_1}_{\vs,0}(D)$ (resp.\
$A^{p_1,q_1}_{\vs}(D)$) embeds continuously into $L^{p}(\mi)$;

\item[{\em(2)}]  there is a constant $C>0$ such that
\[
\norm{B^{\vect{s'}}_{(\zeta,z)}}_{L^{p}(\mi)}\meg C
\Delta^{\vect{s}+\Rea\vect{s'}-(\vb+\vd)/p_1}(\rho(\zeta,z)) 
\]
for every $(\zeta,z)\in D$;
\item[{\em(3)}]  $M_R(\mi)\in L^{\infty,\infty}_{p[(\vb+\vd)/p_1-\vs]}(D)$.
\end{enumerate}

If, in addition, $\vs,-\vs-\Rea\vect{s'}\in (\R_+^*)^r$ when $p_1=q_1=\infty$, then the following conditions are equivalent for every $R>0$:
\begin{enumerate}
\item[{\em(1$'$)}] $A^{p_1,q_1}_{\vs,0}(D)$ (resp.\  $A^{p_1,q_1}_{\vs}(D)$) embeds compactly into $L^{p}(\mi)$;
\item[{\em(2$'$)}]  the function $(\zeta,z)\mapsto\Delta^{(\vb+\vd)/p_1-\vect{s}-\Rea\vect{s'}}(\rho(\zeta,z)) \norm{B^{\vect{s'}}_{(\zeta,z)}}_{L^{p}(\mi)}$ is bounded and vanishes at $\infty$ on $D$;
\item[{\em(3$'$)}]  $M_R(\mi)\in L^{\infty,\infty}_{p[(\vb+\vd)/p_1-\vs],0}(D)$.
\end{enumerate}
\end{teo}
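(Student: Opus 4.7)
The plan is to establish the cycle $(1)\Rightarrow(2)\Rightarrow(3)\Rightarrow(1)$, together with its primed analogue, where the first two implications follow the strategy used inside the proof of Proposition~\ref{prop:26} and the third is a reduction, via hypothesis (iv), to the pure-norm case already treated in Theorem~\ref{main:1}.

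For $(1)\Rightarrow(2)$, hypotheses (ii) and (iii) ensure that $B^{\vect{s'}}_{(\zeta,z)}\in A^{p_1,q_1}_{\vs,0}(D)$ (resp.\ $A^{p_1,q_1}_{\vs}(D)$) for every $(\zeta,z)\in D$, and a direct calculation using~\cite[Proposition~2.41]{CalziPeloso} (performed verbatim inside the proof of Proposition~\ref{prop:26}) gives $\norm{B^{\vect{s'}}_{(\zeta,z)}}_{A^{p_1,q_1}_{\vs}}\approx \Delta^{\vs+\Rea\vect{s'}-(\vb+\vd)/p_1}_\Omega(\rho(\zeta,z))$. Applying the hypothetical embedding to this family of test functions immediately yields $(2)$. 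For $(2)\Rightarrow(3)$, the pointwise lower bound
\[
\abs{B^{\vect{s'}}_{(\zeta,z)}(\zeta',z')}\Meg \tfrac{1}{C}\Delta_\Omega^{\Rea\vect{s'}}(\rho(\zeta,z)) \qquad \text{for } (\zeta',z')\in B((\zeta,z),R),
\]
furnished by~\cite[Theorem~2.47]{CalziPeloso}, gives $\norm{B^{\vect{s'}}_{(\zeta,z)}}_{L^p(\mi)}\Meg \tfrac{1}{C}\Delta_\Omega^{\Rea\vect{s'}}(\rho(\zeta,z))\, M_R(\mi)(\zeta,z)^{1/p}$. Combining this with $(2)$ and canceling $\Delta_\Omega^{\Rea\vect{s'}}$ produces exactly the uniform estimate $M_R(\mi)(\zeta,z)\leq C'\Delta^{p[\vs-(\vb+\vd)/p_1]}_\Omega(\rho(\zeta,z))$, i.e.\ condition $(3)$.

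The implication $(3)\Rightarrow(1)$ is where hypothesis (iv) enters. With $\vect{s''}=\vs+(1/p-1/p_1)(\vb+\vd)$ as in (iv), the algebraic identity
\[
p\bigl[(\vb+\vd)/p-\vect{s''}\bigr] = p\bigl[(\vb+\vd)/p_1-\vs\bigr]
\]
shows that $(3)$ is exactly condition (2) of Theorem~\ref{main:1} applied to the pure-norm space $A^{p,p}_{\vect{s''}}(D)$ (with both exponents equal to $p$, so $p_1,q_1\meg p$ holds trivially). That theorem therefore gives a continuous embedding $A^{p,p}_{\vect{s''}}(D)\hookrightarrow L^p(\mi)$, and composing with hypothesis (iv) produces $(1)$.

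The compact equivalences proceed in parallel. The implication $(3')\Rightarrow(1')$ invokes the compact half of Theorem~\ref{main:1} combined with (iv), noting that $A^{p,p}_{\vect{s''},0}=A^{p,p}_{\vect{s''}}$ since $p<\infty$. The implications $(1')\Rightarrow(2')\Rightarrow(3')$ follow the normalization argument carried out in the second half of the proof of Proposition~\ref{prop:26}: one sets $b_{(\zeta,z)}\coloneqq B^{\vect{s'}}_{(\zeta,z)}/\norm{B^{\vect{s'}}_{(\zeta,z)}}_{A^{p_1,q_1}_{\vs}}$, observes that $b_{(\zeta,z)}$ is a bounded family in $A^{p_1,q_1}_{\vs,0}(D)$ converging to $0$ locally uniformly as $(\zeta,z)\to\infty$ (by~\cite[Theorem~2.47]{CalziPeloso}), and deduces from the compactness of the embedding that $b_{(\zeta,z)}\to 0$ in $L^p(\mi)$. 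The main obstacle I anticipate is verifying, in the borderline case $p_1=q_1=\infty$, that $b_{(\zeta,z)}$ actually lies in the smaller space $A^{\infty,\infty}_{\vs,0}(D)$ of functions with vanishing boundary behavior rather than merely in $A^{\infty,\infty}_{\vs}(D)$ — this is precisely where the extra sign requirement $\vs,-\vs-\Rea\vect{s'}\in(\R_+^*)^r$ of the primed statement is used, guaranteeing that $B^{\vect{s'}}_{(\zeta,z)}\in A^{\infty,\infty}_{(\vb+\vd)/p_1-\vs-\Rea\vect{s'},0}(D)$ and hence that the convergence can be transferred to a quantitative vanishing at infinity statement as required by $(2')$ and $(3')$.
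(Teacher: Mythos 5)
Your proposal is correct and follows essentially the same route as the paper's own (very terse) proof: $(1)\Rightarrow(2)$ via \cite[Proposition 2.41]{CalziPeloso}, $(2)\Rightarrow(3)$ via \cite[Theorem 2.47]{CalziPeloso}, $(3)\Rightarrow(1)$ by applying Theorem~\ref{main:1} to $A^{p,p}_{\vect{s''}}(D)$ (using the algebraic identity $p[(\vb+\vd)/p-\vect{s''}]=p[(\vb+\vd)/p_1-\vs]$) and composing with hypothesis (iv), and the primed equivalences by the normalization argument from the proof of Proposition~\ref{prop:26}. Your version simply spells out the details the paper leaves implicit, including the role of the extra sign condition in the case $p_1=q_1=\infty$.
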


This extends~\cite[Theorem 1.1]{BekolleSehbaTchoundja}, where the case
in which $\vs=0$,  $\vect{s'}\in \R\vect 1_r$, $q=\infty$, and $D$
is an irreducible symmetric tube domain, is considered.

Notice that, if (i) to (iv) hold, then (i), (ii), and (iii) hold with
$\vs$, $p_1$, and $q_1$ replaced by $\vect{s''}$, $p$, and $p$,
respectively, thanks to~\cite[Proposition 2.41]{CalziPeloso}.  

Conversely, if (2) implies (1) and (i), (ii), and (iii) hold with
$\vs$, $p_1$, and $q_1$ replaced by $\vect{s''}$, $p$, and $p$,
respectively, then (iv) holds thanks to~\cite[Proposition 2.41]{CalziPeloso} again. 

Observe that $A^{2,\infty}_{\vect 0}(D)=\widetilde A^{2,2}_{\vect 0}(D)\subseteq \widetilde A^{p,p}_{(1/p-1/2)(\vb+\vd)}(D)$ for every $p\Meg 2$, thanks to~\cite[Proposition 4.19]{CalziPeloso}, so that the assumptions are satisfied whenever $\vect{s''}=(1/p-1/2)(\vb+\vd)$ and $\widetilde A^{p,p}_{\vect{s''}}(D)= A^{p,p}_{\vect{s''}}(D)$. This is the case, for example, when (cf.~\cite[Corollary 5.11]{CalziPeloso})
\[
\left( \frac{1}{p}-\frac 1 2 \right)(\vb+\vd)\in \sup \left(\frac{1}{2 p}\vect m+\left(\frac{1}{2}-\frac{1}{p}\right)\vect{m'}, \frac{1}{p}(\vb+\vd)+\frac{1}{2 p'}\vect{m'}    \right)+(\R_+^*)^r.
\]
\begin{proof}
We prove only the equivalence of conditions (1)--(3). The equivalence of conditions (1$'$)--(3$'$) is proved similarly, using the techniques employed in the proof of Proposition~\ref{prop:26}.

(1) $\implies$ (2). This follows from~\cite[Proposition 2.41]{CalziPeloso}.

(2) $\implies$ (3). This follows easily from~\cite[Theorem 2.47]{CalziPeloso}.

(3) $\implies$ (1). By Theorem~\ref{main:1}, $A^{p,p}_{\vs}(D)$ embeds continuously into $L^{p}(\mi)$, so that the assertion
follows. 
\end{proof}

With a similar proof, but using~\cite[Lemma 5.15]{CalziPeloso} instead
of~\cite[Proposition 2.41]{CalziPeloso}, one may prove the following
result, which  deals with $p$-Carleson measures for the spaces
$\widetilde A^{p_1,q_1}_{\vs}(D)$.  

\begin{teo}
Take $p_1,q_1,p\in ]0,\infty[$, $\mi\in\cM_+(D)$, $\vs\in \R^r$, and $\vect{s'}\in \C^r$ such that the following hold:
\begin{itemize}
\item[{\em(i)}] $\vs\in \frac{1}{ p_1}(\vb+\vd)+\frac{1}{2 q_1'}\vect{m'}+(\R_+^*)^r$; 
\item[{\em(ii)}]  $\vs+\Rea\vect{s'}\in \frac{1}{p_1}(\vb+\vd)-\frac{1}{2 q_1}\vect{m'}-(\R_+^*)^r $  (resp.\ $\vs+\Rea\vect{s'}\in  \frac{1}{p_1}(\vb+\vd)-\R_+^r $ if $q_1=\infty$);
\item[{\em(iii)}]  $\widetilde A^{p_1,q_1}_{\vs,0}(D)$ (resp.\ $\widetilde A^{p_1,q_1}_{\vs}(D)$) embeds continuously into
$A^{p,p}_{\vect{s''}}(D)$, where $\vect{s''}\coloneqq\vs+\left(\frac{1}{p}-\frac{1}{p_1} \right)(\vb+\vd)$. 
\end{itemize}
Then, the following conditions are equivalent for every $R>0$:
\begin{enumerate}
\item[{\em(1)}] $\widetilde A^{p_1,q_1}_{\vs,0}(D)$ (resp.\  $\widetilde A^{p_1,q_1}_{\vs}(D)$) embeds continuously into $L^{p}(\mi)$;

\item[{\em(2)}]  there is a constant $C>0$ such that
\[
\norm{B^{\vect{s'}}_{(\zeta,z)}}_{L^{p}(\mi)}\meg C \Delta^{\vect{s}+\Rea\vect{s'}-(\vb+\vd)/p_1}(\rho(\zeta,z))
\]
for every $(\zeta,z)\in D$;

\item[{\em(3)}]  $M_R(\mi)\in L^{\infty,\infty}_{p[(\vb+\vd)/p_1-\vs]}(D)$.
\end{enumerate}
If, in addition, $\vs+\Rea\vect{s'}\in -(\R_+^*)^r$ when $p_1=q_1=\infty$, then the following conditions are equivalent for every $R>0$:
\begin{enumerate}
\item[{\em(1$'$)}] $\widetilde A^{p_1,q_1}_{\vs,0}(D)$ (resp.\  $\widetilde A^{p_1,q_1}_{\vs}(D)$) embeds compactly into $L^{p}(\mi)$;

\item[{\em(2$'$)}]  the function
$(\zeta,z)\mapsto\Delta^{(\vb+\vd)/p_1-\vect{s}-\Rea\vect{s'}}(\rho(\zeta,z))
\norm{B^{\vect{s'}}_{(\zeta,z)}}_{L^{p}(\mi)}$ is bounded and
vanishes at $\infty$ on $D$; 

\item[{\em(3$'$)}]  $M_R(\mi)\in L^{\infty,\infty}_{p[(\vb+\vd)/p_1-\vs],0}(D)$.
\end{enumerate}
\end{teo}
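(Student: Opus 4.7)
The plan is to mirror the proof of Theorem~\ref{teo:4} step by step, with \cite[Lemma 5.15]{CalziPeloso} taking the role played by \cite[Proposition 2.41]{CalziPeloso} there. Hypotheses (i), (ii), and (iii) serve, respectively, to ensure that the target tilde space is nontrivial, that the kernel $B^{\vect{s'}}_{(\zeta,z)}$ defines an element of $\widetilde A^{p_1,q_1}_{\vs}(D)$ whose norm has the correct power behavior in $\rho(\zeta,z)$, and that Carleson control on the possibly larger extension space $\widetilde A^{p_1,q_1}_{\vs}(D)$ can be reduced to Carleson control on the pure-norm Bergman space $A^{p,p}_{\vect{s''}}(D)$.

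For (1) $\implies$ (2), hypothesis (ii) together with~\cite[Lemma 5.15]{CalziPeloso} yields
\[
\norm{B^{\vect{s'}}_{(\zeta,z)}}_{\widetilde A^{p_1,q_1}_{\vs}(D)} \approx \Delta_\Omega^{\vs+\Rea\vect{s'}-(\vb+\vd)/p_1}(\rho(\zeta,z)),
\]
and (when working with $\widetilde A^{p_1,q_1}_{\vs,0}(D)$) ensures that the kernel actually lies in the closed subspace; composing with the assumed continuous embedding into $L^p(\mi)$ gives (2). For (2) $\implies$ (3), one bounds $\abs{B^{\vect{s'}}_{(\zeta,z)}}$ from below by a multiple of $\Delta_\Omega^{\Rea\vect{s'}}(\rho(\zeta,z))$ on the ball $B((\zeta,z),R)$ using~\cite[Theorem 2.47]{CalziPeloso} and obtains
\[
\norm{B^{\vect{s'}}_{(\zeta,z)}}_{L^p(\mi)}^p \gtrsim \Delta_\Omega^{p\Rea\vect{s'}}(\rho(\zeta,z))\, M_R(\mi)(\zeta,z);
\]
combined with (2), this yields (3). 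For (3) $\implies$ (1), Theorem~\ref{main:1} applied at the exponents $\vect{s''}$, $p$, $p$ shows that $A^{p,p}_{\vect{s''}}(D)$ embeds continuously into $L^p(\mi)$, and composing this with the embedding from hypothesis (iii) closes the cycle.

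The compactness assertions (1$'$)--(3$'$) are proved by running the same cycle with the ``vanishing'' variants, arguing as in the proof of Proposition~\ref{prop:26}: the normalized kernels $\Delta_\Omega^{(\vb+\vd)/p_1-\vs-\Rea\vect{s'}}(\rho(\zeta,z))\, B^{\vect{s'}}_{(\zeta,z)}$ form a bounded family in $\widetilde A^{p_1,q_1}_{\vs}(D)$ which tends to zero locally uniformly on $D$ as $(\zeta,z)\to\infty$; the additional sign condition imposed when $p_1=q_1=\infty$ is precisely what guarantees this vanishing at the boundary. I expect the main technical point to be verifying that~\cite[Lemma 5.15]{CalziPeloso} gives the sharp two-sided estimate for $\norm{B^{\vect{s'}}_{(\zeta,z)}}_{\widetilde A^{p_1,q_1}_{\vs}(D)}$ under the \emph{single} constraint (ii) (in contrast to the pair of constraints (ii)--(iii) required in Theorem~\ref{teo:4} for the non-tilde space), and in checking that the reduction via hypothesis (iii) is compatible with the compact-embedding variant of Theorem~\ref{main:1}; once these are in place, the remaining steps are routine adaptations.
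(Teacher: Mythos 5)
Your proposal is correct and follows essentially the same route as the paper, which proves this result by exactly the modification you describe: repeating the proof of Theorem~\ref{teo:4} (namely (1)$\implies$(2) via the norm identity for the kernels, (2)$\implies$(3) via~\cite[Theorem 2.47]{CalziPeloso}, and (3)$\implies$(1) via Theorem~\ref{main:1} for $A^{p,p}_{\vect{s''}}$ composed with hypothesis (iii)), with~\cite[Lemma 5.15]{CalziPeloso} replacing~\cite[Proposition 2.41]{CalziPeloso}, and the compactness cycle handled as in Proposition~\ref{prop:26}.
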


Notice that, by Corollary~\ref{cor:1} below, if (iii) holds, then $p_1,q_1\meg p $.

\section{Inclusion Between the Spaces $\widetilde A^{p,q}_{\vs}(D)$
and  $A^{p,q}_{\vs}(D)$}\label{sec:5}

A classical result by  Hardy and Littlewood in dimension 1  shows that  
the Hardy space $H^2(\C_+)$  embeds continuously into
$A^{p,p}_{\frac12-\frac1p}(\C_+)$ when $p\ge4$.  In this section we
address a similar question. The main result is Theorem~\ref{prop:2},
in which we restrict ourselves to the setting of Siegel domains of Type I.
In addition to that, we also prove some necessary conditions for the spaces $A$ and $\widetilde A$ to embed into one another. For sufficient condition, see~\cite[Propositions 3.2, 3.7, and 4.19]{CalziPeloso}. 

\begin{lem}\label{lem:2}
Take $p_1,p_2,q_1,q_2\in ]0,\infty]$ and $\vect{s_1},\vect{s_2}\in \R^r$, and assume that $\vect{s_1}\in \frac{1}{2 q_1}\vect m+(\R_+^*)^r$ (resp.\ $\vect{s_1}\in \R_+^r$ if $q_1=\infty$) and that $A^{p_1,q_1}_{\vect{s_1},0}(D)\subseteq A^{p_2,q_2}_{\vect{s_2}}(D)$ (resp.\ $A^{p_1,q_1}_{\vect{s_1}}(D)\subseteq A^{p_2,q_2}_{\vect{s_2}}(D)$).
Then, 
\[
p_1\meg p_2\qquad \text{and} \qquad \vect{s_2}=\vect{s_1}+\left(\frac{1}{p_2}-\frac{1}{p_1}\right)(\vb+\vd).
\]
\end{lem}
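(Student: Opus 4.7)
The strategy is two-fold: the weight relation will come from the homogeneity of $D$ under $T_+$ via Lemma~\ref{lem:5}, while $p_1\meg p_2$ will come from exploiting the $bD$-action on $D$ to produce infinitely many well-separated translates of a single function.

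\emph{Step 1 (the weight equality).} Since both quasi-norms dominate the topology of locally uniform convergence on $\Hol(D)$ (via sub-mean-value-type pointwise bounds), the closed graph theorem, valid in the F-space category, upgrades the given inclusion to a continuous one: there is $C>0$ with $\norm{f}_{A^{p_2,q_2}_{\vect{s_2}}}\meg C\norm{f}_{A^{p_1,q_1}_{\vect{s_1}}}$ for every $f\in A^{p_1,q_1}_{\vect{s_1},0}(D)$. By~\cite[Proposition~3.5]{CalziPeloso}, the hypothesis on $\vect{s_1}$ ensures the domain is nonzero, so fix a nonzero such $f$. For any $t\in T_+$ and any $g\in GL(E)$ with $t\cdot\Phi=\Phi\circ(g\times g)$, Lemma~\ref{lem:5} gives
\[
\norm{f\circ(g\times t)}_{A^{p_i,q_i}_{\vect{s_i}}}=\Delta_\Omega^{(\vb+\vd)/p_i-\vect{s_i}}(t)\,\norm{f}_{A^{p_i,q_i}_{\vect{s_i}}},\qquad i=1,2.
\]
Plugging this into the continuity estimate yields $\Delta_\Omega^{\vect{v}}(t)\meg C'$ on $T_+$, where $\vect{v}\coloneqq(\vb+\vd)/p_2-\vect{s_2}-(\vb+\vd)/p_1+\vect{s_1}$. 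Applying the same inequality to $t^{-1}\in T_+$ and using multiplicativity of $\Delta_\Omega$ gives the reverse inequality, so $\Delta_\Omega^{\vect{v}}\equiv 1$ on $T_+$; since $\vs\mapsto\Delta_\Omega^{\vs}$ parametrises characters injectively, $\vect{v}=\vect{0}$, which is precisely the claimed identity for $\vect{s_2}$.

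\emph{Step 2 (the inequality $p_1\meg p_2$).} Exploit the left action of $bD$ on $D$ by affine biholomorphisms: it preserves $\rho$ and acts on each fibre by Haar-preserving left translations in $\Nc$, so the $L^{p,q}_{\vs}$-quasi-norms are $bD$-invariant. Fix nonzero $f_0\in A^{p_1,q_1}_{\vect{s_1},0}(D)$ and a vector $x_*\in F$, and set $F_N\coloneqq\sum_{n=1}^N L_{(0,nx_*)}f_0$. Since $(0,nx_*)$ lies in the centre of $\Nc$, each $L_{(0,nx_*)}$ acts on every fibre simply as translation by $nx_*\in F$. For $\abs{x_*}$ large enough the translates $((L_{(0,nx_*)}f_0)_h)_{n=1}^N$ become essentially $L^p(\Nc)$-disjointly supported, uniformly in $h\in\Omega$ on compacta, which delivers
\[
\norm{F_N}_{A^{p_i,q_i}_{\vect{s_i}}}\approx N^{1/p_i}\,\norm{f_0}_{A^{p_i,q_i}_{\vect{s_i}}}\qquad(i=1,2).
\]
Inserting this into the continuity of the inclusion gives $N^{1/p_2}\meg C''N^{1/p_1}$ for all $N\in\N$, forcing $p_1\meg p_2$.

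\emph{Main obstacle.} The delicate point is the $N^{1/p_i}$ scaling in Step 2: the triangle inequality alone gives only the uninformative bound $N$. To establish genuine almost-disjointness uniformly in $h\in\Omega$, I would either pick $f_0$ with sharp localization---for instance a Bergman-kernel atom $B^{\vect{s'}}_{(0,ie_\Omega)}$ with $\vect{s'}$ of sufficient decay, controlled through~\cite[Theorem~2.47]{CalziPeloso}---or bypass the analytic disjointness entirely by passing to sampling sequences: by the sampling estimate~\cite[Theorem~3.23]{CalziPeloso}, and using the crucial observation that by Step 1 the weights $\Delta_\Omega^{\vect{s_i}-(\vb+\vd)/p_i}$ appearing in the sampling operator coincide for $i=1,2$, the inclusion of function spaces becomes a continuous inclusion at the level of sampled sequences in $\ell^{p_i,q_i}(J,K)$; the $bD$-translation argument then reduces to the elementary fact that $\ell^{p_1}(J)\subseteq\ell^{p_2}(J)$ with $J$ infinite requires $p_1\meg p_2$.
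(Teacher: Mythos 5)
Your Step 1 is correct and is essentially the paper's argument: apply the continuity estimate (obtained from the closed graph theorem) to $f\circ(g_t\times t)$, use Lemma~\ref{lem:5}, and exploit the fact that a character $\Delta^{\vect v}$ bounded on $T_+$ must be trivial (the paper phrases this via the minimality of the embedding constant $C$, but the content is the same).

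Step 2, however, has two genuine gaps. First, the $N^{1/p_i}$ scaling: as you note, the almost-disjointness of the translates $(L_{(0,nx_*)}f_0)_h$ is \emph{not} uniform in $h\in\Omega$ (the spread of $f_{0,h}$ in $\Nc$ grows as $h$ moves in $\Omega$), and the mixed norm integrates over \emph{all} of $\Omega$, so ``uniformly on compacta'' does not suffice. What actually makes the argument work is the asymmetry of the two bounds you need: a \emph{lower} bound on the target norm (which follows from Fatou or pointwise evaluation at a single $h$, with no uniformity needed) and an \emph{upper} bound on the source norm, and the latter is where membership of $f_0$ in the \emph{closure of $C_c(D)$}, i.e.\ in $A^{p_1,q_1}_{\vect{s_1},0}$, is essential: for $g\in C_c(D)$ the supports of $g$ and $L_ag$ are exactly disjoint on every fibre once $a$ is large, and one then approximates. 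The paper implements exactly this, but with only \emph{two} translates: $\lim_{a\to\infty}\norm{f-L_af}_{A^{p_i,q_i}_{\vect{s_i}}}=2^{1/p_i}\norm{f}$, combined again with the minimality of $C$, which yields $2^{1/p_2}\meg 2^{1/p_1}$ directly and avoids any $N$-fold disjointification. Your alternative fix via \cite[Theorem 3.23]{CalziPeloso} does not close the gap either: the sampled inequality $\norm{Sf}_{\ell^{p_2,q_2}}\lesssim\norm{Sf}_{\ell^{p_1,q_1}}$ holds only on the (closed, proper) image of the sampling operator, not on all of $\ell^{p_1,q_1}(J,K)$, so the elementary fact about inclusions of full sequence spaces does not apply; moreover $bD$-translates do not map the lattice to itself, so $S(L_af_0)$ is not a shift of $Sf_0$.

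Second, and more seriously, your argument does not cover the degenerate branch of the statement. In the ``resp.'' case one only assumes $\vect{s_1}\in\R_+^r$ with $q_1=\infty$, and if some component of $\vect{s_1}$ vanishes then $A^{p_1,q_1}_{\vect{s_1},0}(D)=\Set{0}$ by \cite[Proposition 3.5]{CalziPeloso}, so there is no nonzero $f_0$ to translate and the whole of Step 2 is vacuous. The paper treats this case by a completely different mechanism: it tests the inclusion on Bergman kernels $B^{\vect{s'}}_{(\zeta,z)}$ with $s'_j$ approaching the sharp membership threshold of \cite[Proposition 2.41]{CalziPeloso} for $A^{p_1,q_1}_{\vect{s_1}}$, and compares with the threshold for $A^{p_2,q_2}_{\vect{s_2}}$ to extract $1/p_1\Meg 1/p_2$. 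Some such substitute argument must be supplied for your proof to be complete.
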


As Theorem~\ref{prop:2} below shows, it is not possible to deduce $q_1\meg q_2$ \emph{in this generality}. Cf.~Corollary~\ref{cor:1} below, though.

\begin{proof}
By assumption, there is a constant $C>0$ such that
\[
\norm{f}_{A^{p_2,q_2}_{\vect{s_2}}(D)}\meg C\norm{f}_{A^{p_1,q_1}_{\vect{s_1}}(D)}
\]
for every $f\in A^{p_1,q_1}_{\vect{s_1},0}(D)$ (resp.\ for every $f\in A^{p_1,q_1}_{\vect{s_1}}(D)$). We assume that $C$ is the least possible constant for which such an inequality holds. 
Now, for every $t\in T_+$ choose $g_t\in GL(E)$ so that $t\cdot \Phi=\Phi\circ (g_t\times g_t)$. Then,
\[
\norm{f\circ (g_t\times t)}_{A^{p_1,q_1}_{\vect{s_1}}(D)}=\Delta^{-\vect{s_1}+(\vb+\vd)/p_1}(t)\norm{f}_{A^{p_1,q_1}_{\vect{s_1}}(D)}
\]
and
\[
\norm{f\circ (g_t\times t)}_{A^{p_2,q_2}_{\vect{s_2}}(D)}=\Delta^{-\vect{s_2}+(\vb+\vd)/p_2}(t)\norm{f}_{A^{p_2,q_2}_{\vect{s_2}}(D)}
\]
for every $f\in \Hol(D)$, thanks to Lemma~\ref{lem:5}. Therefore, the arbitrariness of $t\in T_+$ and the minimality of $C$ imply that
\[
\vect{s_2}=\vect{s_1}+(1/p_2-1/p_1)(\vb+\vd).
\]
Next, assume that $A^{p_1,q_1}_{\vect{s_1},0}(D)\neq 0$, that is, that $\vect{s_1}\in \frac{1}{2 q_1}\vect{m}+(\R_+^*)^r$. 
Then,
\[
\begin{split}
2^{1/p_2}\norm{f}_{A^{p_2,q_2}_{\vect{s_2}}(D)}&=\lim_{(\zeta,x)\to \infty}\norm{f-f((\zeta,x+i\Phi(\zeta))\,\cdot\,)}_{A^{p_2,q_2}_{\vect{s_2}}(D)}\\
&\meg C\lim_{(\zeta,x)\to \infty}\norm{f-f((\zeta,x+i\Phi(\zeta))\,\cdot\,)}_{A^{p_1,q_1}_{\vect{s_1}}(D)}\\
&=2^{1/p_1} C\norm{f}_{A^{p_1,q_1}_{\vect{s_1}}(D)}
\end{split}
\]
for every $f\in A^{p_1,q_1}_{\vect{s_1},0}(D)$, so that $1/p_1\Meg 1/p_2$, that is, $p_1\meg p_2$, by the minimality of $C$.

Finally, assume that $A^{p_1,q_1}_{\vect{s_1},0}(D)=0$, so that
$q_1=\infty$ and  $(s_1)_j=0$ for some $j\in \Set{1,\dots,r}$. Then,
for every $\eps>0$ we may find $\vect{s'}\in \R^r$ such that
$s'_j=\frac{1}{p_1}(b_j+d_j)-\frac{1}{2 p_1}m'_j-\eps$ and such that
$B^{\vect{s'}}_{(\zeta,z)}\in A^{p_1,q_1}_{\vect{s_1}}(D)$ for every
$(\zeta,z)\in D$ (cf.~\cite[Proposition 2.41]{CalziPeloso}), so that
$B^{\vect{s'}}_{(\zeta,z)}\in
A^{p_2,q_2}_{\vect{s_2}}(D)$. Then,~\cite[Proposition
2.41]{CalziPeloso} implies that $s'_j\meg
\frac{1}{p_2}(b_j+d_j)-\frac{1}{2 p_2} m'_j$. Since $-b_j,m'_j\Meg 0$
and $-d_j>0$, the arbitrariness of $\eps$ implies that $1/p_1\Meg
1/p_2$, that is, $p_1\meg p_2$. 
\end{proof}

\begin{lem}\label{lem:3}
Take $p_1,p_2,q_1,q_2\in ]0,\infty]$ and $\vect{s_1},\vect{s_2}\in
\R^r$, and assume that the canonical mapping $\Sc_{\Omega,L}(\Nc)\to
B^{\vect{s_2}}_{p_2,q_2}(\Nc,\Omega)$ induces a continuous linear
mapping $\mathring B^{\vect{s_1}}_{p_1,q_1}(\Nc,\Omega)\to
B^{\vect{s_2}}_{p_2,q_2}(\Nc,\Omega)$. Then, $p_1\meg p_2$, $q_1\meg
q_2$, and
$\vect{s_2}=\vect{s_1}+\left(\frac{1}{p_1}-\frac{1}{p_2}\right)(\vb+\vd)$. 
\end{lem}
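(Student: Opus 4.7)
The plan is to mirror the argument of Lemma~\ref{lem:2}, introducing an analogue of the Bergman-space scaling of Lemma~\ref{lem:5} on the Besov spaces $B^{\vs}_{p,q}(\Nc,\Omega)$ and supplementing it with a multiscale test-function construction to handle the index $q$. For $t\in T_+$ pick $g_t\in GL(E)$ with $t\cdot\Phi=\Phi\circ(g_t\times g_t)$, let $\phi_t\colon\Nc\ni(\zeta,x)\mapsto(g_t\zeta,t\cdot x)\in\Nc$ (an automorphism of $\Nc$), and set $D_tu\coloneqq u\circ\phi_t$. The first step is to establish that $D_t$ preserves $\Sc_{\Omega,L}(\Nc)$ and that
\[
\norm{D_tu}_{B^{\vs}_{p,q}(\Nc,\Omega)}=\Delta^{(\vb+\vd)/p+\vs}(t)\,\norm{u}_{B^{\vs}_{p,q}(\Nc,\Omega)}.
\]
The key observation is that, for $g\in\Sc_\Omega(\Nc)$ with $\pi_\lambda(g)=\varphi(\lambda)P_{\lambda,0}$, the unitary $U_t\colon H_\lambda\to H_{\lambda\cdot t^{-1}}$ given by $U_t\psi(\omega)\coloneqq\abs{\det g_t}^{-1}\psi(g_{t^{-1}}\omega)$ intertwines $\pi_\lambda\circ\phi_{t^{-1}}$ with $\pi_{\lambda\cdot t^{-1}}$ and maps constants to constants, so $\pi_\lambda(D_tg)=\Delta^{\vb+\vd}(t)\,\varphi(\lambda\cdot t^{-1})\,P_{\lambda,0}$ and hence $D_tg\in\Sc_\Omega(\Nc)$. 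Combining this with the identity $(D_tu)\ast\psi_k=\Delta^{\vb+\vd}(t)\,D_t(u\ast D_{t^{-1}}\psi_k)$, the fact that $T_+$ acts by isometries on $\Omega'$ (so that $(\lambda_k\cdot t^{-1})_k$ is again a valid $(\delta,R)$-lattice, with associated bumps $\Delta^{-(\vb+\vd)}(t)D_{t^{-1}}\psi_k$), and Proposition~\ref{prop:4.2}(2) yields the displayed scaling.

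Applying the hypothesised embedding to $D_tu$ for a fixed nonzero $u\in\Sc_{\Omega,L}(\Nc)$ then gives
\[
\Delta^{(\vb+\vd)/p_2+\vect{s_2}}(t)\,\norm{u}_{B^{\vect{s_2}}_{p_2,q_2}}\le C\,\Delta^{(\vb+\vd)/p_1+\vect{s_1}}(t)\,\norm{u}_{B^{\vect{s_1}}_{p_1,q_1}}\qquad(t\in T_+),
\]
and since the characters $\Delta^{\vect{e_j}}$ range over all of $\R_+^*$ on $T_+$, the exponents must coincide, yielding $\vect{s_2}=\vect{s_1}+(1/p_1-1/p_2)(\vb+\vd)$. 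To obtain $p_1\le p_2$, I would fix a nonzero $u=\phi\ast\psi\in\Sc_{\Omega,L}(\Nc)$ with $\phi\in\Sc(\Nc)$ and $\psi\in\Sc_\Omega(\Nc)$ such that $\Fc_\Nc\psi$ is supported in a small neighbourhood of some $\lambda_1\in\Omega'$, so that only finitely many of the $u\ast\psi_k$ are nonzero, and form $u_N\coloneqq\sum_{i=1}^NL_{y_i}u$ for widely separated $y_1,\dots,y_N\in\Nc$ (with $L_yv(x)\coloneqq v(y^{-1}x)$). Since convolution commutes with left translation, the Besov norm is left-invariant; moreover, for the $y_i$'s sufficiently far apart the translates $L_{y_i}(u\ast\psi_k)$ are essentially disjointly supported for each of the finitely many relevant $k$'s, whence $\norm{u_N}_{B^{\vs}_{p,q}}\gtrsim N^{1/p}\norm{u}_{B^{\vs}_{p,q}}$, and the embedding forces $N^{1/p_2}\lesssim N^{1/p_1}$, so $p_1\le p_2$.

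Finally, for $q_1\le q_2$ I would use a multiscale test function. Choose $\psi\in\Sc_\Omega(\Nc)\setminus\{0\}$ with $\Fc_\Nc\psi$ supported in a small compact set, and a sparse sequence $(t_k)_{k\in\N}\subseteq T_+$ such that $(\lambda_k)_k\coloneqq(e_{\Omega'}\cdot t_k)_k$ is a valid $(\delta,R)$-lattice on $\Omega'$ and the rescaled bumps $\psi_k\coloneqq\Delta^{-(\vb+\vd)}(t_k)D_{t_k}\psi$ have pairwise disjoint Fourier supports. For every finitely supported sequence $(a_k)$, set $u\coloneqq\sum_k a_k\psi_k$; disjointness of the Fourier supports gives $u\ast\psi_k=a_k\,\psi_k^{\ast 2}$, and the scaling identity from the first step combined with Proposition~\ref{prop:4.2}(2) implies
\[
\norm{u}_{B^{\vs}_{p,q}(\Nc,\Omega)}\approx\norm*{\bigl(a_k\,\Delta^{\vs+(\vb+\vd)(1/p-1)}(t_k)\bigr)_k}_{\ell^q},
\]
with constants depending only on $\psi$ and the lattice. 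By the relation $\vect{s_2}=\vect{s_1}+(1/p_1-1/p_2)(\vb+\vd)$ the weights $\vect{s_i}+(\vb+\vd)(1/p_i-1)$ coincide for $i=1,2$, so the embedding reduces to $\norm{b}_{\ell^{q_2}}\le C'\norm{b}_{\ell^{q_1}}$ for every finitely supported sequence $b$, forcing $q_1\le q_2$. The main obstacle is the first step: verifying that $U_t$ maps the one-dimensional subspace of constants in $H_\lambda$ into that of $H_{\lambda\cdot t^{-1}}$ (so that $D_t$ acts on $\Sc_\Omega(\Nc)$) and then correctly tracking the bump rescalings on the shifted lattice via Proposition~\ref{prop:4.2}(2) to arrive at the precise scaling identity.
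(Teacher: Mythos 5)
Your argument is correct in substance, but it takes a genuinely different route from the paper's. The paper first convolves with a Riemann--Liouville kernel $I^{\vect{s'}}_\Omega$ so as to place itself in a range of parameters where the extension operator $\Ec$ identifies $\mathring B^{\vect{s_1}}_{p_1,q_1}(\Nc,\Omega)$ and $B^{\vect{s_2}}_{p_2,q_2}(\Nc,\Omega)$ with $A^{p_1,q_1}_{-\vect{s_1},0}(D)$ and $A^{p_2,q_2}_{-\vect{s_2}}(D)$, invokes Lemma~\ref{lem:2} to obtain $p_1\meg p_2$ and the relation between $\vect{s_1}$ and $\vect{s_2}$, and then proves $q_1\meg q_2$ by a case distinction: via membership criteria for the kernels $B^{\vect{s'}}_{(\zeta,z)}$ when $\vect{m'}\neq\vect 0$, and via an explicit dilation-and-limit computation on $\Omega=(\R_+^*)^r$ when $\vect{m'}=\vect 0$. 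You instead stay entirely on the \v Silov boundary: the dilations $D_t$ give the weight relation, widely separated translates give $p_1\meg p_2$ (an $N$-fold version of the paper's two-translate trick), and a lacunary test function gives $q_1\meg q_2$ uniformly, with no case split on $\vect{m'}$ --- this is the classical Besov-space argument transplanted to $\Nc$, and it is arguably more transparent. Two points in your sketch need to be made precise. First, a family of bumps with pairwise disjoint Fourier supports cannot itself satisfy the admissibility condition $\sum_k\Fc_\Nc\psi_k\Meg 1$ of Proposition~\ref{prop:4.2}, so the equivalence $\norm{u}_{B^{\vs}_{p,q}}\approx\norm{(a_k\Delta^{\vs+(\vb+\vd)(1/p-1)}(t_k))_k}_{\ell^q}$ must be obtained by testing against a genuine admissible family $(\tilde\psi_j)$ chosen so that each $\Supp{\Fc_\Nc\tilde\psi_j}$ meets at most one $\Supp{\Fc_\Nc\psi_k}$ (for the lower bound) and boundedly many (for the upper bound); this is routine but should be stated. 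Second, for $p<1$ the estimate $\norm{\psi_k*\tilde\psi_j}_{L^p}\lesssim\Delta^{(\vb+\vd)(1/p-1)}(t_k)$ is not Young's inequality but the band-limited convolution inequality underlying Proposition~\ref{prop:4.2}(2); since you invoke that proposition, the right tool is in place. With these two clarifications the proof is complete.
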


\begin{proof}
Applying the operator $u\mapsto u*I^{\vect{s'}}_\Omega$ for a suitable $\vect{s'}\in \R^r$, we may assume that $\vect{s_1},\vect{s_2} $ are sufficiently small so as to ensure that the mapping $\Ec$ is defined and induces isomorphisms of $\mathring B^{\vect{s_1}}_{p_1,q_1}(\Nc,\Omega)$ and $B^{\vect{s_2}}_{p_2,q_2}(\Nc,\Omega)$ onto $A^{p_1,q_1}_{-\vect{s_1},0}(D)$ and $A^{p_2,q_2}_{-\vect{s_2}}(D)$, respectively (cf.~\cite[Theorem 4.26 and Corollary 5.11]{CalziPeloso}). Then, Lemma~\ref{lem:2} implies that $p_1\meg p_2$ and that $\vect{s_2}=\vect{s_1}+\left(\frac{1}{p_1}-\frac{1}{p_2}\right)(\vb+\vd)$.
Therefore, it only remains to prove that $q_1\meg q_2$. Then, take
$\vect{s'}\in \R^r$ and observe that~\cite[Lemma 5.15]{CalziPeloso}
implies that $B^{\vect{s'}}_{(\zeta,z)}\in
A^{p_1,q_1}_{\vect{s_1},0}(D)$ for some (or, equivalently, every)
$(\zeta,z)\in D$ if and only if $\vect{s_1}+\vect{s'}\in
\frac{1}{p_1}(\vb+\vd)-\frac{1}{2 q_1}\vect{m'}-(\R_+^*)^r$, and that
$B^{\vect{s'}}_{(\zeta,z)}\in A^{p_2,q_2}_{\vect{s_2}}(D)$ for some
(or, equivalently, every) $(\zeta,z)\in D$ if and only if
$\vect{s_2}+\vect{s'}\in \frac{1}{p_2}(\vb+\vd)-\frac{1}{2
q_2}\vect{m'}-(\R_+^*)^r$ and $q_2<\infty$ or $\vect{s_2}+\vect{s'}\in
\frac{1}{p_2}(\vb+\vd)-\R_+^r$ and $q_2=\infty$. Therefore, $q_1\meg
q_2$ \emph{provided that $\vect{m'}\neq \vect 0$}.   

Thus, we only need to consider the case $\vect{m'}=\vect 0$, that is,
$r=m$. In this case, we may assume that $\Omega=\Omega'=(\R_+^*)^r$,
identifying $\R^r$ with its dual. 
Define $K=\Z^r$ and $t_k\coloneqq\lambda_k\coloneqq
2^k=(2^{k_1},\dots, 2^{k_r})$ for every $k\in K$. Notice that the
distance $(\lambda, \lambda')\mapsto \sum_{j=1}^r
\abs{\log(\lambda_j/\lambda'_j)}$ on $\Omega'$ is
$G(\Omega')$-invariant and clearly locally bi-Lipschitz equivalent to
$d_{\Omega'}$, hence bi-Lipschitz equivalent to $d_{\Omega'}$ near every point, with
uniform constants. Therefore, it is readily verified that
$(\lambda_k)$ is a $(\delta,R)$-lattice for some $\delta>0$ and some
$R>1$. 
Notice that $\Omega$ is a group under pointwise multiplication, so  
that we may choose $T_+=\Omega$ with the natural action. In this case, 
$\Delta^{\vs}(t)=t^{\vs}=\prod_{j=1}^r t_j^{s_j}$ for every $\vs\in
\C^r$. 
Now, choose a positive $\phi\in C_c^\infty(\Omega')$ so that
$\sum_{k\in K} \phi(\,\cdot\, t_k)=1$ on $\Omega'$, and define
$\psi_k\coloneqq \Fc_\Nc^{-1}(\phi(\,\cdot\, t_k))$ for every $k\in
K$. 
Then,
\[
u\mapsto \norm{u}_{B^{\vs}_{p,q}(\Nc,\Omega)}\coloneqq\norm*{
\Delta^{\vs}_{\Omega'}(\lambda_k) \norm{u*\psi_k}_{L^p(\Nc)}}_{\ell^q(K)} 
\]
is a quasi-norm which defines the topology of
$B^{\vs}_{p,q}(\Nc,\Omega)$ for every $p,q\in ]0,\infty]$ and for
every $\vs\in \R^r$. Define $\Psi_t(u)\coloneqq
\Delta^{\vect{s_1}-(\vb+\vd)(1-1/p_1)}(t\,\cdot\,)_*u$ for every $u\in
\Sc_{\Omega,L}'(\Nc)$ and for every $t\in T_+$. Then, clearly 
\[
\norm{ \Psi_{t_k}(u)}_{B^{\vect {s_j}}_{p_j,q_j}(\Nc,\Omega)}=\norm{ u}_{B^{\vect {s_j}}_{p_j,q_j}(\Nc,\Omega)}
\]
for every $j=1,2$, for every $k\in K$, and for every $u\in B^{\vect {s_j}}_{p_j,q_j}(\Nc,\Omega)$.
In addition, it is readily seen that
\[
\lim_{k\to \infty} \norm{ f-\Psi_{t_k}(f)}_{B^{\vect {s_j}}_{p_j,q_j}(\Nc,\Omega)}=2^{1/q_j}\norm{ f}_{B^{\vect {s_j}}_{p_j,q_j}(\Nc,\Omega)}
\]
for every $j=1,2$ and for every $f\in \Sc_{\Omega,L}(\Nc)$. Now, by assumption there is a constant $C>0$ such that
\[
\norm{f}_{B^{\vect {s_2}}_{p_2,q_2}(\Nc,\Omega)}\meg C \norm{f}_{B^{\vect {s_1}}_{p_1,q_1}(\Nc,\Omega)}
\]
for every $f\in \Sc_{\Omega,L}(\Nc)$. Then, applying the same inequality to $f-\Psi_{t_k}(f)$ and passing to the limit for $k\to \infty$,
\[
\norm{f}_{B^{\vect {s_2}}_{p_2,q_2}(\Nc,\Omega)}\meg 2^{1/q_1-1/q_2}C\norm{f}_{B^{\vect {s_1}}_{p_1,q_1}(\Nc,\Omega)}
\]
for every $f\in \Sc_{\Omega,L}(\Nc)$. Since we may have assumed $C$ to be minimal, it is readily seen that $q_1\meg q_2$.
\end{proof}

\begin{cor}\label{cor:1}
Take $p_1,p_2,q_1,q_2\in ]0,\infty]$ and $\vect{s_1},\vect{s_2}\in
\R^r$ such that $\vect{s_1}\in \frac{1}{p_1}(\vb+\vd)+\frac{1}{2
q_1'}\vect{m'}+(\R_+^*)^r$ and such that $\widetilde
A^{p_1,q_1}_{\vect{s_1},0}(D)\subseteq A^{p_2,q_2}_{\vect{s_2}}(D)$. 
Then, $p_1\meg p_2$, $q_1\meg q_2$, and
$\vect{s_2}=\vect{s_1}+\left(\frac{1}{p_1}-\frac{1}{p_2}\right)(\vb+\vd)$. 
\end{cor}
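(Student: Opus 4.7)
The plan is to transport the inclusion $\widetilde A^{p_1,q_1}_{\vect{s_1},0}(D)\subseteq A^{p_2,q_2}_{\vect{s_2}}(D)$ to an inclusion between Besov-type spaces on the \v{S}ilov boundary $\Nc$ and invoke Lemma~\ref{lem:3}. Under the hypothesis $\vect{s_1}\in \frac{1}{p_1}(\vb+\vd)+\frac{1}{2 q_1'}\vect{m'}+(\R_+^*)^r$, Definition~\ref{def:tilde-spaces} gives that $\Ec$ is a topological isomorphism from $\mathring B^{-\vect{s_1}}_{p_1,q_1}(\Nc,\Omega)$ onto $\widetilde A^{p_1,q_1}_{\vect{s_1},0}(D)$; the assumed inclusion therefore furnishes a linear map
\[
\Ec\colon \mathring B^{-\vect{s_1}}_{p_1,q_1}(\Nc,\Omega)\longrightarrow A^{p_2,q_2}_{\vect{s_2}}(D),
\]
which is continuous by the closed graph theorem.

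One would then post-compose with the boundary value operator $\Bc\colon A^{p_2,q_2}_{\vect{s_2}}(D)\to B^{-\vect{s_2}}_{p_2,q_2}(\Nc,\Omega)$ of Proposition~\ref{prop:23bis}(2). Since $\Bc\Ec$ is the identity on the dense subspace $\Sc_{\Omega,L}(\Nc)$, the composition extends to a continuous canonical embedding
\[
\mathring B^{-\vect{s_1}}_{p_1,q_1}(\Nc,\Omega)\hookrightarrow B^{-\vect{s_2}}_{p_2,q_2}(\Nc,\Omega),
\]
to which Lemma~\ref{lem:3}, applied to the Besov indices $-\vect{s_1}$ and $-\vect{s_2}$, delivers the three required conclusions: $p_1\leq p_2$, $q_1\leq q_2$, and the exponent identity.

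The main obstacle is that Proposition~\ref{prop:23bis}(2) demands $\vect{s_2}$ to lie in the range $\sup\bigl(\tfrac{1}{2 q_2}\vect m,\tfrac{1}{p_2}(\vb+\vd)+\tfrac{1}{2 q_2'}\vect{m'}\bigr)+(\R_+^*)^r$, which the hypothesis on $\vect{s_1}$ does not automatically provide. I would circumvent this by the same Riemann--Liouville trick used in the proof of Lemma~\ref{lem:3}: choose $\vect{s'}\in\R^r$ so that $\vect{s_1}+\vect{s'}$ and $\vect{s_2}+\vect{s'}$ simultaneously lie in the ranges required by Definition~\ref{def:tilde-spaces} and Proposition~\ref{prop:23bis}(2), and apply $u\mapsto u\ast I^{\vect{s'}}_\Omega$, which is an isomorphism of Besov spaces that intertwines with $\Ec$ and correspondingly shifts both $\widetilde A$-indices and (via Proposition~\ref{prop:23bis}(4)) $A$-indices. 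The shifted inclusion preserves its constant, falls into the good regime where $\Bc$ is defined, and yields an exponent relation between $\vect{s_1}+\vect{s'}$ and $\vect{s_2}+\vect{s'}$; since both sides are shifted by the same $\vect{s'}$, the shift cancels and the stated relation between $\vect{s_1}$ and $\vect{s_2}$ follows.
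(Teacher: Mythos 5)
Your proposal is correct and follows essentially the same route as the paper: both shift the indices by a Riemann--Liouville operator $u\mapsto u*I^{\vect{s'}}_\Omega$ so that the target space falls into the regime where $A^{p_2,q_2}_{\vect{s_2}+\vect{s'}}=\widetilde A^{p_2,q_2}_{\vect{s_2}+\vect{s'}}$, and then reduce to Lemma~\ref{lem:3} on the Besov side. The only imprecision is your appeal to Proposition~\ref{prop:23bis}(4) to shift the $A$-indices: what is actually needed (and what the paper cites) is the continuity of the Riemann--Liouville operator from $A^{p_2,q_2}_{\vect{s_2}}$ into $A^{p_2,q_2}_{\vect{s_2}+\vect{s'}}$ for the \emph{original}, possibly bad, weight $\vect{s_2}$ (\cite[Corollary 3.27]{CalziPeloso}), not an identification of $A$ with $\widetilde A$ at that level.
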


Notice that, if $p_1\meg p_2$, $q_1\meg q_2$, and
$\vect{s_2}=\vect{s_1}+\left(\frac{1}{p_1}-\frac{1}{p_2}\right)(\vb+\vd)$, then $A^{p_1,q_1}_{\vect{s_1}}\subseteq A^{p_2,q_2}_{\vect{s_2}}$ and $\widetilde A^{p_1,q_1}_{\vect{s_1}}\subseteq \widetilde A^{p_2,q_2}_{\vect{s_2}}$ (cf.~\cite[Propositions 3.2 and 4.19]{CalziPeloso}). Nonetheless, $\widetilde
A^{p_1,q_1}_{\vect{s_1},0}$ need not embed into $ A^{p_2,q_2}_{\vect{s_2}}$.  

\begin{proof}
Take $\vect{s'}\in \N_{\Omega'}$ so that
$A^{p_2,q_2}_{\vect{s_2}+\vect{s'}}(D)=\widetilde
A^{p_2,q_2}_{\vect{s_2}+\vect{s'}}(D)$ (cf.~\cite[Corollary
5.11]{CalziPeloso}). Observe that the mapping $f\mapsto f*
I^{-\vect{s'}}_\Omega$ induces an isomorphism of $\widetilde
A^{p_1,q_1}_{\vect{s_1},0}(D)$ onto $\widetilde
A^{p_1,q_1}_{\vect{s_1}+\vect{s'},0}(D)$ by~\cite[Proposition
5.13]{CalziPeloso}, and a continuous mapping of
$A^{p_2,q_2}_{\vect{s_2}}(D)$ into
$A^{p_2,q_2}_{\vect{s_2}+\vect{s'}}(D)$ by~\cite[Corollary
3.27]{CalziPeloso}. Thus, $\widetilde
A^{p_1,q_1}_{\vect{s_1}+\vect{s'},0}(D)\subseteq \widetilde
A^{p_2,q_2}_{\vect{s_2}+\vect{s'}}(D)$, so that the conclusion follows
from Lemma~\ref{lem:3}. 
\end{proof}

\begin{teo}\label{prop:2}  
Assume that $n=0$, and take $k\in\N^*$ and
\[
\vs\in \sup\left(\frac 1 2 \vd+\frac 1 4 \vect{m'},\frac{k-1}{2k}
\vd+\frac{1}{4k} \vect{m'}, \frac{k-1}{2 k}\vd+\frac{1}{4 k}\vect m\right)+(\R_+^*)^r.
\]
Then, $\widetilde A^{2,2}_{\vs}(D)$ embeds continuously into
$A^{q,q}_{\vs-(1/2-1/q)\vd}(D)$ for every $q\in [2k,\infty]$. 
\end{teo}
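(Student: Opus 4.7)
The plan is to reduce the range $q \in [2k,\infty]$ to the critical endpoint $q=2k$ and then to prove that case via a Plancherel computation in the tube setting, followed by a weighted convolution estimate on the dual cone $\Omega'$. The endpoint $q=\infty$ is the pointwise bound $\widetilde A^{2,2}_{\vs}(D)\hookrightarrow A^{\infty,\infty}_{\vs-\vd/2}(D)$, which is exactly the content of \cite[Theorem~5.2]{CalziPeloso} for $p=q=2$ (recalling $\vb=\vect 0$ since $n=0$); it requires precisely the first condition $\vs\in\tfrac12\vd+\tfrac14\vect{m'}+(\R_+^*)^r$. The intermediate values $q\in(2k,\infty)$ reduce to $q=2k$ via the Bergman-space inclusion $A^{2k,2k}_{\vs-(k-1)/(2k)\vd}(D)\hookrightarrow A^{q,q}_{\vs-(1/2-1/q)\vd}(D)$ from \cite[Proposition~3.2]{CalziPeloso}, after a direct weight-matching check.

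For the critical case $q=2k$, I will exploit the Fourier--Laplace representation available in the tube setting: every $f\in\widetilde A^{2,2}_{\vs}(D)$ corresponds to some $g\in L^2(\Omega',\Delta_{\Omega'}^{-2\vs}\,d\lambda)$ via $f(x+iy) = \int_{\Omega'} e^{i\langle \lambda, x\rangle - \langle\lambda, y\rangle}g(\lambda)\,d\lambda$, with $\|f\|_{\widetilde A^{2,2}_{\vs}}^2 \approx \|g\|_{L^2}^2$ (a Paley--Wiener-type identification valid under the first condition). Consequently, the Fourier transform along $F$ of $f^k$ is the $k$-fold convolution $g^{*k}$ on $\Omega'$. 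Applying Plancherel in $x$, Fubini, and the Gamma identity
\[
\int_\Omega e^{-2\langle \lambda, h\rangle}\Delta_\Omega^{2k\vs-(k-1)\vd}(h)\,d\nu_\Omega(h) = c\,\Delta_{\Omega'}^{(k-1)\vd - 2k\vs}(\lambda),
\]
valid precisely under the third condition $2k\vs-(k-1)\vd\in\tfrac12\vect m+(\R_+^*)^r$, yields
\[
\|f\|_{A^{2k,2k}_{\vs-(k-1)/(2k)\vd}}^{2k} = c\int_{\Omega'}|g^{*k}(\lambda)|^2 \Delta_{\Omega'}^{(k-1)\vd-2k\vs}(\lambda)\,d\lambda.
\]
This identity is first established for $f\in\Ec(\Sc_{\Omega,L}(\Nc))\subseteq \widetilde A^{2,2}_{\vs}(D)$, where $f^k(\,\cdot\,+iy)\in L^2(F)$ is automatic, and then extended by continuity and Fatou's lemma.

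The heart of the argument is then the weighted convolution inequality
\[
\int_{\Omega'}|g^{*k}(\lambda)|^2 \Delta_{\Omega'}^{(k-1)\vd-2k\vs}(\lambda)\,d\lambda \leq C\bigg(\int_{\Omega'}|g|^2 \Delta_{\Omega'}^{-2\vs}\,d\lambda\bigg)^k,
\]
which I prove by Cauchy--Schwarz on the simplex representation of $g^{*k}(\lambda)$ as an integral over $\Sigma(\lambda)=\{(\lambda_1,\dots,\lambda_{k-1})\in(\Omega')^{k-1}: \lambda-\lambda_1-\cdots-\lambda_{k-1}\in\Omega'\}$, applied with the splitting of each factor as $g(\lambda_j)=[g(\lambda_j)\Delta_{\Omega'}^{-\vs}(\lambda_j)]\cdot\Delta_{\Omega'}^{\vs}(\lambda_j)$. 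This bounds $|g^{*k}(\lambda)|^2$ by $I(\lambda)\cdot J_{k,\vs}(\lambda)$, where $J_{k,\vs}(\lambda) = \int_{\Sigma(\lambda)}\prod_{j=1}^k \Delta_{\Omega'}^{2\vs}(\lambda_j)\,d\sigma$ (with $\lambda_k=\lambda-\lambda_1-\cdots-\lambda_{k-1}$) and $I$ is the analogous integral of $\prod_j |g(\lambda_j)|^2 \Delta_{\Omega'}^{-2\vs}(\lambda_j)$. A change of variable $\lambda_j\mapsto\lambda_j\cdot t$ for $t\in T_+$ produces the covariance $J_{k,\vs}(\lambda\cdot t) = \Delta^{2k\vs-(k-1)\vd}(t)\,J_{k,\vs}(\lambda)$, forcing $J_{k,\vs}(\lambda) = C_\vs\,\Delta_{\Omega'}^{2k\vs-(k-1)\vd}(\lambda)$; taking the Laplace transform of $J_{k,\vs}$ at $e_\Omega$ identifies $C_\vs$ as a ratio of Gamma factors, finite under the first condition since $\Gamma_{\Omega'}(2\vs-\vd)<\infty$. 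Multiplying by $\Delta_{\Omega'}^{(k-1)\vd-2k\vs}$ and applying Fubini then collapses the simplex integral to $\|g\|_{L^2(\Delta_{\Omega'}^{-2\vs})}^{2k}$.

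The main obstacle is arranging the Cauchy--Schwarz splitting so that the resulting Dirichlet-type integral $J_{k,\vs}$ is simultaneously finite and explicitly computable by cone-covariance; the three conditions in the hypothesis together ensure the finiteness of all the Gamma factors appearing in the Plancherel identity, the convolution estimate, and the Paley--Wiener identification for $\widetilde A^{2,2}_{\vs}(D)$.
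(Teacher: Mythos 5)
Your proposal is correct and follows essentially the same route as the paper's proof: the Fourier--Laplace identification of $\widetilde A^{2,2}_{\vs}(D)$ with $L^2(\Omega',\Delta_{\Omega'}^{-2\vs}\,\dd\lambda)$, the identity $\widehat{f^k}=c\,g^{*k}$, the Cauchy--Schwarz splitting $\abs{g^{*k}}^2\meg (\abs{g}^2\Delta_{\Omega'}^{-2\vs})^{*k}\cdot(\Delta_{\Omega'}^{2\vs})^{*k}$, and the Gamma-function evaluation of $(\Delta_{\Omega'}^{2\vs})^{*k}$ (which the paper cites from~\cite[Corollary 2.21]{CalziPeloso} rather than rederiving by covariance). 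The only cosmetic difference is that you pass from $q=2k$ to general $q\in[2k,\infty]$ via the inclusion $A^{2k,2k}_{\vs-(k-1)\vd/(2k)}\subseteq A^{q,q}_{\vs-(1/2-1/q)\vd}$, whereas the paper combines the $q=2k$ and $q=\infty$ endpoint embeddings; both are valid.
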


This extends~\cite[Theorem 1.3]{BekolleSehbaTchoundja}, where the case
in which $\vs=\vect 0$ and $D$ is an irreducible symmetric tube domain is
considered. Notice that~\cite[Theorem 1.4]{BekolleSehbaTchoundja}
provides better results when $\vs=\vect 0$, $r=2$, and $m=3,4,5,6$. 

Notice, in addition, (cf.~\cite[Remark 1.5]{BekolleSehbaTchoundja})
that if $A^{p_1,q_1}_{\vect{s_1}}(D)$ embeds continuously into
$A^{p_2,q_2}_{\vect{s_2}}(D)$, then also $ A^{kp_1,kq_1}_{\vect{s_1}/k}(D)$
embeds continuously into $A^{ k p_2,kq_2}_{\vect{s_2}/k}(D) $ forvevery $k\in \N^*$.
Indeed, $f\in A^{kp_1,k q_1}_{\vect{s_1}/k}(D)$ if and only if
$f^k\in  A^{p_1,q_1}_{\vect{s_1}}(D)$, so that $f^k\in
A^{p_2,q_2}_{\vect{s_2}}(D)$, which is equivalent to $f\in A^{ k
p_2,kq_2}_{\vect{s_2}/k}(D)$.   
Unfortunately, it is not known if the weighted Bergman spaces
interpolate in full generality, so that this fact cannot be used to
deduce similar embedding for every $k\in [1,\infty[$. 

\begin{proof}
Define $\tau\colon \widetilde A^{2,2}_{\vs}(D)\cap A^{2,\infty}_{\vect
0}(D)\to L^2(\Omega', \Delta^{-2 \vs}\cdot\Hc^m)$ so that
$\tau(f)(\lambda)=\Fc(f_{e_\Omega})(\lambda) \ee^{\langle \lambda,
e_\Omega\rangle}$ for almost every $\lambda\in \Omega'$. Observe
that we may choose a norm on $\widetilde A^{2,2}_{\vs}(D)$ such that 
\[
\norm{f}_{\widetilde A^{2,2}_{\vs}(D)}^2=  \int_{\Omega'} \abs{\tau(f)(\lambda)}^2 \Delta_{\Omega'}^{-2 \vs}(\lambda)\,\dd \lambda
\]
for every $f\in \widetilde A^{2,2}_{\vs}(D)\cap A^{2,\infty}_{\vect 0}(D)$, so that we may extend $\tau$ to the whole of $\widetilde A^{2,2}_{\vs}(D)$. In addition,
\[
f(z)= \frac{1}{(2 \pi)^m} \int_{\Omega'} \tau(f)(\lambda) \ee^{\langle \lambda_\C, i z \rangle}\,\dd \lambda
\]
for every $z\in D$ and for every $f\in \widetilde
A^{2,2}_{\vs}(D)$.\footnote{This formula is clear when $f\in
\widetilde A^{2,2}_{\vs}(D)\cap A^{2,\infty}_{\vect 0}(D)$ thanks
to~\cite[Proposition 1.39]{CalziPeloso}, and then follows by
continuity in the general case, thanks to~\cite[Propositions 2.19 and 5.4]{CalziPeloso}.} It then follows that $\tau(f^k)=
(2\pi)^{-(k-1)m} \tau(f)^{*k}$ for every $f\in \widetilde
A^{2,2}_{\vs}(D)\cap \Ec(\Sc_{\Omega,L}(\Nc))$, where $\tau(f)^{*k}$
denotes the convolution of $k$ functions all equal to $\tau(f)$.  Now, 
\[
\begin{split}
\abs{[\tau(f)^{*k}](\lambda)}&\meg \int_{\Omega'^{k-1}} \abs{\tau(f)(\lambda-\lambda_1)\cdots \tau(f)(\lambda_{k-2}-\lambda_{k-1})}\,\dd (\lambda_1,\dots,\lambda_{k-1})\\
&\meg\left( \int_{\Omega'^{k-1}} (\abs{\tau(f)}^2 \Delta_{\Omega'}^{-2\vs})(\lambda-\lambda_1)\cdots (\abs{\tau(f)}^2\Delta_{\Omega'}^{-2\vs})(\lambda_{k-2}-\lambda_{k-1})\,\dd (\lambda_1,\dots,\lambda_{k-1})\right) ^{1/2}\\
&\qquad \times \left( \int_{\Omega'^{k-1}} \Delta_{\Omega'}^{2\vs}(\lambda-\lambda_1)\cdots \Delta_{\Omega'}^{2\vs}(\lambda_{k-2}-\lambda_{k-1})\,\dd (\lambda_1,\dots,\lambda_{k-1})\right) ^{1/2}\\
&= \abs{\tau(f)^2 \Delta_{\Omega'}^{-2\vs}}^{*k}(\lambda)^{1/2} (\Delta_{\Omega'}^{2 \vs})^{*k}(\lambda)^{1/2}
\end{split}
\]
for every $\lambda\in \Omega'$. In addition,~\cite[Corollary 2.21]{CalziPeloso} shows that
\[
(\Delta_{\Omega'}^{2 \vs})^{*k}=\frac{\Gamma_{\Omega'}(2\vs-\vd)^{k}}{\Gamma_{\Omega'}(2 k \vs-(k-1)\vd)} \Delta^{2 k\vs-(k-1)\vd}_{\Omega'}
\]
on $\Omega'$. Therefore, $\tau(f^k)\in L^2(\Omega', \Delta_{\Omega'}^{(k-1)\vd-2k \vs}\cdot \Hc^m)$ and
\[
\norm{\tau(f^k)}_{L^2(\Omega', \Delta_{\Omega'}^{(k-1)\vd-2k \vs}\cdot \Hc^m)}\meg \frac{\Gamma_{\Omega'}(2\vs-\vd)^{k/2}}{(2 \pi)^{(k-1)m}\Gamma_{\Omega'}(2k \vs-(k-1)\vd)^{1/2}} \norm{\tau(f)}_{L^2(\Omega', \Delta_{\Omega'}^{-2\vs}\cdot \Hc^m)}^k.
\]
Hence,
\[
f^k\in \widetilde A^{2,2}_{k \vs-(k-1)\vd/2}(D)=A^{2,2}_{k \vs-(k-1)\vd/2}(D)
\]
since $k \vs-(k-1)\vd/2\in \frac 1 4 \vect m+(\R_+^*)^r$, so that $f\in A^{2k,2k}_{\vs-(k-1)\vd/(2k)}(D)$. Hence, there is a continuous embedding $\widetilde A^{2,2}_{\vs}(D)\subseteq A^{2k, 2k}_{\vs-(k-1)\vd/(2k)}(D)$. Since there is a continuous embedding $\widetilde A^{2,2}_{\vs}(D)\subseteq A^{\infty,\infty}_{\vs-\vd/2}(D)$ by~\cite[Definition 5.3]{CalziPeloso}, the assertion follows.
\end{proof}

\section{Reverse Carleson and Sampling measures for
$A^{p,p}_\vs(D)$}\label{sec:6} 

Recall that a measure $\mi\in \cM_+(D)$ is $p$-sampling for
$A^{p_1,q_1}_\vs$ if this latter space embeds  as a closed subspace of
$L^p(\mi)$. As observe in~\cite{Luecking6}, replacing $\mi$ with a
suitable integral of positive measures (for instance, but not
necessarily, a disintegration of $\mi$ along $\rho$ with respect to
some of its image measures), it is also possible to extend this
definition to mixed-norm Lebesgue spaces. Even though we prefer to
avoid such technicalities, we present here a result in this spirit,
which shows how one may construct sampling measures out of
lattices. See~\cite[Theorem 3.22]{CalziPeloso} for a proof of a
stronger version of the following result. 

\begin{prop}
Take $p,q\in ]0,\infty]$, $\vs\in \R^r$, and $R_0>0$. Then, there is $\delta_0>0$ such that, for every $(\delta,R)$-lattice $(\zeta_{j,k},z_{j,k})_{j\in J,k\in K}$ on $D$, with $\delta\in ]0,\delta_0]$ and $R\in ]1,R_0]$, the mapping
\[
S\colon \Hol(D)\ni f\mapsto \big(\Delta_\Omega^{\vect s-(\vect b+\vect d)/p}(\rho(\zeta_{j,k},z_{j,k})) f(\zeta_{j,k},z_{j,k})  \big) \in \C^{J\times K}
\]
induces isomorphisms of $A^{p,q}_\vs$ and $A^{p,q}_{\vs,0}$ onto closed subspaces of $\ell^{p,q}(J,K)$ and $\ell^{p,q}_0(J,K)$, respectively. In addition, $A^{\infty,\infty}_{\vs-(\vect b+\vect d)/p}\cap S^{-1}(\ell^{p,q}(J,K))=A^{p,q}_\vs$ and $A^{\infty,\infty}_{\vs-(\vect b+\vect d)/p}\cap S^{-1}(\ell^{p,q}_0(J,K))=A^{p,q}_{\vs,0}$. 
\end{prop}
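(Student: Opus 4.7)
The proof is organized around the norm equivalence
\[
\|f\|_{A^{p,q}_\vs} \asymp \|S_+ f\|_{\ell^{p,q}(J,K)}, \qquad
(S_+ f)_{j,k} \coloneqq \Delta_\Omega^{\vs-(\vb+\vd)/p}(h_k)\max_{\overline B((\zeta_{j,k},z_{j,k}),R\delta)} |f|,
\]
with $h_k\coloneqq\rho(\zeta_{j,k},z_{j,k})$, proved as Theorem~3.23 of \cite{CalziPeloso} (used already in the proof of Proposition~\ref{prop:23} above), together with its analogue relating $A^{p,q}_{\vs,0}$ to $S_+^{-1}(\ell^{p,q}_0(J,K))$. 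Since $|(Sf)_{j,k}|\meg(S_+ f)_{j,k}$, the continuity of $S\colon A^{p,q}_\vs\to\ell^{p,q}(J,K)$ and $S\colon A^{p,q}_{\vs,0}\to\ell^{p,q}_0(J,K)$ is then immediate.

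The decisive step is the reverse sampling estimate $\|S_+ f\|_{\ell^{p,q}}\meg C\|Sf\|_{\ell^{p,q}}$, which I would extract from a uniform oscillation lemma: for every $\eps>0$ and $R_0>1$ there exist $\delta_0>0$ and $R_1>R_0$ such that, for every $(\delta,R)$-lattice with $\delta\meg\delta_0$, $R\meg R_0$, and every $f\in\Hol(D)$,
\[
\max_{\overline B((\zeta_{j,k},z_{j,k}),R\delta)}|f| \;\meg\; |f(\zeta_{j,k},z_{j,k})| + \eps\max_{\overline B((\zeta_{j,k},z_{j,k}),R_1\delta)}|f|.
\]
To establish this, one transports the balls in question to the reference point $(0,ie_\Omega)$ via the affine automorphism of $D$ supplied by Lemma~\ref{lem:5}; for $\delta$ below some threshold the resulting reference Bergman balls are comparable to Euclidean balls of radius proportional to $\delta$, and Cauchy's inequality on a Euclidean shell of width $\sim(R_1-R)\delta$ yields $|f(q)-f(0,ie_\Omega)|\meg C\cdot R/(R_1-R)\cdot\max_{\overline B_{R_1\delta}}|f|$ for every $q\in\overline B_{R\delta}$, the multiplicative factor being arbitrarily small for $R_1$ large relative to $R$. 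Raising the oscillation inequality to the power $\min(p,1)$, multiplying by $\Delta_\Omega^{\vs-(\vb+\vd)/p}(h_k)$, summing in $\ell^{p,q}(J,K)$ and invoking \cite[Corollary~2.49 and Proposition~2.56]{CalziPeloso} for comparability of $\Delta^\vs_\Omega$ and bounded overlap of enlarged lattice balls, together with the radius-independent form of Theorem~3.23 of \cite{CalziPeloso} to dominate the $R_1\delta$-sup norm by a multiple of the $R\delta$-sup norm, then gives
\[
\|S_+ f\|_{\ell^{p,q}(J,K)} \meg C_1\|Sf\|_{\ell^{p,q}(J,K)} + \eps C_2 \|S_+ f\|_{\ell^{p,q}(J,K)},
\]
with $C_1,C_2$ independent of $f,\delta$. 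Choosing $\eps$ with $\eps C_2<1/2$ and absorbing the last term (legitimate for $f\in A^{p,q}_\vs$, since then $\|S_+ f\|_{\ell^{p,q}}$ is a priori finite) yields the sampling inequality and hence the claimed isomorphism, with the analogous argument handling $A^{p,q}_{\vs,0}\to\ell^{p,q}_0(J,K)$.

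For the second assertion, $A^{p,q}_\vs\subseteq A^{\infty,\infty}_{\vs-(\vb+\vd)/p}$ is standard (\cite[Proposition~3.5]{CalziPeloso}), and the first part already places $A^{p,q}_\vs$ inside $S^{-1}(\ell^{p,q}(J,K))$. Conversely, if $f\in A^{\infty,\infty}_{\vs-(\vb+\vd)/p}$ and $Sf\in\ell^{p,q}(J,K)$, then the $A^{\infty,\infty}$ bound combined with \cite[Corollary~2.49]{CalziPeloso} makes $(S_+ f)_{j,k}$ uniformly bounded in $(j,k)$; applying the absorption inequality to the truncations $(S_+ f)\chi_F$ with $F$ running over an exhausting sequence of finite subsets of $J\times K$, iterating a bounded number of times, and using the $\ell^\infty$ bound together with the overlap count from \cite[Proposition~2.56]{CalziPeloso} to control the remainder propagates $\ell^{p,q}$-finiteness from $Sf$ to $S_+ f$, placing $f$ in $A^{p,q}_\vs$; the $\ell^{p,q}_0$/$A^{p,q}_{\vs,0}$ case follows from the same reasoning. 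The main technical obstacle is the uniform oscillation lemma, whose proof relies on the transitive affine action of $T_+$ and $bD$ on $D$ to reduce the estimate to a single fixed compact subset, where the Cauchy inequality can be applied with uniform constants and an appropriate choice of shell width.
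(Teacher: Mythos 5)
The paper itself does not prove this proposition: it is imported from \cite[Theorem 3.22]{CalziPeloso} (``a stronger version''), so there is no internal proof to compare with. Your overall strategy --- continuity from the $S_+$ norm equivalence of \cite[Theorem 3.23]{CalziPeloso}, then a sampling inequality obtained from a uniform oscillation estimate plus an absorption argument --- is the standard Luecking-type scheme and is certainly the one behind the cited result (the oscillation estimates you need are essentially \cite[Lemmas 3.24 and 3.25]{CalziPeloso}, which this paper uses in the proof of Proposition~\ref{prop:28}). So the architecture is right; the problem is in the one quantitative step that makes the whole thing work.

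Your absorption step is circular as written. You gain smallness $\eps\sim R/(R_1-R)$ by enlarging the comparison balls to radius $R_1\delta$ with $R_1$ large, and then you ``choose $\eps$ with $\eps C_2<1/2$'' --- but $C_2$, the constant dominating the $\ell^{p,q}$-sums of $\max_{\overline B(\cdot,R_1\delta)}\abs{f}$ by those of $\max_{\overline B(\cdot,R\delta)}\abs{f}$, depends on $R_1$. If $C_2$ is produced the way your sketch indicates (``bounded overlap of enlarged lattice balls''), the overlap of the balls $B(\cdot,R_1\delta)$ over a $\delta$-separated family is of order $R_1^{2n+2m}$, so $\eps C_2\sim R_1^{2n+2m-1}\to\infty$ and nothing is absorbed. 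The estimate can be rescued, but only by a different route: bound $\max_{\overline B(\cdot,R_1\delta)}\abs{f}^{p}$ by the mean of $\abs{f}^p$ over a ball of \emph{fixed}, $\delta$-independent radius, sum against the weights to reach $\norm{f}_{A^{p,q}_{\vs}}$, and come back to the $R\delta$-sup sums through the Riemann-sum normalization $\delta^{(2n+m)/p+m/q}$ that appears in Theorem~\ref{prop:29}; the overlap count $\sim\delta^{-(2n+2m)}$ of the fixed-radius balls is then exactly cancelled by that normalization, and $C_2$ comes out uniform in $R_1$ provided $\delta_0$ is chosen after $R_1$ so that $R_1\delta$ stays bounded. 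This uniformity is precisely what must be proved; a ``radius-independent form of Theorem 3.23'' with constants uniform in $R>R_0$ is not available as stated. A related vagueness affects your final paragraph: the truncated absorption inequality has an \emph{enlarged} finite set on its right-hand side, so ``iterating a bounded number of times'' does not close; one must iterate indefinitely and check that $(\eps C_2)^n$ beats the growth of the truncation sets, using the $\ell^{\infty,\infty}$ bound on $S_+f$.
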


The next result provides a necessary condition for a measure $\mi$ for
which the function $M_R (\mi)$ is in some mixed-normed weighted
Lebesgue space on $D$ to be a sampling measure. 
It extends~\cite[Theorem 4.3]{Luecking2}, which deals with the case
in which $D$ is the unit disc in $\C$. 

\begin{prop}\label{prop:32}
Take $p_1,q_1,p\in ]0,\infty]$, with $p<\infty$, and $\vs\in
\frac{1}{2 q_1}\vect m+(\R_+^*)^r$ if $q_1<\infty$, while $\vs\in
\R_+^r$ if $q_1=\infty$. Let $p^*,q^*$, and $\vs^*$ be as in~\eqref{p*q*s'}.
Then, for every $R,C,C'>0$, there are $R',C''>0$ such that for every
$\mi\in\cM_+(D)$ such that  
\[
\norm{f}_{A^{p_1,q_1}_{\vs}(D)}\meg C \norm{f}_{L^{p}(\mi)}
\]
for every $f\in A^{p_1,q_1}_{\vs}(D)$, and such that
\[
\norm{M_R (\mi)}_{L^{p^*,q^*}_{\vs^*}(D)}\meg C',
\]
one has $p_1,q_1\meg p$ and
\[
M_{R'}(\mi)(\zeta,z)\Meg C'' \Delta_\Omega^{p[\vs-(\vb+\vd)/p_1]}(\rho(\zeta,z))
\]
for every $(\zeta,z)\in D$.
\end{prop}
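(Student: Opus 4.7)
The plan is to combine the hypothesis on $M_R(\mi)$ with Proposition~\ref{prop:23} to realise $A^{p_1,q_1}_{\vs}(D)$ as a closed subspace of $L^p(\mi)$, and then to test the resulting two-sided inequality against normalised reproducing kernels. Proposition~\ref{prop:23} applied to the bound $\norm{M_R(\mi)}_{L^{p^*,q^*}_{\vs^*}}\le C'$ gives a continuous embedding $A^{p_1,q_1}_{\vs}(D)\hookrightarrow L^p(\mi)$ whose operator norm is bounded by a constant $C_0=C_0(C',R)$; combined with the reverse Carleson hypothesis, this yields the two-sided estimate $\norm{f}_{A^{p_1,q_1}_{\vs}}\asymp\norm{f}_{L^p(\mi)}$ on $A^{p_1,q_1}_{\vs}(D)$, with comparability constants depending only on $C,C',R$ and on the data of $D$.

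For the pointwise lower bound, I would pick $\vect{s'}\in\R^r$ such that Proposition~2.41 of \cite{CalziPeloso} ensures $B^{\vect{s'}}_{(\zeta_0,z_0)}\in A^{p_1,q_1}_{\vs}(D)$ for every $(\zeta_0,z_0)\in D$ with $\norm{B^{\vect{s'}}_{(\zeta_0,z_0)}}_{A^{p_1,q_1}_{\vs}}=c_0\Delta_\Omega^{\vs+\vect{s'}-(\vb+\vd)/p_1}(\rho(\zeta_0,z_0))$. Normalising, set
\[
f_{(\zeta_0,z_0)}\coloneqq B^{\vect{s'}}_{(\zeta_0,z_0)}\,\Delta_\Omega^{(\vb+\vd)/p_1-\vs-\vect{s'}}(\rho(\zeta_0,z_0)),
\]
so that $\norm{f_{(\zeta_0,z_0)}}_{A^{p_1,q_1}_{\vs}}\equiv c_0$, while reverse Carleson gives the uniform lower bound $\norm{f_{(\zeta_0,z_0)}}_{L^p(\mi)}\Meg c_0/C$. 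The idea is to choose $R'$, depending only on $C,C',R$, so large that the tail $\int_{D\setminus B((\zeta_0,z_0),R')}|f_{(\zeta_0,z_0)}|^p\,\dd\mi$ is at most $\tfrac12(c_0/C)^p$ uniformly in $(\zeta_0,z_0)$; the remaining mass on $B((\zeta_0,z_0),R')$ combined with the uniform upper bound $|f_{(\zeta_0,z_0)}|\meg c_1\Delta_\Omega^{(\vb+\vd)/p_1-\vs}(\rho(\zeta_0,z_0))$ on that ball (Theorem~2.47 of \cite{CalziPeloso}) then yields at once the estimate $M_{R'}(\mi)(\zeta_0,z_0)\Meg C''\Delta_\Omega^{p[\vs-(\vb+\vd)/p_1]}(\rho(\zeta_0,z_0))$.

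The heart of the proof is the tail estimate. Fix a $(\delta,4)$-lattice $(\zeta_{j,k},z_{j,k})_{j\in J,k\in K}$ with $\delta\meg R/4$ and a Borel partition $(B_{j,k})$ of $D$ subordinate to the balls $B((\zeta_{j,k},z_{j,k}),R)$, and set $h_k\coloneqq\rho(\zeta_{j,k},z_{j,k})$. By Theorem~2.47 of \cite{CalziPeloso}, $|f_{(\zeta_0,z_0)}|^p$ is essentially constant on each $B_{j,k}$, so the tail integral is controlled by
\[
\sum_{(j,k)\in T_{R'}}|f_{(\zeta_0,z_0)}(\zeta_{j,k},z_{j,k})|^p\,M_R(\mi)(\zeta_{j,k},z_{j,k}),\qquad T_{R'}\coloneqq\{(j,k):d((\zeta_{j,k},z_{j,k}),(\zeta_0,z_0))\Meg R'-R\}.
\]
A mixed-norm H\"older inequality with exponents $(p_1/p,q_1/p)$ and their conjugates $(p^*,q^*)$ then pairs the sampled values of $f_{(\zeta_0,z_0)}$, whose associated $\ell^{p_1,q_1}$-norm equals $\norm{f_{(\zeta_0,z_0)}}_{A^{p_1,q_1}_{\vs}}^p=c_0^p$ up to constants (by the lattice-sampling proposition at the beginning of Section~\ref{sec:6}), against the tail of the fixed $\ell^{p^*,q^*}$-sequence $(\Delta_\Omega^{p[(\vb+\vd)/p_1-\vs]}(h_k)M_R(\mi)(\zeta_{j,k},z_{j,k}))_{(j,k)}$, which is finite by hypothesis thanks to Lemma~\ref{lem:1}; as $R'\to\infty$ this tail factor tends to zero uniformly in $(\zeta_0,z_0)$.

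Finally, to obtain the constraints $p_1,q_1\meg p$ I would test the reverse Carleson inequality on sums $f=\sum_i c_i f_{(\zeta_i,z_i)}$ of normalised kernels anchored at well-separated points of a sparse enough sublattice. The atomic-decomposition bound (property $\atomic$, available in the relevant range by Proposition~\ref{prop:25}) yields $\norm{f}_{A^{p_1,q_1}_{\vs}}\lesssim\norm{c}_{\ell^{p_1,q_1}}$, whereas the pointwise lower bound on $\mi$ just established, together with the off-diagonal decay of the normalised kernels, gives $\norm{f}_{L^p(\mi)}\gtrsim\norm{c}_{\ell^p}$; restricting $c$ to a single ``column'' (resp.\ ``row'') of the lattice then forces $p_1\meg p$ (resp.\ $q_1\meg p$). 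The main obstacle is the uniform tail estimate in the third paragraph: the H\"older pairing above yields the required smallness only when $p_1/p\Meg 1$ and $q_1/p\Meg 1$, since in the complementary regime $p^*$ or $q^*$ degenerates to $\infty$ and the tail of an $\ell^\infty$-norm does not decay. That degenerate regime is handled differently: there the hypothesis $M_R(\mi)\in L^{p^*,q^*}_{\vs^*}$ is already a pointwise bound on $M_R(\mi)$, and the tail becomes small provided $\vect{s'}$ is taken sufficiently negative that $B^{\vect{s'}}_{(\zeta_0,z_0)}$ itself decays fast enough away from $(\zeta_0,z_0)$, a choice that remains compatible with Proposition~2.41 of \cite{CalziPeloso}.
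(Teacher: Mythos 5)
Your overall strategy --- testing the two-sided inequality against the normalised kernels $f_{(\zeta_0,z_0)}$ and splitting $\norm{f_{(\zeta_0,z_0)}}_{L^p(\mi)}^p$ into the mass on $B((\zeta_0,z_0),R')$ and a tail --- matches the paper's. The gap is precisely where you locate "the heart of the proof", the uniform tail estimate. In your H\"older pairing you bound the $\ell^{p_1/p,q_1/p}$-factor (the sampled values of $\abs{f_{(\zeta_0,z_0)}}^p$ over $T_{R'}$) by the \emph{full} norm $c_0^p$ and try to extract the smallness from the tail, over $T_{R'}$, of the fixed sequence $\big(\Delta_\Omega^{p[(\vb+\vd)/p_1-\vs]}(h_k)M_R(\mi)(\zeta_{j,k},z_{j,k})\big)_{j,k}$. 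That tail does tend to $0$ for each \emph{fixed} centre when $p^*,q^*<\infty$, but not uniformly in $(\zeta_0,z_0)$: $T_{R'}$ is the complement of a ball about a moving point, and for centres far from where the $\ell^{p^*,q^*}$-mass of the sequence concentrates, the "tail" contains essentially all of it (already for $a\in \ell^1(\Z)$, $\sup_{n_0}\sum_{\abs{n-n_0}\Meg R'}\abs{a_n}$ need not tend to $0$ as $R'\to\infty$). Since $R'$ and $C''$ must be uniform over $(\zeta_0,z_0)$, this step fails, and your proposed repair only addresses the separate degeneration $p^*=\infty$ or $q^*=\infty$. The paper extracts the smallness from the \emph{other} factor: by inspecting the proof of Proposition~\ref{prop:23} one gets $\norm{g}_{L^p(\mi')}\meg C_{R'}\norm{M_R(\mi')}_{L^{p^*,q^*}_{\vs^*}(D)}^{1/p}\norm{\chi_{B(\Supp{\mi'},R')}g}_{L^{p_1,q_1}_{\vs}(D)}$, which applied to $\mi'=\chi_{D\setminus B((\zeta_0,z_0),2R')}\cdot\mi$ and $g=B^{\vect{s'}}_{(\zeta_0,z_0)}$ controls the tail by $\norm{\chi_{D\setminus B((\zeta_0,z_0),R')}B^{\vect{s'}}_{(\zeta_0,z_0)}}_{L^{p_1,q_1}_{\vs}(D)}$; by Lemma~\ref{lem:5} this equals $C^{(5)}_{R'}\Delta_\Omega^{\vs+\vect{s'}-(\vb+\vd)/p_1}(\rho(\zeta_0,z_0))$ with $C^{(5)}_{R'}$ computed at the base point, hence independent of the centre and tending to $0$ as $R'\to\infty$. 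The smallness lives in the tail of the kernel, not in the tail of $M_R(\mi)$.

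A secondary issue: your derivation of $p_1,q_1\meg p$ invokes property $\atomic$ via Proposition~\ref{prop:25}, whose hypotheses on $\vs$ are not among those of Proposition~\ref{prop:32}, so that route is not available in general. The paper gets the conclusion for free from the pointwise bound already established: the function $(\Delta_\Omega^{p[(\vb+\vd)/p_1-\vs]}\circ\rho)\,M_{R'}(\mi)$ is bounded below by a positive constant and, by Lemma~\ref{lem:1}, lies in $L^{p^*,q^*}_{\vs^*+p[\vs-(\vb+\vd)/p_1]}(D)$, which is only possible when $p^*=q^*=\infty$.
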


Notice that, if $\norm{M_R (\mi)}_{L^{p^*,q^*}_{\vs^*}(D)}$ is finite, then $\mi$ is a $p$-Carleson measure for $A^{p_1,q_1}_\vs$, thanks to Proposition~\ref{prop:23}. Hence, a measure $\mi$ as above is indeed a $p$-sampling measure for $A^{p_1,q_1}_\vs$.  

\begin{proof}
Observe that, by inspection of the proof of Proposition~\ref{prop:23},
it is readily verified that for every $R'>0$ there is a constant
$C_{R'}>0$ such that  
\[
\norm{f}_{L^{p}(\mi)}\meg C_{R'}\norm{M_R(\mi)}_{L^{p^*,q^*}_{\vs^*}(D)}^{1/p}\norm{\chi_{B(\Supp{\mi},R')}f}_{L^{p_1,q_1}_{\vs}(D)}
\]
for every $f\in \Hol(D)$ and for every positive Radon measure $\mi$ on $D$, where 
\[
B(\Supp{\mi},R')\coloneqq \bigcup_{(\zeta,z)\in \Supp{\mi}} B((\zeta,z),R').
\]
Therefore, for every $(\zeta,z)\in D$ and for every $R'>0$,
\[
\norm{\chi_{D\setminus B((\zeta,z), 2 R')}f}_{L^{p}(\mi)}\meg C_{R'}\norm{M_R(\mi)}_{L^{p^*,q^*}_{\vs^*}(D)}^{1/p}\norm{\chi_{D\setminus B((\zeta,z),R')}f}_{L^{p_1,q_1}_{\vs}(D)}
\]
for every $f\in \Hol(D)$ and for every positive Radon measure $\mi$ on $D$. Now, take $\vect{s'}\in \R^r$ so that $B^{\vect{s'}}_{(\zeta,z)}\in A^{p_1,q_1}_{\vs}(D)$ for every $(\zeta,z)\in D$, and observe that there is a constant $C'''>0$ such that
\[
\norm{B^{\vect{s'}}_{(\zeta,z)}}_{A^{p_1,q_1}_{\vs}(D)}=C''' \Delta_\Omega^{\vs+\vect {s'}-(\vb+\vd)/p_1}(\rho(\zeta,z))
\]
for every $(\zeta,z)\in D$ (cf.~\cite[Proposition 2.41]{CalziPeloso}). In addition, for every $R'>0$ there is a constant $C^{(4)}_{R'}>0$ such that
\[
\frac{1}{C^{(4)}_{R'}}\Delta_\Omega^{\vect{s'}}(\rho(\zeta,z))\meg \abs{B^{\vect{s'}}_{(\zeta,z)}(\zeta',z')}\meg C^{(4)}_{R'}\Delta_\Omega^{\vect{s'}}(\rho(\zeta,z)) 
\]
for every $(\zeta,z),(\zeta',z')\in D$ such that $d((\zeta,z),(\zeta',z'))\meg R'$ (cf.~\cite[Theorem 2.47]{CalziPeloso}).
Therefore,
\[
\begin{split}
&  (C^{(4)}_{2R'})^{p}\Delta_\Omega^{p\vect{s'}}(\rho(\zeta,z))M_{2R'}(\mi)(\zeta,z) \\
& \quad \Meg 
\int_{B((\zeta,z),2R')} \abs{ B^{\vect{s'}}_{(\zeta,z)} }^{p}\,\dd \mi\\
&\quad  =   \norm{B^{\vect{s'}}_{(\zeta,z)}}_{L^{p}(\mi)}^{p}- \norm{\chi_{D\setminus B((\zeta,z),2 R')}  B^{\vect{s'}}_{(\zeta,z)} }^{p}_{L^{p}(\mi)}\\
&  \quad 
\Meg \left(\frac{C'''}{C}\Delta_\Omega^{\vs+\vect{s'}-(\vb+\vd)/p_1}(\rho(\zeta,z))\right)^{p}-C' C_{R'}^{p} \norm{\chi_{D\setminus B((\zeta,z),R')} B^{\vect{s'}}_{(\zeta,z)}}_{L^{p_1,q_1}_{\vs}(D)}^{p}
\end{split}
\]
for every $(\zeta,z)\in D$, for every $R'>0$, and for every $\mi$ as in the statement. Now, observe that, by homogeneity, setting 
\[
C^{(5)}_{R'}\coloneqq\norm{\chi_{D\setminus B((0,ie_\Omega),R')} B^{\vect{s'}}_{(0, ie_\Omega)}}_{L^{p_1,q_1}_{\vs}(D)},
\]
one has
\[
\norm{\chi_{D\setminus B((\zeta,z),R')} B^{\vect{s'}}_{(\zeta,z)}}_{L^{p_1,q_1}_{\vs}(D)}= C^{(5)}_{R'} \Delta_\Omega^{\vs+\vs^*-(\vb+\vd)/p_1}(\rho(\zeta,z))
\]
for every $(\zeta,z)\in D$ (cf.~Lemma~\ref{lem:5}). Therefore,
\[
M_{2 R'}(\mi)(\zeta,z)\Meg (C^{(4)}_{2R'})^{-p} \left( \left(\frac{C'''}{C}\right)^{p}- C' (C_{R'} C^{(5)}_{R'})^{p}  \right) \Delta_\Omega^{p[\vs-(\vb+\vd)/p_1]}(\rho(\zeta,z)) 
\]
for every $(\zeta,z)\in D$ and for every $R'>0$. 

Now, observe that the function $(\Delta_\Omega^{p[(\vect b+\vect d)/p_1-\vs]}\circ \rho) M_{R'}$ is both bounded from below and in $L^{p^*,q^*}_{\vs^*+p[\vs-(\vect b+\vect d)/p_1]  }$, thanks to Lemma~\ref{lem:1}. It then follows easily that $p^*=q^*=\infty$, that is, $p_1,q_1\meg p$, in which case $\vs^*=p[(\vect b+\vect d)/p_1-\vs]$. The proof is complete.  
\end{proof}

In the next result we establish a necessary and sufficient condition
in order for a $\nu_D$-measurable set $G$ to be a dominant (or sampling) set (recall
Definition~\ref{def:dom-set}), that is, for the measure $\chi_G(\Delta_\Omega^{p\vs-(\vect b+\vect d)}\circ \rho)\cdot\nu_D$
to be a $p$-sampling measure for $A^{p,p}_\vs$.    It extends~\cite[Main
Theorem]{Luecking}, which deals with the case in which $D$ is the unit
disc in $\C$. See also~\cite[Theorem 1]{Luecking4}, which deals with
weighted Bergman spaces on general homogeneous domains.

\begin{teo}\label{prop:31} 
Take $p\in ]0,\infty[$ and $\vs\in \frac{1}{2 p}\vect
m+(\R_+^*)^r$. Then,  for every $\nu_D$-measurable subset $G$ of $D$
the following conditions are equivalent: 
\begin{enumerate}
\item[{\em(1)}] there are $R,C>0$ such that, for every  $(\zeta,z)\in D$,
\[
\norm*{\chi_{B((\zeta,z),R)}}_{L^{p,p}_{\vs}(D)}\meg C
\norm*{\chi_{G\cap B((\zeta,z),R)}}_{L^{p,p}_{\vs}(D)};
\]
\item[{\em(2)}] there exists $C'>0$ such that, for every $f\in A^{p,p}_{\vs}(D)$,
\[
\norm*{f}_{A^{p,p}_{\vs}(D)}\meg C' \norm*{\chi_{G}f}_{L^{p,p}_{\vs}(D)} ;
\]  
\item[{\em(3)}]  there are $  R' ,C''>0$ such that $\nu_D(G\cap
B((\zeta,z),  R' ))\Meg C'' $ for every $(\zeta,z)\in D$. 
\end{enumerate}
\end{teo}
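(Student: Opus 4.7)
The plan is to establish the cycle $(1)\Leftrightarrow(3)$, $(2)\Rightarrow(3)$, $(3)\Rightarrow(2)$. The backbone of all three is the rewriting, via Fubini and the definitions $\nu_\Omega = \Delta_\Omega^{\vect d}\cdot \Hc^m$ and $\nu_D = (\Delta_\Omega^{\vect b+2\vect d}\circ\rho)\cdot \Hc^{2n+2m}$, of the mixed norm as the single weighted integral
\[
\norm{f}_{L^{p,p}_{\vs}(D)}^p = \int_D \abs{f(\zeta,z)}^p \Delta_\Omega^{p\vs-\vect b-\vect d}(\rho(\zeta,z))\,\dd\nu_D(\zeta,z)
\]
for every measurable $f$ on $D$. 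Applying this formula to the characteristic functions appearing in (1), using the comparison $\Delta_\Omega^{p\vs-\vect b-\vect d}\circ\rho \approx \Delta_\Omega^{p\vs-\vect b-\vect d}(\rho(\zeta,z))$ on $B((\zeta,z),R)$ from~\cite[Corollary~2.49]{CalziPeloso}, and the biholomorphic invariance of $\nu_D$ (so that $\nu_D(B((\zeta,z),R))$ depends only on $R$), I see that (1) is equivalent to the existence of $R,C_1>0$ with $\nu_D(G\cap B((\zeta,z),R))\Meg C_1$ for all $(\zeta,z)\in D$, which is (3).

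For $(2)\Rightarrow(3)$, I test Proposition~\ref{prop:32} on the measure $\mi\coloneqq \chi_G\,(\Delta_\Omega^{p\vs-\vect b-\vect d}\circ\rho)\cdot\nu_D$. The integral formula above gives $\norm{g}_{L^p(\mi)} = \norm{\chi_G g}_{L^{p,p}_{\vs}(D)}$, so assumption (2) becomes $\norm{f}_{A^{p,p}_{\vs}(D)}\meg C'\norm{f}_{L^p(\mi)}$. The same local comparison on Bergman balls yields $M_R(\mi)(\zeta,z)\lesssim \Delta_\Omega^{p\vs-\vect b-\vect d}(\rho(\zeta,z))$, that is, $M_R(\mi)\in L^{\infty,\infty}_{\vs^*}(D)$ with $\vs^*$ as in~\eqref{p*q*s'} for $p_1=q_1=p$. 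Proposition~\ref{prop:32} then furnishes $R',C''>0$ such that $M_{R'}(\mi)(\zeta,z)\Meg C''\Delta_\Omega^{p\vs-\vect b-\vect d}(\rho(\zeta,z))$, and unwinding the left-hand side via the same comparison delivers (3).

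The main work is $(3)\Rightarrow(2)$, which I treat in the spirit of Luecking. Fix a $(\delta,4)$-lattice $(\zeta_{j,k},z_{j,k})_{j\in J,k\in K}$ on $D$ (Lemma~\ref{lem:32}), set $h_k\coloneqq \rho(\zeta_{j,k},z_{j,k})$ and $B_{j,k}\coloneqq B((\zeta_{j,k},z_{j,k}),4\delta)$. The proof of Proposition~\ref{prop:23} (via~\cite[Theorem~3.23]{CalziPeloso}) gives a constant $C_2>0$ with
\[
\norm{f}_{A^{p,p}_{\vs}(D)}^p \meg C_2 \sum_{j,k}\Delta_\Omega^{p\vs-\vect b-\vect d}(h_k)\,\nu_D(B_{j,k})\,\sup_{B_{j,k}}\abs{f}^p.
\]
The crucial geometric ingredient is then a plurisubharmonic reverse mean-value estimate: \emph{there exist $R_0>4\delta$ and, for every $\eta\in]0,1[$, $C_3(\eta)>0$ such that, for every $(\zeta,z)\in D$, every measurable $E\subseteq B((\zeta,z),R_0)$ with $\nu_D(E)\Meg \eta\,\nu_D(B((\zeta,z),R_0))$ and every $f\in\Hol(B((\zeta,z),R_0))$,}
\[
\sup_{B((\zeta,z),4\delta)}\abs{f}^p \meg \frac{C_3(\eta)}{\nu_D(E)}\int_E \abs{f}^p\,\dd\nu_D.
\]
By the invariance of $\nu_D$ and of the Bergman balls under the automorphism group of $D$, this reduces to a fixed model ball, where it follows from the plurisubharmonicity of $\abs{f}^p$ via Cauchy estimates combined with a covering/iteration argument, as in~\cite[Main Theorem]{Luecking}. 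Applying the estimate with $E\coloneqq G\cap B((\zeta_{j,k},z_{j,k}),R_0)$ (after enlarging $R_0$ so that $R_0\Meg R'$ and choosing $\eta$ so that $C''\Meg \eta\,\nu_D(B((\zeta,z),R_0))$), replacing $1/\nu_D(E)$ by the uniform constant $1/C''$ from (3) and restoring the (locally constant) weight $\Delta_\Omega^{p\vs-\vect b-\vect d}(\rho)$, I obtain
\[
\Delta_\Omega^{p\vs-\vect b-\vect d}(h_k)\,\nu_D(B_{j,k})\,\sup_{B_{j,k}}\abs{f}^p \lesssim \int_{G\cap B((\zeta_{j,k},z_{j,k}),R_0)}\abs{f}^p\,\Delta_\Omega^{p\vs-\vect b-\vect d}(\rho)\,\dd\nu_D.
\]
Summing over $(j,k)$ and using the bounded overlap of the enlarged balls (\cite[Proposition~2.56]{CalziPeloso}) yields $\norm{f}_{A^{p,p}_{\vs}(D)}^p \lesssim \norm{\chi_G f}_{L^{p,p}_{\vs}(D)}^p$, which is~(2).

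The main obstacle will be the plurisubharmonic reverse mean-value estimate: converting a lower bound on the relative $\nu_D$-measure of $E$ into a reverse $L^\infty$--$L^p$ inequality on holomorphic functions, uniformly over all Bergman balls. Once this lemma is in hand, the remaining ingredients — the integral rewriting of the mixed norm, the lattice-based upper bound on $\norm{f}_{A^{p,p}_{\vs}}$, the local comparison of $\Delta_\Omega^{p\vs-\vect b-\vect d}\circ\rho$, and Proposition~\ref{prop:32} — combine in a routine fashion to close the three implications.
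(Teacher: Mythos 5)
Your treatment of (1)$\Leftrightarrow$(3) and of (2)$\Rightarrow$(3) via Proposition~\ref{prop:32} agrees in substance with the paper (which phrases the latter as (2)$\Rightarrow$(1) and uses the already-established equivalence of (1) and (3)). The problem lies in (3)$\Rightarrow$(2): the ``plurisubharmonic reverse mean-value estimate'' you isolate as the crucial geometric ingredient is false, not merely difficult. Already for concentric Euclidean discs with Lebesgue measure, take $f_N(w)=(2w)^N$ and $E=D(0,\sqrt{\eta})$, so that $\Hc^2(E)=\eta\,\Hc^2(D(0,1))$; then $\sup_{D(0,1/2)}\abs{f_N}^p=1$, while
\[
\frac{1}{\Hc^2(E)}\int_E\abs{f_N}^p\,\dd\Hc^2=\frac{2}{Np+2}\,(2\sqrt{\eta})^{Np}\longrightarrow 0\qquad(N\to\infty)
\]
whenever $\eta<1/4$. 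Hence no constant $C_3(\eta)$ can exist for small $\eta$, and in your application $\eta$ is forced to equal $C''/\nu_D(B((0,ie_\Omega),R_0))$, which may be arbitrarily small: condition (3) only requires $G$ to meet each ball in a set of measure bounded below, not to fill most of it. The failure is intrinsic: a subset of prescribed relative measure can sit exactly where a holomorphic function is anomalously small compared with its value on a smaller concentric ball, so no ball-by-ball reverse $L^\infty$--$L^p$ inequality over arbitrary such subsets holds, and the lattice summation cannot be closed with a uniform constant.

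The paper's proof is built precisely to avoid this. Lemma~\ref{lem:12} shows that the set $A_{f,\eps}$ of points where $\abs{f}$ is at most $\eps$ times its local $L^p$-mean contributes at most $C\eps\norm{f}$ to the norm, so those balls can be discarded; Lemma~\ref{lem:13} (a normal-families compactness argument) shows that at each remaining centre the super-level set $E_{f,\lambda}(\zeta,z)=\Set{\abs{f}>\lambda\abs{f(\zeta,z)}}$ occupies all but an arbitrarily small fraction of $B((\zeta,z),R)$, at the cost of shrinking $\lambda$, and therefore must meet $G$ in a set of definite weighted measure by (1). The pointwise bound $\abs{f}>\lambda\abs{f(\zeta,z)}$ on $G\cap E_{f,\lambda}(\zeta,z)$ then plays the role you wanted the reverse mean-value inequality to play, and Fubini's theorem concludes. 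Any repair of your argument would have to reintroduce both of these steps; the single estimate you propose cannot be proved.
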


Notice that condition (3) depends neither on $\vs$, nor on $p$. 
Before we pass to the proof, we need to establish some lemmas.

\begin{lem}\label{lem:12}
Fix $R>0$, and define, for every $\eps>0$  and for every $f\in A^{p,p}_{\vs}(D)$,
\[
A_{f,\eps}\coloneqq \Set{(\zeta,z)\in D\colon \abs{f(\zeta,z)}\meg \eps \left(\dashint_{B((\zeta,z),R)} \abs{f}^p\,\dd \nu_D\right)^{1/p}  }.
\]
Then, there is a constant $C>0$ such that 
\[
\norm{\chi_{A_{f,\eps}} f  }_{L^{p,p}_{\vs}(D)}\meg C\eps \norm{f  }_{L^{p,p}_{\vs}(D)}
\]
for every $f\in A^{p,p}_{\vs}(D)$ and for every $\eps>0$.
\end{lem}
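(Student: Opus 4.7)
The plan is to reduce the mixed-norm quantity to a single weighted integral on $D$ and then apply Fubini. Unfolding definitions, for any $\nu_D$-measurable $g\colon D\to \C$ one has
\[
\norm{g}_{L^{p,p}_\vs(D)}^p = \int_\Omega \Delta_\Omega^{p\vs}(h)\int_\Nc \abs{g(\zeta,x+i\Phi(\zeta)+ih)}^p\,\dd(\zeta,x)\,\dd \nu_\Omega(h) = \int_D \abs{g}^p\,\Delta_\Omega^{p\vs-\vect{b}-\vect{d}}\!\circ\rho\,\dd \nu_D,
\]
after the change of variables $z=x+i\Phi(\zeta)+ih$ and using $\nu_\Omega = \Delta^{\vect d}_\Omega\cdot \Hc^m$ and $\nu_D=(\Delta^{\vect b+2\vect d}\circ\rho)\cdot \Hc^{2n+2m}$.

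Next, by the simple transitivity of the affine action of $bD\rtimes T_+$ on $D$ and the invariance of $\nu_D$, the quantity $V\coloneqq \nu_D(B((\zeta,z),R))$ is a finite positive constant independent of $(\zeta,z)$. Applied to $g=\chi_{A_{f,\eps}}f$ together with the pointwise bound $\abs{f(\zeta,z)}^p \meg \eps^p V^{-1}\int_{B((\zeta,z),R)}\abs{f}^p\,\dd \nu_D$ on $A_{f,\eps}$, the first display gives
\[
\norm{\chi_{A_{f,\eps}}f}_{L^{p,p}_\vs(D)}^p
\meg \frac{\eps^p}{V}\int_D \int_{B((\zeta,z),R)}\abs{f(\zeta',z')}^p\,\dd \nu_D(\zeta',z')\;\Delta_\Omega^{p\vs-\vect b-\vect d}(\rho(\zeta,z))\,\dd \nu_D(\zeta,z).
\]
Since $d$ is symmetric, Tonelli's theorem lets us swap the two integrations to get $\abs{f(\zeta',z')}^p$ on the outside and an integral of the weight over $B((\zeta',z'),R)$ on the inside.

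Finally, by~\cite[Corollary 2.49]{CalziPeloso} (the weight $\Delta_\Omega^{\vect{t}}\circ\rho$ is comparable to a constant on every Bergman ball of radius $R$, with a comparability constant depending only on $\vect t$ and $R$), there is $C_1>0$ such that
\[
\int_{B((\zeta',z'),R)} \Delta_\Omega^{p\vs-\vect b-\vect d}(\rho(\zeta,z))\,\dd \nu_D(\zeta,z) \meg C_1 V \Delta_\Omega^{p\vs-\vect b-\vect d}(\rho(\zeta',z'))
\]
for every $(\zeta',z')\in D$. Substituting and using the first display in the reverse direction yields $\norm{\chi_{A_{f,\eps}}f}_{L^{p,p}_\vs}^p \meg C_1\eps^p\norm{f}_{L^{p,p}_\vs}^p$, giving the lemma with $C=C_1^{1/p}$. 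There is no serious obstacle: the only delicate point is the bookkeeping in identifying the $L^{p,p}_\vs$ norm with the single weighted integral, after which the argument is routine mean-value manipulation.
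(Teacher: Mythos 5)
Your proof is correct and follows essentially the same route as the paper's: identify $\norm{\cdot}_{L^{p,p}_{\vs}}^p$ with a single weighted integral against $\nu_D$, use the defining inequality of $A_{f,\eps}$, swap the order of integration by Tonelli, and evaluate the resulting inner average of the weight over a Bergman ball. The only cosmetic difference is that the paper concludes by homogeneity (the averaged weight equals a constant multiple of $\Delta_\Omega^{p\vs-(\vb+\vd)}\circ\rho$ exactly), whereas you use the ball-comparability estimate of \cite[Corollary 2.49]{CalziPeloso} to get the same bound up to a constant, which suffices.
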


This extends~\cite[Lemma 2]{Luecking2}, which deals with the case in which $D$ is the unit disc in $\C$.

\begin{proof}
Observe that, by Fubini's theorem,
\[
\begin{split}
\norm{\chi_{A_{f,\eps}} f  }_{L^{p,p}_{\vs}(D)}^p&\meg \eps^p \int_{A_{f,\eps}} \dashint_{B((\zeta,z),R)} \abs{f}^p\,\dd  \nu_D\, \Delta_\Omega^{p\vs-(\vb+\vd)}(\rho(\zeta,z))\,\dd \nu_D(\zeta,z)\\
&\meg \eps^p\int_D \abs{f(\zeta',z')}^p \dashint_{B((\zeta',z'),R)}  \Delta_\Omega^{p\vs-(\vb+\vd)}(\rho(\zeta,z)) \,\dd \nu_D(\zeta,z)\,\dd  \nu_D(\zeta',z').
\end{split}
\]
Now, observe that, by homogeneity,
\[
\begin{split}
\dashint_{B((\zeta',z'),R)}  \Delta_\Omega^{p\vs-(\vb+\vd)}(\rho(\zeta,z)) \,\dd \nu_D(\zeta,z)&= C'\Delta_\Omega^{p\vs-(\vb+\vd)}(\rho(\zeta',z'))  
\end{split}
\]
for a suitable constant $C'>0$. The assertion follows.
\end{proof}

\begin{lem}\label{lem:13}
Take $R,\eps,p\in ]0,\infty[$. Then, for every $\delta>0$ there is $\lambda>0$ such that, if we define
\[
E_{f,\lambda}(\zeta,z)\coloneqq \Set{(\zeta',z')\in B((\zeta,z),R)\colon \abs{f(\zeta',z')}>\lambda \abs{f(\zeta,z)} },
\]
then
\[
\nu_D( E_{f,\lambda}(\zeta,z)  )\Meg (1-\delta) \nu_D(B((\zeta,z),R))
\]
for every $(\zeta,z)\in D$ and for every $f\in \Hol(B((\zeta,z),R))$ such that
\[
\abs{f(\zeta,z)}\Meg \eps \left( \dashint_{B((\zeta,z),R)} \abs{f}^p\,\dd \nu_D \right)^{1/p}.
\]
\end{lem}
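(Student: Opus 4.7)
The plan is to argue by contradiction, reducing via the homogeneity of $D$ to a single fixed Bergman ball and then extracting a normal-family limit whose zero set has measure zero. Suppose the conclusion fails for some $\delta>0$: then for $\lambda_n=1/n$ one can find $(\zeta_n,z_n)\in D$ and $f_n\in \Hol(B((\zeta_n,z_n),R))$ satisfying the hypothesis on $|f_n(\zeta_n,z_n)|$ but with
\[
\nu_D(G_n)>\delta\,\nu_D(B((\zeta_n,z_n),R)),\qquad G_n\coloneqq B((\zeta_n,z_n),R)\setminus E_{f_n,\lambda_n}(\zeta_n,z_n).
\]
Using the affine homogeneity of $D$, pick an affine biholomorphism $\phi_n$ of $D$ with $\phi_n(0,ie_\Omega)=(\zeta_n,z_n)$. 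Since $\nu_D$ and the Bergman distance are biholomorphism-invariant, replacing $f_n$ by $f_n\circ \phi_n$ one may assume $(\zeta_n,z_n)=(0,ie_\Omega)$ for every $n$. Set $B\coloneqq B((0,ie_\Omega),R)$ (which has finite $\nu_D$-measure) and normalize by $g_n\coloneqq f_n/f_n(0,ie_\Omega)$, so that $g_n(0,ie_\Omega)=1$ and
\[
\int_B |g_n|^p\,\dd\nu_D \leq \eps^{-p}\,\nu_D(B).
\]

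Plurisubharmonicity of $|g_n|^p$ together with the sub-mean-value inequality on small Bergman balls contained in $B$ (whose $\nu_D$-volumes are uniformly bounded below on any compact subset of $B$) upgrades the uniform $L^p$ bound above to uniform local boundedness of $(g_n)$ on $B$. By Montel's theorem, passing to a subsequence, $g_n\to g$ locally uniformly on $B$ for some $g\in\Hol(B)$ with $g(0,ie_\Omega)=1$. In particular $g\not\equiv 0$, so $\{g=0\}$ has $\nu_D$-measure zero in $B$.

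It remains to rule out the lower bound on $\nu_D(G_n)$. For every $(\zeta',z')\in B$ with $g(\zeta',z')\neq 0$, local uniform convergence yields an open neighbourhood $U\ni(\zeta',z')$ and $c>0$ with $|g_n|\Meg c$ on $U$ for all sufficiently large $n$; since $\lambda_n\to 0$, eventually $U\cap G_n=\emptyset$, and so $\chi_{G_n}(\zeta',z')\to 0$. Therefore $\limsup_n \chi_{G_n}\le \chi_{\{g=0\}}$ pointwise on $B$, and since $\nu_D(B)<\infty$, the reverse Fatou lemma yields
\[
\delta\,\nu_D(B)\le \limsup_n \nu_D(G_n)\le \nu_D(\{g=0\}\cap B)=0,
\]
a contradiction.

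The only non-trivial ingredient is the passage from the uniform $L^p$-bound to uniform local sup-bounds, i.e.\ normality of $(g_n)$ on $B$. This is the standard subharmonic mean-value estimate, but it requires one to check that small Bergman balls $B((\zeta',z'),r)\subseteq B$ have $\nu_D$-volume bounded below by a positive constant depending only on $r$ and the compact subset of $B$ containing $(\zeta',z')$; this follows from local equivalence of $\nu_D$ with Lebesgue measure. Once normality is in hand the compactness/contradiction step is completely soft.
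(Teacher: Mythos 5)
Your argument is correct and follows essentially the same route as the paper's proof: reduce to the base point $(0,ie_\Omega)$ by homogeneity, normalize, extract a locally uniform limit by normality, and derive a contradiction from the fact that a non-zero holomorphic function cannot vanish on a set of positive measure in the connected ball $B((0,ie_\Omega),R)$. The only (harmless) difference is in the endgame: the paper passes to a slightly smaller ball $B((0,ie_\Omega),R')$ to exploit uniform convergence and a nested-sets limit, whereas you apply the reverse Fatou lemma directly on $B$ using the pointwise bound $\limsup_n\chi_{G_n}\meg\chi_{\{g=0\}}$, which is a slightly cleaner way to close the same argument.
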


This extends~\cite[Lemma 2]{Luecking3}, which deals with the case in which $D$ is the unit disc in $\C$.

\begin{proof}
Observe that, by homogeneity, we may reduce to proving the assertion for $(\zeta,z)=(0,i e_\Omega)$. Then, assume by contradiction that there are $\delta>0$ and a sequence $(f_j)_{j\in\N}$ of elements of $\Hol(D)$ such that
\[
\abs{f_j(0,i e_\Omega)}\Meg \eps \left( \dashint_{B((\zeta,z),R)} \abs{f_j}^p\,\dd \nu_D \right)^{1/p}
\]
and such that
\[
\nu_D(E_{f_j,2^{-j}})<\delta\nu_D(B((0,i e_\Omega),R))
\]
for every $j\in \N$. Observe that, up to multiplying each $f_j$ by a suitable constant, we may assume that  $\dashint_{B((\zeta,z),R)} \abs{f_j}^p\,\dd \nu_D=1$ for every $j\in \N$, so that $\abs{f_j(0,i e_\Omega)}\Meg \eps$ for every $j\in\N$. It is then readily verified that the sequence $(f_j)$ is bounded in $\Hol(B((0,i e_\Omega),R))$, so that we may assume that it converges locally uniformly to some $f$. Then, take $R'\in ]0,R[$ so that $\nu_D(B((0,i e_\Omega),R'))\Meg \frac \delta 2\nu_D(B((0,ie_\Omega),R))$, and observe that
\[
\nu_D(\Set{(\zeta,z)\in B((0,i e_\Omega),R')\colon \abs{f_j(\zeta,z)}\meg 2^{-j}  \abs{f_j(0,i e_\Omega)} })\Meg \frac \delta 2\nu_D(B((0,ie_\Omega),R))
\]
for every $j\in \N$.
Observe that $(f_j)$ converges uniformly to $f$ on $B((0,i e_\Omega),R')$ (cf.~\cite[Proposition 2.44]{CalziPeloso}), so that for every $j_0\in\N$ there is $j_1\in \N$ such that
\[
\abs{f_j-f}\meg 2^{-j_0}
\]
on $B((0,i e_\Omega),R')$for every $j\Meg j_1$. Hence,
\[
\begin{split}
&\Set{(\zeta,z)\in B((0,i e_\Omega),R')\colon \abs{f(\zeta,z)}\meg 2^{-j_0}+2^{-j}  \abs{f_j(0,i e_\Omega)} }\\
&\qquad \qquad\qquad \qquad\qquad \qquad\qquad \qquad\supseteq \Set{(\zeta,z)\in B((0,i e_\Omega),R')\colon \abs{f_j(\zeta,z)}\meg 2^{-j}  \abs{f_j(0,i e_\Omega)} }
\end{split}
\]
for every $j\Meg j_1$, so that
\[
\nu_D(\Set{(\zeta,z)\in B((0,i e_\Omega),R')\colon \abs{f(\zeta,z)}\meg 2^{-j_0}+2^{-j}  \abs{f_j(0,i e_\Omega)} })\Meg \frac\delta 2\nu_D(B((0,ie_\Omega),R)).
\]
By the arbitrariness of $j_0$, we then see that
\[
\begin{split}
\nu_D(\Set{(\zeta,z)\in B((0,i e_\Omega),R')\colon f(\zeta,z)=0})&=\lim_{\eta\to 0^+} \nu_D(\Set{(\zeta,z)\in B((0,i e_\Omega),R')\colon \abs{f(\zeta,z)}\meg \eta })\\
&\Meg  \frac\delta 2\nu_D(B((0,ie_\Omega),R)).
\end{split}
\]
Then, $f$ vanishes on a non-$\nu_D$-negligible set, so that it vanishes identically on the connected set $B((0,i e_\Omega),R)$ by holomorphy.\footnote{Observe that the Bergman distance on $D$ is induced by a complete Riemannian metric by~\cite[Proposition 2.44]{CalziPeloso}, so that $B((0,i e_\Omega),R)=\exp_{(0,i e_\Omega)}(B(0,R))$ by the Hopf--Rinow theorem (cf., e.g.,~\cite[the proof of Theorem 6.6]{Lang}).   Alternatively, every element of $B((0, i e_\Omega),R)$ may be connected to $(0,i e_\Omega)$ by a minimizing geodesic, whose image therefore lies entirely in $B((0,i e_\Omega),R)$.} Nonetheless, $\abs{f(0,i e_\Omega)}\Meg \eps$: contradiction.
\end{proof}

\begin{proof}[Proof of Theorem~\ref{prop:31}.]
By means of~\cite[Corollary 2.49]{CalziPeloso}, condition
(1) is readily seen to be equivalent to condition (3).  

(2) $\implies$ (1). This follows from Proposition~\ref{prop:32}.

(1) $\implies$ (2). With the notation of Lemma~\ref{lem:12}, choose $\eps>0$  so small that
\[
\norm{f}_{A^{p,p}_{\vs}(D)}\meg 2\norm{\chi_{D\setminus A_{f,\eps}}f}_{L^{p,p}_{\vs}(D)}
\]
for every $f\in A^{p,p}_{\vs}(D)$. In addition, with the notation of Lemma~\ref{lem:13}, choose $\lambda>0$ so that
\[
\norm{\chi_{B((\zeta,z),R)\setminus E_{f,\lambda}(\zeta,z)}}_{L^{p,p}_{\vs}(D)}\meg \frac{1}{2^{1/p} C'}\norm{\chi_{B((\zeta,z),R)}}_{L^{p,p}_{\vs}(D)}
\]
for every $f\in A^{p,p}_{\vs}(D)$, and for every $(\zeta,z)\in D\setminus A_{f,\eps}$ (cf.~also~\cite[Corollary 2.49]{CalziPeloso}), so that
\[
\norm{\chi_{G\cap E_{f,\lambda}(\zeta,z)}}_{L^{p,p}_{\vs}(D)}\Meg \frac{1}{2^{1/p} C'} \norm{\chi_{B((\zeta,z),R)}}_{L^{p,p}_{\vs}(D)}.
\]
Now, the definition of $E_{f,\lambda}(\zeta,z)$ implies that
\[
\norm{\chi_{G\cap B((\zeta,z),R)} f}_{L^{p,p}_{\vs}(D)}\Meg \norm{\chi_{G\cap E_{f,\lambda}(\zeta,z)} f}_{L^{p,p}_{\vs}(D)}\Meg \frac{\lambda^p}{2^{1/p} C'} \abs{f(\zeta,z)} \norm{\chi_{B((\zeta,z),R)}}_{L^{p,p}_{\vs}(D)}
\]
for every $f\in A^{p,p}_{\vs}(D)$, and for every $(\zeta,z)\in D\setminus A_{f,\eps}$.
Therefore, by means of Fubini's theorem we see that
\[
\begin{split}
\norm{f}_{A^{p,p}_{\vs}(D)}^p&\meg 2^p\norm{\chi_{D\setminus A_{f,\eps}}f}_{L^{p,p}_{\vs}(D)}^p\\
&\meg \frac{2^{p+1} C'^p}{\lambda^p} \int_G \abs{f(\zeta',z')}^p \int_{B((\zeta',z'),R)} \frac{\Delta_\Omega^{p\vs+\vd}(\rho(\zeta,z))}{\norm{\chi_{B((\zeta,z),R)}}_{L^{p,p}_{\vs}(D)}^p}\,\dd (\zeta,z) \,\Delta_\Omega^{p \vs+\vd}(\rho(\zeta',z'))\,\dd(\zeta',z') 
\end{split}
\]
for every $f\in A^{p,p}_{\vs}(D)$. To conclude, it suffices to observe that the function
\[
(\zeta',z')\mapsto \int_{B((\zeta',z'),R)} \frac{\Delta_\Omega^{p\vs+\vd}(\rho(\zeta,z))}{\norm{\chi_{B((\zeta,z),R)}}^p_{L^{p,p}_{\vs}(D)}}\,\dd (\zeta,z)=\frac{\nu_D(B((\zeta',z'),R))  }{\norm{\chi_{B((0,i e_\Omega),R)}}^p_{L^{p,p}_{\vs}(D)}}
\]
is constant, by homogeneity (cf.~Lemma~\ref{lem:5}).
\end{proof}

\begin{teo}\label{prop:30}
Take $p\in ]0,\infty[$, $\vs\in \frac{1}{2 p}\vect m+(\R_+^*)^r$,
$\eps,C',R_0>0$. Then, there are $R_1,C>0$ such that, for every $R\in
]0,R_1]$, for every $\mi\in\cM_+(D)$ such that 
\[
N(\mi)\coloneqq \norm{M_R(\mi)}_{L^{\infty,\infty}_{\vb+\vd-p \vs}(D)}<\infty,
\]
and such that
\[
\nu_D(G_\mi\cap B((\zeta,z),R_0))\Meg C'
\]
for every $(\zeta,z)\in D$,
where
\[
G_\mi\coloneqq \Set{(\zeta,z)\in D\colon \Delta_\Omega^{\vb+\vd-p\vs}
(\rho(\zeta,z))M_R(\mi)(\zeta,z)\Meg \eps N(\mi)}, 
\]
one has
\[
\norm{f}_{A^{p,p}_{\vs}(D)}\meg \frac{C R^{(2 n+2m)/p}}{N(\mi)} \norm{ f}_{L^{p}(\mi)}
\]
for every $f\in A^{p,p}_{\vs}(D)$
\end{teo}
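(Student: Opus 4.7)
My plan is to obtain the reverse Carleson inequality in three stages: dominance of $G_\mu$ via Theorem~\ref{prop:31}, absorption of a bad set via Lemma~\ref{lem:12}, and a reverse pointwise control of $|f|^p$ by its $\mu$-integral obtained from Lemma~\ref{lem:13} together with a covering/Carleson argument.

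First I would invoke Theorem~\ref{prop:31}, (3)$\Rightarrow$(2), applied with $R' = R_0$ and $G = G_\mu$: the hypothesis $\nu_D(G_\mu \cap B((\zeta,z), R_0)) \geq C'$ yields a constant $C_1 = C_1(p, \vs, C', R_0)$ such that $\|f\|^p_{A^{p,p}_\vs} \leq C_1^p \int_{G_\mu} |f|^p \Delta^{p\vs-\vb-\vd}_\Omega(\rho) \, \dd\nu_D$ for every $f \in A^{p,p}_\vs(D)$. Applying Lemma~\ref{lem:12} at radius $R$ and choosing $\eta > 0$ so small that $C_1^p C_L \eta^p < 1/2$, the contribution from the set $A_{f,\eta}$ is absorbed, giving
\[
\|f\|^p_{A^{p,p}_\vs} \leq 2 C_1^p \int_{G_\mu \setminus A_{f,\eta}} |f|^p \Delta^{p\vs-\vb-\vd}_\Omega(\rho) \, \dd\nu_D.
\]

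Next, for $\zeta \in G_\mu \setminus A_{f,\eta}$, Lemma~\ref{lem:13} with $\delta > 0$ small produces $E_{f,\lambda}(\zeta) \subseteq B(\zeta, R)$ with $\nu_D(E_{f,\lambda}(\zeta)) \geq (1-\delta)\, \nu_D(B(\zeta,R))$ and $|f| > \lambda |f(\zeta)|$ on $E_{f,\lambda}(\zeta)$. A covering argument together with the Carleson bound $\mu(B(w, R)) \leq N(\mu) \Delta^{p\vs-\vb-\vd}_\Omega(\rho(w))$ and the near-constancy of $\Delta^{p\vs-\vb-\vd}_\Omega(\rho)$ on $B(\zeta, R)$ (cf.~\cite[Corollary 2.49]{CalziPeloso}) should then give $\mu(B(\zeta, R) \setminus E_{f,\lambda}(\zeta)) \leq C_2 \delta N(\mu) \Delta^{p\vs-\vb-\vd}_\Omega(\rho(\zeta))$. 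A suitably small choice of $\delta$ (equivalently of $\lambda$) then forces $\mu(E_{f,\lambda}(\zeta) \cap B(\zeta, R)) \geq (\eps/2) N(\mu) \Delta^{p\vs-\vb-\vd}_\Omega(\rho(\zeta))$ on $G_\mu$, yielding the pointwise reverse control
\[
|f(\zeta)|^p \, \Delta^{p\vs-\vb-\vd}_\Omega(\rho(\zeta)) \leq \frac{2}{\lambda^p \eps N(\mu)} \int_{B(\zeta, R)} |f|^p \, \dd\mu.
\]

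Finally, integrating this pointwise bound against $\dd\nu_D(\zeta)$ over $G_\mu \setminus A_{f,\eta}$ and applying Fubini (with $\nu_D(B(\,\cdot\,, R))$ constant by homogeneity) gives
\[
\int_{G_\mu \setminus A_{f,\eta}} |f|^p \Delta^{p\vs-\vb-\vd}_\Omega(\rho) \, \dd\nu_D \leq \frac{2\, \nu_D(B(\,\cdot\,,R))}{\lambda^p \eps N(\mu)} \|f\|^p_{L^p(\mu)}.
\]
Combining with the earlier absorption estimate and noting that $\nu_D(B(\,\cdot\,, R)) \leq C_3 R^{2n+2m}$ for $R \leq R_1$ sufficiently small (where the near-constancy also holds uniformly), the asserted estimate follows. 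The main obstacle is the covering/Carleson step: producing $\mu(B(\zeta, R) \setminus E_{f,\lambda}(\zeta)) \leq C_2 \delta N(\mu) \Delta^{p\vs-\vb-\vd}_\Omega(\rho(\zeta))$ with the correct linear dependence on $\delta$ requires a careful trade-off between $\delta$, the dimensional covering constants, and $\eps$, using both the upper Carleson bound globally and the lower bound defining $G_\mu$ locally.
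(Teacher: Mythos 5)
Your first two steps (domination by $G_\mi$ via Theorem~\ref{prop:31} and removal of $A_{f,\eta}$ via Lemma~\ref{lem:12}) are sound and mirror arguments used elsewhere in the paper, but the third step contains a genuine gap --- precisely the one you flag as ``the main obstacle''. The implication
\[
\nu_D\big(B((\zeta,z),R)\setminus E_{f,\lambda}(\zeta,z)\big)\meg \delta\,\nu_D\big(B((\zeta,z),R)\big)
\ \Longrightarrow\
\mi\big(B((\zeta,z),R)\setminus E_{f,\lambda}(\zeta,z)\big)\meg C_2\,\delta\, N(\mi)\,\Delta_\Omega^{p\vs-(\vb+\vd)}(\rho(\zeta,z))
\]
is false for a general $\mi$ satisfying only the hypotheses of the theorem: the condition $N(\mi)<\infty$ controls $\mi$ on \emph{balls of radius $R$}, not on arbitrary subsets of small $\nu_D$-measure, and covering at a smaller scale $r$ does not help, since $M_r(\mi)\meg M_R(\mi)$ gives no gain as $r\to 0$ while the number of covering balls grows like $(R/r)^{2n+2m}$. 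Concretely, $\mi$ may have an atom at a point $w_0$; choosing $f$ vanishing at $w_0$ and $(\zeta,z)\in G_\mi\setminus A_{f,\eta}$ such that $B((\zeta,z),R)$ meets $\Supp{\mi}$ only at $w_0$, one gets $\int_{B((\zeta,z),R)}\abs{f}^p\,\dd\mi=0$ while $f(\zeta,z)\neq 0$, so the pointwise reverse inequality you aim for fails even though such a $\mi$ can satisfy all the hypotheses (and the conclusion) of the theorem. Only an integrated version of that pointwise bound is true.

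The paper's proof circumvents this by replacing the measure-theoretic covering with the oscillation estimate of Proposition~\ref{prop:28}: applying it with $\mi_1=\mi$, $\mi_2=\nu_D$, $\vect{s_1}=\vect{s_3}=p\vs-(\vb+\vd)$, $\vect{s_2}=\vect 0$ yields
\[
\int_{d((\zeta,z),(\zeta',z'))<R}\abs{f(\zeta,z)-f(\zeta',z')}^p\,\dd(\nu_D\otimes\mi)\meg \frac{C_1R^p}{R_0^{p+2n+2m}}\,N(\mi)\,\norm{f}_{A^{p,p}_{\vs}(D)}^p,
\]
where the smallness now comes from the factor $R^p$ (a gradient estimate for holomorphic functions) rather than from small $\nu_D$-measure. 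One then bounds $\int\abs{f(\zeta,z)}^p\,\dd(\nu_D\otimes\mi)=\int_D\abs{f}^pM_R(\mi)\,\dd\nu_D$ from below by $\eps N(\mi)\int_{G_\mi}\abs{f}^p(\Delta_\Omega^{p\vs-(\vb+\vd)}\circ\rho)\,\dd\nu_D\Meg C_2\eps N(\mi)\norm{f}_{A^{p,p}_{\vs}(D)}^p$ via Theorem~\ref{prop:31}, identifies $\int\abs{f(\zeta',z')}^p\,\dd(\nu_D\otimes\mi)$ with $\nu_D(B((0,ie_\Omega),R))\,\norm{f}^p_{L^p(\mi)}$, and absorbs the oscillation term by taking $R\meg R_1$ small; this is also the source of the factor $R^{(2n+2m)/p}$ and of the restriction $R\meg R_1$ in the statement, neither of which your argument produces. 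Lemmas~\ref{lem:12} and~\ref{lem:13} are the right tools for dominant \emph{sets}, i.e.\ for measures comparable to $\chi_G\cdot\nu_D$ as in Theorem~\ref{prop:31}, but not for general Carleson measures.
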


This extends~\cite[Theorem 4.2]{Luecking2}, which deals with the case in which $D$ is the unit disc in $\C$.

Before we pass to the proof, we need another result.

\begin{prop}\label{prop:28}
Take $p\in ]0,\infty]$. Then, there are two constants $C,R_0>0$ such
that the following hold. For every
$\vect{s_1},\vect{s_2},\vect{s_3}\in \R^r$, for every $R,R'\in
]0,R_0]$, for every $\mi_1,\mi_2\in\cM_+(D)$ such that
\[
C_1\coloneqq \sup_{(\zeta,z)\in D}
\Delta_\Omega^{-\vect{s_1}}(\rho(\zeta,z)) M_R(\mi_1)(\zeta,z)\qquad
\text{and}\qquad C_2\coloneqq \sup_{(\zeta,z)\in D}
\Delta_\Omega^{-\vect{s_2}}(\rho(\zeta,z)) M_{R+R'}(\mi_2)(\zeta,z) 
\]
are finite, and for every $f\in \Hol(D)$,
\[
\int_{d((\zeta,z),(\zeta',z'))< R}\frac{\abs{f(\zeta,z)-f(\zeta',z')}^p}{\Delta_\Omega^{\vect{s_1}-\vect{s_3}}(\rho(\zeta',z'))} \,\dd (\mi_1\otimes \mi_2)((\zeta,z),(\zeta',z'))\meg \frac{C C_1 C_2 R^p }{R'^{p+2 n+2m}}   \int_D   \abs{f}^p (\Delta_\Omega^{\vect{s_2}+\vect{s_3}}\circ \rho)\,\dd \nu_D,
\]
\end{prop}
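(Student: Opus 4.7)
The approach is to establish a local oscillation inequality of Poincar\'e type for holomorphic functions in the Bergman metric, and then substitute into the double integral and switch the order of integration via Fubini.

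For the first step, the plan is to prove that there exist constants $C_0, R_0>0$ such that, for every $w_2\in D$, every $R,R'\in]0,R_0]$, every $w_1\in B(w_2,R)$ and every $f\in\Hol(B(w_2,R+R'))$,
\[
\abs{f(w_1)-f(w_2)}^p \meg \frac{C_0\, R^p}{R'^{p+2n+2m}}\int_{B(w_2,R+R')}\abs{f}^p\,\dd\nu_D.
\]
Both the Bergman distance and the measure $\nu_D$ are invariant under the simply transitive affine action of $\Nc\rtimes T_+$ on $D$ (cf.~Lemma~\ref{lem:5} for the measure-theoretic side), so by conjugation one may reduce to $w_2=(0,ie_\Omega)$. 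On a fixed compact neighbourhood of this base point the Bergman metric is comparable to the Euclidean metric and $\nu_D$ is comparable to $\Hc^{2n+2m}$. Integrating $df$ along the segment from $(0,ie_\Omega)$ to $w_1$, and combining Cauchy's estimate on derivatives of holomorphic functions with the submean-value inequality for the plurisubharmonic function $\abs{f}^p$, one obtains the claim. The range $p<1$ is covered because $\abs{f}^p$ is plurisubharmonic for every $p>0$ when $f$ is holomorphic.

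Given the local estimate, the rest is a straightforward computation. Integrating it against $\dd\mu_1(w_1)$ on $B(w_2,R)$ and invoking the hypothesis $M_R(\mu_1)\meg C_1\Delta_\Omega^{\vect{s_1}}\circ\rho$ yields
\[
\int_{B(w_2,R)}\abs{f(w_1)-f(w_2)}^p\,\dd\mu_1(w_1)\meg \frac{C_0 C_1 R^p}{R'^{p+2n+2m}}\Delta_\Omega^{\vect{s_1}}(\rho(w_2))\int_{B(w_2,R+R')}\abs{f}^p\,\dd\nu_D.
\]
Next I divide by $\Delta_\Omega^{\vect{s_1}-\vect{s_3}}(\rho(w_2))$, integrate against $\dd\mu_2(w_2)$, and exchange the order of integration, using the symmetry $w'\in B(w_2,R+R')\iff w_2\in B(w',R+R')$. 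By~\cite[Corollary 2.49]{CalziPeloso}, the function $\Delta_\Omega^{\vect{s_3}}$ is comparable along any Bergman ball of radius at most $2R_0$, so
\[
\int_{B(w',R+R')}\Delta_\Omega^{\vect{s_3}}(\rho(w_2))\,\dd\mu_2(w_2)\meg C'\Delta_\Omega^{\vect{s_3}}(\rho(w'))\,M_{R+R'}(\mu_2)(w')\meg C'C_2\Delta_\Omega^{\vect{s_2}+\vect{s_3}}(\rho(w')),
\]
and collecting factors produces the stated inequality.

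The main technical obstacle is the local oscillation estimate of the first step, specifically the uniformity of the constant $C_0$ in the base point $w_2$. This uniformity rests on the invariance of both the Bergman distance and $\nu_D$ under the group of affine biholomorphisms acting transitively on $D$, which reduces the problem to a single compact Bergman ball, where Bergman and Euclidean geometries are comparable and standard submean and Cauchy-type estimates apply in local coordinates.
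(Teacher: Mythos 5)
Your proposal is correct and follows essentially the same route as the paper: the pointwise oscillation bound $\abs{f(\zeta,z)-f(\zeta',z')}^p\meg C\,R^p R'^{-(p+2n+2m)}\int_{B((\zeta',z'),R+R')}\abs{f}^p\,\dd\nu_D$, followed by integration against $\mi_1$, then $\mi_2$, with Fubini and the comparability of $\Delta_\Omega^{\vect{s_3}}\circ\rho$ on Bergman balls (\cite[Corollary 2.49]{CalziPeloso}). The only difference is that the paper simply cites \cite[Lemmas 3.24 and 3.25]{CalziPeloso} for the local estimate, whereas you sketch its proof from homogeneity, Cauchy estimates, and the submean-value property; the sketch is sound.
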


This extends~\cite[Theorem 2.3]{Luecking2}, which deals with the case in which $D$ is the unit disc in $\C$.

\begin{proof}
By~\cite[Lemmas 3.24 and 3.25]{CalziPeloso}, there are two constants $C_1,R_0>0$ such that for every $R,R'\in ]0,R_0]$, for every $f\in \Hol(D)$, and for every $(\zeta,z),(\zeta',z')\in D$ such that $d((\zeta,z),(\zeta',z'))\meg R$,
\[
\abs{f(\zeta,z)-f(\zeta',z')}^p\meg C'\frac{R^p}{R'^{p+2n+2m}} \int_{B((\zeta',z'),R+R')} \abs{f}^p\,\dd \nu_D.
\] 
Therefore, by means of~\cite[Corollary 2.49]{CalziPeloso}, we see that there is a constant $C''>0$ such that
\[
\abs{f(\zeta,z)-f(\zeta',z')}^p\Delta_\Omega^{\vect{s_3}}(\rho(\zeta',z'))\meg C''\frac{R^p}{R'^{p+2n+2m}} \int_{B((\zeta',z'),R+R')} \abs{f}^p(\Delta_\Omega^{\vect{s_3}}\circ \rho)\,\dd \nu_D.
\]
for every $R,R'\in ]0,R_0]$, for every $f\in \Hol(D)$, and for every $(\zeta,z),(\zeta',z')\in D$ such that $d((\zeta,z),(\zeta',z'))\meg R$.
Then, integrating in $(\zeta,z)$ with respect to $\mi_1$,
\[
\begin{split}
&\Delta_\Omega^{\vect{s_3}}(\rho(\zeta',z'))\int_{ B((\zeta',z'),R)} \abs{f(\zeta,z)-f(\zeta',z')}^p\,\dd \mi_1(\zeta,z)\\
&\qquad\meg C'' \frac{R^p}{R'^{p+2n+2m}} M_R(\mi_1)(\zeta',z') \int_{B((\zeta',z'),R+R')} \abs{f}^p(\Delta_\Omega^{\vect{s_3}}\circ \rho)\,\dd \nu_D\\
&\qquad\meg C'' \frac{R^p}{R'^{p+2n+2m}}C_1
\Delta_\Omega^{\vect{s_1}}(\rho(\zeta',z')) \int_{B((\zeta',z'),R+R')}
\abs{f}^p(\Delta_\Omega^{\vect{s_3}}\circ \rho)\,\dd \nu_D 
\end{split}
\]
so that, integrating in $(\zeta',z')$ with respect to the measure $\mi_2$,
\[
\begin{split}
&\int_{d((\zeta,z),(\zeta',z'))< R} \abs{f(\zeta,z)-f(\zeta',z')}^p\Delta_\Omega^{\vect{s_3}-\vect{s_1}}(\rho(\zeta',z')) \,\dd (\mi_1\otimes \mi_2)((\zeta,z),(\zeta',z'))\\
&\qquad \qquad\meg \frac{C_1 C_2 C'' R^p}{ R'^{p+2n+2m}}  \int_D   \abs{f}^p (\Delta_\Omega^{\vect{s_2}+\vect{s_3}}\circ \rho)\,\dd \nu_D,
\end{split}
\]
whence the result.
\end{proof}

\begin{proof}[Proof of Theorem~\ref{prop:30}.]
Apply Proposition~\ref{prop:28} with $\mi_1=\mi $, $\mi_2=\nu_D$,
$\vect{s_1}=\vect{s_3}=p \vs-(\vb+\vd)$,  and $\vect{s_2}=\vect 0$.
Then, we find $R_0>0$ and $C_1>0$ such that
\[
\int_{d((\zeta,z),(\zeta',z'))< R} \abs{f(\zeta,z)-f(\zeta',z')}^p
\,\dd (\nu_D\otimes \mi)((\zeta,z),(\zeta',z'))\meg
\frac{C_1R^{p}}{R_0^{p+2 n+2 m}}
N(\mi) \norm{f}_{A^{p,p}_{\vs}(D)}^p
\]
for every $R\in ]0,R_0]$ and for every $f\in \Hol(D)$.
Therefore,
\[
\begin{split}
&\left( \int_{d((\zeta,z),(\zeta',z'))< R} \abs{f(\zeta,z)}^p\,\dd (\nu_D\otimes \mi)((\zeta,z),(\zeta',z'))\right)^{1/\max(1,p)}\\
&\qquad\meg \left( \int_{d((\zeta,z),(\zeta',z'))< R}\abs{f(\zeta',z')}^p \,\dd (\nu_D\otimes \mi)((\zeta,z),(\zeta',z'))\right) ^{1/\max(1,p)}\\
&\qquad\qquad+ \left( \frac{C_1 R^{p}}{ R_0^{p+2 n+2 m}} N(\mi) \norm{f}_{A^{p,p}_{\vs}(D)}^p\right) ^{1/\max(1,p)}
\end{split}
\]
for every $f\in \Hol(D)$. Now, observe that Theorem~\ref{prop:31} implies that there is a constant $C_2>0$ such that
\[
\begin{split}
\int_{d((\zeta,z),(\zeta',z'))< R} \abs{f(\zeta,z)}^p\,\dd (\nu_D\otimes \mi)((\zeta,z),(\zeta',z'))
&= \int_D \abs{f(\zeta,z)}^p M_R(\mi)(\zeta,z)\,\dd \nu_D(\zeta,z) \\
&\Meg \eps N(\mi)\int_{G_\mi} \abs{f}^p( \Delta_\Omega^{p\vs-(\vb+\vd)}\circ \rho)\,\dd \nu_D\\
&\Meg C_2 \eps N(\mi) \norm{f}_{A^{p,p}_{\vs}(D)}^p,
\end{split}
\]
for every $f\in A^{p,p}_{\vs}(D)$,
while clearly
\[
\begin{split}
\int_{d((\zeta,z),(\zeta',z'))< R}\abs{f(\zeta',z')}^p \,\dd (\nu_D\otimes \mi)((\zeta,z),(\zeta',z'))&= \nu_D(B((0,i e_\Omega),R)  )\int_D \abs{f}^p\,\dd \mi,
\end{split}
\]
for every $f\in A^{p,p}_{\vs}(D)$.
Therefore, 
\[
\begin{split}
\left( C_2\eps N(\mi) \norm{f}_{A^{p,p}_{\vs}(D)}^p\right)^{1/\max(1,p)}
&\meg\left( \nu_D(B((0,i e_\Omega),R)  )\int_D \abs{f}^p\,\dd \mi\right) ^{1/\max(1,p)}\\
&\qquad+ \left( \frac{C_1 R^{p}}{R_0^{p+2 n+2 m}} N(\mi)\norm{f}_{A^{p,p}_{\vs}(D)}^p\right) ^{1/\max(1,p)}
\end{split}
\]
for every $f\in A^{p,p}_{\vs}(D)$. It then follows that
\[
C_3N(\mi)\norm{f}_{A^{p,p}_{\vs}(D)}^p\meg \nu_D(B((0,i e_\Omega),R)  )\int_D \abs{f}^p\,\dd \mi
\]
for every $f\in A^{p,p}_{\vs}(D)$, where
\[
C_3\coloneqq \left( \left( C_2\eps \right)^{1/\max(1,p)}-    \left( \frac{C_1 R^{p}}{R_0^{p+2 n+2 m}}  \right) ^{1/\max(1,p)}  \right)^{\max(1,p)}.
\]
Since there is $R_1\in ]0, R_0]$ such that $C_3$ is well defined and $>0$ whenever $R\in ]0,R_1]$, the assertion follows.
\end{proof}

\begin{deff}\label{def:1}
Given $\vs\in \R^r$ and $\mi\in\cM_+(D)$, we shall denote with
$W_{\vs}(\mi)$ the vague closure of the set of measures of the
form 
\[
\Delta_\Omega^{\vs}(\rho(\zeta,z))(\phi_{(\zeta,z)})_*(\mi),
\]
for every $(\zeta,z)\in D$, where $\phi_{(\zeta,z)}$ is an affine
automorphism of $D$ of the form $(\zeta',z')\mapsto (\zeta,\Rea
z+i\Phi(\zeta))\cdot (g\zeta', t\cdot z)$, with $t\in T_+$, $g\in
GL(E)$, and $t\cdot \Phi=\Phi\circ (g\times g)$.\footnote{Notice that
$\phi_{(\zeta,z)}$ is not uniquely determined by $(\zeta,z)$ unless
$n=0$, so that this definition may depend on the choice of the
automorphisms $\phi_{(\zeta,z)}$.} 
\end{deff}

\begin{teo}\label{prop:27}
Take $p,q\in ]0,\infty[$, with $q<p$, $\vs\in\R^r$, and
$\mi\in\cM_+(D)$ such that the following hold: 
\begin{enumerate}
\item[{\em(1)}] $\vs\in \frac{1}{2 p}\vect m+\frac{q}{p(p-q)}\vect{m'}+(\R_+^*)^r$;

\item[{\em(2)}]  $M_1(\mi) \in L^{\infty,\infty}_{\vb+\vd-p\vs}(D)$;

\item[{\em(3)}]   the support of   every element of $W_{p\vs-(\vb+\vd)}(\mi)$  is a set
of uniqueness for $A^{q,q}_{(p/q)\vs}(D)$.  
\end{enumerate}
Then, the canonical mapping $A^{p,p}_{\vs}(D)\to L^{p}(\mi)$ is an
isomorphism onto its image. 
\end{teo}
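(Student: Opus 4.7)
First, condition~(2) combined with Theorem~\ref{main:1} (applied with $p_1=q_1=p$) gives that $\mi$ is a $p$-Carleson measure for $A^{p,p}_{\vs}(D)$, so the embedding $A^{p,p}_{\vs}\hookrightarrow L^p(\mi)$ is continuous. It remains to establish the reverse estimate $\norm{f}_{A^{p,p}_{\vs}}\meg C\norm{f}_{L^p(\mi)}$, which I prove by contradiction in the spirit of Luecking. Assume it fails: there exists $(f_n)\subset A^{p,p}_{\vs}$ with $\norm{f_n}_{A^{p,p}_{\vs}}=1$ and $\norm{f_n}_{L^p(\mi)}\to 0$. Using the sampling equivalence recalled at the beginning of Section~\ref{sec:6} for a sufficiently fine $(\delta,4)$-lattice on $D$, after passing to a subsequence I may choose points $(\zeta_n,z_n)\in D$ such that
\[
\abs{f_n(\zeta_n,z_n)}\,\Delta_\Omega^{\vs-(\vb+\vd)/p}(\rho(\zeta_n,z_n))\Meg c
\]
for some absolute $c>0$.

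Setting $h_n\coloneqq\rho(\zeta_n,z_n)$ and letting $\phi_n=\phi_{(\zeta_n,z_n)}$ be the affine automorphism of~$D$ of Definition~\ref{def:1} with $\phi_n(0,ie_\Omega)=(\zeta_n,z_n)$, I define
\[
\tilde f_n\coloneqq \Delta_\Omega^{\vs-(\vb+\vd)/p}(h_n)\,(f_n\circ\phi_n),\qquad \tilde\mi_n\coloneqq \Delta_\Omega^{(\vb+\vd)-p\vs}(h_n)\,(\phi_n^{-1})_*\mi.
\]
Since $\phi_n^{-1}$ is itself of the form in Definition~\ref{def:1} and $\Delta_\Omega^{\vect a}(t_n^{-1}\cdot e_\Omega)=\Delta_\Omega^{-\vect a}(h_n)$, one checks that $\tilde\mi_n\in W_{p\vs-(\vb+\vd)}(\mi)$. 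Lemma~\ref{lem:5} and a change of variables give $\norm{\tilde f_n}_{A^{p,p}_{\vs}}=1$, $\abs{\tilde f_n(0,ie_\Omega)}\Meg c$, and $\norm{\tilde f_n}_{L^p(\tilde\mi_n)}=\norm{f_n}_{L^p(\mi)}\to 0$; hypothesis~(2) and homogeneity also yield $\sup_n\norm{M_R(\tilde\mi_n)}_{L^{\infty,\infty}_{\vb+\vd-p\vs}(D)}<\infty$. The continuous inclusion $A^{p,p}_{\vs}\hookrightarrow A^{\infty,\infty}_{\vs-(\vb+\vd)/p}$ (a consequence of the sub-mean value inequality on Bergman balls) makes $(\tilde f_n)$ locally uniformly bounded, so Montel's theorem yields, along a subsequence, a limit $\tilde f_n\to f_\infty\in\Hol(D)$ locally uniformly, with $\abs{f_\infty(0,ie_\Omega)}\Meg c>0$. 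Extracting further, $\tilde\mi_n\to\mu_\infty$ vaguely for some $\mu_\infty\in W_{p\vs-(\vb+\vd)}(\mi)$. Combining vague convergence of $\tilde\mi_n$, local uniform convergence of $\tilde f_n$, and the uniform local mass bound with $\norm{\tilde f_n}_{L^p(\tilde\mi_n)}\to 0$ yields $\int\abs{f_\infty}^p\varphi\,\dd\mu_\infty=0$ for every non-negative $\varphi\in C_c(D)$, so $f_\infty$ vanishes on $\supp\mu_\infty$ by continuity.

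By hypothesis~(3), $\supp\mu_\infty$ is a uniqueness set for $A^{q,q}_{(p/q)\vs}(D)$, so \emph{provided} $f_\infty\in A^{q,q}_{(p/q)\vs}(D)$, one concludes $f_\infty\equiv 0$ on $D$, contradicting $\abs{f_\infty(0,ie_\Omega)}\Meg c>0$. The main obstacle is thus the verification of this membership: since $q<p$ and $\Nc$ has infinite Haar measure, Lemma~\ref{lem:2} precludes any continuous inclusion $A^{p,p}_{\vs}\subseteq A^{q,q}_{(p/q)\vs}$, so a naive Fatou estimate on the $A^{q,q}_{(p/q)\vs}$-norm diverges. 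Condition~(1), namely $\vs-\tfrac{1}{2p}\vect m-\tfrac{q}{p(p-q)}\vect{m'}\in(\R_+^*)^r$, is precisely what is needed to bridge this gap: a H\"older splitting with exponents $p/q$ and $p/(p-q)$ applied to the lattice discretization of the $A^{q,q}_{(p/q)\vs}$-norm, in which the common weight $\Delta^{p\vs-(\vb+\vd)}_\Omega(h_k)$ is paired against a convergent dual weighted sum whose finiteness is ensured exactly by~(1), yields $\sup_n\norm{\tilde f_n}_{A^{q,q}_{(p/q)\vs}}<\infty$; Fatou's lemma then delivers $f_\infty\in A^{q,q}_{(p/q)\vs}$, closing the contradiction.
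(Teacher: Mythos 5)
Your overall architecture (continuity from condition~(2) via Theorem~\ref{main:1}, then a contradiction argument with affine renormalizations, vague limits in $W_{p\vs-(\vb+\vd)}(\mi)$, and hypothesis~(3)) is the right Luecking-type skeleton, but the step you yourself single out as the main obstacle is where the proof breaks, and the fix you propose does not work. The H\"older splitting with exponents $p/q$ and $p/(p-q)$ applied to the lattice discretization of $\norm{\tilde f_n}_{A^{q,q}_{(p/q)\vs}}$ leaves as second factor the sum $\sum_{j,k}\Delta_\Omega^{p\vs-(\vb+\vd)}(h_k)$, whose summand is independent of $j$; since for each fixed $k$ there are infinitely many lattice points $j$ over the non-compact group $\Nc$, this sum diverges no matter what $\vs$ is. Condition~(1) constrains only the cone variable and cannot repair the divergence in the $\Nc$-directions. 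More decisively: taking $f_n\equiv f$ for a fixed $f\in A^{p,p}_{\vs}(D)\setminus A^{q,q}_{(p/q)\vs}(D)$ (such $f$ exist, since by Lemma~\ref{lem:2} no inclusion $A^{p,p}_{\vs}\subseteq A^{q,q}_{(p/q)\vs}$ can hold when $q<p$), each $\tilde f_n$ is a nonzero multiple of $f\circ\phi_n$ and therefore has infinite $A^{q,q}_{(p/q)\vs}$-norm; so $\sup_n\norm{\tilde f_n}_{A^{q,q}_{(p/q)\vs}}<\infty$ is simply false, $f_\infty$ need not lie in $A^{q,q}_{(p/q)\vs}(D)$, and hypothesis~(3) cannot be applied to it. There is also a secondary gap earlier: from $\norm{f_n}_{A^{p,p}_{\vs}}=1$ you cannot extract points with $\abs{f_n(\zeta_n,z_n)}\Delta_\Omega^{\vs-(\vb+\vd)/p}(\rho(\zeta_n,z_n))\Meg c$ for an absolute $c>0$, since this amounts to a lower bound on $\norm{f_n}_{A^{\infty,\infty}_{\vs-(\vb+\vd)/p}}$, and the sampling proposition only controls the $\ell^{p,p}$-norm of the samples, not their supremum.

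The paper circumvents both problems by never asking $f$ itself (or its normal limits) to lie in $A^{q,q}_{(p/q)\vs}(D)$. Instead it works with the products $fB^{\vect{s'}}_{(\zeta,z)}$, which for suitable $\vect{s'}$ do belong to $A^{q,q}_{(p/q)\vs}(D)$ because the Bergman kernel supplies the missing decay along $\Nc$; the compactness--vague-limit--uniqueness argument is run on these products, locally at each base point $(\zeta,z)$, in Lemma~\ref{lem:14}, yielding the pointwise lower bound $\norm{fB^{\vect{s'}}_{(\zeta,z)}}_{L^q(\mi)}\Meg C\,\Delta_\Omega^{\vect{s'}+(p/q)\vs-(\vb+\vd)/q}(\rho(\zeta,z))\abs{f(\zeta,z)}$ outside an exceptional set. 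The exceptional set (where $\abs{f(\zeta,z)}$ is small compared with $\norm{fB^{\vect{s'}}_{(\zeta,z)}}_{A^{q,q}_{(p/q)\vs}}$) is shown to carry an arbitrarily small fraction of $\norm{f}_{A^{p,p}_{\vs}}$ in Lemma~\ref{lem:11}, and the global estimate is then closed by the Schur-test Lemma~\ref{lem:15}; it is in these two lemmas, not in a Fatou argument, that condition~(1) is actually used. If you want to salvage your approach, you should replace the single-point renormalization and the membership claim for $f_\infty$ by this localized argument on the kernel-multiplied functions.
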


This extends the implication (b)$\implies$(a) in~\cite[Theorem 5]{Luecking5}, which deals with the case in which $D$ is the unit disc in $\C$. Notice that, as shown in~\cite[Theorem 5]{Luecking5}, condition~(3) holds for some $q<p$ if $A^{p,p}_{\vs}(D)$ embeds as a closed subspace of $L^p(\mi)$ and  $D$ is the unit disc in $\C$.

Before we pass to the proof, we need some lemmas.

\begin{lem}\label{lem:10}
Take $p_1,q_1,p\in ]0,\infty]$, with $p<\infty$, and $\vs\in
\frac{1}{2 q_1}\vect m+(\R_+^*)^r$ and let $p^*,q^*$ and $\vs^*$ be as
in~\eqref{p*q*s'}.  Let $\Ms$ be a set of Radon measures
on $D$ such that  
\[
\sup_{\mi\in M}\norm{M_R(\mi)}_{L^{p^*, q^*}_{\vs^*}(D)}<\infty.
\]
Then, 
\[
\lim_{\mi,\Ff}\,  \norm{f}_{L^{p}(\mi)}= \norm{f}_{L^{p}(\mi_0)}
\]
for every filter $\Ff$ on $\Ms$ which converges vaguely to some  Radon
measure $\mi_0$ on $D$, and for every $f\in A^{p_1,q_1}_{\vs,0}(D)$.  
\end{lem}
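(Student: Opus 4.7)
The plan is to reduce to showing $\int_D|f|^p\,\dd\mi\to\int_D|f|^p\,\dd\mi_0$ along $\Ff$, which implies the stated convergence of norms by continuity of $x\mapsto x^{1/p}$. The argument splits into a uniform tail estimate outside a sufficiently large compact set, and an application of vague convergence on the complementary (compact) part.

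For the tail estimate, I would fix a $(\delta,4)$-lattice $(\zeta_{j,k},z_{j,k})_{j\in J,k\in K}$ on $D$ with $\delta$ small (say $\delta\le R/4$ for some $R>0$) and a Borel partition $(B_{j,k})_{j\in J,k\in K}$ of $D$ with $B_{j,k}\subseteq B((\zeta_{j,k},z_{j,k}),R)$, setting $h_k\coloneqq\rho(\zeta_{j,k},z_{j,k})$ and
\[
(S_+f)_{j,k}\coloneqq\Delta_\Omega^{\vs-(\vb+\vd)/p_1}(h_k)\max_{\overline B((\zeta_{j,k},z_{j,k}),R)}|f|.
\]
By~\cite[Theorem~3.23]{CalziPeloso}, $\norm{S_+ f}_{\ell^{p_1,q_1}(J,K)}\approx\norm{f}_{A^{p_1,q_1}_\vs(D)}$ and, crucially, $S_+f\in\ell^{p_1,q_1}_0(J,K)$ since $f\in A^{p_1,q_1}_{\vs,0}(D)$. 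Repeating the H\"older-type estimate from the proof of Proposition~\ref{prop:23} (and invoking Lemma~\ref{lem:1} to translate the resulting lattice sum into $\norm{M_R(\mi)}_{L^{p^*,q^*}_{\vs^*}(D)}$), one obtains a constant $C_1>0$ independent of $\mi$ and $F$ such that, for every finite $F\subseteq J\times K$ and every $\mi\in\cM_+(D)$,
\[
\int_{D\setminus K_F}|f|^p\,\dd\mi\ \le\ C_1\,\bigl\|\bigl((S_+f)_{j,k}\bigr)_{(j,k)\notin F}\bigr\|_{\ell^{p_1,q_1}(J,K)}^{p}\,\norm{M_R(\mi)}_{L^{p^*,q^*}_{\vs^*}(D)},
\]
where $K_F\coloneqq\bigcup_{(j,k)\in F}\overline{B((\zeta_{j,k},z_{j,k}),R)}$ is compact. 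Given $\eps>0$, since $S_+f\in\ell^{p_1,q_1}_0$ and $\sup_{\mi\in\Ms}\norm{M_R(\mi)}_{L^{p^*,q^*}_{\vs^*}(D)}<\infty$ by hypothesis, I can choose a finite $F_\eps\subseteq J\times K$ so that $\int_{D\setminus K_{F_\eps}}|f|^p\,\dd\mi\le\eps$ uniformly in $\mi\in\Ms$.

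To transfer this tail bound to $\mi_0$, I would take an increasing sequence $(\phi_n)\subseteq C_c(D)$ with $0\le\phi_n\le\chi_{D\setminus K_{F_\eps}}$ and $\phi_n\nearrow\chi_{D\setminus K_{F_\eps}}$ pointwise. For each $n$, $\phi_n|f|^p\in C_c(D)$, so vague convergence yields $\int\phi_n|f|^p\,\dd\mi_0=\lim_\Ff\int\phi_n|f|^p\,\dd\mi\le\eps$; passing to the supremum in $n$ by monotone convergence gives $\int_{D\setminus K_{F_\eps}}|f|^p\,\dd\mi_0\le\eps$. I would then pick $\phi\in C_c(D)$ with $0\le\phi\le 1$ and $\phi\equiv 1$ on $K_{F_\eps}$, so that $\phi|f|^p\in C_c(D)$ and $\lim_\Ff\int\phi|f|^p\,\dd\mi=\int\phi|f|^p\,\dd\mi_0$. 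Since $(1-\phi)|f|^p$ is supported in $D\setminus K_{F_\eps}$, the triangle inequality yields $\limsup_\Ff\bigl|\int|f|^p\,\dd\mi-\int|f|^p\,\dd\mi_0\bigr|\le 2\eps$, and the arbitrariness of $\eps$ concludes the proof.

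The main technical point is the uniform tail estimate, whose validity rests squarely on the $0$-subscript hypothesis: it is precisely for $f\in A^{p_1,q_1}_{\vs,0}(D)$ that $S_+f$ has vanishing tails in $\ell^{p_1,q_1}$, allowing truncation to compact sets. The remaining transfer to $\mi_0$ via monotone approximation from within, and the final combination with vague convergence on the compact part, are then standard.
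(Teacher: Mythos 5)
Your proposal is correct and follows essentially the same route as the paper: the paper likewise extracts from the proof of Proposition~\ref{prop:23} a localized Carleson estimate that is uniform over $\mi\in\Ms$, uses the $0$-subscript hypothesis to make the contribution outside a large compact set uniformly small, and concludes by testing against compactly supported continuous functions via vague convergence. Your version merely phrases the uniform tail bound in terms of lattice truncations rather than balls about the base point, and writes out the bookkeeping (monotone approximation for $\mi_0$, cutoff $\phi$) that the paper leaves as ``the assertion follows easily.''
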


This extends~\cite[Theorem 1]{Luecking5}, which deals with the case in which $D$ is the unit disc in $\C$.

\begin{proof}
By inspection of the proof of Proposition~\ref{prop:23}, it is readily verified that there is a constant $C'>0$ such that
\[
\norm{\chi_{B((0,i e_\Omega), R')}f}_{L^{q_2}(\mi)}\meg C' \norm{\chi_{B((0,i e_\Omega), R'+1)} f}_{L^{q_1,q_2}_{\vs}(D)}
\]
for every $\mi\in M$, for every $f\in A^{p_1,q_1}_{\vs,0}(D)$, and for every $R'>0$. The   assertion follows easily. 
\end{proof}

\begin{lem}\label{lem:14}
Take $p_1,q_1,p\in ]0,\infty]$, $\vs\in \frac{1}{2 q_1}\vect
m+(\R_+^*)^r$ if $q_1<\infty$ and $\vs\in \R_+^r$ if $q_1=\infty$, and
$\vect{s_2},\vect{s_3}\in \R^r$. Let $p^*,q^*$ and $\vs^*$  be as
in~\eqref{p*q*s'}.
Take $\eps>0$, and define, for every $(\zeta,z)\in D$,
\[
U_\eps(\zeta,z)\coloneqq \Set{f\in \Hol(D)\colon \abs{f(\zeta,z)}\Meg\eps \Delta_\Omega^{(\vb+\vd)/p-\vs-\vect{s_2}}(\rho(\zeta,z))\norm{f B^{\vect{s_2}}_{(\zeta,z)}}_{A^{p_1,q_1}_{\vs}(D)}}.
\]
Take $\mi\in\cM_+(D)$ such that
\[
\norm{M_R(\mi)}_{L^{p^*,q^*}_{\vect {s'}}(D)}<\infty,
\]
and such that the support of every
element of $W_{p[\vs-(\vb+\vd)/p_1]}(\mi)$  is a set of uniqueness for $A^{p_1,q_1}_{\vect {s_1}}(D)$. 

Then, there is  a constant $C>0$ such that
\[
\norm{f B^{\vect{s_2}}_{(\zeta,z)}(\Delta_\Omega^{\vect{s_3}}\circ \rho) 
}_{L^{p}(\mi')}\Meg C\Delta_\Omega^{\vect{s_3}}(\rho(\zeta,z))\norm{f B^{\vect{s_2}}_{(\zeta,z)}}_{A^{p_1,q_1}_{\vs}(D)}
\]
for every $(\zeta,z)\in D$, for every $f\in U_\eps(\zeta,z)$ and for every $\mi'\in W_{p[\vs-(\vb+\vd)/p_1]}(\mi)$.
\end{lem}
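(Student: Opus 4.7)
The plan is to argue by contradiction, extracting simultaneously a nontrivial holomorphic limit of the functions $f_n B^{\vect{s_2}}_{(\zeta_n,z_n)}$ (pulled back by the affine automorphisms of Definition~\ref{def:1}) and a vague limit of the measures $\mi'_n$ inside $W_\sigma(\mi)$, and then to contradict the uniqueness-set hypothesis. Suppose the inequality fails; then there are sequences $((\zeta_n,z_n)) \subset D$, $f_n \in U_\eps(\zeta_n,z_n)$ and $\mi'_n \in W_\sigma(\mi)$, with $\sigma := p[\vs-(\vb+\vd)/p_1]$, for which the left-hand side is less than $1/n$ times the right-hand side. For each $n$, fix $\phi_n = \phi_{(\zeta_n,z_n)}$ with associated $t_n \in T_+$ satisfying $t_n \cdot e_\Omega = \rho(\zeta_n,z_n)$, and set
\[
h_n := \Delta_\Omega^{\vs-(\vb+\vd)/p_1}(t_n)\, \bigl(f_n B^{\vect{s_2}}_{(\zeta_n,z_n)}\bigr) \circ \phi_n.
\]
By Lemma~\ref{lem:5} together with $\Nc$-translation invariance, rescaling puts us in the normalization $\norm{h_n}_{A^{p_1,q_1}_\vs(D)} = 1$. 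Tracking exponents and using $\abs{B^{\vect{s_2}}_{(\zeta_n,z_n)}(\zeta_n,z_n)} = \Delta_\Omega^{\vect{s_2}}(\rho(\zeta_n,z_n))$ up to a universal constant, the membership $f_n \in U_\eps(\zeta_n,z_n)$ translates into $\abs{h_n(0,ie_\Omega)} \Meg \eps'$ for an $\eps' > 0$ independent of $n$.

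The normalization $\norm{h_n}_{A^{p_1,q_1}_\vs(D)} = 1$ combined with the standard pointwise-from-local-integral bound for holomorphic functions (cf.~\cite[Theorem 2.47]{CalziPeloso}) makes $(h_n)$ locally uniformly bounded, so Montel's theorem yields a subsequence with $h_n \to h_\infty$ locally uniformly, $h_\infty \in A^{p_1,q_1}_\vs(D)$ by Fatou, and $\abs{h_\infty(0,ie_\Omega)} \Meg \eps'$, hence $h_\infty \not\equiv 0$. Define in parallel
\[
\tilde\mi_n := \Delta_\Omega^{-\sigma}(\rho(\zeta_n,z_n))\, (\phi_n^{-1})_* \mi'_n,
\]
so that $\tilde\mi_n \in W_\sigma(\mi)$ by direct inspection of Definition~\ref{def:1}. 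The hypothesis $\norm{M_R(\mi)}_{L^{p^*,q^*}_{\vs^*}(D)} < \infty$ transfers to a uniform pointwise bound on $M_R(\tilde\mi_n)$ and hence to a uniform local bound on $(\tilde\mi_n)$, so a further subsequence converges vaguely to some $\mi' \in W_\sigma(\mi)$, since $W_\sigma(\mi)$ is vaguely closed by definition.

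After the change of variables the failed inequality reads $\int_D \abs{h_n}^p (\Delta_\Omega^{p\vect{s_3}} \circ \rho)\, d\tilde\mi_n \to 0$. Combining locally uniform convergence of $h_n$ with vague convergence of $\tilde\mi_n$ and a tail estimate in the spirit of Lemma~\ref{lem:10} (to control contributions from large $\rho$, using the mixed-norm hypothesis on $M_R$), one passes to the limit and deduces $\int_D \abs{h_\infty}^p (\Delta_\Omega^{p\vect{s_3}} \circ \rho)\, d\mi' = 0$. Thus $h_\infty$ vanishes on $\supp \mi'$, and the uniqueness-set assumption for $A^{p_1,q_1}_{\vect{s_1}}(D)$ forces $h_\infty \equiv 0$, contradicting $\abs{h_\infty(0,ie_\Omega)} \Meg \eps'$.

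The main obstacle is the passage to the limit just described: vague convergence of $\tilde\mi_n$ together with merely locally uniform convergence of $h_n$ does not suffice on its own, because the weight $\Delta_\Omega^{p\vect{s_3}} \circ \rho$ is unbounded and the support of $h_\infty$ need not be compact. The uniform integrability needed to push the contradiction through must be extracted from the $L^{p^*,q^*}_{\vs^*}$-hypothesis on $M_R(\mi)$ through a decomposition mirroring the proof of Proposition~\ref{prop:23}, in the same way as Lemma~\ref{lem:10} handles the simpler case $\vect{s_3} = \vect 0$.
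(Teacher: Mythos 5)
Your overall strategy---argue by contradiction, normalize by the affine automorphisms $\phi_{(\zeta,z)}$, use vague compactness of $W_{p[\vs-(\vb+\vd)/p_1]}(\mi)$ together with locally uniform convergence of the normalized functions, and conclude via the uniqueness-set hypothesis---is the same as the paper's. The problem is that the decisive step, passing to the limit in $\int_D\abs{h_n}^p(\Delta_\Omega^{p\vect{s_3}}\circ\rho)\,\dd\tilde\mi_n\to 0$, is exactly the step you leave unproved, and you misdiagnose what it requires. No uniform integrability or tail estimate is needed there: since the integrals tend to $0$ and the integrands are nonnegative, only \emph{lower semicontinuity} of the integral along the sequence is required. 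Concretely, for any $g\in C_c(D)$ with $0\meg g\meg 1$, the functions $g\abs{h_n}^p(\Delta_\Omega^{p\vect{s_3}}\circ\rho)$ converge uniformly on the fixed compact support of $g$, and the $\tilde\mi_n$ have uniformly bounded mass on compacta (this much does follow from the $M_R$ hypothesis), so $\int_D g\abs{h_\infty}^p(\Delta_\Omega^{p\vect{s_3}}\circ\rho)\,\dd\mi'=\lim_n\int_D g\abs{h_n}^p(\Delta_\Omega^{p\vect{s_3}}\circ\rho)\,\dd\tilde\mi_n\meg\liminf_n\int_D\abs{h_n}^p(\Delta_\Omega^{p\vect{s_3}}\circ\rho)\,\dd\tilde\mi_n=0$; letting $g$ increase to $1$ gives that $h_\infty$ vanishes $\mi'$-almost everywhere, which is all you need. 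This is precisely how the paper argues, via a partition of unity and Fatou's lemma for the resulting series. The two-sided convergence of Lemma~\ref{lem:10} (which is where the $L^{p^*,q^*}_{\vs^*}$ control of $M_R(\mi)$ genuinely enters) is used in the paper for a different purpose: to reduce the assertion from all of $W_{p[\vs-(\vb+\vd)/p_1]}(\mi)$ to the generating measures $\mi_{(\zeta',z')}$ before running the contradiction. As written, your proof is therefore incomplete at its central point, even though the missing step is easier than you believe.

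A secondary issue: when you pull back the measures by $\phi_{(\zeta_n,z_n)}^{-1}$, the claim that $\tilde\mi_n\in W_{p[\vs-(\vb+\vd)/p_1]}(\mi)$ ``by direct inspection'' is not quite immediate. The set $W$ is built from a \emph{fixed choice} of automorphisms $\phi_{(\zeta,z)}$, and compositions such as $\phi_{(\zeta_n,z_n)}^{-1}\circ\phi_{(\zeta',z')}$ need not belong to that chosen family (cf.\ the footnote to Definition~\ref{def:1}: $\phi_{(\zeta,z)}$ is not determined by $(\zeta,z)$ unless $n=0$). You must either verify that your limit measure still has support which is a uniqueness set, or organize the reduction as the paper does---keeping the measures among the $\mi_{(\zeta',z')}$ and transporting only the functions by the composed automorphisms $\psi_{(\zeta,z),(\zeta',z')}$ in its Step~II.
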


This extends~\cite[Lemma 4]{Luecking5}, which deals with the case in which $D$ is the unit disc in $\C$.

\begin{proof}
\textsc{Step I.} We prove the assertion for $(\zeta,z)=(0,i e_\Omega)$.
Define
\[
\mi_{(\zeta,z)}\coloneqq \Delta_\Omega^{p[\vs-(\vb+\vd)/p_1]}(\rho(\zeta,z))(\phi_{(\zeta,z)})_*(\mi)
\]
for every $(\zeta,z)\in D$, where $\phi_{(\zeta,z)}$ is as in Definition~\ref{def:1}.
Observe that it will suffice to prove the assertion with $W_{p[\vect{s_1}-(\vb+\vd)/p_1]}(\mi)$
replaced by $\Set{\mi_{(\zeta,z)}\colon (\zeta,z)\in D}$, thanks to Lemma~\ref{lem:10}.
Then assume, by contradiction, that there are a sequence $(f_j)$ of
elements of $U_\eps(0,i e_\Omega)$, and a sequence $((\zeta_j,z_j))$
of elements of $D$ such that 
\[
\norm{f_j B^{\vect{s_2}}_{(0,i e_\Omega)}}_{A^{p_1,q_1}_{\vs}(D)}=1
\]
for every $j\in \N$, while
\[
\lim_{j\to \infty }\norm{f_j B^{\vect{s_2}}_{(0,i e_\Omega)} (\Delta_\Omega^{\vect{s_3}}\circ \rho) }_{L^{q_2}(\mi_{(\zeta_j,z_j)})}=0.
\]
Observe that 
\[
\norm{M_R(\mi_{(\zeta',z')})}_{L^{p^*,q^*}_{\vs^*}(D)}= \norm{M_R(\mi)}_{L^{p^*,q^*}_{\vs^*}(D)}
\]
for every $(\zeta',z')\in D$ (cf.~Lemma~\ref{lem:5}), so that
$W_{p[\vect {s_1}-(\vb+\vd)/p_1]}(\mi)$ is bounded, hence compact and
metrizable, in the vague topology. Therefore, we may assume that
$(\mi_{(\zeta_j,z_j)})  $ converges vaguely to some (positive Radon)
measure $\mi'$ on $D$. Analogously, we may assume that $(f_j)$
converges locally uniformly to some $f\in A^{p_1,q_1}_{\vs}(D)$, so
that $\abs{f(0,i e_\Omega)}\Meg \eps$. Let $(\psi_k)_{k\in K}$ be a
partition of the unity on $D$ whose elements belong to $C_c(D)$, and
observe that 
\[
\lim_{j\to \infty }\norm{\psi_k^{1/p} f_j  }_{L^{p}(\mi_{(\zeta_j,z_j)})}= \norm{\psi_k^{1/p} f }_{L^{p}(\mi')}
\]
by the previous remarks. Therefore, by Fatou's lemma,
\[
\begin{split}
0&= \lim_{j\to \infty }\norm{f_j B^{\vect{s_2}}_{(\zeta,z)}(\Delta_\Omega^{\vect{s_3}}\circ \rho)  }_{L^{p}(\mi_{(\zeta_j,z_j)})}^{p}\\
&= \lim_{j\to \infty }\sum_{k\in K}\norm{\psi_k^{1/p} f_j B^{\vect{s_2}}_{(\zeta,z)}(\Delta_\Omega^{\vect{s_3}}\circ \rho)  }_{L^{p}(\mi_{(\zeta_j,z_j)})}^{p}\\
&\Meg \sum_{k\in K}\norm{\psi_k^{1/p} f B^{\vect{s_2}}_{(\zeta,z)}(\Delta_\Omega^{\vect{s_3}}\circ \rho) }_{L^{p}(\mi')}^{p}\\
&= \norm{f B^{\vect{s_2}}_{(\zeta,z)}(\Delta_\Omega^{\vect{s_3}}\circ \rho) }_{L^{p}(\mi')}^{p}.
\end{split}
\]
Since the support of $\mi'\in W_{p[\vs-(\vb+\vd)/p_1]}(\mi)$ is a set
of uniqueness for $A^{p_1,q_1}_{\vs}(D)$, this implies that $f=0$,
which is absurd, since $\abs{f(0,i e_\Omega)}\Meg \eps$. 

\textsc{Step II.} We now prove the assertion for general $(\zeta,z)\in D$ and for $\mi'=\mi_{(\zeta',z')}$, $(\zeta',z')\in D$. Define $\psi_{(\zeta,z),(\zeta',z')}\coloneqq  \phi_{(\zeta',z')}\circ \phi_{\phi^{-1}_{(\zeta',z')}(\zeta,z)}$.
Then, take $f\in U_\eps(\zeta,z)$, and observe that $f\circ \psi_{(\zeta,z),(\zeta',z')}\in U_\eps(0,i e_\Omega)$, since $B^{\vect{s_2}}_{(\zeta,z)}\circ \psi_{(\zeta,z),(\zeta',z')}= \Delta_\Omega^{\vect{s_2}}(\rho(\zeta,z)) B^{\vect{s_2}}_{(0,i e_\Omega)}$ (use Lemma~\ref{lem:5} again).
Applying \textsc{step I} to  $f\circ \psi_{(\zeta,z),(\zeta',z')}$ then yields the result, by means of another application of Lemma~\ref{lem:5}.
\end{proof}

\begin{lem}\label{lem:15}
Take $p\in ]1,\infty[$, $\vect{s_1},\vect{s_2},\vect{s_3}\in \R^r$,
and  $\mi_1,\mi_2\in\cM_+(D)$ such that the
following hold: 
\begin{enumerate}
\item[{\em(1)}] the mapping $M_1(\mi_j)\in L^{\infty,\infty}_{\vb+\vd-\vect{s_j}}(D)$ for $j=1,2$;

\item[{\em(2)}] $\frac{1}{p'}\vect{s_1}+\frac 1 p \vect{s_2}=\vb+\vd-\vect{s_3}$;

\item[{\em(3)}] $\frac{1}{p'}\vect{s_1}\in \frac{1}{2 p'}\vect m+\frac{1}{2 p}\vect{m'}+(\R_+^*)^r$;

\item[{\em(4)}] $\frac{1}{p}\vect{s_2}\in \frac{1}{2 p}\vect m+\frac{1}{2 p'}\vect{m'}+(\R_+^*)^r$;
\end{enumerate}
Then, the mapping
\[
T\colon C_c(D) \ni f \mapsto  \int_D f(\zeta,z) \abs{B_{(\zeta,z)}^{\vect{s_3}}}\,\dd \mi_1(\zeta,z) 
\]
induces a continuous linear mapping of $L^p(\mi_1)$ into $L^p(\mi_2)$.
\end{lem}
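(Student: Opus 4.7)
The plan is to apply Schur's test to the positive kernel $K((\zeta',z'),(\zeta,z))\coloneqq \abs{B^{\vect{s_3}}_{(\zeta,z)}(\zeta',z')}$ and thereby prove boundedness of the integral operator from $L^p(\mi_1)$ to $L^p(\mi_2)$. Since $p\in ]1,\infty[$, it suffices to exhibit positive measurable test weights $\phi$ on $D$ (paired against $\mi_1$) and $\psi$ on $D$ (paired against $\mi_2$) satisfying
\[
\int_D K((\zeta',z'),(\zeta,z))\phi(\zeta,z)^{p'}\,\dd \mi_1(\zeta,z) \meg A\,\psi(\zeta',z')^{p'}
\]
and
\[
\int_D K((\zeta',z'),(\zeta,z))\psi(\zeta',z')^{p}\,\dd \mi_2(\zeta',z') \meg B\,\phi(\zeta,z)^{p}.
\]
I will take $\phi=\Delta_\Omega^{\vect{t_1}}\circ \rho$ and $\psi=\Delta_\Omega^{\vect{t_2}}\circ \rho$ for suitable $\vect{t_1},\vect{t_2}\in \R^r$.

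The first step converts the $\mi_j$-integrals above into weighted Lebesgue integrals. By Lemma~\ref{lem:1}, hypothesis (1) yields the uniform bound $\mi_j(B((\zeta,z),R))\meg C\Delta_\Omega^{\vect{s_j}-(\vb+\vd)}(\rho(\zeta,z))$ for every $(\zeta,z)\in D$. Using the slow oscillation of both $\abs{B^{\vect{s_3}}_{(\zeta,z)}}$ (Theorem 2.47 of~\cite{CalziPeloso}) and of the generalized power functions $\Delta_\Omega^{\vect{u}}\circ \rho$ (Corollary 2.49 of~\cite{CalziPeloso}) on balls of fixed Bergman radius, a lattice-discretization completely parallel to those in the proofs of Lemma~\ref{lem:1} and Proposition~\ref{prop:23} dominates each Schur integral by the corresponding integral against $(\Delta_\Omega^{\vect{s_j}-(\vb+\vd)}\circ \rho)\cdot \nu_D$.

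The resulting integrals are of the form $\int_D \abs{B^{\vect{s_3}}_{(\zeta,z)}(\zeta',z')}\,\Delta_\Omega^{\vect{u}}(\rho(\zeta,z))\,\dd \nu_D(\zeta,z)$, which after unfolding $\nu_D$ equals $\norm{B^{\vect{s_3}}_{(\zeta',z')}}_{A^{1,1}_{\vect{u}+(\vb+\vd)}(D)}$ and, by Proposition~2.41 of~\cite{CalziPeloso}, equals $c\,\Delta_\Omega^{\vect{s_3}+\vect{u}}(\rho(\zeta',z'))$ on its admissibility range. The two Schur conditions therefore reduce to the exponent equations
\[
p'\vect{t_2}=\vect{s_3}+p'\vect{t_1}+\vect{s_1}-(\vb+\vd),\qquad p\vect{t_1}=\vect{s_3}+p\vect{t_2}+\vect{s_2}-(\vb+\vd).
\]
Dividing the first by $p'$, the second by $p$, and adding them, the terms in $\vect{t_1},\vect{t_2}$ cancel and we recover precisely hypothesis (2). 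The system is consistent, leaving one linear parameter of freedom in $\vect{t_1}$, which is used to land both Lebesgue integrals in the admissibility range of Proposition~2.41; hypotheses (3) and (4) are designed so that such a $\vect{t_1}$ indeed exists.

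The main obstacle is the careful algebraic matching of the admissibility conditions of Proposition~2.41 (which involve $\frac{1}{2}\vect m$--translates for small $h\in \Omega$ and $\frac{1}{2}\vect{m'}$--translates for large $h$) with the precise form of hypotheses (3) and (4), as well as the bookkeeping in the discretization step, so that the local oscillation constants remain independent of $\mi_j$. Once these points are verified, Schur's test yields $\norm{T}_{L^p(\mi_1)\to L^p(\mi_2)}\meg A^{1/p'}B^{1/p}$, and $T$ extends by density of $C_c(D)$ in $L^p(\mi_1)$.
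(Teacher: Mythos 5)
Your proposal is correct and follows essentially the same route as the paper: a Schur test with weights of the form $\Delta_\Omega^{\vect{t}}\circ\rho$, domination of the $\mi_j$-integrals by weighted $\nu_D$-integrals via hypothesis (1) and a lattice discretization, evaluation of the resulting integrals as $A^{1,1}$-norms of Bergman kernels through Proposition 2.41 of~\cite{CalziPeloso}, and the observation that the two exponent equations are compatible precisely because of hypothesis (2). The only part you defer --- checking that hypotheses (3) and (4) leave room for an admissible choice of the free parameter within the validity range of Proposition 2.41 --- is exactly the bookkeeping the paper carries out explicitly via its conditions (i)--(v) and their reduction to (iv$'$) and (v$'$).
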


\begin{proof}
Define
\[
T'\colon C_c(D) \ni f \mapsto  \int_D f(\zeta,z) \abs{B_{(\zeta,z)}^{\vect{s_3}}}\,\dd \mi_2(\zeta,z).
\]
Observe that our assumptions and~\cite[Theorem 2.47 and Corollary
2.49]{CalziPeloso} imply that for every $\vs',\vect{s_5}\in \R^r$
there is a constant $C_1>0$ such that 
\[
T(\Delta_\Omega^{p'\vs'}\circ \rho)(\zeta,z)\meg C_1 \norm{B^{\vect{s_3}}_{(\zeta,z)}}_{A^{1,1}_{\vect{s_1}+p'\vs'}(D)}
\]
and
\[
T(\Delta_\Omega^{p\vect{s_5}}\circ \rho)(\zeta,z)\meg C_1 \norm{B^{\vect{s_3}}_{(\zeta,z)}}_{A^{1,1}_{\vect{s_2}+p\vect{s_5}}(D)}
\]
for every $ (\zeta,z)\in D$. In addition, by~\cite[Proposition 2.41]{CalziPeloso}, there are two constants $C_2,C_3>0$ such that
\[
\norm{B^{\vect{s_3}}_{(\zeta,z)}}_{A^{1,1}_{\vect{s_1}+p'\vs'}(D)}=C_2 \Delta_\Omega^{\vect{s_1}+\vect{s_3}+p'\vs'-(\vb+\vd)}(\rho(\zeta,z))
\]
and
\[
\norm{B^{\vect{s_3}}_{(\zeta,z)}}_{A^{1,1}_{\vect{s_2}+p\vect{s_5}}(D)}=C_3 \Delta_\Omega^{\vect{s_2}+\vect{s_3}+p\vect{s_5}-(\vb+\vd)}(\rho(\zeta,z))
\]
for every $(\zeta,z)\in D$ if and only if the following conditions are satisfied:
\begin{itemize}
\item[(i)] $p'\vs'\in \frac 1 2 \vect m-\vect{s_1}+(\R_+^*)^r$;

\item[(ii)] $\vect{s_3}\in \vb+\vd-\frac1 2 \vect{m'}-(\R_+^*)^r$;

\item[(iii)] $ p' \vs'\in \vb+\vd-\frac 1 2 \vect{m'}-\vect{s_1}-\vect{s_3}-(\R_+^*)^r$;

\item[(iv)] $p\vect{s_5}\in \frac 1 2 \vect m-\vect{s_2}+(\R_+^*)^r$;

\item[(v)] $ p \vect{s_5}\in \vb+\vd-\frac 1 2 \vect{m'}-\vect{s_2}-\vect{s_3}-(\R_+^*)^r$.
\end{itemize}
In addition, 
\[
\vect{s_1}+\vect{s_3}+p'\vs'-(\vb+\vd)= p'\vect{s_5} \qquad \text{and} \qquad \vect{s_2}+\vect{s_3}+p\vect{s_5}-(\vb+\vd)=p \vect{s_4}
\]
if and only if
\[
\vect{s_5}=\vect{s_4}+\frac 1 p (\vb+\vd-\vect{s_2}-\vect{s_3}) \qquad \text{and} \qquad \frac{1}{p'}\vect{s_1}+\frac 1 p \vect{s_2}=\vb+\vd-\vect{s_3}.
\]
If these conditions are satisfied, then conditions (iv) and (v) become:
\begin{itemize}
\item[(iv$'$)] $p \vect{s_4}\in \vect{s_3}-(\vb+\vd)+\frac 1 2 \vect m+(\R_+^*)^r$;

\item[(v$'$)] $ p\vect{s_4}\in -\frac{1}{2}\vect{m'}-(\R_+^*)^r$.
\end{itemize}
By our assumptions, we may take $\vect{s_4}, \vect{s_5}$ such that all the preceding conditions hold, so that the assertion follows by means of Schur's lemma (cf., e.g.,~\cite[Lemma of I.2]{Grafakos}).
\end{proof}

\begin{lem}\label{lem:11}
Take $p,q\in ]0,\infty[$, with $q<p$, and take $\vect{s_1}, \vect{s'_2},\vect{s_3}\in \R^r$ such that the following hold:
\begin{enumerate}
\item $\vect{s_1}-\vect{s_3}\in \left( \frac{1}{2 q}-\frac{1}{2 p}  \right)\vect m+\frac{1}{2 p}\vect{m'}+(\R_+^*)^r$;

\item $\vect{s_1}+\vect{s_2}-\vect{s_3}\in \frac{1}{q}(\vb+\vd)-\frac{1}{2 p}\vect m-\left( \frac{1}{2 q}-\frac{1}{2p}  \right)\vect{m'}-(\R_+^*)^r$.
\end{enumerate}
For every $\eps>0$ and for every $f\in \Hol(D)$, define
\[
B_{\eps,f}^{q,\vect{s_1},\vect{s_2}}\coloneqq \Set{ (\zeta,z)\in D\colon \abs{f(\zeta,z)}\meg \eps  \Delta_\Omega^{(\vb+\vd)/q-\vect{s_1}-\vect{s_2}}(\rho(\zeta,z))\norm{f B^{\vect{s_2}}_{(\zeta,z)}}_{A^{q,q}_{\vect{s_1}}(D)}}.
\] 
Then, there is a constant $C>0$ such that
\[
\norm{\chi_{B_{\eps,f}^{q,\vect{s_1},\vect{s_2}}} f}_{L^{p,p}_{\vect{s_3}}(D)}\meg C\eps \norm{f}_{A^{p,p}_{\vect{s_3}}(D)}
\]
for every $\eps>0$ and for every $f\in A^{p,p}_{\vect{s_3}}(D)$.
\end{lem}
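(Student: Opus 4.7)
\emph{Plan.} My plan is to follow Luecking's strategy from~\cite{Luecking2}: extract a pointwise bound from the very definition of $B_{\eps,f}^{q,\vect{s_1},\vect{s_2}}$, use Jensen's inequality (crucially leveraging $p>q$) to upgrade the $L^q$-norm appearing in that definition to an $L^p$-norm against a Bergman kernel, then reduce everything to two standard Forelli--Rudin-type integrations via~\cite[Proposition 2.41]{CalziPeloso}.  The first step is immediate: on $B_{\eps,f}^{q,\vect{s_1},\vect{s_2}}$ the definition gives
\[
|f(\zeta,z)|^p\meg \eps^p\,\Delta_\Omega^{p[(\vb+\vd)/q-\vect{s_1}-\vect{s_2}]}(\rho(\zeta,z))\,\norm{fB^{\vect{s_2}}_{(\zeta,z)}}_{A^{q,q}_{\vect{s_1}}(D)}^p.
\]
Since $p/q>1$, Jensen's inequality applied to the probability measure obtained by normalizing $|B^{\vect{s_2}}_{(\zeta,z)}|^q\,\Delta_\Omega^{q\vect{s_1}-(\vb+\vd)}(\rho)\cdot\nu_D$ by its total mass $\norm{B^{\vect{s_2}}_{(\zeta,z)}}_{A^{q,q}_{\vect{s_1}}(D)}^q$ yields
\[
\norm{fB^{\vect{s_2}}_{(\zeta,z)}}_{A^{q,q}_{\vect{s_1}}(D)}^p\meg \norm{B^{\vect{s_2}}_{(\zeta,z)}}_{A^{q,q}_{\vect{s_1}}(D)}^{p-q}\int_D|f(w,u)|^p|B^{\vect{s_2}}_{(\zeta,z)}(w,u)|^q\Delta_\Omega^{q\vect{s_1}-(\vb+\vd)}(\rho(w,u))\,\dd\nu_D(w,u).
\]

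A first application of~\cite[Proposition 2.41]{CalziPeloso} evaluates $\norm{B^{\vect{s_2}}_{(\zeta,z)}}_{A^{q,q}_{\vect{s_1}}(D)}^{p-q}$ as a constant times $\Delta_\Omega^{(p-q)[\vect{s_1}+\vect{s_2}-(\vb+\vd)/q]}(\rho(\zeta,z))$, so that the powers of $\Delta_\Omega(\rho(\zeta,z))$ in front of the double integral telescope to $(\vb+\vd)-q(\vect{s_1}+\vect{s_2})$.  Multiplying by $\chi_{B_{\eps,f}^{q,\vect{s_1},\vect{s_2}}}\Delta_\Omega^{p\vect{s_3}-(\vb+\vd)}(\rho(\zeta,z))$, discarding the indicator to get an upper bound, integrating over $D$, and applying Fubini, one arrives at
\[
\norm{\chi_{B_{\eps,f}^{q,\vect{s_1},\vect{s_2}}}f}_{L^{p,p}_{\vect{s_3}}(D)}^p\meg C\eps^p\int_D|f(w,u)|^p\Delta_\Omega^{q\vect{s_1}-(\vb+\vd)}(\rho(w,u))\,I(w,u)\,\dd\nu_D(w,u),
\]
where
\[
I(w,u)=\int_D|B^{\vect{s_2}}_{(w,u)}(\zeta,z)|^q\Delta_\Omega^{p\vect{s_3}-q(\vect{s_1}+\vect{s_2})}(\rho(\zeta,z))\,\dd\nu_D(\zeta,z).
\]
One recognizes $I(w,u)$ as $\norm{B^{\vect{s_2}}_{(w,u)}}_{A^{q,q}_{\vect{s_*}}(D)}^q$ with $\vect{s_*}\coloneqq (\vb+\vd)/q-\vect{s_1}-\vect{s_2}+(p/q)\vect{s_3}$; a second application of~\cite[Proposition 2.41]{CalziPeloso} then evaluates $I(w,u)$ as a constant times $\Delta_\Omega^{p\vect{s_3}-q\vect{s_1}}(\rho(w,u))$.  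Combined with the factor $\Delta_\Omega^{q\vect{s_1}-(\vb+\vd)}(\rho(w,u))$ already present, the powers reassemble into $\Delta_\Omega^{p\vect{s_3}-(\vb+\vd)}(\rho(w,u))$, so the right-hand side becomes $C'\eps^p\norm{f}_{L^{p,p}_{\vect{s_3}}(D)}^p$, as required.

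The main obstacle is the bookkeeping required to check that hypotheses (1) and (2) are precisely calibrated so that both applications of~\cite[Proposition 2.41]{CalziPeloso} fall within its admissibility range.  Concretely, (1) (together with the implicit nontriviality $\vect{s_1}\in \frac{1}{2q}\vect m+(\R_+^*)^r$ of $A^{q,q}_{\vect{s_1}}(D)$, which must hold for the norms in the statement to be finite) places the upper bound on $\vect{s_3}$ needed for both $\vect{s_1}+\vect{s_2}$ and $\vect{s_*}+\vect{s_2}$ to lie in $\frac{1}{q}(\vb+\vd)-\frac{1}{2q}\vect{m'}-(\R_+^*)^r$; while (2) (together with the implicit $\vect{s_3}\in \frac{1}{2p}\vect m+(\R_+^*)^r$ required for $A^{p,p}_{\vect{s_3}}(D)$ to be nontrivial) gives exactly the lower bound on $\vect{s_3}$ that forces $\vect{s_*}\in \frac{1}{2q}\vect m+(\R_+^*)^r$, i.e.\ the nontriviality of $A^{q,q}_{\vect{s_*}}(D)$.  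The strict inequality $q<p$ is essential both to push Jensen's inequality in the useful direction and to ensure the positivity of the factor $p-q$ appearing repeatedly in the algebraic identifications.
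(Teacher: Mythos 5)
Your reduction via Jensen's inequality is where the argument breaks down: it converts the problem into an $L^1$-type kernel estimate, which is strictly stronger than what hypotheses (1)--(2) permit. Concretely, after Fubini you need the second Forelli--Rudin integral
\[
I(w,u)=\int_D\abs{B^{\vect{s_2}}_{(w,u)}(\zeta,z)}^q\,\Delta_\Omega^{p\vect{s_3}-q(\vect{s_1}+\vect{s_2})}(\rho(\zeta,z))\,\dd\nu_D(\zeta,z)
\]
to be finite, i.e.\ (in the notation of your $\vect{s_*}$) you need $\vect{s_*}+\vect{s_2}=\frac{1}{q}(\vb+\vd)-\vect{s_1}+\frac pq\vect{s_3}$ to lie in $\frac{1}{q}(\vb+\vd)-\frac{1}{2q}\vect{m'}-(\R_+^*)^r$, that is, $\vect{s_1}-\frac pq\vect{s_3}\in\frac{1}{2q}\vect{m'}+(\R_+^*)^r$. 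Hypothesis (1) only controls $\vect{s_1}-\vect{s_3}$; since $\vect{s_3}\in\frac{1}{2p}\vect m+(\R_+^*)^r$ has strictly positive entries and $p/q>1$, the required condition is strictly stronger and fails on an open set of admissible parameters. For instance, on $D=\C_+$ take $q=1$, $p=2$, $s_1=1$, $s_2=-5/2$, $s_3=9/10$: conditions (1) and (2) hold, both spaces are non-trivial, and $\norm{B^{s_2}_{(\zeta,z)}}_{A^{1,1}_{s_1}}<\infty$, yet $I(u)=c\int_{\C_+}\abs{(z-\bar u)/(2i)}^{-5/2}(\Ima z)^{13/10}\,\dd\Hc^{2}(z)=+\infty$, since after integrating in $\Rea z$ the integrand decays only like $(\Ima z)^{-1/5}$. (A related problem already affects your first application of~\cite[Proposition 2.41]{CalziPeloso}: (1)--(2) do not imply $\vect{s_1}+\vect{s_2}\in\frac{1}{q}(\vb+\vd)-\frac{1}{2q}\vect{m'}-(\R_+^*)^r$, so $\norm{B^{\vect{s_2}}_{(\zeta,z)}}_{A^{q,q}_{\vect{s_1}}}$ may be infinite as well.)

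The missing idea is to exploit the exponent $p/q>1$ instead of linearizing down to $L^1$. The paper keeps the inequality at the level of $\abs f^q$: on $B^{q,\vect{s_1},\vect{s_2}}_{\eps,f}$ one has $\abs f^q\meg\eps^q\,T(\abs f^q)$ pointwise, where
\[
T g(\zeta,z)=\Delta_\Omega^{\vb+\vd-q(\vect{s_1}+\vect{s_2})}(\rho(\zeta,z))\int_D g\,\abs{B^{q\vect{s_2}}_{(\zeta,z)}}\,\Delta_\Omega^{q\vect{s_1}-(\vb+\vd)}(\rho)\,\dd\nu_D,
\]
and then proves that $T$ is bounded on $L^{p/q,p/q}_{q\vect{s_3}}(D)$ by the Schur test of Lemma~\ref{lem:15}, whose freely chosen auxiliary powers $\Delta^{p'\vs'}$ and $\Delta^{p\vect{s_5}}$ provide exactly the room that compensates for the divergence above. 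Your computation amounts to testing the adjoint kernel against the single forced weight $\Delta_\Omega^{p\vect{s_3}-q(\vect{s_1}+\vect{s_2})}\circ\rho$, i.e.\ to the $L^1$ endpoint of that scale, which is not available here.
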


This extends~\cite[Lemma 2]{Luecking5}, which deals with the case in which $D$ is the unit disc in $\C$.

\begin{proof}
It will suffice to prove that the operator
\[
T\colon f \mapsto \left[(\zeta,z)\mapsto
\Delta_\Omega^{\vb+\vd-q_2(\vect{s_1}+\vect{s_2})}(\rho(\zeta,z))
\int_D f(\zeta',z')
\abs{B^{q_2\vect{s_2}}_{(\zeta,z)}(\zeta',z')}\Delta_\Omega^{q_2\vect{s_1}-(\vb+\vd)}(\rho(\zeta',z'))\,\dd
\nu_D(\zeta',z')   \right] 
\]
induces a continuous linear mapping  of
$L^{p/q,p/q}_{q_2\vect{s_3}}(D)$ into itself. This follows from
Lemma~\ref{lem:15}. 
\end{proof}

\begin{proof}[Proof of Theorem~\ref{prop:27}.]

Set $\ell\coloneqq p/q$ and $\vect{s''}\coloneqq \ell \vs$.
Observe that Lemma~\ref{lem:14} implies that there is a constant $C'>0$ such that
\[
\norm{f B^{\vect{s'}}_{(\zeta,z)}
}_{L^{q}(\mi)}\Meg C'\norm{f B^{\vect{s'}}_{(\zeta,z)}}_{A^{q,q}_{\vect {s''}}(D)}
\]
for every $(\zeta,z)\in D$ and for every $f\in \Hol(D)$ such that
\[
\abs{f(\zeta,z)}>\eps \Delta_\Omega^{(\vb+\vd)/q-\vect{s''}-\vect{s'}}(\rho(\zeta,z))\norm{f B^{\vect{s'}}_{(\zeta,z)}}_{A^{q,q}_{\vect {s''}}(D)},
\]
that is, for every $f\in \Hol(D)$ and for every $(\zeta,z)\in D\setminus B_{f,\eps}^{q,\vect {s''},\vect{s'}}$, with the notation of Lemma~\ref{lem:11}.
In addition, by~\cite[Proposition 3.2]{CalziPeloso}, there is a constant $C''>0$ such that
\[
\norm{f}_{A^{\infty,\infty}_{\vect{s''}-(\vb+\vd)/q}(D)}\meg C''\norm{f}_{A^{q,q}_{\vect {s''}}(D)}
\]
for every $f\in A^{q,q}_{\vect {s''}}(D)$, so that
\[
\norm{f B^{\vect{s'}}_{(\zeta,z)}
}_{L^{q}(\mi)}\Meg \frac{C'}{C''} \Delta_\Omega^{\vect{s'}+\vect {s''}-(\vb+\vd)/q}(\rho(\zeta,z))\abs{f(\zeta,z)}
\]
for every $f\in \Hol(D)$ and for every $(\zeta,z)\in D\setminus B_{f,\eps}^{q,\vect {s''},\vect{s'}}$. By Lemma~\ref{lem:11}, we may choose $\eps$ so small that
\[
\norm{f}_{A^{p,p}_{\vs}(D)}\meg 2 \norm{\chi_{D\setminus B_{f,\eps}^{q,\vect {s''},\vect{s'}}}f}_{L^{p,p}_{\vs}(D)}
\]
for every $f\in A^{p,p}_{\vs}(D)$, provided that
\[
\vect{s'}+ \vect{s''}-\vs\in \frac{1}{q}(\vb+\vd) -\frac{1}{2p} \vect{m}-\left( \frac{1}{2 q}-\frac{1}{2 p}  \right)\vect{m'}-(\R_+^*)^r.
\]
Therefore,
\[
\norm{f }_{A^{p,p}_{\vs}(D)}\meg \frac{2}{C' C''} \norm*{(\zeta,z)\mapsto \norm{f B^{\vect{s'}}_{(\zeta,z)} 
}_{L^{q}(\mi)}  }_{L^{p,p}_{\vs-\vect{s'}-\vect{s''}+(\vb+\vd)/q}(D)}
\]
for every $f\in A^{p,p}_{\vs}(D)$.  Hence, it will suffice to show that the linear mapping
\[
T\colon f\mapsto \left[(\zeta,z)\mapsto  \int_D f \abs{ B^{q\vect{s'}}_{(\zeta,z)} }\,\dd \mi  \right]
\]
induces a continuous linear mapping from $L^{p /q}(\mi)$ into $L^{p/q}((\Delta_\Omega^{p[\vs-\vect{s'}-\vect{s''}+(1/q-1/p)(\vb+\vd) ]  }\circ \rho)\cdot \nu_D )$. 
By Lemma~\ref{lem:15}, this is the case if the following conditions are satisfied:
\begin{itemize}
\item[(i)] $\frac{p}{\ell'}\vs\in \frac{1}{2 \ell'}\vect m+\frac{1}{2 \ell}\vect{m'}+(\R_+^*)^r $;

\item[(ii)] $q \vect{s'}+\frac{p}{\ell'}\vs\in \vb+\vd- \frac{1}{2 \ell}\vect m+\frac{1}{2 \ell'}\vect{m'}-(\R_+^*)^r $.
\end{itemize}
This is the case if $\vect{s'}$ is sufficiently small.
\end{proof}

Finally, we provide a sufficient condition for   a measure
$\mi\in\cM_+(D)$ to be a reverse Carleson, but not necessarily
Carleson, measure for the weighted Bergman space $A^{p,q}_\vs(D)$. A
necessary condition  (for sampling measures only)   has already been provided 
in Proposition~\ref{prop:32}. 

\begin{teo}\label{prop:29}
Take $p_1,q_1,  p \in ]0,\infty]$ with $  p <\infty$, $R_0>1$, and
$\vs\in \frac{1}{2 q_1}\vect m+(\R_+^*)^r$ if $q_1<\infty$, while
$\vs\in \R_+^r$ if $q_1=\infty$. Define $  p^* \coloneqq
p_1/(  p -p_1)_+$ and $  q^* \coloneqq q_1/(  p -q_1)_+$. 

Then, there are $\delta_0>0$ and a constant $C>0$ such that the
following hold. For every $(\delta,R)$-lattice
$(\zeta_{j,k},z_{j,k})_{j\in J,k\in K}$ on $D$, with $\delta\in
]0,\delta_0]$ and $R\in ]1,R_0]$, for every Borel partition
$(B_{j,k})_{(j,k)\in J\times K}$ of $D$ such that 
\[
B((\zeta_{j,k},z_{j,k}),\delta)\subseteq B_{j,k}\subseteq B((\zeta_{j,k},z_{j,k}),R\delta)
\]
for every $(j,k)\in J\times K$, and for every  $\mi\in\cM_+(D)$ such that
\[
C'\coloneqq \norm*{  \bigg( \frac{\Delta_\Omega^{  p [\vs-(\vect
b+\vd)/p_1]}(h_k)}{\mi(B_{j,k})} \bigg)_{j,k}
}_{\ell^{  p^*, q^* }(J,K)}  <\infty,
\]
one has
\[
\norm{f}_{A^{p_1,q_1}_{\vs}(D)}\meg  C C'^{1/  p} \delta^{(2n+m)/p_1+m/q_1}  \norm{f}_{L^{  p}(\mi)}
\]
for every $f\in A^{\infty,\infty}_{\vs-(\vb+\vd)/p_1}(D)$, where $h_k\coloneqq \rho(\zeta_{j,k},z_{j,k})$ for every $(j,k)\in J\times K$.
\end{teo}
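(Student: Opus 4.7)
The proof proceeds by discretising the Bergman norm of $f$ at the lattice points, bounding the resulting discrete sampling norm by $\|f\|_{L^p(\mi)}$ via H\"older's inequality together with the hypothesis on $\mi$, and absorbing into the left-hand side a residual error coming from the fact that $|f|$ is not constant on the cells $B_{j,k}$. More precisely, Theorem~3.23 of \cite{CalziPeloso} yields, for $\delta\leq\delta_0$ sufficiently small and a constant $C_1>0$ independent of the lattice, the quantitative discretisation
\[
\|f\|_{A^{p_1,q_1}_{\vs}(D)} \leq C_1\,\delta^{(2n+m)/p_1+m/q_1}\,\Big\|\big(\Delta_\Omega^{\vs-(\vb+\vd)/p_1}(h_k)\sup_{B_{j,k}}|f|\big)_{j,k}\Big\|_{\ell^{p_1,q_1}(J,K)},
\]
the explicit $\delta$-factor matching the $L^{p_1,q_1}_{\vs}$-size of a unit bump supported on a Bergman ball of radius $\asymp\delta$.

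For each cell, let $(\zeta_0,z_0)\in\overline{B_{j,k}}$ realise $\sup_{B_{j,k}}|f|$. The elementary inequality $|a|^p\leq C_p(|b|^p+|a-b|^p)$, applied with $a=f(\zeta_0,z_0)$ and $b=f(\zeta',z')$ for each $(\zeta',z')\in B_{j,k}$, combined with the Cauchy-type oscillation bound of Lemmas~3.24--3.25 of \cite{CalziPeloso} (as in the proof of Proposition~\ref{prop:28}), yields, after integration against $\mi$ and division by $\mi(B_{j,k})$,
\[
\sup_{B_{j,k}}|f|^p \leq \frac{C_2}{\mi(B_{j,k})}\int_{B_{j,k}}|f|^p\,d\mi + C_2\,\delta^p\sup_{B^*_{j,k}}|f|^p,
\]
where $B^*_{j,k}$ denotes a slight Bergman-enlargement of $B_{j,k}$. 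Raising to the $p_1/p$-th power, summing over $j$, and applying H\"older's inequality with conjugate exponents $(p/p_1,p/(p-p_1))$ (the second producing $p^*$); then raising to the $q_1/p_1$-th power, summing over $k$, and applying H\"older's inequality with $(p/q_1,p/(p-q_1))$ (producing $q^*$), the main term collapses to $C_3(C')^{q_1/p}\|f\|_{L^p(\mi)}^{q_1}$ thanks to the hypothesis and the fact that $(B_{j,k})$ partitions $D$. The endpoint cases $p^*=\infty$ or $q^*=\infty$ are handled by the continuous embedding $\ell^p\subseteq\ell^{p_1}$ (for $p\leq p_1$) replacing the corresponding H\"older step.

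The residual error is $C_3\delta^{q_1}$ times the analogous sampling quantity over the enlarged balls $B^*_{j,k}$, which by Theorem~3.23 of \cite{CalziPeloso} applied in the lower-bound direction is comparable to $\delta^{-q_1[(2n+m)/p_1+m/q_1]}\|f\|_{A^{p_1,q_1}_{\vs}(D)}^{q_1}$. Combining this with Step~1 produces
\[
\|f\|_{A^{p_1,q_1}_{\vs}(D)}^{q_1} \leq C_4\,\delta^{q_1[(2n+m)/p_1+m/q_1]}\,(C')^{q_1/p}\,\|f\|_{L^p(\mi)}^{q_1} + C_4\,\delta^{q_1}\,\|f\|_{A^{p_1,q_1}_{\vs}(D)}^{q_1},
\]
and choosing $\delta_0$ so small that $C_4\delta_0^{q_1}<1/2$ yields the stated inequality after absorption; a standard approximation argument by the interior vertical translations $f_\eps(\zeta,z)\coloneqq f(\zeta,z+i\eps e_\Omega)$ legitimises the absorption when $\|f\|_{A^{p_1,q_1}_{\vs}(D)}$ is a priori infinite. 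The main obstacle is the careful quantitative tracking of the $\delta$-scaling in the sampling identities of Step~1 and in their reverse direction, together with the adaptations required in the endpoint cases $\max(p_1,q_1)=\infty$ (where integrals become suprema) and $p^*=\infty$ or $q^*=\infty$.
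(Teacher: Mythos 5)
Your central Hölder step --- pairing $\bigl(\Delta_\Omega^{p[\vs-(\vb+\vd)/p_1]}(h_k)/\mi(B_{j,k})\bigr)_{j,k}\in\ell^{p^*,q^*}$ against the cell integrals $\int_{B_{j,k}}\abs{f}^p\,\dd\mi$, whose $\ell^{1,1}$-norm is $\norm{f}_{L^p(\mi)}^p$ --- is exactly the paper's. Where you diverge is in the discretisation: the paper invokes \cite[Theorem 3.23]{CalziPeloso} directly in its \emph{minimum} form, i.e.\ $\norm{f}_{A^{p_1,q_1}_{\vs}(D)}\meg C_1\delta^{(2n+m)/p_1+m/q_1}\norm{S_-f}_{\ell^{p_1,q_1}(J,K)}$ with $(S_-f)_{j,k}=\Delta_\Omega^{\vs-(\vb+\vd)/p_1}(h_k)\min_{\overline B((\zeta_{j,k},z_{j,k}),R\delta)}\abs{f}$, valid for every $f\in A^{\infty,\infty}_{\vs-(\vb+\vd)/p_1}(D)$. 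Since $\mi(B_{j,k})\cdot(\min_{B_{j,k}}\abs{f})^p\meg\int_{B_{j,k}}\abs{f}^p\,\dd\mi$ trivially, the proof closes in two lines with no oscillation estimate and no absorption. You instead rebuild the minimum-version from the supremum-version plus the Cauchy-type oscillation bound of \cite[Lemmas 3.24--3.25]{CalziPeloso}; that is essentially how such statements are proved, so the bookkeeping (the $\delta^{(2n+m)/p_1+m/q_1}$ scaling, the $O(\delta)$ error over the enlarged balls, the endpoint modifications) is plausible, but it makes the argument much longer than necessary.

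The genuine gap is the absorption. Your final inequality $\norm{f}_{A^{p_1,q_1}_{\vs}}^{q_1}\meg A+C_4\delta^{q_1}\norm{f}_{A^{p_1,q_1}_{\vs}}^{q_1}$ yields nothing unless $\norm{f}_{A^{p_1,q_1}_{\vs}}<\infty$ a priori, whereas the theorem is asserted for every $f\in A^{\infty,\infty}_{\vs-(\vb+\vd)/p_1}(D)$ --- a strictly larger class, and precisely the point of the statement (it implies that such an $f$ lying in $L^p(\mi)$ must belong to $A^{p_1,q_1}_{\vs}(D)$; the remark following the theorem stresses that this growth class cannot be enlarged to all of $\Hol(D)$ but is the intended domain). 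Your proposed regularisation $f_\eps(\zeta,z)=f(\zeta,z+i\eps e_\Omega)$ does not repair this: translating into the cone tames the behaviour as $\rho(\zeta,z)\to\partial\Omega$ but produces no decay in the $\Nc$-directions, so $f_\eps$ need not lie in $A^{p_1,q_1}_{\vs}(D)$ (already on $\C_+$, $f(z)=(z/i)^{1/p_1-s}$ lies in $A^{\infty,\infty}_{s-1/p_1}$ but $\int_\R\abs{f_\eps(x+iy)}^{p_1}\dd x=\infty$ in general). A workable patch would be to multiply by the cutoffs $g^{(\eps)}$ of \cite[Lemma 1.22]{CalziPeloso} and use Fatou's lemma, but as written the argument only proves the inequality on $A^{p_1,q_1}_{\vs}(D)\cap A^{\infty,\infty}_{\vs-(\vb+\vd)/p_1}(D)$. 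The paper's min-based route avoids the issue entirely, since its chain of inequalities is valid even when both sides are infinite.
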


\begin{oss}
We point out  that the condition $C'<\infty$ is sufficient, but far from
being necessary.   As a matter of fact, using Theorem~\ref{prop:31} one may easily construct reverse Carleson measures which vanish on any given compact subset of $D$. Moreover, the condition
$f\in A^{\infty,\infty}_{\vs-(\vb+\vd)/p_1}(D)$ can be relaxed (in the spirit of~\cite[Theorem 3.23]{CalziPeloso})  but not
eliminated, as observed  in~\cite{Luecking}.
\end{oss}

\begin{proof}
By~\cite[Theorem 3.23]{CalziPeloso}, we may take $\delta_0>0$ and $C_1>0$ so that, if we define 
\[
S_-\colon \Hol(D) \ni f \mapsto \left( \Delta_\Omega^{\vs-(\vb+\vd)/p_1}(h_k) \min_{\overline
B((\zeta_{j,k},z_{j,k}),R\delta)}\abs{f} \right)_{j,k}\in
\R_+^{J\times K}, 
\]
then
\[
\frac{1}{C_1}\norm{f}_{A^{p_1,q_1}_{\vs}(D)}\meg\delta^{(2n+m)/p_1+m/q_1}
\norm{S_- f}_{\ell^{p_1,q_1}(J,K)}
\meg C_1\norm{f}_{A^{p_1,q_1}_{\vs}(D)}
\]
for every $f\in A^{\infty,\infty}_{\vs-(\vb+\vd)/p_1}(D)$ and for every lattice as in the statement.
Then, take $f\in A^{\infty,\infty}_{\vs-(\vb+\vd)/p_1}(D)$, and observe that, by H\"older's inequality,
\[
\begin{split}
&\norm*{  \bigg( \frac{\Delta_\Omega^{  p  [\vs-(\vb+\vd)/p_1]}(h_k)}{\mi(B_{j,k})} \bigg)_{j,k}  }^{1/  p}_{\ell^{  p^*,q^*}(J,K)} \norm{f}_{L^{q_2}(\mi)}\\
&\qquad\Meg \norm*{  \bigg( \frac{\Delta_\Omega^{\vs-(\vb+\vd)/p_1}(h_k)}{\mi(B_{j,k})^{1/  p}} \bigg)_{j,k}  }_{\ell^{  p p^*, p q^*}(J,K)} \norm*{\big(\Delta_\Omega^{(\vb+\vd)/p_1-\vs}(h_k) \mi(B_{j,k})^{1/  p}  (S_- f)_{j,k} \big)_{j,k}  }_{\ell^{  p}(J\times K)}\\
&\qquad\Meg \norm{S_- f}_{\ell^{p_1,q_1}(J,K)} \\
&\qquad\Meg \frac{1}{C_1\delta^{(2n+m)/p_1+m/q_1} }  \norm{f}_{A^{p_1,q_1}_{\vs}(D)},
\end{split}
\]
whence the result.
\end{proof}

We conclude this section with some rather `pathological' examples of non-Carleson reverse Carleson measures for the spaces $A^{p,q}_s(\C_+)$ which fail to satisfy the preceding sufficient condition.

\begin{oss}
Take $q\in ]0,\infty]$ and $p\in ]0,\infty[$. In addition, fix $a<b$ in $\R$ and $\nu\in \cM_+(\R_+^*)$ such that $\nu(]0,1])=\infty$. Define
\[
\mi\coloneqq (\chi_{[a,b]}\cdot \Hc^1)\otimes \nu,
\]
and observe that $\mi$ is a positive Radon measure on $\C_+\cong\R\times \R_+^*$. Take a non-zero $f\in H^q$, and let us prove that $\norm{f}_{L^p(\mi)}=\infty$. Indeed, the function 
\[
f^*\colon \R\ni x\mapsto \lim_{y\to 0^+} f(x+i y)\in \C
\]
is well-defined and non-zero almost everywhere (cf.~\cite[Corollary to Theorem 11.1]{Duren2}). Hence, by Fatou's lemma,
\[
\liminf_{y\to 0^+} \int_a^b \abs{f(x+i y)}^p\,\dd x\Meg\int_a^b \abs{f^*(x)}^p\,\dd x>0,
\]
so that clearly
\[
\int_{\C_+}\abs{f}^p\,\dd \mi=+\infty.
\]
By the arbitrariness of $f$, it is clear that $\mi$ is (trivially) reverse $p$-Carleson for $H^q$.
\end{oss}

In order to extend the preceding example to the spaces $A^{p,q}_s(\C_+)$ for $s>0$, we need the following lemma.

\begin{lem}\label{lem:4}
Take a non-zero $u\in \Sc'(\R)$. If the Fourier transform of $u$ is supported in a half-line, then $\Supp{u}=\R$.
\end{lem}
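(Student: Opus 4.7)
The plan is to realise $u$ as the distributional boundary value of a holomorphic function on the upper half-plane $\C_+$ and derive the result from a Schwarz-reflection argument.

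After replacing $u$ by $u(-\,\cdot\,)$ if necessary, I may assume $\Supp{\widehat u}\subseteq [a,\infty)$ for some $a\in\R$; after multiplying $u$ by $\ee^{-ia\,\cdot}$, which translates $\widehat u$, I may further assume $\Supp{\widehat u}\subseteq [0,\infty)$. Fix a cutoff $\chi\in C^\infty(\R)$ with $\chi\equiv 1$ on $[0,\infty)$ and $\chi\equiv 0$ on $(-\infty,-1]$, and define
\[
U(z)\coloneqq \frac{1}{2\pi}\langle\widehat u,\,\chi(\,\cdot\,)\ee^{iz\,\cdot}\rangle\qquad(z\in \C_+).
\]
For every $z\in\C_+$ the function $\xi\mapsto\chi(\xi)\ee^{iz\xi}$ lies in $\Sc(\R)$, so the pairing is well defined; it is independent of the choice of $\chi$, since two admissible cutoffs differ by a function supported in $(-\infty,0)$, disjoint from $\Supp{\widehat u}$.

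Standard arguments then yield $U\in\Hol(\C_+)$, together with a polynomial growth bound of the form $|U(x+iy)|\meg C(1+|x|+y^{-1})^N$ (coming from seminorm estimates for $\widehat u\in\Sc'(\R)$) and the boundary convergence $U(\,\cdot\,+iy)\to u$ in $\Sc'(\R)$ as $y\to 0^+$ (from Fourier inversion and the support of $\widehat u$). Suppose, for a contradiction, that $\Supp{u}\neq\R$, so that $u$ vanishes on some nonempty open interval $I\subseteq\R$. I would then invoke the Schwarz reflection principle in its distributional form: a holomorphic function on $\C_+$ of polynomial growth whose $\Sc'(\R)$-boundary value vanishes on an open subset $I$ of $\R$ extends holomorphically across $I$ by $0$. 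The identity principle then forces $U\equiv 0$ on $\C_+$, whence $u=0$ in $\Sc'(\R)$, contradicting $u\neq 0$.

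The main obstacle is carrying out the reflection step rigorously, since only a distributional boundary value is available. I would execute it by fixing $z_0\in I$ and a small contour $\Gamma\subset\C_+\cup I$ surrounding $z_0$, and then verifying---using the polynomial bound on $U$ and the vanishing of the boundary distribution tested against functions supported in $I$---that the Cauchy-type integral $\frac{1}{2\pi i}\oint_\Gamma \frac{U(w)}{w-z}\,dw$ produces a holomorphic extension of $U$ to a neighbourhood of $z_0$ that agrees with $U$ on $\C_+$ and vanishes at $z_0$. All remaining steps are either standard or reduce to distribution-theoretic bookkeeping.
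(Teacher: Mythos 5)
Your route is genuinely different from the paper's. You extend $u$ holomorphically to $\C_+$ via a Laplace-type transform of $\widehat u$ and then kill the extension by a reflection argument once $u$ vanishes on an interval. The paper never extends $u$ itself: it multiplies $u$ by a non-zero $g\in \Sc(\R)$ with $\Supp{\widehat g}\subseteq \R_+$ (so that $gu\neq 0$ by the Titchmarsh--Lions theorem on supports of convolutions, cited from Schwartz), mollifies to obtain non-zero \emph{Schwartz} functions $(gu)*\phi_j$ with spectrum in $\R_+$, and then invokes the classical one-variable fact (from Duren's book) that such functions, being boundary values of Hardy functions, cannot vanish on a set of positive measure; if $u$ vanished on $]a,b[$, then $(gu)*\phi_j$ would vanish on a non-empty interval for large $j$. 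The paper's proof thus outsources all the complex analysis to classical $H^p$ theory at the cost of some mollification bookkeeping, and in fact yields the stronger conclusion that $u$ cannot vanish on any set whose complement misses an interval; your proof keeps the argument at the level of $u$ itself but must confront the boundary behaviour of $U$ directly. Your reductions, the construction of $U$, its independence of the cutoff $\chi$, and the convergence $U(\,\cdot\,+iy)\to u$ in $\Sc'(\R)$ are all fine.

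The confrontation with the boundary is where your write-up has a real gap. The statement you invoke --- a function holomorphic on $\C_+$, with $\abs{U(x+iy)}\meg C_K y^{-N}$ locally near $I$ and vanishing distributional boundary value on $I$, extends holomorphically by $0$ across $I$ --- is a true and citable theorem, but the device you propose does not work as written: the contour $\Gamma\subset \C_+\cup I$ has a portion lying on $I$, where $U$ has no pointwise values and may blow up like $y^{-N}$, so $\frac{1}{2\pi i}\oint_\Gamma U(w)(w-z)^{-1}\,\dd w$ is not defined, and even the limiting version with contours $\Gamma_\eps$ at height $\eps$ runs into trouble at the endpoints of the flat segment, where the relevant pairing is against a non-smooth, $\eps$-dependent function rather than a fixed test function. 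The standard repair is to pass to an iterated antiderivative: choose $V$ holomorphic near $I$ in $\C_+$ with $V^{(N+2)}=U$, so that $V$ extends continuously up to $I$; its boundary value is a continuous function whose $(N+2)$-nd distributional derivative vanishes on $I$, hence a polynomial $P$, and the classical Schwarz reflection applied to $V-P$ (continuous up to $I$ and vanishing there) gives a holomorphic extension across $I$ vanishing on $I$, whence $V\equiv P$ and $U\equiv 0$ by the identity theorem. With that step either carried out or explicitly cited, your argument closes; as it stands, the pivotal step is asserted rather than proved.
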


\begin{proof}
Up to replacing $u$ with $\ee^{i \xi\,\cdot\,} u(\eps\,\cdot\,)$ for
some $\xi\in \R$ and some $\eps\in \Set{-1,1}$, we may assume that the
Fourier transform of $u$ is supported in $\R_+$. Now, take a non-zero
$g\in \Sc(\R)$ whose Fourier transform is supported in $\R_+$, and
observe that $g u$  is non-zero, and has a Fourier transform
supported in $\R_+$, thanks to~\cite[Theorems XIII and XIV of Chapter
VI]{Schwartz}.    
Now, fix $\phi\in C^\infty_c(\R)$ such that $\int_\R \phi(x)\,\dd x=1$ and
$\Supp{\phi}\subseteq [-1,1]$, and define $\phi_j\coloneqq
2^{j}\phi(2^{j}\,\cdot\,)$ for every $j\in \N$. Then, it is easily verified that $(g u)*\phi_j$belongs to $\Sc(\R)$ and that its Fourier transform is supported in $\R_+$ for every $j\in \N$. In addition, $(g u)*\phi_j\to g u$ in $\Sc'(\R)$,
so that  $(g u)*\phi_j\neq 0$ if $j$ is sufficiently
large. Therefore,~\cite[Corollary to Theorem 11.1, and Theorem
11.9]{Duren2} imply that $(g u)*\phi_j$ is non-zero almost everywhere
if $j$ is sufficiently large. 
Now, assume by contradiction that $\Supp{u}\neq \R$, and take $a,b\in
\R$ such that $a<b$ and $u$ vanishes on $]a,b[$. Then, clearly $(g
u)*\phi_j$ vanishes on $]a+2^{-j},b-2^{-j} [$, which is not empty if
$b-a>2^{1-j}$, that is, if $j$ is sufficiently large: contradiction. 
\end{proof}

\begin{oss}
Take  $p_1,q_1\in ]0,\infty]$ and $s,p\in ]0,\infty[$. In addition, fix $a<b$ in $\R$ and $\nu\in \cM_+(\R_+^*)$ such that $\nu(]0,1])=\infty$. Define
\[
\mi\coloneqq (\chi_{[a,b]}\cdot \Hc^1)\otimes \nu,
\]
and observe that $\mi$ is a positive Radon measure on $\C_+\cong\R\times \R_+^*$. Take a non-zero $f\in A^{p_1,q_1}_s$, and let us prove that $\norm{f}_{L^q(\mi)}=\infty$. 
Take $(g^{(\eps)})$ as in~\cite[Lemma 1.22]{CalziPeloso}, and observe that $g^{(\eps)}f\in A^{1,1}_{s+(1/p_1-1)_+}$ for every $\eps>0$ (cf.~\cite[Proposition 3.2]{CalziPeloso}). Hence,~\cite[Theorem 1.7]{BekolleBonamiGarrigosRicci} implies that $(g^{(\eps)}f)_y$ converges to some non-zero $f_0$ in $\Sc'(\R)$, for $y\to 0^+$, and that the support of the Fourier transform of $f_0$ is contained in $\R_+$. Therefore, Lemma~\ref{lem:4} implies that $\Supp{f_0}=\R$, so that clearly 
\[
0<\liminf_{y\to 0^+} \int_a^b \abs{(g^{(\eps)}f)(x+i y)}^q\,\dd x\meg \liminf_{y\to 0^+} \int_a^b \abs{f(x+i y)}^q\,\dd x.
\]
Hence,
\[
\int_{\C_+}\abs{f}^q\,\dd \mi=+\infty.
\]
By the arbitrariness of $f$, it is clear that $\mi$ is (trivially) reverse $p$-Carleson for $A^{p_1,q_1}_s$.
\end{oss}

\end{document}